\title{Cyclic $A_\infty$ Structures and Deligne's Conjecture}
\author{Benjamin C. Ward}
\address{Purdue University, Department of Mathematics  \newline%
\indent 150 N. University St.  West Lafayette, IN 47907-2067}%
\email{wardbc@math.purdue.edu}%
\newcommand{\ds}{\displaystyle}
\newcommand{\N}{\mathbb{N}}
\newcommand{\C}{\mathbb{C}}
\newcommand{\Z}{\mathbb{Z}}
\newcommand{\tensor}{\otimes}
\newcommand{\op}{\mathcal}
\newcommand{\fr}{\mathfrak}
\newcommand{\cdc}{,\dots,}
\newcommand{\tdt}{\tensor\dots\tensor}
\numberwithin{equation}{section}
\newtheorem{theorem}{Theorem}[section]
\theoremstyle{plain}
\newtheorem*{thma}{Theorem A}
\newtheorem*{thmb}{Theorem B}
\newtheorem*{thmc}{Theorem C}
\newtheorem*{thmd}{Theorem D}
\newtheorem{corollary}[theorem]{Corollary}
\newtheorem{lemma}[theorem]{Lemma}
\newtheorem{proposition}[theorem]{Proposition}
\theoremstyle{definition}
\newtheorem{definition}[theorem]{Definition}
\newtheorem{example}[theorem]{Example}
\newtheorem{remark}[theorem]{Remark}
\newtheorem{notation}[theorem]{Notation}
\newtheorem*{Acknowledgments}{Acknowledgments}
\begin{document}

\begin{abstract}  First we describe a class of homotopy Frobenius algebras via cyclic operads which we call cyclic $A_\infty$ algebras.  We then define a suitable new combinatorial operad which acts on the Hochschild cochains of such an algebra in a manner which encodes the homotopy BV structure.  Moreover we show that this operad is equivalent to the cellular chains of a certain topological (quasi)-operad of CW complexes whose constituent spaces form a homotopy associative version of the Cacti operad of Voronov.  These cellular chains thus constitute a chain model for the framed little disks operad, proving a cyclic $A_\infty$ version of Deligne's conjecture.  This chain model contains the minimal operad of Kontsevich and Soibelman as a suboperad and restriction of the action to this suboperad recovers the results of \cite{KS} and \cite{KSch} in the unframed case.  Additionally this proof recovers the work of Kaufmann in the case of a strict Frobenius algebra.  We then extend our results to the context of cyclic $A_\infty$ categories, with an eye toward the homotopy BV structure present on the Hochschild cochains of the Fukaya category of a suitable symplectic manifold. 
\end{abstract}

\maketitle

\section*{Introduction}  Throughout we let $k$ be a field of characteristic $\neq 2$.  Given $M$, a closed oriented manifold, there are several meaningful constructions which associate a BV $k$-algebra to $M$ including:  the (shifted) homology of the free loop space of $M$ \cite{CS}, the Hochschild cohomology of the singular cochains of $M$ \cite{FMT}, and the symplectic cohomology \cite{CFH} of the unit disk cotangent bundle of $M$ \cite{Seidel2}.  Much work has been devoted to the study of the BV structures listed above and the relationships between them \cite{CJ},\cite{Seidel3},\cite{Vit1},\cite{AS1},\cite{AS2}.  However, noticing that these BV structures all arise on the level of homology/cohomology suggests that they are, as written in \cite{DCV}, merely the `shadow of a higher structure: that of a homotopy BV algebra'.  

The purpose of this paper is to investigate an emerging class of homotopy BV structures which arises in a related context.  Given a compact symplectic manifold $N$, for example the unit disk cotangent bundle of $M$, we can consider the Fukaya category $\op{F}(N)$ (see e.g. \cite{Seidel}).  Recall $\op{F}(N)$ is an $A_\infty$ category whose objects are (classes of) Lagrangian submanifolds of $N$ and which has the additional structure of an inner product
\begin{equation*}
Hom(L_1,L_2)\tensor Hom(L_2,L_1)\to k
\end{equation*}   
which is expected to be cyclically invariant.  We will call such a structure a cyclic $A_\infty$ category (see Definition $\ref{cyccatdef}$).  The cyclic invariance of the form would imply that the Hochschild cohomology of $\op{F}(N)$ is a BV algebra and would endow the Hochschild cochains of $\op{F}(N)$ with a homotopy BV structure.  This is the homotopy BV structure that will be considered here in.  As such, it would be expected that our principal objects of study would be cyclic $A_\infty$ categories and their Hochschild cohomology.  However, as we will show, the study of cyclic $A_\infty$ categories and their Hochschild cohomology can be largely performed in the context of cyclic $A_\infty$ algebras and their Hochschild cohomology.  As a result, for the sake of simplicity we conduct the bulk of our study in terms of cyclic $A_\infty$ algebras, and conclude with the categorical generalization in Section $\ref{section:cat}$.

Cyclic $A_\infty$ algebras are a particular class of homotopy Frobenius algebras, namely those which relax the associativity to an $A_\infty$ algebra structure but do not resolve the bilinear form.  Such algebras first appeared in \cite{Kont}.  Our approach will be to realize Frobenius algebras and more generally cyclic $A_\infty$ algebras as cyclic unital algebras over the cyclic unital operads $\op{A}s$ and $\op{A}_\infty$.  In particular we will take care to make the cyclic structure of the operad $\op{A}_\infty$ geometrically and combinatorially explicit.

\subsection*{Summary of Results}
Our main result is a proof of a version of Deligne's conjecture for cyclic $A_\infty$ algebras.  Since such an algebra is Frobenius up to homotopy, the Hochschild cohomology of such an algebra is a BV algebra \cite{Menichi},\cite{tradler} and hence an algebra over the homology of the framed little disks operad, $fD_2$.  We can then ask if there is a suitable chain model for $fD_2$ which admits a lift of this action, and the answer is yes (Theorem $\ref{mainthm}$).

\begin{thma}  There is a dg operad $\op{TS}_\infty$ which is a cell model for the framed little disks operad and which acts on the Hochschild cochains of a cyclic $A_\infty$ algebra in a manner compatible with the standard operations on homology/cohomology.
\end{thma}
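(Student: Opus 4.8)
The plan is to prove Theorem A in three movements: (i) give an explicit combinatorial construction of the dg operad $\op{TS}_\infty$ together with a CW (quasi-)operad realizing it, (ii) identify that topological realization with a known model of $fD_2$, and (iii) build the action on Hochschild cochains directly out of the cyclic $\op{A}_\infty$ operations and verify it is a map of dg operads. For step (i), I would realize $\op{TS}_\infty$ as the cellular chains of a space of ``spineless cacti with a homotopy-associative refinement'': a cactus arc is replaced by a fat (planar, planted) vertex allowed to carry the extra associahedral cells that record the failure of strict associativity, and one retains Voronov's $S^1$-markings (the local zeroes and the global zero) whose presence is what produces the framing. The cellular differential then splits combinatorially into two types of terms: an $\op{A}_\infty$-type term that breaks a fat vertex into two, and an arc-collapsing term that contributes both the ``compose with $\Delta$'' pieces and the cup-product pieces. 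Simultaneously one identifies the Kontsevich--Soibelman minimal operad with the suboperad spanned by the cells carrying no $S^1$-markings, which fixes normalizations and makes the comparison with \cite{KS}, \cite{KSch} transparent.

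For step (ii), I would compare the underlying topological (quasi-)operad of $\op{TS}_\infty$ with Voronov's Cacti operad, which is known to be equivalent to $fD_2$. There is a cellular deformation retraction that collapses the homotopy-associativity cells --- on each component it contracts the contractible associahedral factors attached at the fat vertices --- landing in (normalized, spineless-with-spines) Cacti. Because this retraction is cellular and is an operad map up to the standard cactus-reparametrization homotopies, it induces a quasi-isomorphism of chain (quasi-)operads; rectifying the cactus-type quasi-operad to an honest operad exactly as in the strict-Frobenius case, one concludes that $\op{TS}_\infty$ is a dg operad with $H_*(\op{TS}_\infty)\cong H_*(fD_2)=BV$ and with the correct operad structure, i.e.\ a cell model for $fD_2$.

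For step (iii), I would assign to each generating cell of $\op{TS}_\infty$ a polylinear operation on $CH^*(A,A)$ assembled from the structure maps of the cyclic $\op{A}_\infty$ algebra $A$: fat vertices contribute brace-type insertions built from the higher multiplications $m_n$, composition edges contribute plugging one Hochschild cochain into the decorating flag of another, and each $S^1$-marking contributes a use of the cyclic pairing to ``turn a leg around'' --- the chain-level avatar of the BV operator $\Delta$. The substance is to check this is a morphism of dg operads: operadic composition of cells corresponds to substitution of operations (a bookkeeping identity among decorated trees), and the cellular differential is intertwined with the Hochschild differential $[m,-]$ because the $\op{A}_\infty$ relations among the $m_n$ together with the cyclic invariance of the pairing cancel precisely the matching terms. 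Passing to cohomology, one then verifies that the induced map $H_*(fD_2)=BV\to \mathrm{End}(HH^*(A,A))$ is the standard structure: the brace operations degenerate to the Gerstenhaber bracket, the multiplication cell to the cup product, and the cyclic twist to the Menichi--Tradler operator \cite{Menichi},\cite{tradler}.

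The main obstacle is the compatibility check in step (iii), and specifically the interaction of the $S^1$-markings with the fat-vertex cells: one must show that moving a marked point across a fat vertex (a cellular boundary on the topological side) corresponds on the algebraic side to the identity relating the cyclic pairing to the \emph{higher} $m_n$, which is a consequence of cyclic invariance of the full $\op{A}_\infty$ structure rather than merely of $m_2$ --- this is exactly where the explicit description of the cyclic structure of $\op{A}_\infty$ from the earlier sections is needed, and where the $\infty$-analogue goes beyond Kaufmann's strict-Frobenius argument. A secondary obstacle is that the cactus-type composition is only a quasi-operad, so the rectification in step (ii) must be carried out compatibly with the action constructed in step (iii); this is handled as in the strict case, at the cost of some sign and reparametrization bookkeeping that I would not grind through here.
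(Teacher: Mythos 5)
Your proposal follows essentially the same route as the paper: a combinatorial cell model of marked trees/decorated cacti realized as the cellular chains of a CW quasi-operad, a cellular collapse of the associahedral (and cyclohedral) factors onto normalized $\op{C}acti^1$ giving the quasi-isomorphism that establishes the chain-model property, and an action built from brace operations, spined braces, and the cyclic pairing, with the compatibility of the cellular and Hochschild differentials as the substantive check. You also correctly locate the genuine difficulty in the interaction of the $S^1$-markings with the higher $\mu_n$, which is exactly what the paper's appendix computation handles.
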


To prove this theorem we start with the chain model of Kontsevich and Soibelman used in \cite{KS} in a proof of the (noncyclic) $A_\infty$ version as well as its underlying topological structure exhibited in \cite{KSch}, and add a topological $S^1$ action inducing the desired cyclic structure on the chain level.  This technique is inspired by Kaufmann's proof in the associative case \cite{KCyclic}.  The result is a chain model for $fD_2$ which can be realized as the cellular chains of a topological (quasi)-operad and which can be viewed as an $A_\infty$ analog of the Cacti operad of Voronov \cite{Vor}.  In particular we prove the following (Theorem $\ref{mainthm2}$).

\begin{thmb}  The cell model $\op{TS}_\infty$ is isomorphic to the cellular chains of a topological (quasi)-operad $\op{X}=\{X_n\}$ of CW complexes.  Furthermore there is a surjective homotopy equivalence from $\op{X}$ to normalized Cacti:
\begin{equation*}
\op{X}\stackrel{\sim}\longrightarrow \op{C}acti^1
\end{equation*}
\end{thmb}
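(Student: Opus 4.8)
The plan is to construct the spaces $\op{X}=\{X_n\}$ explicitly out of the Kaufmann-Schwell topological model for the minimal operad, enrich each configuration by an $S^1$-worth of ``spine'' data so that the cellular chains become exactly $\op{TS}_\infty$, and then collapse away the resolved $A_\infty$-structure to land surjectively on normalized Cacti along a map whose fibers I will argue are contractible. Recall from \cite{KSch} that the minimal operad of \cite{KS} is the cellular chains of a CW quasi-operad $\op{M}$ of stable planar trees, whose underlying spaces are products of associahedra and which maps by a surjective homotopy equivalence onto spineless normalized Cacti $\op{C}act^1$ by forgetting the internal bracketing of each tree. Following the strategy of Kaufmann's associative cyclic solution \cite{KCyclic}, I would form $X_n$ by adjoining to each configuration in $\op{M}_n$ the cyclically ordered subdivision (an angular parameter valued in a simplex) that records the $S^1$-action, precisely the way spineless Cacti acquire a spine to become full Cacti. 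Thus the cells of $X_n$ are exactly the generators of $\op{TS}_\infty(n)$ --- a cell of $\op{M}_n$ times an angular simplex --- with closed cells the product of the two factors and attaching maps coming from the two degenerations: inserting an internal tree edge (already an attaching map of $\op{M}$) and collapsing an angular arc so that a spine mark meets a vertex. One checks that this yields an honest CW complex on which $S_n$ acts cellularly, that the cactus-insertion maps $X_m\times X_n\to X_{m+n-1}$ are cellular, and that --- exactly as for Voronov's normalized Cacti --- strict associativity of insertion fails only up to renormalizing total circumference, so that $\op{X}$ is a topological quasi-operad.

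Next I would establish the isomorphism $\op{TS}_\infty\cong C^{\mathrm{cell}}_*(\op{X})$ of dg quasi-operads. The bijection between cells and generators is built in; what remains is to match the differentials and the compositions. Comparing the cellular boundary of a product cell (cell of $\op{M}_n$) $\times$ (angular simplex) with the differential of $\op{TS}_\infty(n)$, term by term and sign by sign --- the former being the Kaufmann-Schwell edge-insertion boundary tensored with the identity, plus the identity tensored with the simplicial boundary of the spine --- identifies the two differentials, and comparing the cellular chains of the insertion maps with the operadic composition of $\op{TS}_\infty$ identifies the structure maps. This gives the first assertion of the theorem.

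I would then define the surjection $\pi\colon\op{X}\to\op{C}acti^1$ by applying the Kaufmann-Schwell collapse $\op{M}\to\op{C}act^1$ to the underlying configuration and carrying the spine datum along unchanged, so that the output is a point of full normalized Cacti. It is continuous and $S_n$-equivariant, it is a map of quasi-operads by construction, and it is surjective because the Kaufmann-Schwell collapse is surjective and the spine is unconstrained.

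The hard part will be showing that each $\pi_n$ is a homotopy equivalence. Since the spine datum is simply pulled back along $\pi_n$, its fibers are the fibers of the Kaufmann-Schwell collapse, namely products of associahedra indexed by the branch points of the cactus, and are therefore contractible. The genuine obstacle is that the stratification of full normalized Cacti is strictly finer than that of the spineless version --- a spine mark may collide with a lobe-intersection point --- so there are new strata over which one must re-verify that $\pi_n$ is a (quasi-)fibration with contractible fibers; concretely, one must check that a fiberwise deformation retraction onto a chosen section (say the left-comb resolution at each branch point) extends continuously as the combinatorial type of the cactus degenerates through such a spine collision. Granting this compatibility across strata, a Dold-type argument shows $\pi_n$ is a quasi-fibration, hence a weak equivalence, hence --- both spaces being CW --- a homotopy equivalence; combined with the equivalence $\op{C}acti^1\simeq fD_2$ of Voronov and Kaufmann this exhibits $\op{TS}_\infty\cong C^{\mathrm{cell}}_*(\op{X})$ as a chain model for $fD_2$. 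Finally, restricting the entire construction to trees with trivial spine recovers the unframed statement of \cite{KSch}, and specializing to a strict Frobenius algebra recovers Kaufmann's theorem \cite{KCyclic}.
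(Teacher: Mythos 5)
Your overall strategy --- take the Kaufmann--Schwell realization $|\op{T}_\infty|$ of the minimal operad, adjoin spine data to get spaces $X_n$, and collapse associahedra to map onto $\op{C}acti^1$ --- is the same as the paper's, and your description of the projection and of its fibers (products of associahedra) matches the paper's proof of the second assertion.  However, there is a genuine error in your proof of the first assertion.  You claim that the cells of $X_n$ are products, ``a cell of $\op{M}_n$ times an angular simplex,'' with cellular boundary equal to the Kaufmann--Schwell boundary tensored with the identity plus the identity tensored with the simplicial boundary of the spine factor.  That is the \emph{product} CW structure on $|\op{T}_\infty(n)|\times(S^1)^n$, and its cellular chain complex is \emph{not} isomorphic to $\op{TS}_\infty(n)$.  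The whole technical content of the paper's construction is that one must use a strictly finer, non-product decomposition: already in the associative case the subcomplex $\overline{\beta_n\circ_1\delta}$ realizes $\Delta_n\times S^1$, but with $n+1$ top-dimensional cells --- the simplices $D_l(\Delta_{n+1})$, indexed by the type~1 spined braces $\beta_{n,l}$ --- glued along interior codimension-one walls indexed by the type~2 spined braces $\beta_n\wedge_l\mu_2$; in the $A_\infty$ case this becomes the analogous decomposition of $W_{n+1}\times S^1$.  In the product structure this subcomplex would have a single top cell $\Delta_n\times e^1$, there would be no cells corresponding to the individual generators $\beta_{l,n}$ and $\beta_n\wedge_l\mu_2$ at all, and the ``pushing off the spine'' terms of $\partial_H$ --- which are interior walls of the product, not faces of it --- would not appear in the cellular differential.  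So with your cell structure the claimed isomorphism $CC_\ast(X_n)\cong\op{TS}_\infty(n)$ fails at the level of a basis count, not merely of signs.

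To repair this you need the decomposition the paper builds: orient and subdivide each product (cyclohedron)$\,\times\,S^1$ by the simplicial maps $D_i\colon\Delta_n\to\Delta_{n-1}\times I$ (blown up from $\Delta_n$ to $W_{n+1}$), assign to a general tree the ordered product of the cells of its generators under the simple-gluing decomposition, and only then verify that the cellular boundary reproduces all three local cases of $\partial_H$ (associahedral boundary at black vertices, angle contraction, and spine push-off).  On the second assertion your approach is, if anything, more careful than the paper's: the paper deduces the homotopy equivalence directly from the levelwise contraction of associahedra and cyclohedra, whereas you flag the need to check compatibility of the fiberwise retractions across the strata where a spine meets an intersection point.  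That concern is legitimate but is a refinement of the paper's argument rather than a different route.
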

The spaces $X_n$ are constructed using the cyclohedra of Bott and Taubes \cite{bt}, which describe the differential of the minimal operad of Kontsevich and Soibelman as shown in \cite{KSch} and this can be viewed as a cyclic extension of this result.  This homotopy equivalence allows us to give the explicit homotopies endowing $\op{TS}_\infty$-algebras with a homotopy BV structure.  Moreover it establishes explicitly the fact that $\op{TS}_\infty$ is a chain model for the framed little disks, i.e. that there is a zig-zag of quasi-isomorphisms:
\begin{equation}\label{zzeq}
\op{TS}_\infty\stackrel{\sim}\leftarrow\dots\stackrel{\sim}\rightarrow C_\ast(fD_2)
\end{equation}
where $C_\ast$ are the singular chains.

In the case that our field is of characteristic $0$, there is an explicit cofibrant replacement of the operad $H_\ast(fD_2)\cong \op{BV}$, called $\op{BV}_\infty$ due to \cite{GCTV}.  It is to be expected then that all $\op{TS}_\infty$-algebras are also $\op{BV}_\infty$ algebras (although not vice versa), which we establish via the following theorem (Theorem $\ref{tvthm}$).  

\begin{thmc}  In characteristic $0$ there is a quasi-isomorphism of dg operads
\begin{equation}\label{thmceq}
\op{BV}_\infty\stackrel{\sim}\to\op{TS}_\infty
\end{equation}
In particular, $\op{BV}_\infty$ satisfies the cyclic $A_\infty$ Deligne conjecture in characteristic $0$.
\end{thmc}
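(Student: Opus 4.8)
The plan is to derive the quasi-isomorphism \eqref{thmceq} from general homotopical properties of dg operads, feeding in two external facts: that $\op{BV}_\infty$ is a \emph{cofibrant} dg operad equipped with a quasi-isomorphism $\op{BV}_\infty\stackrel{\sim}\to\op{BV}$ (this is the content of \cite{GCTV}, where $\op{BV}_\infty$ is built as the cobar construction of the Koszul dual cooperad of $\op{BV}$), and the \emph{formality} of the framed little $2$-disks operad over $\Q$, hence over any field of characteristic $0$. I would then conclude the Deligne-type statement by pulling back the action of Theorem A.

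First I would assemble the homological picture already available. By Theorem B the zig-zag \eqref{zzeq} consists of quasi-isomorphisms of dg operads, so $H_\ast(\op{TS}_\infty)\cong H_\ast(fD_2)$ as operads, and by Getzler's computation $H_\ast(fD_2)\cong\op{BV}$. Invoking the formality of $fD_2$ there is, in addition, a zig-zag of quasi-isomorphisms of dg operads between $C_\ast(fD_2)$ and its homology $\op{BV}$. Splicing this onto \eqref{zzeq} produces a zig-zag of quasi-isomorphisms
\begin{equation*}
\op{TS}_\infty\stackrel{\sim}\longleftarrow\cdots\stackrel{\sim}\longrightarrow\op{BV}.
\end{equation*}

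Next comes the lift. Working in Hinich's model structure on dg operads in characteristic $0$, every object is fibrant, and $\op{BV}_\infty$, being the cobar construction of a connected weight-graded cooperad, is cofibrant; hence the set-valued functor $[\op{BV}_\infty,-]$ on the homotopy category sends weak equivalences to bijections. Transporting the homotopy class of the structure map $\op{BV}_\infty\stackrel{\sim}\to\op{BV}$ through the zig-zag above yields a homotopy class of dg operad maps $\op{BV}_\infty\to\op{TS}_\infty$, and I would fix a representative $\varphi$. Tracking homology around the zig-zag, $H_\ast(\varphi)$ is identified with the isomorphism $\op{BV}\xrightarrow{\cong}H_\ast(\op{TS}_\infty)$, so $\varphi$ is a quasi-isomorphism; this is \eqref{thmceq}. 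The ``in particular'' then follows at once: by Theorem A the operad $\op{TS}_\infty$ acts on the Hochschild cochains of a cyclic $A_\infty$ algebra compatibly with the BV structure on Hochschild cohomology, so restriction along $\varphi$ endows these Hochschild cochains with a $\op{BV}_\infty$-algebra structure inducing their BV cohomology --- precisely the assertion that $\op{BV}_\infty$ satisfies the cyclic $A_\infty$ Deligne conjecture.

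I expect the genuinely nontrivial input, and hence the main obstacle, to be the formality of the framed little $2$-disks operad: it is exactly what is needed to connect $\op{TS}_\infty$ to $\op{BV}$ by weak equivalences, and without it the desired lift need not exist, the obstructions living in the deformation cohomology of $\op{TS}_\infty$. A more self-contained but considerably heavier alternative would bypass the black box by constructing $\varphi$ directly: a dg operad map out of the cobar construction $\op{BV}_\infty$ is the same datum as a Maurer--Cartan element of the convolution pre-Lie algebra of equivariant maps from the Koszul dual cooperad of $\op{BV}$ to $\op{TS}_\infty$, and one could try to build such an element by obstruction theory along the weight filtration, using the explicit operations on $\op{TS}_\infty$ furnished by Theorem A to kill successive obstruction classes; however this still implicitly relies on the same formality-type vanishing, so I would not pursue it.
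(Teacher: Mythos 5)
Your argument is correct and follows essentially the same route as the paper: both invoke formality of $fD_2$ in characteristic $0$ to splice the zig-zag of Theorem B into a zig-zag $\op{TS}_\infty\stackrel{\sim}\leftarrow\cdots\stackrel{\sim}\rightarrow\op{BV}$, then use cofibrancy of $\op{BV}_\infty$ and fibrancy of all dg operads to lift the structure map $\op{BV}_\infty\stackrel{\sim}\to\op{BV}$ across the zig-zag (the paper phrases this via a cofibrant replacement $X$ of $\op{TS}_\infty$ rather than via the corepresented functor $[\op{BV}_\infty,-]$, but these are the same model-category lifting argument), and finally pull back the action of Theorem A. Your closing remarks correctly identify formality as the essential nontrivial input and the resulting non-explicitness of the morphism, which the paper also acknowledges.
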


As noted in \cite{GCTV} the cofibrancy of $\op{BV}_\infty$ makes it a good canonical model for homotopy BV structures such as this.  It should be noted, however, that the action of $\op{BV}_\infty$ has not been made explicit; rather the morphism in equation $\ref{thmceq}$ exists by an abstract model category argument and formality.  On the other hand, the action by $\op{TS}_\infty$ will be given explicitly.  Additionally since the operad $\op{TS}_\infty$ is topological it allows us to establish the zig-zag of equation $\ref{zzeq}$ explicitly and without recourse to questions of formality of the framed little disks, and in particular allows us to prove Deligne's conjecture in any characteristic $\neq 2$. 

Finally we extend the cyclic $A_\infty$ Deligne conjecture to cyclic $A_\infty$ categories (Theorem $\ref{catthm}$). 
\begin{thmd}  Let $\op{A}$ be a cyclic $A_\infty$ category.  Then $CH^\ast(\op{A},\op{A})$ is a $\op{TS}_\infty$-algebra.
\end{thmd}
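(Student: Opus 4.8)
\emph{Proof proposal.} The plan is to reduce Theorem~D to the algebra case, Theorem~\ref{mainthm}, by carrying along the bookkeeping of objects; this is the precise sense in which, as promised in the introduction, the categorical theory is ``largely performed in the context of cyclic $A_\infty$ algebras''. First I would recall the Hochschild complex of a small cyclic $A_\infty$ category $\op{A}$ (Definition~\ref{cyccatdef}): it is the product
\[
CH^\ast(\op{A},\op{A})=\prod_{n\geq 0}\ \ \prod_{X_0\cdc X_n}\ \mathrm{Hom}\bigl(\op{A}(X_0,X_1)\tdt\op{A}(X_{n-1},X_n),\,\op{A}(X_0,X_n)\bigr),
\]
the inner product ranging over composable $(n+1)$-tuples of objects, with the Hochschild differential and the Gerstenhaber-type operations built from the structure maps $\mu_k$ of $\op{A}$ exactly as in the one-object case, the sole modification being that two structure maps or cochains compose only when the intermediate objects agree. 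One should note that this is not literally the Hochschild complex of an associative or $A_\infty$ algebra (a product over object-tuples rather than something built from a single algebra), so the reduction is by \emph{parallel formalism} rather than by literal substitution.

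Second, I would observe that every generating operation of $\op{TS}_\infty$ produced by the combinatorial and topological machinery behind Theorems~\ref{mainthm} and \ref{mainthm2} acts on a tuple of Hochschild cochains by a formula that is \emph{local} in the morphism spaces: the cochains are plugged into a planar tree whose internal vertices are decorated by the maps $\mu_k$, the output of one cochain is fed into an input slot of another, and a prescribed set of the remaining slots is contracted in pairs by the cyclic form $\langle-,-\rangle$. Reading such a decorated cyclohedron-tree with its leaves now labelled by object-tuples, the boundary (source/target) labels propagate consistently through the whole configuration: the composition slots match by construction of the tree, and — this is the crucial point — each pair of slots that the operation contracts is of the form $\op{A}(L,L')$ together with $\op{A}(L',L)$, which is exactly the class of pairs on which the cyclic form of $\op{A}$ is defined. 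Hence the \emph{same} formulas as in the algebra case define maps $\op{TS}_\infty(n)\tensor CH^\ast(\op{A},\op{A})^{\tensor n}\to CH^\ast(\op{A},\op{A})$, where each operation is applied object-tuple by object-tuple and all the contractions land in $k$ via cyclic pairings.

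Third, I would verify that these maps constitute a dg operad action. Because the action is defined diagonally over the index set of composable object-tuples, both the associativity and unitality of $\op{TS}_\infty$-composition and the Leibniz compatibility with the Hochschild differential can be checked factor by factor, and each such factor is precisely an instance of the already-established algebra statement, Theorem~\ref{mainthm}. One also has to confirm that the $S^1$/BV part of the structure survives the passage to categories: the degree $-1$ operator in the algebra case inserts the unit, so here one uses the identity morphisms $\mathrm{id}_X\in\op{A}(X,X)$ (strict or homotopy units as in Definition~\ref{cyccatdef}) and checks that the cyclic form pairs $\mathrm{id}_X$ with the appropriate $\op{A}(X,Y)$ or $\op{A}(Y,X)$, just as before.

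The main obstacle is the combinatorial verification in the second step: one must confirm that when the cyclohedra-decorated trees and flow charts underlying $\op{TS}_\infty$ are labelled by objects, every contraction demanded by the cyclic structure is genuinely applied to a dual pair $\op{A}(L,L')\tensor\op{A}(L',L)$, so that nothing degenerates or is ill-defined in passing from a single algebra to the many-object setting. In the strictly associative categorical case this bookkeeping is due to Kaufmann, and in the non-cyclic $A_\infty$ case to Kontsevich–Soibelman; what remains here is to thread the same argument through the cyclic $A_\infty$ trees of $\op{TS}_\infty$, which — because the bilinear form is never resolved — is a direct, if notationally heavy, extension rather than a new difficulty.
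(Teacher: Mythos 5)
Your overall strategy --- reduce to the algebra case --- is the right one, but the route you take is genuinely different from the paper's, and the difference matters. You explicitly observe that $CH^\ast(\op{A},\op{A})$ is not literally the Hochschild complex of an $A_\infty$ algebra and therefore argue by ``parallel formalism'': rerun every tree/cyclohedron formula with object labels and check that all compositions, and all contractions by the cyclic form, occur between matching hom-spaces. The paper instead makes the reduction \emph{literal}: it forms the direct sum $\op{A}_\oplus=\bigoplus_{X,Y}Hom_\op{A}(X,Y)$, extends the structure maps and the pairing by zero on non-composable (resp.\ non-dual) summands, and notes that Definition~\ref{cyccatdef} is rigged precisely so that $\op{A}_\oplus$ is an honest cyclic $A_\infty$ algebra, to which Theorem~\ref{mainthm} applies verbatim; it then identifies $CH^\ast(\op{A},\op{A})$ as a subcomplex of $CH^\ast(\op{A}_\oplus,\op{A}_\oplus)$, and the only remaining check is that this subspace is closed under the action, carried out generator by generator ($\mu_n$, the $\circ_i$'s and hence the braces, and the implicitly defined spined braces and BV operator) by elementary object-matching. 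What the paper's route buys is that none of the heavy sign and identity verifications (Theorem~\ref{opthm}, Theorem~\ref{diffthm} and the appendix computations) need to be revisited; your route requires exactly those verifications to be re-threaded through the many-object setting, and the soft spot in your write-up is the claim that this ``can be checked factor by factor, and each such factor is precisely an instance of Theorem~\ref{mainthm}.'' It is not: the operations mix factors (a composition $f\circ_i g$ of cochains supported on different object tuples lands in a third factor), so what you actually have is a re-derivation with extra bookkeeping rather than a factorwise application of the algebra theorem. Two smaller points: the paper takes a direct sum over object tuples rather than your product, which keeps the implicit dualizations by the pairing finite and well defined; and the smallness and finite-type hypotheses of Definition~\ref{cyccatdef} are exactly what make $\op{A}_\oplus$ and its self-duality work, so they should be invoked explicitly in your argument as well.
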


In particular $CH^\ast(\op{A},\op{A})$ is a $\op{BV}_\infty$ algebra.  

\subsection*{Future Directions}

The proof of Theorem D is achieved by a simple technique which should prove useful, beyond the scope of this paper.  There is a natural inclusion from the category of $A_\infty$ algebras $\textbf{Alg}_\infty$ to the category of (small) $A_\infty$ categories $\textbf{Cat}_\infty$ given by considering an $A_\infty$ algebra as an $A_\infty$ category with one object.  This functor has a natural (left) adjoint, as we construct below.  This adjunction is closed under cyclicity.  We then relate the Hochschild cochains of a cyclic $A_\infty$ category to the Hochschild cochains of its image under this adjunction and show that the $\op{TS}_\infty$-algebra structure on the associated cyclic $A_\infty$ algebra induces a $\op{TS}_\infty$-algebra structure on the original cyclic $A_\infty$ category.  This adjunction should provide a useful tool for the future study of $A_\infty$ categories in terms of $A_\infty$ algebras.

The most interesting context for Theorem D is the case in which $M$ is a compact oriented manifold and $N=D(T^\ast M)$ the unit disk cotangent bundle.  In this case there is an isomorphism of graded algebras between the symplectic cohomology of $N$ and the (shifted) homology of the free loop space of $M$: $SH^\ast(N)\cong H_\ast(LM)$ (see e.g. \cite{Seidel2},\cite{AS2}).  Both of these BV algebras can be modeled in Hochschild cohomology.  In \cite{CJ} the authors construct an isomorphism of graded algebras
\begin{equation*}
H_\ast(LM)\to HH^\ast(C^\ast(M),C^\ast(M))
\end{equation*}
where $C^\ast(M)$ are the singular cochains of $M$.  In parallel, on the symplectic side there is a  so called `open-closed string map':
\begin{equation*}
SH^\ast(N)\to HH^\ast(\op{F}(N),\op{F}(N))
\end{equation*}
which is known to be an isomorphism of graded algebras \cite{Seidel2}.  This puts the above objects in the following context:
\begin{equation}\label{bvdiagram}
\xymatrix{H_\ast(LM) \ar[rr]\ar[d] && HH^\ast(C^\ast(M),C^\ast(M)) \\ SH^\ast(N)\ar[u]\ar[rr] && HH^\ast(\op{F}(N),\op{F}(N))  }
\end{equation}
where all arrows are isomorphisms of graded algebras.  A direction for future research will be the following.  First the cyclic symmetry of the form of the the Fukaya category must be concretely established.  Then applying Theorem D and Theorem C it will be an immediate consequence that $HH^\ast(\op{F}(N),\op{F}(N))$ is a BV algebra, and more specifically that $CH^\ast(\op{F}(N),\op{F}(N))$ is a $\op{BV}_\infty$ algebra, or more specifically a $\op{TS}_\infty$ algebra.  It is expected, then, that an understanding of the homotopy BV structures underlying diagram $\ref{bvdiagram}$ will give insight in to the associated BV structures and the relationships between them.  For example, it is an open question whether or not the maps in this diagram are isomorphisms of BV algebras. 

Finally, if we consider $H_\ast(fD_2)\cong H_\ast(\op{M}_0)$, the homology of the moduli spaces of genus zero curves with parameterized boundary, one can look for a chain model computing the homology in all genera which also acts on the Hochschild complex of a Frobenius or cyclic $A_\infty$ structure, see e.g. \cite{TZ}, \cite{Co}, \cite{KMod1}, \cite{KMod2}, \cite{WW}.  In the Frobenius case one such chain model has been constructed via arc graphs on decorated Riemann surfaces \cite{KLP}, the so called $\op{A}rc$ operad.  In particular in \cite{KMod1} and \cite{KMod2} Kaufmann uses $\op{A}rc$ to construct a cell model of the moduli space of curves with marked points and tangent vectors at the marked points which acts on the Hochschild cochains of a Frobenius algebra.  A future direction will be to construct an $A_\infty$ version of $\op{A}rc$ operad of \cite{KLP} which acts in the cyclic $A_\infty$ algebra/category case.

\subsection*{Outline}

In Section $\ref{cyclicinftysection}$ we recall relevant details pertaining to cyclic operads and $A_\infty$ algebras and give an operadic definition of cyclic $A_\infty$ algebras.  In Section $\ref{sec:trees}$ we fix terminology for graphs and trees.  In Section $\ref{sec:cdc}$ we review Deligne's conjecture in the case of a strict Frobenius algebra and the solution of \cite{KCyclic}, emphasizing the ingredients that will be needed for our generalization.  In Section $\ref{sec:op}$ we define a new operad $\op{TS}_\infty$ and give a presentation of this operad in terms of generators, facilitating the definition of the action in Section $\ref{section:action}$.  In Section $\ref{section:underlyingtopology}$ we construct CW complexes whose cellular chains are isomorphic to the chain model $\op{TS}_\infty$ and show that $\op{TS}_\infty$ is a chain model for $fD_2$.  Piecing together our work, we give the main theorem in Section $\ref{sec:mainthm}$ and then show that the operad $\op{BV}_\infty$ of \cite{GCTV} also gives a solution to this theorem.  Finally in Section $\ref{section:cat}$ we recall cyclic $A_\infty$ categories and extend our results to them.
\tableofcontents
\begin{Acknowledgments}  I would like to thank Ralph Kaufmann for suggesting this topic and for carefully explaining his work to me.  I have also benefited from helpful discussions with Alexander Berglund, Ralph Cohen, Urs Fuchs, James McClure, Alexander Voronov, and Nathalie Wahl. 
\end{Acknowledgments}

\section{Cyclic $A_\infty$ Algebras.}\label{cyclicinftysection}  In this section we will recall the $A_\infty$ operad and give explicitly its cyclic (and unital) structure.  We will then define cyclic $A_\infty$ algebras as algebras over the cyclic unital $A_\infty$ operad.  Our emphasis will be on linear operads: operads in the category of finite dimensional vector spaces, or graded vector spaces/dg vector spaces of finite type, over a fixed field $k$ of characteristic $\neq 2$.  

\subsection{$A_\infty$ Algebras}  In order to fix notation we recall relevant details pertaining to $A_\infty$ algebras.  We denote by $K_n$ the associahedron of dimension $n-2$.  Recall the associahedron $K_n$ is an abstract polytope whose vertices correspond to full bracketings of $n$ letters and whose codimension $m$ faces correspond to partial bracketings with $m$ brackets.  Hence $K_2$ is a point, $K_3$ is an interval, $K_4$ is a pentagon.  For more detail see e.g. \cite{MSS}.  The bracketings give the collection of associahedra an operad structure induced by insertion.  As polytopes the associahedra also have a natural CW structure with $0$ cells as vertices, $1$ cells as edges, etc.  We will denote the top dimensional cell in $K_n$ by $\mu_n$ for $n\geq 2$.

\begin{definition}  The dg operad of cellular chains $\{K_n\}$ will be denoted $\op{A}_\infty$.  An algebra over this operad is an $A_\infty$ algebra.
\end{definition}

For $\op{A}_\infty$ to be an operad we must have the identity encoded in arity $1$.  As such we take as a convention that $K_1$ is a point which encodes the identity on algebras.  We will refrain from calling this point $\mu_1$, since $\mu_1$ will more commonly refer to the differential on algebras.

\begin{remark} \label{symmetrization}  The associahedra form a non-$\Sigma$ operad.  The termwise tensor product with the associative operad $\op{A}s$ (see example $\ref{Frobeniusalgebra}$ below) gives a functor $-\tensor \op{A}s$ taking linear non-$\Sigma$ operads to linear operads by taking the symmetric group action only on the $\op{A}s$ factor.  We will adopt the terminology of \cite{MSS} and call the image of this functor the symmetrization of the input.  In what follows we can work in both the symmetrized or unsymmetrized versions by choosing to label or not label the e.g edges of polygons or vertices of trees.
\end{remark}

\subsection{The Unital Structure of $\op{A}_\infty$.}

\begin{definition}  Let $\op{O}$ be a linear operad and define $\op{O}(0)=k$.  We say $\op{O}$ is an operad with unital multiplication $\mu_2$ if the usual operad composition rules (which hold for $n\geq 1$) can be extended to hold for $n\geq 0$ in such a way that for $i=1,2$,
\begin{equation} \label{unitalequation}
\op{O}(2)\tensor\op{O}(0)\stackrel{\circ_i}\longrightarrow\op{O}(1)
\end{equation}
takes $\mu_2 \mapsto \nu \in Hom(k, \op{O}(1))$, where $\nu$ is the operadic unit.
\end{definition}
This definition is a nonassociative version of the notion of operad with multiplication in \cite{MS}.  The extra structure maps involving $\op{O}(0)$ will be called degeneracies.

\begin{example}  
Let $A$ be an algebra with multiplication $\ast$ and let $u$ be a unit with respect to the multiplication.  Define the degeneracies
\begin{equation*}
End_A(n)\tensor End_A(0) \stackrel{\circ_i}\longrightarrow End_A(n-1)
\end{equation*}
to be insertion of $u$ into the $i^{th}$ argument of a function.  Then $End_A$ is a unital operad with multiplication.  This example illustrates why we require equation $\ref{unitalequation}$ to hold.  If we did not require this then we could define degeneracies by inserting any element, not just a unit.
\end{example}

\begin{definition} Let $A$ be an algebra with a unital multiplication $\ast$.  We say $A$ is a unital algebra over the unital operad with multiplication $\op{O}$ if there is a morphism of operads
\begin{equation*}
\op{O}\to End_A
\end{equation*}
taking $\mu$ to $\ast$ and taking degeneracies to degeneracies.
\end{definition}

\begin{example}  Define degeneracies on the $\op{A}_\infty$ operad by taking for $n\geq 3$
\begin{equation*}
\mu_n\circ_i 1=0
\end{equation*}
and for $n=2$
\begin{equation*}
\mu_2\circ_i 1= 1
\end{equation*}
In other words the degeneracies are only non zero when considered on the suboperad generated by $\mu_2$.  Then $\op{A}_\infty$ is an operad with unital multiplication.  For degree reasons, this is the only unital structure on this operad.  A unital algebra over this operad with unital multiplication will be called a unital $A_\infty$ algebra.  
\end{example}
\begin{remark}\label{unitremark}
For the remainder of this paper we will consider only the unital version of the $A_\infty$ operad and it's algebras. For practical purposes this is equivalent to postulating 
\begin{enumerate}
\item{}  All $A_\infty$ algebras we consider have a unit with respect to their binary multiplication.
\item{}  For $n\geq 3$, $\mu_n$ is a `normalized cochain', meaning $\mu_n(a_1\tensor\dots\tensor a_n)=0$ if $a_i$ is the unit element for some $i$. 
\end{enumerate}
\end{remark}

\subsection{Hochschild Cohomology of an $A_\infty$ Algebra.}  We will now define the Hochschild cohomology of an $A_\infty$ algebra following the presentation in \cite{KS}.  For a graded linear operad $\op{O}$ we can associate an odd-Lie algebra, $(\op{O}_\ast, [-,-])$ to $\op{O}$ as follows.  Let $\op{O}_\ast:=\bigoplus\op{O}(n)$ and take the vector space $\op{O}_\ast$ to be graded by the total grading, i.e. if $a \in \op{O}(n)$ is an element of degree $deg(a)$ then we consider the degree of $a \in \op{O}_\ast$ to be the total degree: $||a||=deg(a)+n$.  Then define for $a \in\op{O}(n)$ and $b \in \op{O}(m)$;
\begin{equation*}
[a,b] := a\circ b - (-1)^{(||a||-1)(||b||-1)}b\circ a
\end{equation*}
where 
\begin{equation}\label{preliesigns}
a \circ b:= \ds\sum_{i=1}^n (-1)^{(i-1)(m-1)+(n+1)deg(b)} a\circ_i b
\end{equation}
Notice that by assumption the operad associativity holds with respect to the original grading.  The $(n+1)deg(b)$ appearing in the sign must be included to assure that $a\circ b$ will satisfy the odd pre-Lie identity with respect to the $\textit{total grading}$.  Consequently $(\op{O}_\ast, [-,-])$ is an odd-Lie algebra.  This construction is due to Gerstenhaber \cite{G} and the graded sign appears in e.g. (\cite{LV} p. 293).

\begin{proposition} \label{multdiff}
Let $\op{O}$ be an operad with associative multiplication $\mu_2 \in \op{O}$ (of degree zero).  Then 
\begin{equation*}
d(a):=[a,\mu_2]
\end{equation*}
defines a square zero differential on $\op{O}_\ast$ of degree $+1$.
\end{proposition}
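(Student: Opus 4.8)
The plan is to deduce the statement from the odd pre-Lie structure on $\op{O}_\ast$ already recorded in the excerpt, together with the single extra hypothesis that $\mu_2$ is associative. First recall that $\mu_2$ has degree zero and arity $2$, so its total degree is $||\mu_2|| = 0 + 2 = 2$; hence $||\mu_2|| - 1 = 1$ is odd, which is what makes the bracket with $\mu_2$ behave like an odd derivation and ultimately forces $d^2 = 0$. I would first observe that $d$ raises total degree by $1$: for $a \in \op{O}(n)$ of degree $\deg(a)$, the element $a\circ_i \mu_2$ lies in $\op{O}(n+1)$ and still has degree $\deg(a)$, so its total degree is $\deg(a) + n + 1 = ||a|| + 1$; the same holds for $\mu_2 \circ_i a$, and taking the signed sums in $\eqref{preliesigns}$ and then the bracket does not change this, so $||d(a)|| = ||a|| + 1$ as claimed.

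Next I would prove $d^2 = 0$. The cleanest route is to use the odd Jacobi identity for the bracket $[-,-]$, which is part of the assertion (already made in the excerpt) that $(\op{O}_\ast, [-,-])$ is an odd-Lie algebra. Applying the graded Jacobi identity to the triple $(a, \mu_2, \mu_2)$ gives a relation of the form
\begin{equation*}
[[a,\mu_2],\mu_2] \pm [[a,\mu_2],\mu_2] \pm [a,[\mu_2,\mu_2]] = 0,
\end{equation*}
where the signs are governed by the total degrees $||\mu_2|| - 1 = 1$. Two points then need to be checked. First, $[\mu_2,\mu_2] = 0$: by the definition of the bracket this equals $\mu_2\circ\mu_2 - (-1)^{(||\mu_2||-1)^2}\mu_2\circ\mu_2 = \mu_2\circ\mu_2 - (-1)^1\,\mu_2\circ\mu_2 = 2\,\mu_2\circ\mu_2$, and $\mu_2 \circ \mu_2$ — which by $\eqref{preliesigns}$ is (up to the universal sign, which for $\deg(\mu_2)=0$ is trivial) the sum $\mu_2\circ_1\mu_2 - \mu_2\circ_2\mu_2$ — vanishes precisely because $\mu_2$ is associative (associativity of $\mu_2$ is the statement $\mu_2\circ_1\mu_2 = \mu_2\circ_2\mu_2$ in $\op{O}(3)$). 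Since the characteristic is $\neq 2$ one may alternatively just use $\mu_2\circ\mu_2 = 0$ directly. Second, the two copies of $[[a,\mu_2],\mu_2]$ appearing in the Jacobi identity come with the \emph{same} sign (because the relevant degree is odd), so the identity reduces to $2\,[[a,\mu_2],\mu_2] = [a,[\mu_2,\mu_2]] = 0$, whence $d^2(a) = [[a,\mu_2],\mu_2] = 0$ in characteristic $\neq 2$.

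Finally I would note that $d$ is $k$-linear (immediate, since $\circ_i$ and the bracket are bilinear), completing the verification that $d$ is a square-zero, degree $+1$, $k$-linear endomorphism of $\op{O}_\ast$. The one point requiring genuine care — and the main potential obstacle — is bookkeeping the signs: one must confirm that with the total grading and the sign conventions of $\eqref{preliesigns}$ the two iterated-bracket terms in the Jacobi identity really do add rather than cancel, and that $\mu_2\circ\mu_2 = 0$ is exactly the associativity of $\mu_2$ expressed in terms of $\eqref{preliesigns}$ (here the extra $(n+1)\deg(b)$ term in the sign is harmless because $\deg(\mu_2) = 0$). Alternatively, to sidestep invoking the odd Jacobi identity as a black box, one can expand $d^2(a) = [[a,\mu_2],\mu_2]$ directly using $\eqref{preliesigns}$ and the pre-Lie relation $a\circ(b\circ c) - (a\circ b)\circ c = (-1)^{(||b||-1)(||c||-1)}\bigl(a\circ(c\circ b) - (a\circ c)\circ b\bigr)$, setting $b = c = \mu_2$; the pre-Lie relation then shows $(a\circ\mu_2)\circ\mu_2$ is symmetric enough that $[[a,\mu_2],\mu_2]$ collapses to a multiple of $a\circ(\mu_2\circ\mu_2)$, which is $0$. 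Either way the content is: associativity of $\mu_2$ plus the odd pre-Lie identity plus $\mathrm{char}\, k \neq 2$.
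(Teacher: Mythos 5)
Your proposal is correct and follows essentially the same route as the paper: apply the odd Jacobi identity to $[[a,\mu_2],\mu_2]$, note that $(-1)^{(||\mu_2||-1)^2}=-1$ so the two iterated-bracket terms combine to $2d^2(a)$, observe that $[\mu_2,\mu_2]=0$ because $\mu_2\circ\mu_2=\mu_2\circ_1\mu_2-\mu_2\circ_2\mu_2$ vanishes by associativity, and conclude $d^2=0$ in characteristic $\neq 2$; the degree count $||d(a)||=||a||+1$ is also identical. The extra checks you include (that the $(n+1)\deg(b)$ term in the sign is trivial since $\deg(\mu_2)=0$, and the alternative direct pre-Lie expansion) are consistent elaborations of the same argument.
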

\begin{proof}  By the odd Jacobi identity
\begin{eqnarray*}
d^2(a)&=&[[a,\mu_2],\mu_2] \\ \nonumber &=&[a,[\mu_2,\mu_2]]+(-1)^{(||\mu_2||-1)^2}[[a,\mu_2],\mu_2]
\end{eqnarray*}
Now, $||\mu_2||=2$ thus $\mu_2\circ\mu_2=0$ by associativity, and so $[\mu_2,\mu_2]=0$.  Thus $d^2(a)=-d^2(a)$, hence $d^2=0$.  Considering the degree, for $a\in\op{P}(n)$ we have 
\begin{equation*}
||d(a)||=||a\circ\mu_2\pm\mu_2\circ a|| = deg(a\circ\mu_2\pm\mu_2\circ a)+n+1= deg(\mu_2)+deg(a)+n+1 =||a||+1  
\end{equation*}
\end{proof}

\begin{example}  Let $A$ be an associative algebra.  Then the endomorphism operad $End_A$ is an operad with associative multiplication.  Thus we can consider the differential $d:End_A(n)\to End_A(n+1)$.  This differential is precisely the differential of Hochschild cohomology.
\end{example}

We can generalize Proposition $\ref{multdiff}$ as follows.
\begin{definition}
Let $\op{O}$ be an operad and let $\zeta\in \op{O}_\ast$ be an element of even degree with respect to the inherited $\Z/2$ grading satisfying the equation $[\zeta,\zeta]=0$.  Then we define
\begin{equation*}
d_\zeta(a):=[a,\zeta]
\end{equation*}
By the same argument as Proposition $\ref{multdiff}$ we have $d_\zeta^2=0$.  As such $(\op{O}_\ast, d_\zeta)$ is a cochain complex.  
\end{definition}

\begin{example}  Let $A$ be an $A_\infty$ algebra.  Let $\mu_1$ denote the differential on $A$ and define
\begin{equation}\label{mueq}
\mu:=\ds\bigoplus_{n\geq 1}\mu_n  
\end{equation}
Notice that $||\mu_n||=(n-2)+n$, hence $\mu$ is of even degree with respect to the inherited $\Z/2$ grading.  Now for any $A_\infty$ algebra and natural number $t$ we have that
\begin{equation*}
\bigoplus_{r+s=t+1}\mu_r\circ\mu_s = 0
\end{equation*}
and hence $\mu\circ\mu=0$.  As such $[\mu,\mu]=0$ and so $d_\mu:=[-,\mu]$ is a differential of total degree $+1$.
We then define the Hochschild cohomology of $A$ to be the cohomology of the cochain complex $(End_A, d_\mu)$ and write $HH$ (resp $CH$) for the cohomology (resp. cochains).
\end{example}
\begin{notation}\label{degreenotation}  We fix degree notation for Hochschild cochains as follows.  For $f \in CH^n(A,A)$ we write $deg(f)$ for the degree of $f$ with respect to $A$, we write $|f|=n$, the number of inputs, and we write $||f||$ for the total degree:  
\begin{equation*}
||f||=deg(f)+|f|
\end{equation*}
\end{notation}

\begin{remark}\label{gradingconventions}  The grading we have chosen for $\op{O}_\ast$ is a convention.  Other advantageous conventions would be for $a\in\op{P}(n)$ to take $deg(a)-n$ or $deg(a)-n+2$ as the total degree.  The disadvantage of these conventions would be that the differential would have degree $-1$, which does not agree with the standard convention for grading in the Hochschild complex.  See also Remark $\ref{gradingconventions2}$.
\end{remark}

\subsection{Cyclic Operads: Definition}
Since we will make explicit use of the axioms for cyclic operads we recall the definition here.  Let $S_n^+$ be the group of permutations of the set $[n]:=\{0,\dots,n\}$.  We view the symmetric group $S_n$ as a subset of $S_n^+$.  Let $\tau_n$ be the permutation $(0 \dots n)\in S_n^+$.
\begin{definition}\label{cycdef} \cite{GeK1}  Let $\op{P}$ be an operad in a symmetric monoidal category $(\op{C}, \iota, \tensor)$ and suppose there is an action of $S_n^+$ on each $\op{P}(n)$ such that:
\begin{itemize}
\item  The action of $S_n^+$ restricted to the subset $S_n$ agrees with the underlying operad structure.
\item  If $\eta \colon \iota \to \op{P}(1)$ is the identity map then $\tau_1 \circ \eta = \eta$.
\end{itemize}
We say $\op{P}$ is cyclic if the following diagram commutes for  $1\leq i \leq m-1$

\begin{equation}\label{cycdefeq1}
\xymatrix{\op{P}(n)\tensor \op{P}(m) \ar[rr]^{\circ_i} \ar[d]_{\tau_n\tensor id} && \op{P}(n+m-1) \ar[d]^{\tau_{n+m-1}} \\ \op{P}(n)\tensor\op{P}(m) \ar[rr]^{\circ_{i+1}} && \op{P}(n+m-1)}
\end{equation}
and the following diagram commutes for $i=m$.

\begin{equation}\label{cycdefeq2}
\xymatrix{\op{P}(n)\tensor \op{P}(m) \ar[rr]^{\circ_m} \ar[d]_{\tau_n\tensor\tau_m}   && \op{P}(n+m-1) \ar[dd]^{\tau_{n+m-1}} \\ \op{P}(n)\tensor \op{P}(m)  \ar[d]_{s_{\op{P}(n)\tensor\op{P}(m)}} &  \\ \op{P}(n)\tensor\op{P}(m) \ar[rr]^{\circ_1} && \op{P}(n+m-1)}
\end{equation}
where $s_{\op{P}(m),\op{P}(n)}$ interchanges the order of the monoidal product $\op{P}(m) \tensor \op{P}(n) \to \op{P}(n) \tensor  \op{P}(m)$.  
\end{definition}
There is also a notion of a non-$\Sigma$ cyclic operad (\cite{MSS} p. 257) which we will need.
\begin{definition} \label{nonsigmacycdef}  A non-$\Sigma$ operad $\op{P}$ is said to be a non-$\Sigma$ cyclic operad if it comes equipped with an action of $\Z_{n+1}$ on $\op{P}(n)$ for each $n$ satisfying the conditions of Definition $\ref{cycdef}$ (where we consider $\Z_{n+1}\subset S_n^+$ under the identification $1\sim\tau_n$).  
\end{definition}

We can talk about cyclic operads in any symmetric monoidal category, but we again restrict our primary attention to the linear case.  The endomorphism operad $End_A$ of a vector space $A$ is not $\textit{a priori}$ cyclic, but additional structure on $A$ can ensure that it is. 

\begin{definition} \label{cyclicalgebra}  We call a vector space $A$ cyclic if it comes with a symmetric nondegenerate bilinear form.  If $A$ is dg we further require
\begin{equation*}
\langle d(a),b \rangle= (-1)^{|b|-1}\langle a, d(b) \rangle 
\end{equation*}
\end{definition}
This terminology is motivated by the following fact.
\begin{lemma}  If $A$ is cyclic then $End_A$ is a cyclic operad.
\end{lemma}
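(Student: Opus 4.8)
The plan is to construct the required $S_n^+$-action on $\mathit{End}_A(n) = \mathrm{Hom}(A^{\otimes n}, A)$ out of the bilinear form, and then verify the two axioms of Definition \ref{cycdef}. Since the form $\langle -,-\rangle$ is nondegenerate, we have a canonical isomorphism $\mathrm{Hom}(A^{\otimes n}, A) \cong \mathrm{Hom}(A^{\otimes n+1}, k)$, sending $f$ to the multilinear functional $\widehat f(a_0, a_1, \dots, a_n) := \langle a_0, f(a_1, \dots, a_n)\rangle$. The space $\mathrm{Hom}(A^{\otimes n+1}, k)$ carries an evident action of the symmetric group $S_n^+$ on the $n+1$ tensor slots $\{0, \dots, n\}$ (up to the Koszul sign coming from permuting graded elements), and we transport this action back to $\mathit{End}_A(n)$ via $\widehat{(-)}$. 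First I would check that the restriction of this action to $S_n \subset S_n^+$ (permutations fixing $0$) agrees with the usual symmetric-group action on $\mathit{End}_A(n)$ permuting the inputs $a_1, \dots, a_n$ — this is immediate from the definition of $\widehat f$. I would also check that $\tau_1$ fixes the operadic unit $\eta \in \mathit{End}_A(1) = \mathrm{Hom}(A,A)$: here $\widehat{\mathrm{id}_A}(a_0,a_1) = \langle a_0, a_1\rangle$, and $\tau_1$ swaps the two slots, so $\tau_1 \cdot \mathrm{id}_A$ corresponds to $(a_0,a_1)\mapsto \langle a_1, a_0\rangle = \langle a_0, a_1\rangle$ by symmetry of the form, giving $\tau_1 \cdot \eta = \eta$.

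The main content is the compatibility of this action with operadic composition, i.e. the two diagrams \eqref{cycdefeq1} and \eqref{cycdefeq2}. The key observation is that under the dualization $\widehat{(-)}$, the operadic composition $f \circ_i g$ becomes a "partial contraction": for $f \in \mathit{End}_A(n)$, $g \in \mathit{End}_A(m)$, one has, up to sign,
\begin{equation*}
\widehat{f \circ_i g}(a_0, \dots, a_{n+m-1}) = \big\langle \widehat f\text{-slot-}0 \,\big|\, \text{insert } g(a_i, \dots, a_{i+m-1}) \text{ into slot } i \text{ of } f \big\rangle,
\end{equation*}
which exhibits a cyclic symmetry in the combined set of $n+m$ slots. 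Concretely, I would write everything in terms of the tensor $\widehat f \otimes \widehat g \in \mathrm{Hom}(A^{\otimes n+1} \otimes A^{\otimes m+1}, k)$ followed by a single contraction using the form on the pair of "output" slots, so that $\circ_i$ on the operad corresponds to a fixed combinatorial gluing of two cyclically-ordered sets of marked points along one point each. Once composition is phrased this way, both cyclic axioms reduce to the bookkeeping that rotating the markings on $f$ (resp. simultaneously rotating $f$ and $g$ in the $i = m$ case) matches rotating the markings on the glued configuration $f \circ_i g$; this is exactly the planar/cyclic picture of gluing polygons along edges.

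The step I expect to be the genuine obstacle is the sign verification. The action on $\mathrm{Hom}(A^{\otimes n+1},k)$ carries Koszul signs depending on $\deg(a_j)$, the contraction using $\langle-,-\rangle$ introduces further signs (and the dg-compatibility hypothesis $\langle d(a), b\rangle = (-1)^{|b|-1}\langle a, d(b)\rangle$ in Definition \ref{cyclicalgebra} is what will be needed to see the $S_n^+$-action is compatible with the differential, should that be part of the claim), and the composition sign conventions in \eqref{preliesigns} must be reconciled with the cyclic rotation $\tau_{n+m-1}$. My strategy for the signs would be to fix, once and for all, the convention that $\widehat f(a_0, \dots, a_n) = \langle a_0, f(a_1, \dots, a_n)\rangle$ with $f$ moved past $a_0$ (picking up $(-1)^{\deg(f)\deg(a_0)}$), then express $\tau_n$ as the product of transpositions $(0\,1)(1\,2)\cdots(n-1\,n)$ and track the sign of each transposition in a single reduced word; since the form is degree $0$ and symmetric, the rotation sign on $\widehat f$ is a clean function of the $\deg(a_j)$ and $\deg(f)$, and comparing it to the rotation sign on $\widehat{f \circ_i g}$ is then a finite check. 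This is the standard argument that the endomorphism operad of a symmetric Frobenius/inner-product space is cyclic (cf. \cite{GeK1}), so no new difficulty arises beyond careful sign accounting, and I would present only the definition of the action and a sketch of the two diagram chases, relegating the signs to a remark.
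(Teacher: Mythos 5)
Your construction is exactly the paper's: use nondegeneracy of the form to identify $End_A(n)$ with $(A^\ast)^{\tensor n+1}$ (equivalently $Hom(A^{\tensor n+1},k)$) and transport the evident $S_n^+$-action. The paper's own proof stops at this identification and leaves the diagram chases and signs implicit, so your additional sketch of the compatibility with $\circ_i$ is a more detailed rendering of the same argument rather than a different route.
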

\begin{proof}  Note that a cyclic vector space is canonically self dual by the map
\begin{equation*}
v\mapsto[w\stackrel{\phi_v}\mapsto\langle v,w\rangle]
\end{equation*}
This self duality of $A$ gives us an isomorphism
\begin{equation*}
End_A(n) \cong (A^\ast)^{\tensor n+1}
\end{equation*}
which gives us the $S_{n+1}$ action.
\end{proof}

A morphism of cyclic operads is a morphism of operads between two cyclic operads that is $S_n^+$ equivariant.  To say $A$ is an algebra over a cyclic operad $\op{P}$ means that $A$ is cyclic and there is a morphism of cyclic operads,
\begin{equation*}
\op{P} \to End_A
\end{equation*}  
In this case we will also use the terminology cyclic $\op{P}$-algebra, or just cyclic algebra if the operad is clear. 
\begin{example}\label{Frobeniusalgebra}  Let $\op{A}s$ be the operad for associative algebras.  That is, $\op{A}s(n)$ is a $k$ vector space of dimension $n!$ with a basis indexed by unparenthesized sequences of letters indexed by $\{1\cdc n\}$ in all possible orders.  The operad structure map $\circ_i$ is given by insertion of one sequence of letters in to the $i^{th}$ position of the other, and the $S_n$ action is given by permuting the letters of a sequence.  An algebra over this operad has the structure of an associative $k$ algebra with multiplication parameterized by the sequence $x_1x_2 \in \op{A}s(2)$ and its image in the endomorphism operad will be called $\mu_2$.  The operad $\op{A}s$ is cyclic by taking the action of $(0\dots n)$ to be the identity.  A cyclic algebra $A$ over this cyclic operad has a symmetric nondegenerate bilinear form which satisfies
\begin{equation*}
\langle a,bc \rangle = \langle ab,c \rangle \ \ \forall  \ a,b,c  \in A
\end{equation*}
Because 
\begin{equation*}
\langle a,\mu_2(b,c) \rangle = \langle c,\mu_2(a,b) \rangle
\end{equation*}
holds due to the compatibility of the cyclic structures and the morphism of operads.  So we see that a cyclic $\op{A}s$-algebra is a (not necessarily commutative) Frobenius algebra.
\end{example}

\subsection{The Cyclic Structure of $\op{A}_\infty$}  As pointed out in \cite{GeK1}, the number of vertices of the associahedron $K_n$ is equal to the number of triangulations of the regular polygon with $n+1$ vertices.  We now give an explicit dual graph construction which fixes an assignment that endows $\op{A}_\infty$ with a non-$\Sigma$ cyclic structure.

First suppose $n\geq 3$ and let $P_n$ be the regular polygon with $n$ vertices and with a distinguished edge called the base.  Realize $P_n$ in the complex plane such that the based edge has vertices at $0$ and $1$ and with all other vertices above the real line.  By a partial triangulation of $P_n$ of degree $m$, where $0\leq m \leq n-3$ we mean a triangulation that is missing $m$ edges.  Given a partial triangulation we form a dual planar tree as follows.  The vertices of the tree are the midpoints of the edges of the partial triangulation.  The vertex at $1/2$ will be called $v_0$.  We say two vertices are adjacent if they border a region enclosed by, but not intersecting, the edges of the triangulation.  We then assign a height to each tree vertex inductively as follows.  The vertex $v_0$ has height $0$.  The vertices adjacent to $v_0$ have height $1$.  The vertices adjacent to those of height $1$ have height $2$ (except $v_0$ whose height was already assigned), and so on.  Two vertices are joined by an edge if they are adjacent and have different heights.  It is plain to see that the result is a planar tree where the tree height is given by the vertex height, and the number of internal edges of the tree is equal to the number of internal edges of the triangulation.  In particular a full triangulation will have a binary tree.  Since planar trees with $n$ leaves correspond in the obvious way to partial parenthisizations of $n$ letters, this gives the assignment to the cells of associahedra.  See Figure $\ref{tri2fig}$.  

\begin{figure}[ht]
\centering
\includegraphics[scale=.95]{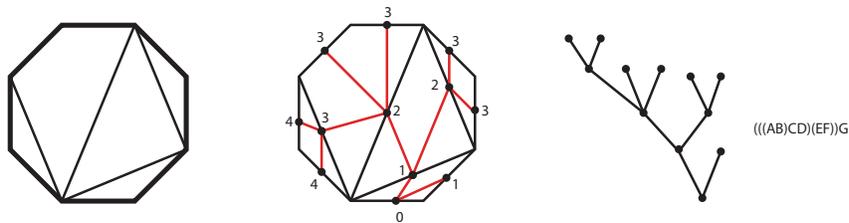}
\caption{A degree 1 partial triangulation of $P_8$ yields a tree corresponding to a cell in $CC_1(K_7)$ }
\label{tri2fig}
\end{figure}

Now $P_{n+1}$ has a $\Z_{n+1}$ action given by rotating in the clockwise direction about the center by $2\pi/{(n+1)}$.  This gives a $\Z_{n+1}$ action to the set of partial triangulations of $P_{n+1}$ and hence to $CC_\ast(K_n)=\op{A}_\infty(n)$.  See Figure $\ref{fig:rotate}$ for an example.  Notice that for each $n$ the action on the top dimensional cell in $K_n$ is trivial modulo sign, since it corresponds to the rotation of a polygon with no triangulating edges.  We will determine below that the signs on top dimensional cells are given by $\tau_n(\mu_n)=(-1)^n \mu_n$.  This defines the action for $n\geq 3$.  We then define the action $\tau_1$ and $\tau_2$ on the associahedra $K_1$ and $K_2$ to be the identity.  Notice that this is not the same as extending $\tau_n(\mu_n)=(-1)^n \mu_n$ to hold for all $n$.  Doing this would give us an anticyclic operad and the notion of a symplectic $A_\infty$ algebra.

\begin{figure}
\centering
\includegraphics[scale=.48]{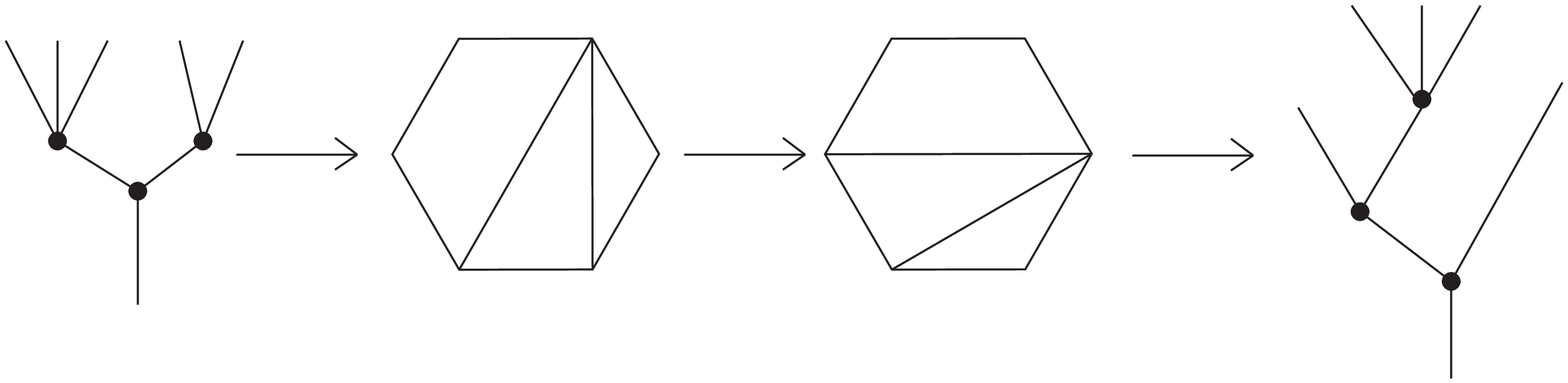}
\caption{An example of the $\Z_{n+1}$ action on $\op{A}_\infty(5)$: $\tau_5((\mu_2\circ_2\mu_2)\circ_1\mu_3) = \pm\mu_2\circ_1(\mu_2\circ_2\mu_3)$}
\label{fig:rotate}
\end{figure}

\begin{remark} \label{polygonremark}  The one to one correspondence between binary trees with $n-1$ leaves and triangulations of an $n$-gon is well known.  The above dual graph construction, and in particular its relation to cyclic operads, does not appear in the literature as far as I am aware.  This gives a new geometric interpretation to the $A_\infty$ operad whose composition maps we can now describe by attaching regular polygons along their faces and then reshaping the result to be again a regular polygon. 
\end{remark}

\begin{lemma}  This assignment makes $\op{A}_\infty$ a non-$\Sigma$ cyclic operad.
\end{lemma}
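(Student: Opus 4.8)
The plan is to verify directly the axioms of Definition \ref{nonsigmacycdef}: that the prescribed $\Z_{n+1}$-action on each $\op{A}_\infty(n)=CC_\ast(K_n)$ is compatible with the non-$\Sigma$ operad structure in the sense of diagrams \ref{cycdefeq1} and \ref{cycdefeq2}, and that $\tau_1$ fixes the arity-one unit $\eta$. The last point is immediate since $K_1$ is a point and $\tau_1$ was declared to be the identity. The action is well defined because the clockwise rotation $\rho$ of $P_{n+1}$ by $2\pi/(n+1)$ permutes the partial triangulations of $P_{n+1}$, hence permutes the cells of $K_n$ under the dual-tree correspondence; passing to cellular chains it induces a linear automorphism of $\op{A}_\infty(n)$, well defined up to the sign recording the effect of $\rho$ on cell orientations, and $\rho^{n+1}=\mathrm{id}$ gives a $\Z_{n+1}$-action. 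I would first establish the two compatibility diagrams at the level of partial triangulations, ignoring signs, and only then pin the signs down.

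For the unsigned statement I would use the geometric description of $\circ_i$ from Remark \ref{polygonremark}: $\sigma\circ_i\tau$ is obtained by gluing the base edge of $P_{m+1}$ (carrying $\tau$) to the $i$-th input edge of $P_{n+1}$ (carrying $\sigma$) and reshaping the resulting $(n+m)$-gon back to a regular polygon $P_{n+m}$, the glued edge becoming one of the triangulating diagonals; this operation is additive in the number of missing edges, hence a map of cellular chain complexes. The key point is that $\rho$ is a rigid rotation while gluing-and-reshaping depends only on which boundary edges are identified, so applying $\rho$ to the glued configuration merely cyclically shifts which input edge of $P_{n+1}$ occupies position $i$. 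For $i$ in the non-boundary range one reads off $\rho(\sigma\circ_i\tau)=\rho(\sigma)\circ_{i+1}\tau$, which is diagram \ref{cycdefeq1}; for the boundary value of $i$, namely $i$ equal to the arity of the first factor, the rotated edge is carried across the base of $P_{n+1}$, the reshaping then takes $P_{m+1}$ as the outer polygon, and one obtains $\rho(\sigma\circ_n\tau)=\rho(\tau)\circ_1\rho(\sigma)$ --- exactly diagram \ref{cycdefeq2}, with the interchange $s$ accounting for the swapped roles of the two polygons. Drawing the dual trees, as in Figures \ref{tri2fig} and \ref{fig:rotate}, should make this transparent.

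It remains to incorporate orientation signs, which is where the real work lies. The cellular chain functor records orientations, so $\rho$ sends a cell $e$ of $K_n$ to $(-1)^{\epsilon(e)}e'$ with $e'$ the rotated cell; I would compute $\epsilon$ first on the top cell $\mu_n$ --- an untriangulated $P_{n+1}$, where rotating the regular $(n-2)$-dimensional cell gives $\tau_n(\mu_n)=(-1)^n\mu_n$ as asserted in the text --- and on the arity-one and arity-two cells, where $\tau_1$ and $\tau_2$ are the identity. Since the $\mu_k$ generate $\op{A}_\infty$ as an operad (every partial triangulation is an iterated edge-gluing of untriangulated polygons), the signs on all remaining cells are then forced by the already-verified diagrams \ref{cycdefeq1} and \ref{cycdefeq2} together with the Koszul sign of $s$. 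What must be checked is that this prescription is consistent, i.e.\ independent of the chosen decomposition of a cell into generators; this is a finite check using associativity of $\circ$ and the identity $\tau_n^{n+1}=\mathrm{id}$, with which $((-1)^n)^{n+1}=1$ is compatible. I would also note here that extending the rule $\tau_n(\mu_n)=(-1)^n\mu_n$ down to $n=1$ would give $\tau_1\circ\eta=-\eta$, the anticyclic axiom, which is precisely why $\tau_1=\tau_2=\mathrm{id}$ is the correct choice for the cyclic structure.

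The main obstacle is thus not the combinatorics of rotation versus gluing, which is geometrically clear, but the sign bookkeeping: correctly obtaining the $(-1)^n$ on top cells and verifying that the induced signs on all cells are simultaneously compatible with operadic composition. I would organize this by first treating the $0$-cells of $K_n$ (full triangulations), where orientation signs are easiest to track, and then propagating upward through the face poset.
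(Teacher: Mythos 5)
Your proposal is correct and follows essentially the same route as the paper: the paper's proof likewise verifies diagrams $\ref{cycdefeq1}$ and $\ref{cycdefeq2}$ via the polygon-gluing model of Remark $\ref{polygonremark}$, observing that the edges being glued and the edge acting as the base are independent of whether one glues then rotates or rotates then glues, and handles the unit by the definition $\tau_1\equiv\mathrm{id}$. The only organizational difference is that the paper does not carry out any sign bookkeeping inside this lemma; the signs $\tau_n(\mu_n)=(-1)^n\mu_n$ are pinned down afterward, in the uniqueness proposition, by compatibility with the differential rather than by the direct orientation computation and generator-propagation you outline.
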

\begin{proof}  This amounts to checking that the action of $\tau_n$ on $\op{A}_\infty(n)$ is compatible with the identity and satisfies diagrams $\ref{cycdefeq1}$ and $\ref{cycdefeq2}$.  Compatibility with the identity follows by the definition $\tau_1\equiv id$.  To see that diagrams $\ref{cycdefeq1}$ and $\ref{cycdefeq2}$ hold is easiest if we use the model for $A_\infty$ suggested in Remark $\ref{polygonremark}$.  In either diagram one can glue then rotate or rotate then glue.  It is clear that the edges which are glued and the edge which acts as the base are independent of this choice and hence the diagrams commute.    

\end{proof}
\begin{remark}
The triangulated polygon framework can be modified so that the planar planted trees with height of \cite{KSch} are dual.  Do this by defining weighted triangulations of polygons where each internal edge carries a weight in $(0,1]$.  Taking chains on this topological operad gives a cubical decomposition of associahedra which is equivalent to the Boardman Vogt $W$ construction on the trivial non-$\Sigma$ operad \cite{MSS}.  Rotating these weighted triangulations gives the cubical decomposition of associahedra a non-$\Sigma$ cyclic operad structure.
\end{remark}

\begin{remark}  The above $\Z_{n+1}$ action makes $\op{A}_\infty$ a non-$\Sigma$ cyclic operad.  We can then apply the functor $-\tensor\op{A}s$, as in Remark $\ref{symmetrization}$.  Thinking of $\op{A}s$ as a cyclic operad the symmetrization is cyclic as well.  When we consider $\op{A}_\infty$ as a cyclic operad, it should be clear from the context if we mean the non-$\Sigma$ version or the symmetrization.
\end{remark}

\begin{proposition}  The non-$\Sigma$ cyclic structure on $\op{A}_\infty$ given above is unique.
\end{proposition}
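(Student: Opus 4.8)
The plan is to show that a non-$\Sigma$ cyclic structure on $\op{A}_\infty$ is forced arity by arity, so that only the structure constructed above can occur; as a by-product this derives the sign rule $\tau_n(\mu_n)=(-1)^n\mu_n$ announced before the statement. Two observations reduce matters to scalar data. First, since $\op{A}_\infty$ is a dg operad, each $\tau_n$ is an automorphism of the complex $\op{A}_\infty(n)=CC_\ast(K_n)$, hence degree-preserving; as the top-degree piece $CC_{n-2}(K_n)$ is one-dimensional, spanned by $\mu_n$, we must have $\tau_n(\mu_n)=c_n\mu_n$ for a scalar $c_n\in k^\times$, and $\tau_1\circ\eta=\eta$ forces $c_1=1$. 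Second, every cell of $K_n$ is an iterated operadic composite of corollas (a cell is a partial bracketing, equivalently a planar tree built from the corollas $\mu_k$), so diagrams \ref{cycdefeq1} and \ref{cycdefeq2} completely determine the action of $\tau_n$ on the cells of dimension $<n-2$ in terms of the scalars $c_k$ with $k<n$. In particular a non-$\Sigma$ cyclic structure is determined by the sequence $(c_n)_{n\geq1}$.

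I would then induct on $n$, showing that any such structure $\tau$ agrees on $\op{A}_\infty(n)$ with the one constructed above, call it $\tau^{\mathrm{std}}$; equivalently, $c_1=c_2=1$ and $c_n=(-1)^n$ for $n\geq3$. The combined base case $n\leq3$ is a short direct computation in $\op{A}_\infty(3)$: the two vertices of $K_3$ are $\mu_2\circ_1\mu_2$ and $\mu_2\circ_2\mu_2$, diagrams \ref{cycdefeq1} and \ref{cycdefeq2} give $\tau_3(\mu_2\circ_1\mu_2)=c_2\,(\mu_2\circ_2\mu_2)$ and $\tau_3(\mu_2\circ_2\mu_2)=c_2^2\,(\mu_2\circ_1\mu_2)$, and $d\mu_3=\pm(\mu_2\circ_1\mu_2-\mu_2\circ_2\mu_2)$; imposing that $\tau_3$ commute with $d$ on $\mu_3$ yields the two scalar identities $c_3=-c_2$ and $c_3=-c_2^2$, hence $c_2=1$ (using $c_2\neq0$) and $c_3=-1$. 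For the inductive step, assume the claim in all arities $<n$. Every codimension-one cell of $K_n$ is a composite $\mu_p\circ_j\mu_q$ with $p+q=n+1$ and $2\leq p,q\leq n-1$, so by the inductive hypothesis and the reduction above, $\tau_n$ and $\tau^{\mathrm{std}}_n$ agree on $CC_{n-3}(K_n)$, which contains $d\mu_n$. Hence
\begin{equation*}
c_n\,d\mu_n\;=\;d(\tau_n\mu_n)\;=\;\tau_n(d\mu_n)\;=\;\tau^{\mathrm{std}}_n(d\mu_n)\;=\;d(\tau^{\mathrm{std}}_n\mu_n)\;=\;(-1)^n\,d\mu_n.
\end{equation*}
Since $K_n$ is a contractible polytope of dimension $n-2\geq1$, the group $CC_{n-1}(K_n)$ vanishes and $d$ is injective on $CC_{n-2}(K_n)$, so $d\mu_n\neq0$ and $c_n=(-1)^n$. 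Together with the fact that all lower-dimensional cells of $K_n$ are composites of cells of arity $<n$, this gives $\tau_n=\tau^{\mathrm{std}}_n$ and closes the induction.

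The only genuine work is the sign bookkeeping, and it is confined to the base case: one must reconcile the Koszul sign in the swap $s$ of diagram \ref{cycdefeq2} and the operadic sign conventions (as in \ref{preliesigns}) with the orientation of the cellular differential of $K_3$, and confirm that the resulting $2\times2$ linear system in $(c_2,c_3)$ is nondegenerate with the stated solution. I expect this to be the main---indeed the only---obstacle; the inductive step is essentially formal, using only the one-dimensionality of the top cell of each $K_n$, the fact that $\op{A}_\infty$ is generated by its corollas, and the contractibility of the associahedra. (Consistency of the overdetermined constraint system---that the same $c_n$ satisfies the chain-map identity coefficientwise---is guaranteed by the existence statement already established.)
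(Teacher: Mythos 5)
Your proposal is correct and follows essentially the same route as the paper: reduce to a scalar $c_n$ on the top cell by degree reasons, pin down $c_2=1$, $c_3=-1$ from the two coefficient equations arising from $\tau_3(d\mu_3)=c_3\,d\mu_3$ (the paper's $\zeta_4=-\zeta_3=-\zeta_3^2$ with $\zeta_{n+1}=c_n$), and then induct using that $d\mu_n$ is a sum of composites of lower-arity corollas on which $\tau_n$ is already determined by the cyclic-operad axioms. Your added justifications (injectivity of $d$ on the top cell via contractibility of $K_n$, and the observation that all lower cells are operadic composites) are details the paper leaves implicit but do not change the argument.
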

\begin{proof}  The operad $\op{A}_\infty$ is generated by top dimensional cells $\mu_n$ under operadic composition.  For degree reasons and since $\tau_n^{n+1}=1$, we must have $\tau_n(\mu_n)=\zeta_{n+1}\mu_n$ where $\zeta_{n+1}$ is some $(n+1)st$ root of unity.  It is then enough to show that the differential dictates that $\zeta_{n}=(-1)^{n+1}$ for $n\geq3$.  To see this we proceed by induction.  First calculate (using axioms in $\ref{cycdef}$):  

\begin{eqnarray}\label{signeqs}
\zeta_{4}d(\mu_3)&=&\tau_3(d(\mu_3))=\tau_3(\mu_2\circ_1\mu_2-\mu_2\circ_2\mu_2)\\ \nonumber
& = & \tau_2(\mu_2)\circ_2\mu_2-(-1)^{|\mu_2||\mu_2|}\tau_2(\mu_2)\circ_2\tau_1(\mu_2) \\ \nonumber
& = & \zeta_3\mu_2\circ_2\mu_2-\zeta_3^2\mu_2\circ_1\mu_2 = \zeta_4(\mu_2\circ_1\mu_2-\mu_2\circ_2\mu_2)
\end{eqnarray}
and so $\zeta_4 =-\zeta_3=-\zeta_3^2$, hence $\zeta_3=1$ and $\zeta_4=-1$.  For the induction step notice that since $d(\mu_n)$ can be written as a signed sum of compositions of lower $\mu_i$ (see e.g. \cite{MSS} p. 195) we can calculate $\zeta_{n+1}$ uniquely in terms of the lower $\zeta_i$ as above.
\end{proof}
\subsection{Cyclic $A_\infty$ Algebras} \label{casection} 
We can now give our operad theoretic definition of cyclic $\op{A}_\infty$ algebras.

\begin{definition} \label{cycinftydef} Let $A$ be a dg vector space.  We say $A$ is a cyclic $A_\infty$ algebra if $A$ is a cyclic unital algebra over the cyclic unital operad $\op{A}_\infty$.
\end{definition}
  
\begin{proposition}\label{cycalgprop} If $A$ is a cyclic $A_\infty$ algebra then $A$ is a unital $A_\infty$ algebra equipped with a symmetric nondegenerate bilinear form $\langle -,- \rangle$ such that $\forall \ a_0,\dots,a_{n} \in A$, 
\begin{equation}\label{eqcyc}
\langle a_0, \mu_n(a_1 \tensor a_2\tensor \dots \tensor a_{n}) \rangle = (-1)^{n+|a_0|(|a_1|+\dots+|a_n|)}\langle a_{n},\mu_n(a_0\tensor  a_1\tensor  \dots \tensor a_{n-1}) \rangle \  
\end{equation}
for each $n \geq 2$.
\end{proposition}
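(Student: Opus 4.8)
The plan is to derive all three assertions from the single operad morphism underlying the cyclic $A_\infty$ structure. By Definition \ref{cycinftydef}, a cyclic $A_\infty$ algebra $A$ is a cyclic vector space together with a morphism of operads $\Phi\colon\op{A}_\infty\to End_A$ which is $S_n^+$-equivariant and unital. Two of the three assertions are then immediate: forgetting the $S_n^+$-actions, $\Phi$ is a morphism of unital operads $\op{A}_\infty\to End_A$, which is exactly the data making $A$ a unital $A_\infty$ algebra; and a symmetric nondegenerate bilinear form on $A$, together with its dg-compatibility, is part of the data of a cyclic vector space (Definition \ref{cyclicalgebra}). So the real content is the cyclic invariance \eqref{eqcyc}, and the strategy is to recognize it as the $S_n^+$-equivariance of $\Phi$ evaluated on the top cells $\mu_n$.

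Concretely: since $\Phi$ is equivariant, $\tau_n\cdot\Phi(\mu_n)=\Phi(\tau_n\cdot\mu_n)=(-1)^n\Phi(\mu_n)$ in $End_A(n)$, using the value $\tau_n(\mu_n)=(-1)^n\mu_n$ found above (for $n=2$ this is just $\tau_2=id$, which is consistent). It remains to make the $S_n^+$-action on $End_A(n)$ explicit. As in the lemma that $End_A$ is cyclic when $A$ is, the self-duality $A\cong A^\ast$ induced by the form gives $End_A(n)\cong Hom(A^{\tensor n+1},k)$, under which $f$ corresponds to the multilinear form $\widetilde f(a_0\tensor\dots\tensor a_n)=\langle a_0,f(a_1\tensor\dots\tensor a_n)\rangle$ (the output slot becoming the $0$th slot), and $\tau_n=(0\,1\,\dots\,n)$ acts by cyclically permuting the $n+1$ tensor factors together with the Koszul sign. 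Spelling the identity $\tau_n\cdot\widetilde{\mu_n}=(-1)^n\widetilde{\mu_n}$ out in coordinates, i.e. evaluating both sides at $a_0\tensor\dots\tensor a_n$, then reproduces \eqref{eqcyc}, the only work being the bookkeeping of signs.

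I expect the only delicate point to be matching the sign exactly as $(-1)^{n+|a_0|(|a_1|+\dots+|a_n|)}$. The factor $(-1)^n$ is the eigenvalue $\tau_n(\mu_n)=(-1)^n\mu_n$ coming from the cyclic structure of $\op{A}_\infty$, while $(-1)^{|a_0|(|a_1|+\dots+|a_n|)}$ should be the Koszul sign picked up when the cyclic permutation transports the (dualized) output argument $a_0$ past $a_1,\dots,a_n$. Getting this right calls for care in three places: fixing the precise self-duality isomorphism together with the slot into which the dualized output is inserted; computing Koszul signs for these dual (and possibly shifted) elements consistently with the total-degree convention $||a||=deg(a)+|a|$ in force and with the symmetry convention for $\langle-,-\rangle$; and using the constraint that $\widetilde{\mu_n}$ is nonzero only when $|a_0|+\dots+|a_n|$ is pinned down by $deg(\mu_n)=n-2$, in order to rewrite the resulting Koszul contribution in the stated form. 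Finally I would record the base case $n=2$ separately: there $\tau_2=id$ forces the $n=2$ instance of \eqref{eqcyc}, which is the Frobenius relation $\langle a_0,\mu_2(a_1,a_2)\rangle=\pm\langle a_2,\mu_2(a_0,a_1)\rangle$ and is compatible with the dg-compatibility in Definition \ref{cyclicalgebra}. No further argument is needed for symmetry, nondegeneracy, or unitality.
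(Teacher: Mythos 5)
Your proposal is correct and follows essentially the same route as the paper: the paper likewise derives \eqref{eqcyc} from the $S_n^+$-equivariance of the structure morphism, phrased as the invariance of the pairing $f\tensor a_0\tensor\dots\tensor a_n\mapsto\langle a_0,f(a_1\tensor\dots\tensor a_n)\rangle$ under the simultaneous action (citing \cite{MSS}, Prop.\ 5.14, which supplies exactly the Koszul sign $(-1)^{|a_0|(|a_1|+\dots+|a_n|)}$ you flag as the delicate point), combined with $\tau_n(\mu_n)=(-1)^n\mu_n$.
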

\begin{proof}  Say $A$ is a cyclic $A_\infty$ algebra and let $\rho \colon \op{A}_\infty \to End_A$ be the relevant morphism of cyclic operads.  The map
\begin{equation}\label{invareq}
\op{A}_\infty(n)\tensor A^{\tensor n+1} \to k
\end{equation}
given by
\begin{equation*}
 f\tensor a_0\tensor\dots\tensor a_n  \mapsto \langle a_0,f(a_1\tensor\dots\tensor a_n) \rangle
\end{equation*}
is invariant under the simultaneous $S_n^+$ action on the left hand side of equation $\ref{invareq}$ (\cite{MSS} proposition 5.14).  Hence 
\begin{eqnarray*}
\langle a_0,\mu_n(a_1\tensor\dots\tensor a_n) \rangle & = & (-1)^{|a_0|(|a_n|+\dots+|a_1|)}\langle a_n,\tau_n(\mu_n)(a_0\tensor\dots\tensor a_{n-1}) \rangle \nonumber \\ \nonumber & = & (-1)^{n+|a_0|(|a_n|+\dots+|a_1|)}\langle a_n,\mu_n(a_0\tensor\dots\tensor a_{n-1}) \rangle
\end{eqnarray*}
\end{proof}
\begin{remark}\label{invardiffrmk}  The above proposition also holds when $n=1$ if we define $\mu_1$ to be the differential on $A$ due to definition $\ref{cyclicalgebra}$.  In particular we can calculate
\begin{equation*}
\langle a, d_A(b) \rangle = (-1)^{|b|-1}\langle d_A(a),b \rangle = (-1)^{|b|-1+|b|(|a|+1)}\langle b,d_A(a) \rangle=(-1)^{1+|a||b|} \langle b,d_A(a) \rangle
\end{equation*}
Notice the proposition would not hold at $n=1$ if we had consider $\mu_1$ to be the identity.
\end{remark}
Notice that the cyclic structure of $\op{A}_\infty$ gives us a notion of compatibility in the context of more complicated cells.  For example from Figure $\ref{fig:rotate}$ we see that,
\begin{equation*}
\langle a_0, \tau_5((\mu_2\circ_2\mu_2)\circ_1\mu_3)(a_1 \tensor\dots\tensor  a_5) \rangle =\pm\langle a_5, \mu_2\circ_1(\mu_2\circ_2\mu_3)(a_0\tensor  a_1\tensor  \dots \tensor a_4) \rangle
\end{equation*}
Hence, operads are the ideal tool to describe such an algebra; the infinitely many axioms needed to define invariance of the bilinear form are encoded by the cyclic operad.

\section{Trees}\label{sec:trees}  In this section we will fix notation and definitions pertaining to trees.  This presentation partly follows \cite{KCyclic}.  

\begin{definition}  A graph $\Gamma$ is a quadruple $(F(\Gamma), V(\Gamma), \lambda_\Gamma, \iota_\Gamma)=(F,V,\lambda,\iota)$ where $F$ and $V$ are finite sets, $\lambda$ is a map $F\to V$ and $\iota$ is a map $F \to F$ such that $\iota^2$ is the identity.
\end{definition}
We will use the following terminology with respect to graphs.
\begin{itemize}
\item{}  The elements of $V$ are called the \textbf{vertices} of $\Gamma$.  
\item{}  The elements of $F$ are called the \textbf{flags} of $\Gamma$.
\item{}  For $v\in V$, the elements of $\lambda^{-1}(v)$ are called the $\textbf{flags at $v$}$.
\item{}  For $v\in V$, the $\textbf{valence}$ of $v$ is $|\lambda^{-1}(v)|$ and is denoted $val(v)$.
\item{}  An $\textbf{edge}$ of $\Gamma$ is a pair $(f, \iota(f))$ such that $f \in F$ and $f\neq\iota(f)$.
\item{}  The $\textbf{tails}$ of $\Gamma$ are those flags $f$ such that $f=\iota(f)$.
\item{}  An $\textbf{isomorphism of graphs}$ is a bijection of the flags and vertices that preserves the defining maps ($\lambda$ and $\iota$).  From now on we do not distinguish between isomorphic graphs.
\end{itemize} Given a graph $\Gamma$ with an edge $e$ we can contract $e$ and get a new graph as follows.

\begin{definition}\label{edgecontraction}  Let $\Gamma$ be a graph with an edge $e=(f,\iota(f))$ and with $\lambda(f)=v, \lambda(\iota(f))=w$.  Define a new graph $\Gamma/e$ called the contraction of $e$ in $\Gamma$ by $F(\Gamma/e):=F(\Gamma)\setminus\{f,\iota(f)\}, V(\Gamma/e):=(V(\Gamma)\setminus \{v,w\})\cup (v\wedge w)$ (where $v\wedge w$ is an arbitrary new element of the vertex set), $\iota_{\Gamma/e}:=\iota_\Gamma $ and finally for a flag $g \in F(\Gamma)$ we define
\begin{equation*}
\lambda_{\Gamma/e}(g):= \begin{cases} \lambda_{\Gamma}(g) & \text{ if } \lambda_{\Gamma}(g)\not\in\{v,w\} \\ v\wedge w & \text{ if } \lambda_\Gamma(g)\in \{v,w\}  \end{cases}
\end{equation*}
\end{definition}

To a graph $\Gamma$ we can associate a $1$-dimensional CW complex in an obvious way.  Namely, the open $1$-cells correspond to the collection of edges and tails and the $0$ cells to the vertices of $\Gamma$ and the closure of the tails.  In particular the boundary relation for an edge $(f,\iota(f))$ is
\begin{equation*}
\partial[(f,\iota(f))]= \lambda(\iota(f))-\lambda(f)
\end{equation*}

\begin{definition}  A graph will be called a tree if the realization of the associated CW complex is connected and contractible.
\end{definition}
We will use the following terminology with respect to trees.
\begin{itemize}
\item{}  A tree $\tau$ together with a choice of distinguished tail $f$ will be called a $\textbf{rooted tree}$ with root $f$.
\item{}  A $\textbf{planar tree}$ is a tree with a cyclic order on the flags at each vertex.
\item{}  A $\textbf{planar planted tree}$ is a rooted planar tree together with a linear order on the set of flags at the root such that the root comes first in the linear order associated to its vertex.  This order will be called the $\textbf{planar order}$ of the flags at a vertex.  The flag coming first in the planar order will be called the $\textbf{outgoing flag}$ at $v$.  The remaining flags will be called the $\textbf{incoming flags}$ of $v$.
\item{}  If $\tau$ is a planar planted tree with a vertex $v$, the $\textbf{branches}$ of $\tau$ at $v$, denoted $br_v(\tau)$, are the connected components of the graph formed by deleting the vertex $v$ and any non-root flags $f$ having $\lambda(f)=v$.  Notice these components are rooted trees (with one exception) taking roots $\iota(f)$ for $f$ an incoming flag at $v$ and the original root on the component corresponding to the outgoing flag at $v$.  The exception occurs if the root of $\tau$ is adjacent to $v$.  In this case we consider the root as a branch of $\tau$ at $v$, although it is not technically a tree.  Notice that the branches at $v$ have a natural linear order coming from the planar order of $\tau$.
\item{}  The $\textbf{arity}$ of a vertex $v$ in a tree is the number of incoming flags at $v$ and is denoted $|v|$.  Note that $val(v)=|v|+1$.  
\item{}  A $\textbf{leaf}$ of a tree is a vertex whose only flag is outgoing.  A $\textbf{corolla}$ is a tree with only one non-leaf vertex.
\item{}  A tree will be called a $\textbf{black and white tree}$ (or b/w) if it comes equipped with a map $clr\colon V \to \Z/2$.  Those vertices mapped to $1$ are called $\textbf{white vertices}$ and those mapped to $0$ are called $\textbf{black vertices}$.  An edge $(f,\iota(f))$ such that $clr(\lambda(f))=clr(\lambda(\iota(f)))=1$ (resp. $0$) will be called a $\textbf{white (resp. black) edge}$.
\item{}  A black and white tree will be called $\textbf{bipartite}$ if for each edge $(f,\iota(f))$, the vertices $clr(\lambda(f))\neq clr(\lambda(\iota(f)))$.
\end{itemize}
I will remark here that abstractly there is no difference between a black and white vertex, but these two types of vertices will play a very different role in what follows.

\subsection{Drawing Trees}  Let $\tau$ be a planar planted tree with root $f$.  We depict $\tau$ graphically as follows.  The vertex $v_0=\lambda(f)$ is placed lower than all other vertices.  The non-root flags $f_i$ at $v_0$ are depicted as line segments drawn at angles $\theta_i$ in $(0,\pi)$ such that $f_l< f_j \Rightarrow \theta_l> \theta_j$.  If a flag $f$ belongs to an edge we put the vertex $\iota(f)$ at the top of the associated line segment.\footnote{Since realizations of trees are defined only up to homeomorphism, there is no need to make edges twice the length of the flags.}  We then continue in this manner, where the outgoing flag of a vertex is drawn below the vertex and the incoming flags are drawn above such that the planar order goes from left to right, until all flags and vertices are depicted.  Vertices are depicted as small circles, and if the tree is b/w we depict the black vertices as filled in and the white vertices as hollow.  Finally we attach a line segment (representing the root $f$) to the vertex $v_0$ pointing straight down and place a small square at the end of this line segment.  The square is not a vertex of $\tau$.  Given a planar planted tree there is a unique way to draw such a picture and given a picture as drawn above there is a unique planar planted tree that can be associated to it in the coherent way.  As such we no longer distinguish between a planar planted tree and its pictorial representation.

\subsection{Trees With Spines}  For each positive integer $n$ there is a cellular decomposition of $S^1$ with $n$ $0$-cells corresponding to $e^{2\pi \sqrt{-1}j/n}$ for $j=1\cdc n$ and taking $1$-cells corresponding to pieces of the unit circle connecting two adjacent vertices.  We will call the set of all cells given by this decomposition $C^n_\ast(S^1)$.  We call the $0$-cell associated to $1 \in \C$ the `base point'.
\begin{definition}\label{treeswithspines}
A tree with spines is a bipartite planar planted tree whose only tail is the root, along with a choice of cell $\eta(v)\in C^{val(v)}_\ast(S^1)$ for each white vertex $v$.  We call this choice of cell the spine of $v$.
\end{definition}
We depict a tree with spines graphically as follows.  Consider each white vertex $v$ as a $S^1$ rotated by $\pi/2$ in the clockwise direction so that the base point coincides with the outgoing flag of the vertex.  Then the $0$-cells in $C^{val(v)}_\ast(S^1)$ correspond to the flags at $v$ and the $1$-cells correspond to the portion of the circle $v$ between two adjacent flags.  If the cell associated to $v$ is a $1$-cell we place a tic mark on the associated portion of the vertex.  If the cell associated to $v$ is a $0$ cell we place a tic mark on the associated flag, unless the associated flag is outgoing, in which case we draw no tic mark.  We call a tree with spines spineless if the cell associated to each vertex is the base point.  Note that a tree that is not a tree with spines can be considered as a tree with spines by taking it to be spineless.  As such we tacitly assume that all bipartite planar planted trees are trees with spines from now on.

\subsubsection{Contraction of a white angle}  Let $\tau$ be a tree with spines and $v$ a white vertex of $\tau$.  We refer to the arcs of the circle $v$ between two adjacent flags as the white angles at $v$ of $\tau$.  The set of white angles at all white vertices of $\tau$ will be called simply the white angles of $\tau$.  A white angle corresponding to the spine of $v$ will be called spined, otherwise it will be called non-spined.

\begin{definition}  Let $\tau$ be a tree with spines and let $\theta_v$ be a non-spined white angle at a white vertex $v$ of $\tau$.  We define a new tree with spines $\tau/\theta_v$ as follows.  If neither of the two flags adjacent to $\theta_v$ is the root then collapsing this angle combines the two associated edges in to one edge (smashing the two black vertices in to one).  If the spine at said white vertex was on one of these two flags then the new amalgamated edge has the spine.  On the other hand if one of the flags was the root flag then the root is now attached to the black vertex at the end of the edge associated to the other flag.  We call the map of trees with spines given by $\tau \mapsto \tau/\theta_v$ the contraction of the white angle $\theta_v$.
\end{definition}

\subsection{Marked Trees}\label{section:markedtrees} Let $\tau$ be a tree with spines and $v$ a black vertex of $\tau$ with outgoing flag $f$.  We define $||v||=|v|+1$ if $\lambda(\iota(f))$ is a white vertex whose spine corresponds to $\iota(f)$ and define $||v||=|v|$ otherwise.
\begin{definition}\label{markedtrees}
A marked tree is a tree with spines along with a labeling of each black vertex by a cell in $K_{||v||}\times S_n$.
\end{definition}
The first observation concerning marked trees is that the set of flags at a black vertex $v$ in a marked tree has two linear orders.  One comes from the planar order and takes the outgoing edge first and then the incoming edges left to right.  The other comes from the label $\gamma\times\sigma$ and will be called the label order.  Suppose the label is of the form $\gamma\times 1$.  Then if $||v||=|v|$ the two orders agree.  If $||v||=|v|+1$ then the cyclic orders agree but we take the linear order as starting at the first incoming flag in the planar order.  If $\sigma \neq 1$ then we simply permute the label order of $\gamma\times 1$ by $\sigma$.  In other words, the label $\gamma\times\sigma$ puts a label of $1\cdc n$ on the incoming (resp. all) flags at $v$ for $||v||=|v|$ (resp. $||v||=|v|+1$) which need not in general agree with the planar order.

We can depict a marked tree $\tau$ graphically in two ways.  The first way is to draw the tree as above and to write a label next to the depiction of the associated black vertex.  The other is to depict the black vertex as the tree (drawn with all black vertices) corresponding to the label.  It is important to remember that in such a depiction the actual tree is the result of contracting the black edges in the picture one at a time (see Definition $\ref{edgecontraction}$), each time relabeling the new amalgamated black vertex as the operadic composition of the two labels of its predecessors.  Note that the associativity axiom for operads guarantees that the order of the contractions is immaterial.  Since a marked tree is always bipartite, no confusion should result if we use the second depiction of a marked tree.   

\begin{definition}\label{trivialvertex}  A black vertex $v$ in a marked tree will be called trivial if $||v||=|v|=1$.
\end{definition}
Notice that trivial vertices are labeled with the identity in $K_1$ by definition.

\begin{remark}  We have made the choice to define trees with spines and marked trees as bipartite.  The primary reason for this is that we do not wish to distinguish between a tree with white edges and the bipartite tree formed by placing trivial black vertices in the middle of all white edges.  Restricting our attention to bipartite trees exempts us from having to consider both of these classes simultaneously.  Having said that, I will reserve the right to not draw trivial black vertices when depicting trees graphically.
\end{remark}

\subsubsection{Contraction of white angles}\label{whiteangles}
Finally we point out that contraction of one or more white angles makes sense with marked trees.  We only need to say what happens to the labels of black vertices when two or more are pushed together.  When collapsing a single white angle we use the canonical injection of associahedra $K_n\times K_m \to K_{n+m}$ (given by multiplication) to form the new label, where $n$ corresponds to the cell coming first in the planar order.  Moreover there is a canonical way to contract multiple adjacent white angles of a vertex.  Given a white vertex with $l$ consecutive, nonspined white angles, such that these angles do not comprise the entire vertex, we can simaultaneously collapse these angles.  In so doing we smash together $l+1$ black vertices in to one, which is labeled via the canonical injection $K_l\times K_{n_1}\times\dots\times K_{n_l}\to K_{\sum n_j}$.

\subsection{Grafting branches}\label{grafting} Let $\tau$ and $\sigma$ be marked trees, let $v$ be a vertex of $\tau$, and $u$ a vertex of $\sigma$.  Let $b\in br_v(\tau)$ be a non-root branch and let $f$ be the outgoing flag of $b$.  The flags at $u$ have a cyclic order given by traversing the tree in the planar (clockwise) order.  Let $f_1$ and $f_2$ be consecutive flags at $u$.  Then there is a unique marked tree $\sigma\wedge_{f_1}^{f_2}b$ formed by drawing the branch $b$ as attached to the tree $\sigma$ at $u$ such that $f_1<b<f_2$.  In the case that $u$ is a black vertex labeled by $\mu_n$, the corresponding vertex in $\sigma\wedge_{f_1}^{f_2}b$ is then labeled by $\mu_{n+1}$.  If $u$ is a black vertex with a composite label, blow up the label to a tree and change the $\mu_n$ factor which receives the grafting to $\mu_{n+1}$.  We say that this tree is formed by grafting $b$ to $\sigma$ at $u$.

\section{The Cyclic Deligne Conjecture}\label{sec:cdc}
In this section we will review the statement of the cyclic Deligne conjecture and the solution of Kaufmann in \cite{KCyclic}.

\begin{definition}  A Batalin-Vilkovisky (BV) algebra is an associative, graded-commutative algebra with an operator $\Delta$ of degree $\pm1$ with $\Delta^2=0$, satisfying the BV equation:
\begin{equation}\label{bvequation}
\Delta(abc)=\Delta(ab)c+(-1)^{n}a\Delta(bc)+(-1)^{(n-1)m}b\Delta(ac)-\Delta(a)bc-(-1)^{n}a\Delta(b)c-(-1)^{n+m}ab\Delta(c)
\end{equation}
for $|a|=n,|b|=m,|c|=l$.  
\end{definition}

\begin{lemma} \label{gbvprop}  Let $A$ be a BV algebra. Define a bracket $\{-,-\}$ by
\begin{equation*}
\{ a,b \} :=(-1)^{|a|}\Delta(ab)-(-1)^{|a|} \Delta(a)b-a\Delta(b)
\end{equation*}
Then $\{-,-\}$ is a Gerstenhaber bracket.
\end{lemma}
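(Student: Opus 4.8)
The plan is to verify the three defining properties of a Gerstenhaber bracket directly from the formula: that $\{-,-\}$ is a degree $-1$, graded-antisymmetric bracket on the graded-commutative algebra $A$ which obeys the Poisson (Leibniz) rule and the graded Jacobi identity. The degree statement is immediate, since $\Delta$ lowers degree by one and so each of the three summands in the definition has degree $|a|+|b|-1$. For graded antisymmetry, $\{b,a\}=-(-1)^{(|a|-1)(|b|-1)}\{a,b\}$, I would expand $\{b,a\}$ by its definition and move the factors past one another using graded-commutativity of the product in $A$ together with the fact that $\Delta x$ has degree $|x|-1$; collecting the resulting Koszul signs converts the three terms of $\{b,a\}$ into $-(-1)^{(|a|-1)(|b|-1)}$ times those of $\{a,b\}$. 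This step is pure sign bookkeeping and, worth noting, uses only commutativity, not $\Delta^2=0$.

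For the Leibniz rule $\{a,bc\}=\{a,b\}c+(-1)^{(|a|-1)|b|}b\{a,c\}$, the plan is to substitute the definition of the bracket into both sides and cancel: the terms in which $\Delta$ is \emph{not} applied to the full product $abc$ pair off, and the identity that remains is exactly the BV equation \eqref{bvequation} for the triple $(a,b,c)$. Thus this property is simply a rearrangement of \eqref{bvequation}. The Leibniz rule in the first argument then follows formally by combining this with the antisymmetry already established.

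The graded Jacobi identity is the main obstacle. The approach I would take here is operator-theoretic. Write $L_x$ for left multiplication by $x$ on $A$ and $[-,-]$ for the graded commutator of operators; one checks the identity $\{x,-\}=(-1)^{|x|}\bigl([\Delta,L_x]-L_{\Delta x}\bigr)$, and one observes that \eqref{bvequation} is precisely the assertion that $\Delta$ is a second-order operator, i.e. that $[[\Delta,L_a],L_b]=(-1)^{|a|}L_{\{a,b\}}$ is an operator of multiplication type. Since $\Delta^2=0$ yields $[\Delta,[\Delta,L_a]]=0$, the graded Jacobi identity for commutators of operators then forces $L_{\{\{a,b\},c\}}$ to equal the prescribed signed combination of $L_{\{a,\{b,c\}\}}$ and $L_{\{b,\{a,c\}\}}$; evaluating these multiplication operators on $1\in A$ (using $\Delta(1)=0$, itself a special case of \eqref{bvequation}) yields the Jacobi identity for $\{-,-\}$. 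As alternatives one could expand the Jacobiator head-on using \eqref{bvequation} and $\Delta^2=0$ — a longer but entirely routine sign calculation — or invoke the classical fact of Koszul and Getzler that the bracket generated by any square-zero second-order differential operator on a graded-commutative algebra is automatically a Gerstenhaber bracket. I expect the bookkeeping of signs in the operator manipulations, and in matching them against the conventions of \eqref{bvequation}, to be the only genuinely delicate point.
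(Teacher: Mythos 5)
The paper does not actually prove this lemma: it is stated as a known fact, with the BV structure and its compatibility deferred to the literature (Theorem \ref{fldbv} cites \cite{G94}, and the surrounding discussion points to \cite{Menichi} and \cite{tradler}). So there is no in-paper argument to compare against, and your proposal has to stand on its own --- which it does. The three identifications you make are the standard ones and are all correct: graded antisymmetry is pure Koszul-sign bookkeeping from graded commutativity of the product (and indeed uses neither $\Delta^2=0$ nor the BV equation); the seven-term relation \eqref{bvequation} is precisely the Leibniz rule for the derived bracket, so that step is a literal rearrangement; and the Jacobi identity via the second-order reading of \eqref{bvequation}, i.e. $\{x,-\}=(-1)^{|x|}\bigl([\Delta,L_x]-L_{\Delta x}\bigr)$ and $[[\Delta,L_a],L_b]=(-1)^{|a|}L_{\{a,b\}}$ combined with $[\Delta,[\Delta,L_a]]=0$ and the operator Jacobi identity, is exactly the classical argument of Koszul and Getzler that the paper is implicitly invoking.

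One caveat worth recording: the paper's definition of a BV algebra does not explicitly require a unit, and your evaluate-at-$1$ step (together with the claim that $\Delta(1)=0$ follows from \eqref{bvequation}, which it does, by setting $a=b=c=1$ after first deducing $\Delta(1)c=0$ from $a=b=1$) needs one; without a unit, $L_x=L_y$ does not force $x=y$. In the paper's applications the algebra is the Hochschild cohomology of a unital (cyclic $A_\infty$) algebra, so this is harmless, but to get the lemma in the stated generality you should either fall back on the direct expansion of the Jacobiator that you list as an alternative, or add the unitality hypothesis explicitly. Beyond that, only the sign bookkeeping you already flag remains to be carried out.
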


\begin{theorem}\cite{G94}\label{fldbv}  A $k$-vector space $V$ is a BV algebra if and only if $V$ is an algebra over the homology of the framed little disks operad, $H_\ast(fD_2)$.
\end{theorem}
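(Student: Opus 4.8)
The plan is to compute the homology operad $H_\ast(fD_2)$ explicitly and then match its generators and relations against the defining data of a BV algebra. The two directions of the biconditional will both follow once this identification is in place.

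First I would exploit the semidirect-product structure of the framed little disks. There is an $S^1$-action on each space $D_2(n)$ of the unframed little disks operad by simultaneous rotation of the ambient disk, and $fD_2$ is the corresponding semidirect product $D_2\rtimes S^1$: the space $fD_2(n)$ is $S^1$-equivariantly homotopy equivalent to $F(\C,n)\times (S^1)^n$, where the $i$-th circle factor records the framing of the $i$-th little disk and the operadic composition twists these circle factors by the rotation action. Passing to homology and applying the Künneth theorem, one obtains an isomorphism of operads $H_\ast(fD_2)\cong H_\ast(D_2)\rtimes H_\ast(S^1)$.

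Second, I would invoke Cohen's computation that $H_\ast(D_2)$ is the Gerstenhaber operad: it is generated by a degree-$0$ class $m\in H_0(D_2(2))$ and a degree-$1$ class $b\in H_1(D_2(2))$, subject to associativity and graded-commutativity of $m$, antisymmetry and the Jacobi identity for the odd bracket $b$, and the Poisson compatibility of $b$ with $m$. Since $H_\ast(fD_2(1))=H_\ast(S^1)=k\oplus k\Delta$ with $\deg\Delta=1$, the semidirect product adjoins one generator $\Delta\in H_1(fD_2(1))$ with $\Delta^2=0$ (as $H_2(S^1)=0$), together with the relations forced by the twisting of the composition maps: on two inputs $\Delta$ applied to $m$ recovers $b$ up to signs and lower-order terms, and on three inputs the failure of $\Delta$ to be a derivation of $m$ is governed by a single identity, which after bookkeeping of the composition signs is exactly the BV equation \eqref{bvequation}. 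This exhibits $H_\ast(fD_2)$ as the operad presented by $m$, $\Delta$, and these relations, i.e. the BV operad.

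With the presentation in hand the translation is formal. An algebra over $H_\ast(fD_2)$ is precisely a graded $k$-vector space $V$ equipped with the image of $m$ (an associative, graded-commutative product), the image of $\Delta$ (a square-zero operator of degree $\pm1$), for which the BV equation holds; conversely a BV algebra supplies exactly such an action, the image of the Browder class $b$ being the bracket of Lemma \ref{gbvprop}, whose Gerstenhaber identities follow from the BV equation as recorded in that lemma. Hence $V$ is a BV algebra if and only if it is an $H_\ast(fD_2)$-algebra. The main obstacle is the second step: verifying that the twisting in $D_2\rtimes S^1$ contributes no relations beyond $\Delta^2=0$ and the one BV equation, and that it does contribute that equation. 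This is the technical heart of \cite{G94} and is most cleanly handled either by an explicit analysis of the $S^1$-equivariant cell structure of $fD_2(n)$ or by a spectral-sequence argument for the semidirect product; everything else is a routine comparison of generators and relations.
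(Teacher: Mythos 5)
The paper does not prove this statement at all: it is imported verbatim from Getzler's paper \cite{G94}, and the only accompanying text identifies which chains in $S_\ast(fD_2)$ represent the product and the operator $\Delta$. So there is no internal proof to compare against; what you have written is a sketch of the external result itself.

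As such a sketch, your outline is essentially the standard (and correct) route, which is a mild modernization of Getzler's original argument: the identification $fD_2\cong D_2\rtimes S^1$ and the resulting operad isomorphism $H_\ast(fD_2)\cong H_\ast(D_2)\rtimes H_\ast(S^1)$ (this is the semidirect-product formalism later made precise by Salvatore--Wahl), Cohen's identification of $H_\ast(D_2)$ with the Gerstenhaber operad, the adjunction of the degree-one generator $\Delta$ with $\Delta^2=0$ forced by $H_2(S^1)=0$, and the observation that the bracket is recovered from $\Delta$ and $m$ as in Lemma $\ref{gbvprop}$ while the seven-term identity $\ref{bvequation}$ is the one new relation in arity three. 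You are also right to flag the genuine technical content, namely that the presentation by $m$, $\Delta$, associativity/commutativity, $\Delta^2=0$ and the BV equation is \emph{complete} --- that no further relations hide in higher arity. Getzler settles this by a dimension count: one writes down a spanning set for the free operad on $m,\Delta$ modulo the stated relations in each arity and compares with the known Betti numbers of $fD_2(n)\simeq F(\C,n)\times(S^1)^n$, which is in the same spirit as the equivariant-cell or spectral-sequence arguments you propose. Your sketch is therefore a fair account of the cited theorem, with the acknowledged gap located exactly where the real work of \cite{G94} lies.
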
 
The singular chains which correspond under the operad morphism to the operations generating the BV structure, namely the multiplication and the BV operator $\Delta$, are given by a $0$-cell in $S_0(f\op{D}_2)$ and the $1$-cell in $S_1(f\op{D}_2)$ that rotates the outer marked point one complete revolution.

A well known result of \cite{G} is that the Hochschild cohomology of an associative algebra is a Gerstenhaber algebra.  In the cyclic case there is the following extension of this result.

\begin{theorem}\label{bvthm}  Let $A$ be a Frobenius algebra.  Then the Gerstenhaber structure on $HH^\ast(A,A)$ naturally extends to a BV algebra.
\end{theorem}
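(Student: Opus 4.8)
\emph{Proof proposal.} The plan is to use the nondegenerate symmetric form on $A$ to transport Connes' cyclic operator $B$ from the Hochschild \emph{chain} complex of $A$ to the Hochschild \emph{cochain} complex, thereby producing the degree $-1$ operator $\Delta$, and then to verify that $\Delta$ squares to zero, commutes with the Hochschild differential (so descends to $HH^\ast$), and satisfies the BV relation $\ref{bvequation}$ up to explicit homotopies.

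First I would set up the comparison of complexes. Since $A$ is cyclic (Example $\ref{Frobeniusalgebra}$), the map $v\mapsto\langle v,-\rangle$ identifies $A$ with $A^\ast$ and hence identifies $CH^n(A,A)=Hom(A^{\tensor n},A)$ with $Hom(A^{\tensor n+1},k)$, the degree $n$ piece of the $k$-linear dual of the Hochschild chain complex $C_\bullet(A)=\bigoplus_n A^{\tensor n+1}$. The only place where the Frobenius structure (as opposed to just the algebra structure) is used is the observation that this identification is one of complexes: the Hochschild coboundary $d_{\mu_2}=[-,\mu_2]$ corresponds to the dual $b^\ast$ of the Hochschild boundary $b$ on $C_\bullet(A)$. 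Concretely this is the statement that $A\cong A^\ast$ as $A$-bimodules, equivalently the cyclic invariance $\langle ab,c\rangle=\langle a,bc\rangle$ of Example $\ref{Frobeniusalgebra}$, which is exactly what matches the ``wrap-around'' term of $b$ with the first and last terms of $d_{\mu_2}$.

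Next, on $C_\bullet(A)$ one has Connes' operator $B\colon C_n(A)\to C_{n+1}(A)$ (in normalized form $B=(1-t)sN$, with $t$ the cyclic permutation, $s$ the extra degeneracy, $N$ the norm), satisfying $B^2=0$ and $bB+Bb=0$. Transporting $B^\ast$ through the identification above produces $\Delta\colon CH^n(A,A)\to CH^{n-1}(A,A)$ with $\Delta^2=0$ and $d_{\mu_2}\Delta+\Delta d_{\mu_2}=0$, so $\Delta$ descends to an operator on $HH^\ast(A,A)$ squaring to zero. Since $HH^\ast(A,A)$ is already a graded-commutative associative algebra (Gerstenhaber, \cite{G}), what remains is to prove the seven-term identity $\ref{bvequation}$, equivalently that the Gerstenhaber bracket is the obstruction to $\Delta$ being a derivation of the cup product, $\{f,g\}=\pm\big(\Delta(f\smile g)-\Delta f\smile g-(-1)^{|f|}f\smile\Delta g\big)$ in $HH^\ast(A,A)$; Lemma $\ref{gbvprop}$ then upgrades this to the full BV structure.

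This last point is the crux and the main obstacle: on the cochain level the relation fails strictly, and following \cite{tradler} (see also \cite{Menichi}) I would introduce explicit bilinear operators on $CH^\ast(A,A)$ --- built from the same ``insert, cyclically permute, and contract against the form'' recipe that defines $\Delta$ and the cup product --- and verify by a lengthy, sign-sensitive direct computation that $d_{\mu_2}H\pm Hd_{\mu_2}$ equals the difference of the two sides of the desired identity, so that it holds in cohomology. Once this is in place, $HH^\ast(A,A)$ with $\Delta$ is a BV algebra by definition, extending the Gerstenhaber structure; alternatively one may state the conclusion through Theorem $\ref{fldbv}$ by checking that the resulting operations assemble into an action of $H_\ast(fD_2)$. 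I would also note that $\Delta$ here has degree $-1$ rather than $+1$, consistent with the grading conventions of Notation $\ref{degreenotation}$ and Remark $\ref{gradingconventions}$, and that symmetry of the form (built into Definition $\ref{cyclicalgebra}$) is what makes $\Delta$ well defined.
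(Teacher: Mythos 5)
Your proposal follows essentially the same route as the paper: the paper likewise constructs $\Delta$ by dualizing Connes' operator $B$ through the self-duality $Hom(A^{\tensor n+1},A)\cong (A^{\tensor n+2})^\ast$ furnished by the Frobenius form (see the subsection on the BV operator and normalization), and, like you, defers the verification of the seven-term identity to \cite{Menichi} and \cite{tradler}. Your outline is correct, and the points you highlight --- that the bimodule identification $A\cong A^\ast$ is what makes the coboundary match $b^\ast$, and that the BV relation holds only up to explicit cochain homotopies --- are exactly where the cited references do the work.
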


Full details of this result are given in \cite{Menichi}.  This result is of course still true if $A$ is a cyclic $A_\infty$ algebra; full details can be found in \cite{tradler}.

Now combining Theorem $\ref{bvthm}$ with Theorem $\ref{fldbv}$ we see that, for $A$ Frobenius, $HH^\ast(A,A)$ is an algebra over $H_\ast(fD_2)$, and this fact raises what's known as the cyclic Deligne conjecture.

\begin{theorem}\label{cyclicdeligne}\cite{KCyclic}  There is a chain model for the framed little disks operad that acts on the Hochschild cochains of a Frobenius algebra inducing the standard operations in homology/cohomology.
\end{theorem}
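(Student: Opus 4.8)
The plan is to produce the chain model explicitly, as the cellular chains of a CW (quasi-)operad homotopy equivalent to $fD_2$, and then to write down its action on $CH^\ast(A,A)$ cell by cell. First I would take as topological model the (normalized) Cacti operad $\op{C}acti$ of Voronov \cite{Vor}: a point of $\op{C}acti(n)$ is a treelike configuration of $n$ parameterized circles carrying a global marked point (the outer zero) on the first lobe together with one local marked point on each lobe, with total perimeter normalized. Its spineless suboperad models $\op{D}_2$ and $\op{C}acti$ itself is homotopy equivalent to $fD_2$, so in particular $H_\ast(\op{C}acti)\cong H_\ast(fD_2)\cong\op{BV}$ by Theorem \ref{fldbv}. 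Giving $\op{C}acti$ its natural CW structure — cells indexed by the underlying metric tree together with the combinatorial positions of the global and local zeros — the cellular chain complex $CC_\ast(\op{C}acti)$, with differential the cellular boundary, is the candidate chain model; it is linked to $C_\ast(fD_2)$ by a zig-zag of quasi-isomorphisms through $C_\ast(\op{C}acti)$ since $\op{C}acti\simeq fD_2$.

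Next I would define the action $CC_\ast(\op{C}acti)(n)\tensor CH^\ast(A,A)^{\tensor n}\to CH^\ast(A,A)$ one cell at a time. The underlying tree of a cactus prescribes an iterated brace/insertion operation exactly as in the associative (unframed) Deligne conjecture of \cite{KS},\cite{KSch}: the lobes are read as multilinear operations plugged into one another along the tree, with the two extreme cases being a single brace and the product $\mu_2$. The new data — the position of each local zero on its lobe relative to where branches are attached, i.e. the ``spine'' — is where the Frobenius structure enters: via the self-duality $v\mapsto\langle v,-\rangle$ one turns a Hochschild cochain into a cyclically invariant tensor and is then free to rotate its slots past the local zero, and this combinatorial datum is precisely what encodes the BV operator. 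In particular the unique $0$-cell in arity two acts by the cup product and the $1$-cell sweeping the global zero once around the lobe acts by the operator $\Delta$ of \cite{Menichi},\cite{tradler}.

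It then remains to check that this assignment is a morphism of dg operads $CC_\ast(\op{C}acti)\to End_{CH^\ast(A,A)}$. Operad compatibility is built into the construction: insertion of cacti corresponds to operadic composition of the associated operations, as already in the spineless case. The main point, and the main obstacle, is compatibility with differentials: for a cell $C$ one must show $d_\mu(C\cdot\phi)\pm C\cdot d_\mu\phi=(\partial C)\cdot\phi$. Here $\partial C$ is a signed sum over the codimension-one degenerations of the cactus — lobes shrinking to a point (reproducing the pieces of $\mu\circ\mu=0$ and of the brace relations, handled as in \cite{KS}), the global zero colliding with a branch point, and local zeros colliding with branch points — and each such face must be matched to a term in the expansion of $[\,\cdot\,,\mu]$. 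The faces that move the global zero past a local marked point are exactly those requiring the cyclic invariance $\langle a_0,\mu_2(a_1,a_2)\rangle=\langle a_2,\mu_2(a_0,a_1)\rangle$ of the Frobenius form (and, in the general case, its $A_\infty$ refinement, Proposition \ref{cycalgprop}). Getting everything to close up requires careful sign bookkeeping with the total-degree convention fixed in Section \ref{cyclicinftysection}; this, rather than any conceptual difficulty, is the real work.

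Once the dg operad morphism is in hand, passing to homology gives an action of $H_\ast(\op{C}acti)\cong H_\ast(fD_2)\cong\op{BV}$ on $HH^\ast(A,A)$ which, by the cell-level identifications above, restricts on generators to the cup product and to $\Delta$, so the standard operations in cohomology are recovered; this proves the theorem. (This is the argument of Kaufmann \cite{KCyclic}. One mild technical point is that the natural CW structure makes $\op{C}acti$ only a quasi-operad; this is harmless, since cellular chains of a quasi-operad still act on $CH^\ast$, or one replaces $\op{C}acti$ by a strictly operadic normalization.)
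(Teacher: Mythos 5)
Your proposal is correct and follows essentially the same route as the paper, which likewise takes the chain model to be the cellular chains of normalized Cacti (equivalently the tree-with-spines operad $\op{TS}$), defines the action cell by cell via brace operations with the spine data encoding the use of the Frobenius form, and defers the detailed sign/differential verification to \cite{KCyclic}. Your identification of the arity-two $0$-cell with the cup product and the spine-sweeping $1$-cell with $\Delta$, as well as the caveat that normalized Cacti is only a quasi-operad whose cellular chains nonetheless form an honest dg operad, matches the paper's treatment exactly.
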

The rest of this section will be devoted to recalling the particulars of Kaufmann's proof that will be needed for generalization beyond the associative case.

\subsection{The BV operator and Normalization}  Again $A$ is a Frobenius algebra and we shall describe the BV operator on $HH^\ast(A,A)$.  Let $B\colon A^{\tensor n+1}\to A^{\tensor n+2}$ be Connes' boundary map from cyclic homology (see e.g. \cite{Loday}).  Explicitly
\begin{equation*}
B(a_0\cdc a_n)=\sum_{i=0}^n (-1)^{ni}(1,a_i\cdc a_n,a_0\cdc a_{i-1})-(-1)^{ni}(a_i,1,a_{i+1}\cdc a_n,a_0\cdc a_{i-1})
\end{equation*}
Now since $A$ is cyclic it is canonically self-dual, and we have a natural isomorphism 
\begin{equation*}
Hom(A^{\tensor n},A)\cong Hom(A^{\tensor n+1},k)=(A^{\tensor n+1})^\ast
\end{equation*}
given by $f \mapsto \langle -,f(-) \rangle$.  Define $\Delta$ as the composition of the following sequence:
\begin{equation*}
Hom(A^{\tensor n+1},A)\cong (A^{\tensor n+2})^\ast\stackrel{B^*}\longrightarrow (A^{\tensor n+1})^\ast\cong Hom(A^{\tensor n},A)
\end{equation*}
Then explicitly if $f\colon A^{\tensor n}\to A$ then 
\begin{equation*}
\langle a_0,\Delta(f)(a_1,\cdc a_{n-1})\rangle = \sum_{i=0}^{n-1}(-1)^{(n-1)i}[\langle 1,f(a_i\cdc a_{i-1})\rangle -\langle a_i,f(1,a_{i+1}\cdc a_{i-1})\rangle]
\end{equation*}
The operator $\Delta$ (defined above on cochains) is compatible with the Hochschild differential and so induces an operator on the Hochschild cohomology, which we also call $\Delta$.  

\subsubsection{The Normalized Hochschild Complex.}
Recall the normalized Hochschild complex; $\overline{CH^\ast}(A,A)\subset CH^\ast(A,A)$ is the subspace consisting of those functions which vanish when evaluated at a pure tensor containing $1$.  Inclusion is a quasi-isomorphism of cochain complexes \cite{Loday}.  As such, if $\Phi$ is an operator on cochains which is compatible with the differential, the induced operator on cohomology depends only on the restriction of $\Phi$ to the normalized cochains.

\begin{remark}\label{normalizationremark}  From now on when we speak of an operation on Hochschild cohomology coming from a cochain level operation we always take the normalized version of that cochain operation as described above.  In other words, in what follows formulas on the cochain level will be written for normalized cochains with the implicit assumption that the operation could be extended to all cochains but that this extension will not effect the cohomology operation.  As an example let us now reconsider the BV operator $\Delta$.  For a normalized cochain the expression for $\Delta(f)$ can be simplified considerably.  Let $t_n =(-1)^{n-1} (1\dots n)$ and $N_n =\sum_{i=0}^{n-1} t^i$.  Then we can write 
\begin{equation}\label{normalbv}
\langle a_0,\Delta(f)(a_1,\cdc a_{n-1})\rangle = \langle 1 , f\circ N_n(a_0,\cdc a_{n-1}) \rangle
\end{equation}
Notice that for normalized cochains we have $\Delta^2=0$ already on the cochain level.
\end{remark}

\subsection{The operad $\op{TS}$}  The chain model used to prove Theorem $\ref{cyclicdeligne}$ takes cells indexed by trees with spines (see Definition $\ref{treeswithspines}$).  Define $\op{TS}(n)$ to be the free $k$-module generated by trees with spines having $n$ white vertices labeled by the numbers $1\cdc n$. 

We will now give the vector space $\op{TS}(n)$ a dg structure.  The degree of a tree with spines will be equal to the number of white vertices whose spine is on a $1$-cell plus the sum of the arity of the white vertices.  The differential is given by taking an alternating sum over all trees which can be found by performing one of the following operations;
\begin{enumerate}\label{anglecontraction}
\item{}  Contraction of a white angle.
\item{}  Take a white vertex whose spine is on a one cell and take the alternating sum of moving the spine to the flag following this $1$-cell in the cyclic order and the flag preceding this $1$-cell in the cyclic order.  This will be called pushing off the spine.
\end{enumerate}
I will denote this differential by $\partial_T$ and remark that $\partial_T$ is also defined on marked trees, since contraction of marked angles makes sense in that context also.  The signs in the differential will be explained below in subsection $\ref{diffsigns}$.

We will now give the collection of dg vector spaces $\op{TS}(n)$ the structure of a dg operad.  Define structure maps
\begin{equation}\label{tsop}
\op{TS}(n)\tensor \op{TS}(m) \stackrel{\circ_i}\longrightarrow \op{TS}(n+m-1)
\end{equation}  
for $i=1,\dots,n$ by 
\begin{equation}\label{tsop2}
\tau_1\circ_i\tau_2 = \sum \pm \tau
\end{equation}
where the sum is taken over all trees with spines that can be formed by the following procedure.  Let $v$ be the vertex of $\tau$ labeled by $i$ and let $\{\tau_1^j\}_j$ be the $v$-branches of $\tau_1$.  This set has a linear order by starting at the spine and going around in the planar order.  Graft these branches on to $\tau_2$, starting by identifying the spine of $\tau_1$ at $v$ with the root of $\tau_2$, in a manner compatible with the cyclic order.  The root of this new tree is the root of $\tau_1$.  Notice that if the root of $\tau_2$ is adjacent to a black vertex then this procedure can create a black edge which we contract as in Definition $\ref{edgecontraction}$.  The signs in equation $\ref{tsop2}$ will be fixed by the choice of orientation of a collection of CW complexes whose cellular chains are isomorphic to $\op{TS}$ as is explained in subsection $\ref{diffsigns}$.

\begin{proposition}\cite{KCyclic}
In this manner, $\op{TS}$ is a $dg$-operad.  In particular this dg operad is isomorphic to the operad of cellular chains on (normalized) cacti.
\end{proposition}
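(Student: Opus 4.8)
The plan is to realize $\op{TS}$ concretely as the cellular chain operad of a CW structure on the normalized cacti operad $\op{C}acti^1$ of \cite{Vor}, so that every assertion reduces to a fact about cacti. First I would describe the cells. A cactus with $n$ lobes has an underlying combinatorial type recorded by its intersection tree; decorating that tree bipartitely --- lobes as white vertices, intersection points as black vertices --- and recording for each lobe the position of its local zero relative to the cyclically ordered attaching points produces exactly a tree with spines having $n$ white vertices, and conversely each such tree $\tau$ determines a cell $C(\tau)$ consisting of all cacti of type $\tau$. After normalizing each lobe to perimeter $1$, the moduli of $C(\tau)$ are the lengths of the arcs between consecutive special points on each lobe, together with the position of the local zero within its arc precisely when the spine of that lobe is a $1$-cell. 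A white vertex $v$ therefore contributes $val(v)-1=|v|$ parameters, plus one more when $\eta(v)$ is a $1$-cell, so $\dim C(\tau)=\sum_v |v| + \#\{v : \eta(v)\text{ a }1\text{-cell}\}$, which is the degree assigned to $\tau$ above. This identifies $\op{TS}(n)$ with the cellular chains $CC_\ast(\op{C}acti^1(n))$ as graded vector spaces.

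Next I would match the differential. A codimension-one degeneration inside $\overline{C(\tau)}$ occurs in one of two ways: an arc of a lobe between two consecutive attaching points shrinks to length zero, fusing the two intersection points and hence contracting a white angle of $\tau$ (with the associahedral label of the merged black vertex recorded via the canonical inclusions of $\ref{whiteangles}$); or the local zero of a lobe slides along a $1$-cell arc until it reaches a bounding attaching point, so that the spine of that lobe drops from a $1$-cell to an adjacent $0$-cell. These are exactly the two moves generating $\partial_T$ in $\ref{anglecontraction}$, the second being the reverse of pushing off a spine. Fixing orientations on the cells $C(\tau)$ once and for all --- the bookkeeping of subsection $\ref{diffsigns}$ --- pins down the signs so that $\partial_T$ becomes the cellular boundary; in particular $\partial_T^2=0$, and the Leibniz rule with respect to the compositions is automatic for a cellular differential.

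It remains to match $\circ_i$. Operadic composition of cacti inserts $c_2$ into the $i$-th lobe of $c_1$ by rescaling $c_2$ to the perimeter of that lobe and gluing its global zero to the local zero of the lobe, then re-reading the combinatorial type; on types this is precisely the grafting of the $v$-branches of $\tau_1$ onto $\tau_2$ at its root, ordered starting from the spine, as in $\ref{tsop2}$, the contraction of a spurious black edge being the fusion of intersection points forced by the gluing. The one genuine subtlety, and where I expect the main work to lie, is that normalized cacti form only a quasi-operad --- gluing does not preserve the perimeter-$1$ normalization --- so one must verify that the renormalizing reparametrization stays within cells, that the induced map on cellular chains is strictly associative, equivariant, and unital, and that no cell is carried onto a cell of equal dimension after a nontrivial collapse (such terms contribute $0$ to chains). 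Granting these identifications, $\op{TS}$ acquires the dg-operad structure transported from $CC_\ast(\op{C}acti^1)$ and is isomorphic to it, which is the assertion; the details are those of \cite{KCyclic}.
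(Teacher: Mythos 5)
The paper gives no proof of this proposition: it is quoted from \cite{KCyclic}, and the restatement in Section \ref{section:underlyingtopology} is likewise cited to \cite{KCacti} and \cite{KCyclic}, so there is nothing internal to compare against. Your sketch is the standard argument of those references --- the dual-tree/arc-length dictionary between cells of $\op{C}acti^1(n)$ and trees with spines, the matching of the cellular boundary with angle contraction and spine push-off, and the matching of cactus insertion with grafting of branches --- and it is essentially correct, including the honest identification of the quasi-operad renormalization issue as the place where the real work lies. Two cosmetic slips: in $\op{TS}$ the black vertices carry no associahedral labels (that decoration belongs to $\op{TS}_\infty$, so the parenthetical invoking the canonical inclusions of subsection \ref{whiteangles} is out of place here, where contracting a white angle merely fuses two unlabeled black vertices), and the degeneration in which a local zero reaches a bounding attaching point \emph{is} pushing off the spine, not its reverse.
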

Theorem $\ref{cyclicdeligne}$ can then be proved by exhibiting an action of $\op{TS}$ on the Hochschild cohomology of a Frobenius algebra which is given in \cite{KCyclic}.  For our purposes we would like to `blow up' the operad $\op{TS}$ to something with the additional cells needed to encode operations on a cyclic $A_\infty$ algebra, which is the purpose of the following section.

\section{The Operad $\op{TS}_\infty$}\label{sec:op}  In this section we will define a dg operad of marked trees which we will call   $\op{TS}_\infty$ for `trees with spines with $A_\infty$ labels'.  This operad will be homotopy equivalent to $\op{TS}$ and will eventually serve as our chain model acting on the Hochschild cochains of a cyclic $A_\infty$ algebra.  As a graded $\mathbb{S}$-module we have the following definition.
\begin{definition}  Define $\op{TS}_\infty(n)$ be the graded $k$ vector space generated by marked trees with $n$ white vertices labeled by the numbers $1\cdc n$ (see Definition $\ref{markedtrees}$).  We may denote a marked tree by $(\tau,\{\gamma_i\})$ where $\tau$ is the underlying tree with spines and $\{\gamma_i\}$ is the set of labels of the black vertices $\{v_i\}$ of $\tau$.  For the grading, if $(\tau, \{\gamma_i\})$ is an element of $\op{TS}_\infty$ then we take
\begin{equation*}
|(\tau, \{\gamma_i\})|=|\tau|+\sum_i|\gamma_i|
\end{equation*} 
Finally $\op{TS}(n)$ has an $S_n$ action by permuting the labels of the white vertices. 
\end{definition}

Giving this collection of graded vector spaces the structure of a graded operad will be the subject of this section.  Giving this graded operad the structure of a dg operad is more complicated, and we will postpone the introduction of the differential until subsection $\ref{section:differential}$.

We will now give $\op{TS}_\infty$ the structure of a graded operad.  In the spirit of \cite{KS} and \cite{KCyclic}, $\op{TS}_\infty$ will have an insertion operad structure.  Let $\tau\in\op{TS}_\infty(n)$ and $\sigma\in\op{TS}_\infty(m)$ and let $v$ be the vertex of $\tau$ labeled by $i$ for some $1\leq i \leq n$.  Additionally, let $u$ be the vertex of $\sigma$ which is adjacent to the root.  We define $\tau\circ_i\sigma$ to be a signed sum of trees, i.e.
\begin{equation} \label{signseq2}
\tau\circ_i\sigma=\ds\sum\pm\xi
\end{equation}
The signs in equation $\ref{signseq2}$ will be fixed by the choice of orientation of a collection of CW complexes whose cellular chains are isomorphic to $\op{TS}_\infty$ as will be explained in subsection $\ref{diffsigns}$.  The collection of trees appearing in a sum is determined in three distinct cases as follows.

$\textbf{Case 1:}$  Suppose the vertex $v$ of $\tau$ has a spine on a $1$-cell and also that the vertex $u$ of $\sigma$ is white and has a spine.  In this case we define the collection of trees to be empty and the operadic composition $\tau\circ_i\sigma$ is zero.

$\textbf{Case 2:}$  Suppose $v$ has no spine.  Then the trees appearing in the sum are all those which can be formed by the following procedure.  First detach the $v$-branches of $\tau$, $br_v(\tau)$, identify the root of $\tau$ with the root of $\sigma$, and then graft (see subsection $\ref{grafting}$) the remaining $v$ branches to this configuration such that the cyclic order of the $v$-branches is preserved in the final configuration.  See Figure $\ref{fig:operadstructure}$.

$\textbf{Case 3:}$  Suppose that $v$ has a spine and that $u$ does not.  Write $s$ for the spine of $v$ and notice that the set $\{s\}\cup br_v(\tau)$ has a linear order, starting with the root and traversing $\tau$ clockwise.  Then the trees appearing in the sum are all those which can be formed by the following procedure.  First detach the $v$-branches of $\tau$ and insert the tree $\sigma$ into the vertex $v$ such that the root of $\sigma$ is identified with the spine of $v$.  This (old) root is now the spine of $u$.  Finally graft the $v$-branches to this configuration such that the cyclic order of the set of $\{s\}\cup br_v(\tau)$ is unambiguously preserved in the final configuration.  This means that we do not allow grafting of the root of $\tau$ to $u$ if $u$ is a black vertex with noncomposite label.  The root of $\tau$ is the new root.  See Figure $\ref{fig:operadstructure2}$.

\begin{figure}
	\centering
		\includegraphics[scale=.45]{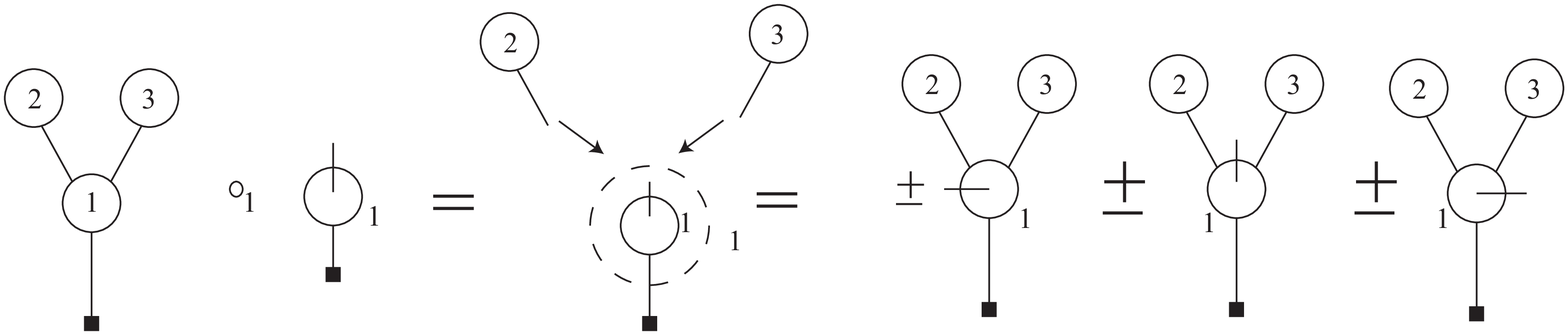}
	\caption{There are three ways to graft the two nonroot branches such that the cyclic order of the branches is preserved. }
	\label{fig:operadstructure}
\end{figure}

\begin{figure}
	\centering
		\includegraphics[scale=.45]{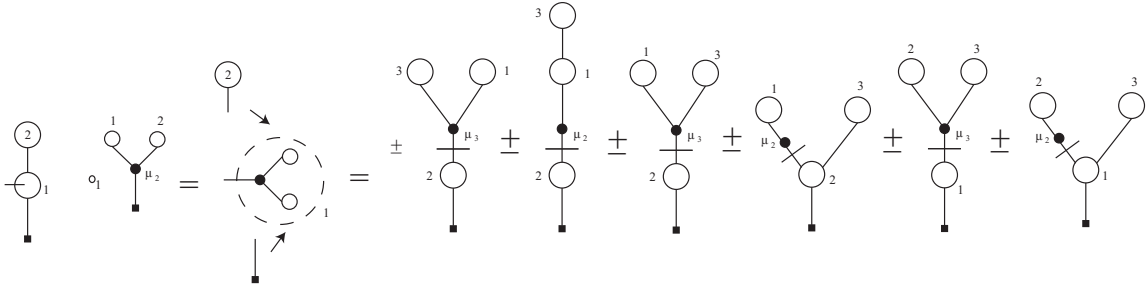}
	\caption{If we first graft the root to vertex $2$ of $\mu_2$ there are four ways to graft the remaining branch.  If we first graft the root to vertex $1$ of $\mu_2$ there are two ways to graft the remaining branch.  If we first graft the root to the black vertex the cyclic order of the set $\{s\}\cup br_v(\tau)$ is ambiguous, and hence this grafting does not contribute.}
	\label{fig:operadstructure2}
\end{figure}

Notice that the key difference in the operad structure of $\op{TS}_\infty$ versus $\op{TS}$ is that now we allow grafting of branches on to black vertices (see subsection $\ref{grafting}$).  In particular, in the former case, the set of terms appearing in the sum in equation $\ref{signseq2}$ includes all of the terms which appear in the latter.  This fact, along with the fact that grafting branches onto black vertices is associative, tells us that the composition defined here is associative and that $\op{TS}_\infty$ is indeed an operad.  

We will make extensive use of the following lemma.
\begin{lemma}\label{simplegluingdef}  Let $\tau$, $\sigma$, and $v$ be as above, and further suppose that $v$ is a leaf with no spine.  Then $\tau\circ_i\sigma$ consists of a single tree, namely the marked tree formed by removing the vertex $v$ and identifying the root of $\sigma$ with the outgoing flag of $\tau$, along with the standard operadic relabeling of white vertices.  See Figure $\ref{fig:cut}$ for an example.  Such a composition will be called a simple composition or a simple gluing.
\end{lemma}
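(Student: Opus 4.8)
The plan is to unwind the three-case definition of $\circ_i$ given just above and to observe that the leaf hypothesis removes every choice. First I would record the structural consequences of $v$ being a leaf: its only flag is the outgoing one, so $|v|=0$ and $v$ has no incoming flags, and consequently $br_v(\tau)$ consists of a single element, the branch carrying the original root of $\tau$. If $v=v_0$ then $\tau$ is the one-white-vertex (spineless) corolla and $\tau\circ_1\sigma=\sigma$, so I would dispose of that case immediately and assume $v\neq v_0$. Since $v$ carries no spine, Cases $1$ and $3$ are inapplicable --- both presuppose a spine at $v$ --- so $\tau\circ_i\sigma$ is computed by the Case $2$ recipe.

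Next I would run Case $2$ with this single branch. The recipe detaches the $v$-branches of $\tau$, attaches the root of $\sigma$ to the outgoing side of $v$ (the root branch of $\tau$), and then grafts the remaining $v$-branches onto the vertices of $\sigma$ in every way compatible with the cyclic order; here there are no remaining branches, so there is nothing to graft, the indexing set of the sum in \ref{signseq2} is a singleton, and $\tau\circ_i\sigma=\pm\xi$ for one marked tree $\xi$. I would then identify $\xi$: it is obtained from $\tau$ by deleting $v$ together with its unique edge and re-gluing the root edge of $\sigma$ there, i.e. identifying the root flag of $\sigma$ with the flag of $\tau$ formerly incident to $v$; the black labels of $\xi$ are inherited from $\tau$ and $\sigma$, and the white vertices, labelled $1\cdc n$ on $\tau$ with $v$ labelled $i$ and $1\cdc m$ on $\sigma$, recombine by the standard operadic relabelling into $1\cdc n+m-1$. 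This is exactly the marked tree named in the statement and drawn in Figure \ref{fig:cut}.

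The one point that deserves genuine care --- and the step I expect to be the main (if mild) obstacle --- is checking that $\xi$ is a bona fide marked tree rather than merely a graph. The vertex of $\tau$ adjacent to the leaf $v$ is black by bipartiteness of $\tau$; if the root vertex of $\sigma$ is white the new edge is black--white and $\xi$ is at once bipartite, whereas if the root vertex of $\sigma$ is black the re-gluing produces a black edge, which must be contracted as in Definition \ref{edgecontraction}, amalgamating the two black labels by the canonical multiplication of associahedra exactly as prescribed for marked trees in Subsection \ref{whiteangles}. In either sub-case the outcome is a single unambiguous marked tree, and the sign on $\xi$ is then the one forced by the orientation convention for the CW models fixed in Subsection \ref{diffsigns}. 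This gives $\tau\circ_i\sigma=\pm\xi$ with $\xi$ as claimed.
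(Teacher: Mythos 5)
Your argument is correct and is exactly the verification the paper leaves implicit: Lemma $\ref{simplegluingdef}$ is stated without proof as an immediate consequence of Case 2 of the definition of $\circ_i$, and your unwinding of that case (a spineless leaf has only the root branch, so nothing remains to be grafted and the sum in equation $\ref{signseq2}$ is a singleton), together with the bipartiteness check and black-edge contraction when the root vertex of $\sigma$ is black, is precisely the intended reasoning. No gaps.
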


\subsection{Generators of $\op{TS}_\infty$.}  One feature of the operad structure of $\op{TS}_\infty$ is that every marked tree can be decomposed into simple compositions of several classes of generating trees which we will now define.  All generators will have trees with zero or one internal white vertex.  For generators we consider white vertices to be labeled according to the planar order, with the internal white vertex always labeled by $1$ if applicable.  Then the $S_n$ action can produce arbitrary labelings.  The generators are as follows.
\begin{itemize}
\item{}\textbf{Corollas:}  For a spineless corolla with $n$ leaves whose lone black vertex is labeled by $\gamma \in K_n$, we will abuse notation slightly and consider $\gamma \in \op{TS}_\infty(n)$.  The generating corollas are $id \in K_1$ and $\mu_n \in K_n$ for $n\geq 2$.
\item{}\textbf{Delta:}  Let $\delta$ be the unique tree in $\op{TS}_\infty(1)$ with a spine.
\item{}\textbf{Spineless Braces:}  Let $\beta_n$ be the unique spineless tree in $\op{TS}_\infty(n+1)$ with one internal white vertex.
\item{}\textbf{Spined Braces, Type 1:}  Let $\beta_{i,n}$ be the tree formed by taking $\beta_n$ and placing a spine on the internal white vertex between flags $i$ and $i+1$ (mod $n+1$), for $i=0,\dots,n$.
\item{}\textbf{Spined Braces, Type 2:}  For a cell $\gamma_m \in K_m$ and for $1\leq i\leq n$ let $\beta_n\wedge_i\gamma_m$ be the tree formed by taking $\beta_n$ and gluing a corolla with $m-1$ white vertices on to the vertex of $\beta_n$ labeled by $i+1$, and then labeling the new black vertex $v$, having $||v||=m$, by $\gamma_m$ and the white vertices according to the planar order, and placing a spine on the interior white vertex at flag $i$.  For the generators it is actually enough to restrict our labels to $\mu_m$ since the rest can be generated under operadic composition.
\end{itemize}

\begin{figure}[htbp]
	\centering
		\includegraphics[scale=.5]{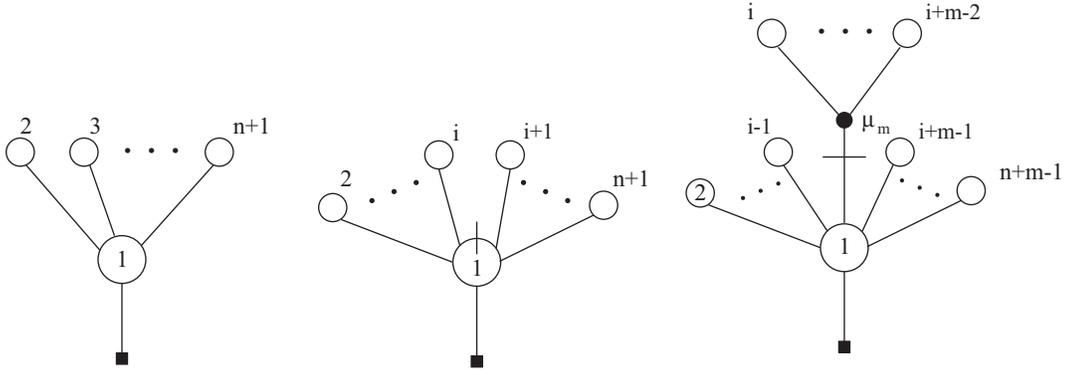}	
	\caption{Left to Right: $\beta_n$, $\beta_{i-1,n}$, and $\beta_{n}\wedge_{i-1}\mu_m$.  The $i-1$ in the subscripts is a result of the fact that the the vertex joining the edge containing the $i^{th}$ flag at $v$ is labeled $i+1$.}
	\label{fig:Bn}	
\end{figure}

\begin{lemma}\label{generators}  Any tree in $\op{TS}_\infty$ can be formed via simple gluings of generators along with the $S_n$ action.
\end{lemma}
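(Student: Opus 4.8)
The plan is to argue by induction on the number of internal (non-leaf) white vertices of a marked tree $\tau\in\op{TS}_\infty(n)$, with a secondary induction on the total number of vertices taking care of the base case. The only moves used are simple gluings in the sense of Lemma~\ref{simplegluingdef} and the $S_n$-action, and I would repeatedly invoke two elementary facts: first, that a simple gluing into a \emph{spineless} white leaf may create a black edge which is then contracted as in Definition~\ref{edgecontraction}, the amalgamated black vertex carrying the operadic product of the two labels; and second, that the number $||u||$ attached to a black vertex $u$ depends only on the spine of the white vertex \emph{immediately below} $u$ — so cutting $\tau$ along an edge while keeping each white vertex together with its own spine never disturbs the labelling data of the resulting pieces.

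\emph{Base case: no internal white vertices.} Here bipartiteness forces $\tau$ to be either a single white vertex (the generator $id$ or $\delta$, according to whether its spine is the base point or the $1$-cell) or a single-black-vertex corolla with white leaves, some possibly spined. Peeling off the spined leaves one at a time as simple gluings with $\delta$ reduces to a spineless corolla labelled by a cell $\gamma\in K_m$; since every cell of the associahedron is an operadic composition of the top cells $\mu_k$ (see \cite{MSS}), the corresponding corolla is built from the generating corollas $\mu_k$ by simple gluings (each step inserting a $\mu_k$-corolla into a spineless leaf and contracting the ensuing black edge), and the $S_n$-action supplies the correct labelling of the leaves.

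\emph{Inductive step.} Choose an internal white vertex $v$ of $\tau$ at minimal distance from the root; using bipartiteness and minimality one checks that the vertex immediately below $v$ (if any) is the root-adjacent black vertex $w$, and that every edge of $w$ other than the one meeting $v$, and every incoming edge of the black vertices $u_1,\dots,u_{|v|}$ sitting on the incoming flags of $v$, leads to a subtree ("branch") not containing $v$. Replace each such branch by a single spineless white leaf, and replace any spined leaf of the result by an unspined one; call the result $\tau_0$. Then $\tau$ is recovered from $\tau_0$ by simple gluings — reinserting each branch into the leaf that replaced it and each $\delta$ onto an unspined leaf — and each branch has strictly fewer internal white vertices than $\tau$, hence is handled by the inductive hypothesis. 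It remains to treat $\tau_0$, which has $v$ as its \emph{unique} internal white vertex. For this I would (i) peel off $w$ as a corolla-composite of generators by a simple gluing, reducing to the case in which $v$ is root-adjacent; (ii) peel off, again via simple gluings with corolla-composites, the black vertices on those incoming flags of $v$ not carrying the spine together with all leaves above them, leaving a trivial black vertex at each such flag; and (iii), when the spine of $v$ lies on an incoming flag, peel the black vertex there down to a label $\mu_m$. After these moves the residual tree is precisely one of the generators $\beta_{|v|}$, $\beta_{i,|v|}$, or $\beta_{|v|}\wedge_i\mu_m$, according as the spine of $v$ is the base point, a $1$-cell, or an incoming flag; the $S_n$-action then finishes the argument.

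I expect the main obstacle to be the bookkeeping in the inductive step: verifying at each peeling that the leaf receiving the insertion is genuinely spineless (so that Lemma~\ref{simplegluingdef} applies and the move really is a simple gluing) and that, after each black-edge contraction, the black-vertex labels of the residual pieces lie in the correct associahedra. The one genuinely delicate point is that the type-2 spined braces use only the top-cell labels $\mu_m$, so when $v$ carries a spine on an incoming flag above a black vertex with a \emph{composite} label one must factor that label further through simple gluings with $\mu_k$-corollas, exactly as in the base case; once it is checked that this is compatible with the spine — which affects only $||\cdot||$ of the bottom-most black vertex of that tower — the decomposition goes through.
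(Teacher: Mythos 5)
Your argument is correct and is essentially the paper's own proof: both decompose an arbitrary marked tree by repeatedly cutting at spineless white leaves (the simple gluings of Lemma $\ref{simplegluingdef}$) until only pieces with at most one internal white vertex remain, and then identify those pieces with composites of corollas, braces, and spined braces. The only difference is organizational --- you induct on the number of internal white vertices where the paper reduces via tree height --- and your explicit handling of spined leaves via $\delta$ and of composite black labels via $\mu_k$-corollas fills in details the paper's proof leaves implicit.
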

\begin{proof}  Let $\tau$ be a marked tree with $n$ white vertices and let $v$ be the white vertex labeled by $i$, and suppose $v$ has $m$ white vertices above it (treewise).  A `cut' of $\tau$ at $v$ is a decomposition of $\tau$ in to two trees, $\tau_1\in \op{TS}(n-m)$ and $\tau_2\in \op{TS}(m+1)$ where $\tau_1$ is given by forgetting everything above $v$, but keeping this white vertex with the spine removed, and $\tau_2$ is given by taking $v$ (and its spine) with everything above it, and adding a root directly below this vertex.  Relabel the white vertices of these trees according to the linear order inherited from the labeling of $\tau$.  Let $j$ be the label of the vertex corresponding to $v$ in $\tau_1$.  Then there is a permutation $\sigma$ such that
\begin{equation}\label{cuteq}
\tau=\sigma(\tau_1\circ_j\tau_2)  
\end{equation}
See Figure $\ref{fig:cut}$ for an example.
\begin{figure}
	\centering
		\includegraphics[scale=.84]{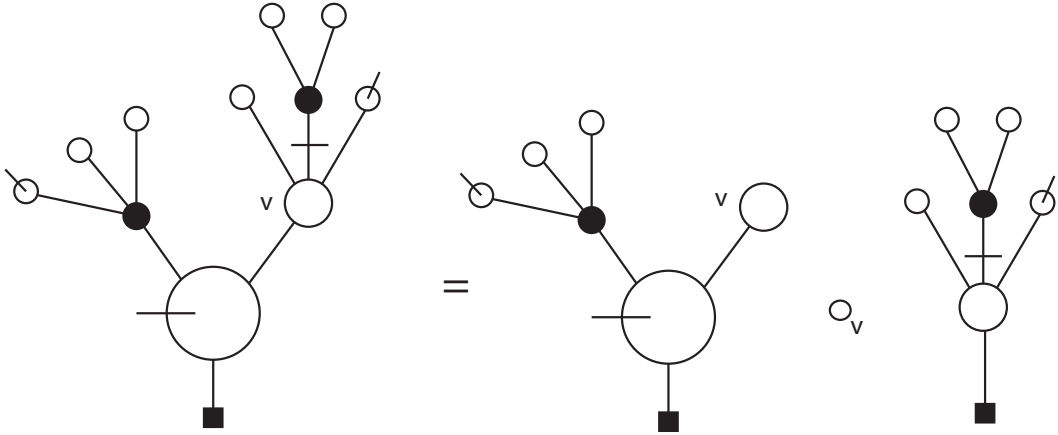}
	\caption{An example of a simple gluing/cut at the vertex $v$.  Notice the composition occurs at leaf $v$ with no spine.  The labels of the other white vertices are immaterial here because they can be freely manipulated via the $S_n$ action, as in equation $\ref{cuteq}$.}
	\label{fig:cut}
\end{figure}

Notice that the vertex of $\tau_1$ which receives the composition is of maximum height and has no spine, as in the statement of the lemma.  As such, a tree can be decomposed by cutting at a given vertex.  Now the height of a tree will be greater than the height of its pieces if we cut at a white vertex of neither maximum or minimum height.  As such we can always decompose a tree (which has finite height) into pieces with no vertices of intermediate height.  In addition we can cut a tree with no white vertices of intermediate height at each white vertex of minimum height to decompose it into pieces with (at most) 1 vertex of minimum height.  Consider first a piece with no internal white vertices.  Such a tree is a corolla whose lone black vertex is labeled by a cell in some $K_m$ and is generated under operadic composition by those corollas labeled by $\mu_n$ for $n \geq 2$.  On the other hand, consider a piece with 1 internal white vertex of minimum height.  Such a tree is of the form corollas glued onto an internal white vertex.  Each flag not labeled with a spine can be cut and replaced with a white vertex.  If a flag is labeled with a spine it can not be cut, and we are left with a type 2 spined brace operation.  We can however blow up the black vertex as indicated by the label and cut it down to a corolla, then relabel by a $\mu_n$.  In other words the type 2 spined brace operations are generated by those whose labeled black vertex takes the label $\mu_n$ for some $n$.  
\end{proof}

\section{The Action}\label{section:action}  In this section we want to give a dg action of $\op{TS}_\infty$ on $CH^\ast(A,A)$, where $A$ is a cyclic $A_\infty$ algebra.  I will call the morphism of operads $\rho$, i.e.
\begin{equation*}
\op{TS}_\infty\stackrel{\rho}\to End_{CH^\ast(A,A)}
\end{equation*}
We will first define the action informally via a generalization of the foliage operator of \cite{KCyclic}.  We will then take the time to give a precise description of the action $\rho$ starting with the generators given in Lemma $\ref{generators}$ and then extending to all trees in $\op{TS}_\infty$.  The formulas written below are for normalized cochains in accordance with Remark $\ref{normalizationremark}$.

\subsection{The Foliage Operator.}  Let $\tau \in \op{TS}_\infty(n)$ be a generator.  Let $F(\tau)$ be the formal sum over all ways to attach free tails to the white vertices of $\tau$ $\textit{and}$ the black vertices of $\tau$ which are labeled with a cell of $K_n$ for $n\geq 2$, changing the labels accordingly.  Notice that we do not allow the attaching of free tails to trivial black vertices.  A marked tree with free tails attached will be called a tree with foliage and $F$ will be called the foliage operator.  We can now informally describe the action on generators.  For a tree with foliage $\tau^\prime$ we define $\rho(\tau^\prime)(f_1\cdc f_n)$ to be zero unless for each $i$ the number of free tails of vertex $i$ is $|f_i|$ in which case $\rho(\tau^\prime)(f_1\cdc f_n)$ is a signed multiple of an operation on $CH^n(A,A)$ described informally as follows.  Insert $f_i$ into the vertex labeled by $i$, label the spine by $1$, the root by $a_0$ and each of the free tails by $a_i$ in the cyclic order, and then read off the result as a flow chart starting at the spine and moving clockwise to form an operation $\Phi$.  Finally, the image of $\tau^\prime$ is defined implicitly as
\begin{equation*}
\langle 1,\Phi(f_1,\dots,f_n)(a_0\cdc a_N) \rangle = \langle a_0, \rho(\tau^\prime)(f_1,\dots,f_n)(a_1\cdc a_N) \rangle
\end{equation*}
Notice that in the case of a spineless generator the root and spine coincide, and the image of $\rho$ can be described without the bilinear form.  We can then define the action for the generator $\tau \in \op{TS}_\infty$ by
\begin{equation*}
\rho(\tau):=\rho(F(\tau))
\end{equation*}
 
The sign associated to each tree with foliage $\tau^\prime$ can be determined by giving the spine, the root, the free tails, and the incoming edges adjacent to trivial vertices weight one.  The sign then is the product over all vertices of the sign of the permutation which permutes the planar order of the weighted elements at each vertex to the order having the root (if applicable), the edges, the free tails, and then the spine (if applicable).

We have now given an informal description of the action and the signs involved.  The remainder of this section will be dedicated to a precise description of the action on generators and an extension of this action to the whole of $\op{TS}_\infty$.  We simply observe here that the precise description given below (which we take as the definition of $\rho$) agrees with the informal description given by the Foliage operator (which we take as intuition).

\subsection{Distinguished cochains} In this subsection we will describe the operations on cochains which are in the image of the generators of $\op{TS}_\infty$.  These include the brace operations of Getzler/Kadeishvili \cite{Brace1}, \cite{Brace2} and a generalization of these operations which I will call spined braces which we now define.

\begin{definition}  The brace operations $B_n \in End_{CH^\ast(A,A)}(n+1)$ are defined by
\begin{equation}\label{braceeq}
B_n(f;g_1,\dots g_n):=  \ds\sum_{1\leq i_1 <\dots<i_n \leq m}\pm (\dots(f\circ_{i_n}g_n)\circ_{i_{n-1}} g_{n-1})\dots \circ_{i_1} g_1)
\end{equation}
where $|f|=m$ (See Notation $\ref{degreenotation}$).  
\end{definition}
Notice
\begin{equation*}
|B_n(f;g_1,\dots g_n)|=m-n+\ds\sum_{i=1}^n |g_i|
\end{equation*}
The signs in the case that $A$ is ungraded are given in e.g. \cite{Brace1}.  In the case where $A$ is graded, e.g. if $A$ is a cyclic $A_\infty$ algebra, there are additional signs as in equation $\ref{preliesigns}$.  In each term of the sum in the equation $\ref{braceeq}$ the sign can be determined by iterating the sign appearing in equation $\ref{preliesigns}$.  In particular $B_1(f;g)=f\circ g$.  Typical notation for the brace operation $B_n$ evaluated at functions $f,g_1,\dots,g_n$ is
\begin{equation*}
B_n(f;g_1,\dots,g_n)=f\{g_1,\dots,g_n\}
\end{equation*}
I will use both notations, since we will often have the need to specify the brace operation itself, not just when it's evaluated at functions.  We will also need what I will call spined brace operations.  There are two types. 

\begin{definition}  (\textbf{Type 1.})   We define $B_{l,n} \in End_{CH^\ast(A,A)}(n+1)$ for $l=0,\dots, n$ implicitly by 
\begin{equation*}
\langle a_0,B_{l,n}(f;g_1\cdc g_n)(\alpha)\rangle = \langle 1, B^1_{l,n}(f;g_1\cdc g_n) (a_0,\alpha) \rangle
\end{equation*}
where $|f|=m$, $\alpha \in A^{\tensor (m-n+\sum_i |g_i|)}$ and where
\begin{equation*}
B^1_{l,n}(f;g_1,\dots g_n)(a_0,-):=  \ds\sum_{\substack{1\leq i_{l+1} <\dots<i_n\\ <i_0<\dots < i_l \leq m}}\pm (\dots(((((f\circ_{i_l}g_l)\circ_{i_{l-1}} g_{l-1})\dots \circ_{i_1} g_1)\circ_{i_0}a_0)\circ_{i_n}g_n) \circ \dots g_{l+1})\circ t^j
\end{equation*}
where $t$ is the permutation $(1\dots |\alpha|)$ and where
\begin{equation*}
j= i_0+\ds\sum_{r=l+1}^n (|g_r|-1)
\end{equation*}  The signs are determined by including the signs in equation $\ref{preliesigns}$ in each composition along with an additional $(-1)^{j(|\alpha|-1)}$ to account for the $t^j$.  The intuition behind this definition comes from considering the tree $\beta_{l,n}$, where the spine is labeled by 1, the root is labeled by $a_0$, the internal vertex is labeled by $f$, the leaves are labeled by the $g_i$ and the arguments $a_i$ are freely adjoined to the vertices as per the cyclic order.  Notice that $a_0$ plays a special role in that it must be between $g_n$ and $g_1$.  For $B^1_{l,n}$ evaluated at functions which are in turn evaluated at a pure tensor we will also use the following imprecise but intuitive notation;
\begin{equation*}
B^1_{l,n}(f,g_1\cdc g_n)=f\{'g_{l+1}\cdc g_n, a_0,g_1\cdc g_l \}
\end{equation*}
where the element $a_0\in A$ is the left most term of the pure tensor.  
\end{definition}

\begin{definition}  (\textbf{Type 2.})   We define $B_n\wedge_l\gamma_m \in End_{CH^\ast(A,A)}(n+m-1)$ for $\gamma_m \in K_m$ and $l=1,\dots, n$ implicitly by
\begin{equation*}
\langle a_0,B_n\wedge_l\gamma_m(f;g_1,\dots g_{n+m-1})(\alpha)\rangle=\langle 1,B^1_n\wedge_l\gamma_m(f;g_1,\dots g_{n+m-1})(a_0,\alpha)\rangle
\end{equation*}
where $\alpha \in \mathbb{T}A$, the tensor algebra, is a direct summand of pure tensors and where 
\begin{equation}\label{defeq}
B^1_n\wedge_l\gamma_m(f;g_1,\dots g_{n+m-1})(a_0,\alpha)=F(\gamma_m)(g_l\cdc g_{l+m-2},\psi)
\end{equation}
Where $\psi=$
\begin{equation*}
\ds\sum_{\substack{1\leq i_{l+m-1} <\dots<i_{n+m-1}\\ <i_0<\dots < i_{l-1} \leq m}}\pm (\dots((((f\circ_{i_{l-1}}g_{l-1})\dots \circ_{i_1} g_1)\circ_{i_0}a_0)\circ_{i_{n+m-1}}g_{n+m-1}) \circ_{i_{l+m-1}} \dots g_{l+m-1})\circ t^j
\end{equation*}
Here by a function evaluated at $\alpha \in \mathbb{T}A$ we mean evaluate the function at the summand $\alpha_s$ of $\alpha$ having the correct number of inputs (which changes with the foliage) and $t$ denotes the permutation $(1\dots s)$ (again $s$ changes with the foliage) and where $j$ is chosen such that the first argument of the relevant summand of $\alpha$ immediately follows $a_0$.  Again the signs in each term of the sum are determined by including the signs in equation $\ref{preliesigns}$ at each composition as well as $(-1)^{j(|\alpha_s|-1)}$.  We will write 
\begin{equation*}
B^1_n\wedge_l\gamma_m(f;g_1,\dots g_{n+m-1})=F(\gamma_m)(g_l\cdc g_{l+m-2},f\{'g_{l+m-1}\cdc g_{n+m-1},a_0,g_1 \cdc g_{l-1}\})
\end{equation*}
where the element $a_0$ will be clear from the context.
\end{definition}
\begin{remark}  In the previous definition we assume that the cell $\gamma_m \in K_m$ is labeled according to the planar order (as was our assumption for all generators).  For other non-generating trees, this need not be the case and the order of the functions in equation $\ref{defeq}$ will reflect the labeling of the black vertex.
\end{remark}

\subsection{Action of the Generators.}\label{actionsection}
We now spell out the action of the generators:
\begin{itemize}
\item  \textbf{Corollas:}  Given a corolla whose lone black vertex is labeled by $\gamma \in K_n$ for $n \geq 2$ we can blow up this vertex to the corresponding tree with white leaves and all other vertices black.  The corolla is then mapped under $\rho$ to all ways of attaching free tails (elements) to this picture and then multiplying as specified.  In particular if we restrict our attention to generators then we can take $\gamma=\mu_n$ and then $\rho(\mu_n)=F(\mu_n)\in End_{CH^\ast(A,A)}(n)$ where
\begin{equation*} 
F(\mu_n)(f_1\cdc f_n)(\alpha)=\bigoplus_{s\geq n}\mu_s\{f_1\cdc f_n\}(\alpha_k)
\end{equation*}
where $\alpha \in \mathbb{T}A$ and $k=s-n+\sum_i|f_i|$.  Notice that the corolla with one white vertex is mapped to the identity operation.
\item  \textbf{The BV operator:}  The tree $\delta$ is mapped to the BV operator, defined implicitly using the bilinear form;
\begin{equation*}
\langle a_0, \Delta(f)(a_1,\dots, a_{n-1})\rangle = \langle 1, f\circ N(a_0,\dots, a_{n-1})\rangle
\end{equation*}
for $f \in CH^n(A,A)$, as we have seen above (Remark $\ref{normalizationremark}$).
\item  \textbf{Unspined Braces:} We define $\rho(\beta_n)=B_n$.  Using the bracket notation for brace operations we write
\begin{equation*}
\rho(\beta_n)(f,g_1,\dots, g_n)=f\{g_1,\dots, g_n\}
\end{equation*}
\item \textbf{Spined Braces, Type 1:}  We define $\rho(\beta_{i,n})=B_{i,n}$.  Recall that this defines $\rho(\beta_{i,n})$ implicitly by 
\begin{align*}
 & \langle a_0, \rho(\beta_{i,n})(f,g_1\cdc g_n)(\alpha)\rangle=\langle a_0, B_{i,n}(f;g_1,\dots, g_n)(\alpha)\rangle \\
 & \indent = \langle 1, B_{i,n}^1(f;g_1,\dots, g_n)(a_0,\alpha)\rangle = \langle 1, f\{'g_{i+1}\cdc g_n, a_0, g_1 \cdc g_i \}(\alpha)\rangle
\end{align*}
\item \textbf{Spined Braces, Type 2:}  We define $\rho(\beta_n\wedge_i \mu_m)=B_n\wedge_i\mu_m$.  Then
\begin{align*}
& \langle a_0, \rho(\beta_n\wedge_i \mu_m)(f,g_1\cdc g_{n+m-1})(\alpha)\rangle=\langle a_0, B_n\wedge_i\mu_m(f;g_1,\dots, g_{n+m-1})(\alpha)\rangle\\
& \indent = \langle 1, B^1_n\wedge_i\mu_m(f;g_1,\dots, g_{n+m-1})(a_0,\alpha)\rangle \\
& \indent = \langle 1, F(\mu_m)(g_i\cdc g_{i+m-2},f\{'g_{i+m-1}\cdc g_{n+m-1},a_0,g_1 \cdc g_{i-1}\})(\alpha) \rangle 
\end{align*}
But due to the invariance of the bilinear form and the fact that $\mu_n$ is normalized for $n\geq 3$ (Remark $\ref{unitremark}$), this is zero unless we have $m=2$, in which case all foliage is zero, but the original terms remain.  That is $\langle a_0, \rho(\beta_n\wedge_i \mu_m)(f,g_1\cdc g_{n+m-1})(\alpha)\rangle =$
\begin{equation}\label{zerocases}
\begin{cases} \langle 1, g_if\{'g_{i+1}\cdc g_{n+m-1},a_0,g_1 \cdc g_{i-1}\}(\alpha) \rangle  & \text{ if } m=2 \\ 0 & \text{else}\end{cases}
\end{equation}
\end{itemize}
I will remark here that the action on spineless generators coincides with that of \cite{KS} and \cite{KSch} and the action on spined brace operations coincides with that of \cite{KCyclic}.  In order to extend $\rho$ to a morphism on the entirety of $\op{TS}_\infty$ we proceed as follows.  Let $\tau$ be any tree appearing in $\op{TS}_\infty$ and choose a decomposition into generators, 
\begin{equation*}
\tau= (...(g_1\circ_{i_1}\dots\circ_{i_{n-1}}g_n)\dots)
\end{equation*}
such that composition is simple (see Lemma $\ref{simplegluingdef}$).  There is at least one such decomposition by Lemma $\ref{generators}$.  Here the notation $(\dots($ means that there is a parenthisization of these generators which we do not wish to specify.  We then define
\begin{equation*}
\rho(\tau)= (...(\rho(g_1)\circ_{i_1}\dots\circ_{i_{n-1}}\rho(g_n))\dots)
\end{equation*}
where the parethesization is the same as in the original presentation of $\tau$.
\begin{lemma}  The definition of $\rho(\tau)$ is well defined independent of the choice of such a decomposition.
\end{lemma}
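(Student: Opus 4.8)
The plan is to reduce the well-definedness of $\rho(\tau)$ to a statement about the relations among generators, and to exploit the fact that $\rho$ is being asked to respect only \emph{simple} compositions. First I would observe that any two simple-gluing decompositions of a fixed marked tree $\tau$ are related by a sequence of elementary moves: given a tree $\tau$ built by simple gluings at a collection of white leaves of intermediate or minimal height, the order in which one performs the cuts is the only ambiguity, together with the choice, at a white vertex, of whether to first split off the black corolla above it or one of the corollas sitting on it. So it suffices to check that $\rho$ is insensitive to (i) interchanging the order of two cuts at vertices lying in ``independent'' positions of the tree (neither above the other), and (ii) re-associating two nested cuts at vertices $v$ and $w$ with $w$ above $v$. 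The first is an immediate consequence of the fact that simple composition at disjoint leaves commutes in $End_{CH^\ast(A,A)}$ --- i.e. $(x\circ_i y)\circ_j z = (x\circ_j z)\circ_i y$ for the endomorphism operad, which holds with signs by operad associativity/the interchange axiom. The second is exactly the operad associativity axiom $(x\circ_i y)\circ_{j}z = x\circ_i(y\circ_{j-i+1}z)$ in $End_{CH^\ast(A,A)}$, provided the image under $\rho$ of a generator ``blown up'' into simple pieces is computed consistently. So the content of the lemma is entirely encoded in the associativity of the target operad, once we know that the informal (foliage-operator) description of $\rho$ is genuinely an operad-theoretic assignment.

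The heart of the argument, then, is to show that the two sources of non-uniqueness that are special to $\op{TS}_\infty$ --- namely that a black vertex may be presented either as a single cell $\gamma\in K_n$ or as a composition of smaller cells (Definition~\ref{markedtrees}), and that the foliage operator $F$ distributes over such a presentation --- are compatible with $\rho$. For this I would verify that for a corolla labeled $\gamma = \gamma'\circ_k\gamma''$ one has $F(\gamma) = F(\gamma')\{ \ldots, F(\gamma''), \ldots\}$ up to the prescribed signs, which is precisely the statement that the foliage operator intertwines operad composition in $\op{A}_\infty$ with brace-type composition in $End_{CH^\ast(A,A)}$; this is the $A_\infty$-analog of the corresponding fact in \cite{KS} and \cite{KSch} and can be cited or checked termwise. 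Combined with the explicit formulas for the spined braces $B_{l,n}$ and $B_n\wedge_l\gamma_m$ --- whose definitions were arranged precisely so that reading off the ``flow chart'' of the tree gives the same cochain regardless of where one breaks it --- this shows that any decomposition of a \emph{generator} into simple gluings of (smaller) generators yields the same operation, and then the general case follows by the associativity reduction of the previous paragraph.

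Concretely the steps are: (1) set up the notion of an elementary move between two simple-gluing decompositions and prove any two decompositions are connected by such moves (a straightforward induction on the number of internal white vertices, using the ``cut'' construction from the proof of Lemma~\ref{generators}); (2) check invariance under a ``disjoint'' move using commutativity of composition at distinct leaves in $End_{CH^\ast(A,A)}$; (3) check invariance under a ``nested'' move using operad associativity in $End_{CH^\ast(A,A)}$; (4) check that the decomposition of a single generator --- in particular the blow-up of a composite black-vertex label and the distribution of $F$ over it --- does not change $\rho$, by comparing the explicit brace formulas; (5) assemble (1)--(4). The main obstacle is step (4), and within it the sign bookkeeping: one must confirm that the signs attached to trees-with-foliage (the product over vertices of the sign of the shuffle putting root/edges/tails/spine into standard order) are multiplicative under the blow-up of a black vertex and match the iterated signs of equation~\ref{preliesigns} appearing in the brace operations. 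Everything else is a formal consequence of the operad structure on $End_{CH^\ast(A,A)}$ together with the compatibility already recorded for the non-spined part in \cite{KS},\cite{KSch} and for the spined part in \cite{KCyclic}; the new ingredient is only that these two compatibilities coexist, which is transparent because a marked tree is bipartite and the two kinds of operation act at disjoint (white vs.\ black) vertices.
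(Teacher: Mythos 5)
Your proof is correct and rests on the same key observation as the paper's: since every composition in the chosen decomposition is a simple gluing at a spineless white vertex of maximal height, the decomposition is just a decomposition of the underlying tree, and operad associativity of $End_{CH^\ast(A,A)}$ does the rest. The paper's proof is essentially that one sentence; your step (4) about blow-ups of composite black-vertex labels and the distributivity of the foliage operator is not actually required for the lemma as stated, because $\rho$ is \emph{defined} only on the listed generators (corollas labeled by $\mu_n$, $\delta$, and the braces), with the foliage-operator description explicitly relegated to intuition rather than definition.
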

\begin{proof}  Since each composition occurs at a vertex of maximum height with no spine, such a decomposition corresponds to a decomposition of a tree (forgetting the extra data), and so the operad associativity of $End_{CH^\ast(A,A)}$ ensures the lemma.
\end{proof}
We then extend $\rho$ linearly across linear combinations of trees.
\begin{lemma}\label{morphismlemma}  If $\tau$ and $\tau^\prime$ are generators of $\op{TS}_\infty$ then $\rho(\tau\circ_i \tau^\prime) = \rho(\tau)\circ_i\rho(\tau^\prime)$.
\end{lemma}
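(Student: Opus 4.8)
The plan is to verify the identity $\rho(\tau\circ_i\tau')=\rho(\tau)\circ_i\rho(\tau')$ by a case analysis mirroring the three cases in the definition of the operad structure on $\op{TS}_\infty$ (Case 1: $v$ spined and $u$ spined; Case 2: $v$ has no spine; Case 3: $v$ has a spine and $u$ does not), and within each case by further reducing to the finite list of generators from Lemma $\ref{generators}$. Since $\tau$ and $\tau'$ are generators, $\tau$ is either a corolla $\mu_n$, $\delta$, a spineless brace $\beta_n$, a type-1 spined brace $\beta_{i,n}$, or a type-2 spined brace $\beta_n\wedge_i\mu_m$, and similarly for $\tau'$; the white vertex $v$ of $\tau$ labeled by $i$ is one of these generators' white vertices. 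The strategy is to exploit that $\rho$ was \emph{defined} on non-generating trees precisely by decomposing into simple gluings of generators and composing in $End_{CH^\ast(A,A)}$, so the content of the lemma is that the operadic composition $\tau\circ_i\tau'$ of two generators — which is in general \emph{not} a simple gluing, being a sum over ways to graft the $v$-branches — is sent by $\rho$ to the corresponding sum of brace-type compositions in the endomorphism operad.

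First I would dispose of the easy structural cases. If $v$ is a leaf of $\tau$ with no spine, then by Lemma $\ref{simplegluingdef}$ the composition is a simple gluing and the identity holds by the very definition of $\rho$ on non-generators together with the operad associativity of $End_{CH^\ast(A,A)}$; this covers insertion into a leaf of $\beta_n$, $\beta_{i,n}$ (at a non-spined leaf), and $\mu_n$ (after blowing up). If $v$ is the internal white vertex of $\tau$, one must compute the grafting sum explicitly. The main point is a \emph{combinatorial bijection}: the terms appearing in $\tau\circ_i\tau'$ — obtained by detaching the $v$-branches, identifying roots (Case 2) or inserting $\sigma$ at the spine (Case 3), and reattaching branches compatibly with the (cyclic) order — are in order-preserving bijection with the terms produced by the brace formula $\ref{braceeq}$ (resp. the spined-brace formulas), where grafting a branch onto a black vertex labeled $\mu_s$ corresponds exactly to a $\circ_{i_k}$ insertion with the index $i_k$ recording which of the $s$ slots (after foliage) receives the branch. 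The foliage operator $F$ is the bookkeeping device that makes this precise: $F(\mu_s)$ spreads the $\mu_s$ across all free-tail configurations, and grafting a $v$-branch is insertion into one such slot. I would check that the $a_0$/spine bookkeeping matches: in Case 3 the old root of $\tau$ becomes the spine of $u$, which on the endomorphism side is exactly the clause "$a_0$ must sit between $g_n$ and $g_1$" in the definition of $B^1_{l,n}$ and $B^1_n\wedge_l\mu_m$.

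The hardest part will be the \textbf{signs}. The signs on the $\op{TS}_\infty$ side are declared only implicitly, via the chosen orientations of the CW complexes whose cellular chains give $\op{TS}_\infty$ (subsection $\ref{diffsigns}$, not in this excerpt), while the signs on the endomorphism side come from iterating the pre-Lie sign of equation $\ref{preliesigns}$ together with the extra $(-1)^{j(|\alpha|-1)}$ factors from the cyclic permutations $t^j$. Matching these requires tracking, term by term in each grafting sum, the permutation that reorders the flags at each affected vertex into the reference order (root, edges, free tails, spine) used to define the foliage signs, and checking it agrees with the accumulated Koszul signs from the iterated $\circ_i$. I expect this to be the real labor: one reduces to each generator-pair, writes the grafting sum and the brace sum side by side, and checks the signs agree on a representative term (the general term following by the same computation with shifted indices). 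I would organize this by first doing $\tau=\beta_n$ against each $\tau'$ (recovering the associativity of braces, essentially the Getzler--Kadeishvili identity), then $\tau=\beta_{i,n}$ and $\tau=\beta_n\wedge_i\mu_m$ against each $\tau'$, using the already-established $\beta_n$ case and the invariance of the bilinear form (Proposition $\ref{cycalgprop}$) plus normalization (Remark $\ref{unitremark}$) to kill the foliage terms that vanish — exactly as in the computation leading to equation $\ref{zerocases}$ — and finally $\tau=\delta$ against each $\tau'$, where the cyclic operator $N$ interacts with the grafting through the spine.
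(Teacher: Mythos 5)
Your plan is essentially sound, and for the genuinely new compositions it runs on exactly the engine the paper uses: the term-by-term correspondence between grafting branches onto a black vertex and filling slots of the foliage $F(\mu_n)$, with invariance of the bilinear form plus normalization killing the unfilled tails (the same mechanism as in the computation leading to equation $\ref{zerocases}$). Where you differ is in the case analysis. The paper does not verify all generator pairs from scratch: it observes that compositions of spineless generators coincide with the action of \cite{KS}, that the compositions producing the brace and spined brace operations reproduce the associative-case operations of \cite{KCyclic}, and that composing with a corolla on the left is a simple gluing; after these reductions only the two families $\rho(\beta_{m,l}\circ_1\mu_n)$ and $\rho((\beta_m\wedge_l\mu_p)\circ_1\mu_n)$ for $n\geq 3$ remain (with $\delta=\beta_{0,0}$ absorbing the compositions of $\delta$ with higher corollas), and these are checked by the foliage argument you describe. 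Your route is self-contained but would force you to re-derive the Getzler--Kadeishvili brace identities and Kaufmann's spined-brace computations, including the sign bookkeeping you rightly flag as the real labor; the paper's route buys brevity at the price of leaning on those references, and note that even there the signs are not matched term by term but are fixed once and for all by the orientation conventions of subsection $\ref{diffsigns}$, so your proposed Koszul-versus-orientation comparison is more than the paper itself carries out.
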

\begin{proof}
In order to show that $\rho$ respects the composition on generators we can limit our horizons as follows.  We will see below that this action coincides with that of \cite{KS} on the suboperad generated by spineless trees, thus we do not need to consider spineless operations.  Additionally, since the brace operations and the spined brace operations coincide with those of \cite{KCyclic} we do not need to consider these compositions.  Finally, since composing with a corolla on the left is a composition at a spineless vertex of maximum height, this case follows from the definition.  Thus the only compositions of generators that remain to be checked are the following:
\begin{eqnarray*}
\rho(\beta_{m,l} \circ_1 \mu_n) &=& \rho(\beta_{m,l})\circ_1 \rho(\mu_n) \label{eqI}\\ 
\rho((\beta_m\wedge_l\mu_p) \circ_1 \mu_n) &=& \rho(\beta_m\wedge_l\mu_p) \circ_1 \rho(\mu_n) \label{eqII}
\end{eqnarray*}
for $n\geq 3$.  So let us prove equation $\ref{eqI}$.  For $f_1\cdc f_n,g_1\cdc g_m$ cochains we have
\begin{eqnarray*}
\langle a_0, \rho(\beta_{m,l})\circ_1\rho(\mu_n)(f_1\cdc f_n,g_1\cdc g_m) \rangle 
= \langle a_0, \rho(\beta_{m,l})(F(\mu_n(f_1\cdc f_n)),g_1\cdc g_m) \rangle \\
= \langle a_0, B_{n,l}[F(\mu_n(f_1\cdc f_n));g_1\cdc g_m] \rangle 
= \langle 1, [F(\mu_n(f_1\cdc f_n))]\{g_{l+1}\cdc g_m,a_0,g_1\cdc g_l\}\rangle\\
\end{eqnarray*}
There are two ways in which a summand of this expression could be nonzero.  One is if there is no foliage, in which case each $g_i$ must be inserted in to some $f_j$, which corresponds to grafting branches to white vertices in the composition $\beta_{m,l} \circ_1 \mu_n$.  The other is if there is foliage that is filled with some of the $g_i$.  Such terms correspond to grafting branches to the black vertex in $\beta_{m,l} \circ_1 \mu_n$ (see Figure $\ref{fig:operadstructure2}$ for an example of such a composition).  Any terms with foliage not filled with the $g_i$ will be zero since we can rotate the $1$ in to this tail using the invariance of the bilinear form, and then appeal to our normalized assumption.
\end{proof}

\begin{theorem}\label{opthm}  Let $A$ be a cyclic $A_\infty$ algebra.  The map $\rho\colon\op{TS}_\infty\to End_{CH^\ast(A,A)}$ defines a morphism of operads.
\end{theorem}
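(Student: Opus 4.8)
The plan is to derive the theorem from the two lemmas just proved, together with the associativity of the operads $\op{TS}_\infty$ and $End_{CH^\ast(A,A)}$. First I would dispose of the easy axioms. The corolla with a single white vertex is the operadic unit of $\op{TS}_\infty$ and is sent by $\rho$ to the identity operation, and $\rho$ is $S_n$-equivariant by construction, since the $S_n$-action on $\op{TS}_\infty(n)$ permutes the labels of the white vertices while $\rho$ inserts the $i$th cochain into the vertex labeled $i$. So the substance of the theorem is the identity $\rho(\tau\circ_i\sigma)=\rho(\tau)\circ_i\rho(\sigma)$ for arbitrary marked trees $\tau\in\op{TS}_\infty(n)$, $\sigma\in\op{TS}_\infty(m)$ and $1\le i\le n$.

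Second, I would reduce this identity to the case of two generators by ``commuting out'' simple gluings. Let $v$ be the white vertex of $\tau$ labeled $i$ and let $u$ be the vertex of $\sigma$ adjacent to its root. Using a cut of $\tau$ at $v$ as in the proof of Lemma \ref{generators}, write $\tau=\pi(\tau_1\circ_j\tau_2)$ with $\tau_2$ the subtree consisting of $v$ and everything above it --- so that $v$ becomes the root-adjacent vertex of $\tau_2$ --- and with $\circ_j$ a simple gluing; by the associativity of $\op{TS}_\infty$ and the compatibility of the $S_n$-actions with composition, the composition with $\sigma$ then takes place inside $\tau_2$, i.e.\ $\tau\circ_i\sigma=\pi'(\tau_1\circ_j(\tau_2\circ_1\sigma))$ with $\circ_j$ still simple. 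Applying Lemma \ref{generators} again, decompose $\tau_2$ into simple gluings of generators with $g$ the generator containing $v$, and decompose $\sigma$ likewise into simple gluings of generators with $g'$ the generator containing $u$. Since inserting at $v$ commutes (by operad associativity) with all of the simple gluings at the leaves of $g$ and of $g'$, one finds that $\tau\circ_i\sigma$ equals a composite built from $\tau_1$, the generator-composition $g\circ_1 g'$, and various subtrees, in which every composition except the single composition $g\circ_1 g'$ is a simple gluing (followed by the relabelling $\pi'$).

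Third, I would apply $\rho$ to both sides of this identity. On the left, $\rho$ commutes with simple gluings by construction and is independent of the chosen decomposition into simple gluings (by the lemma preceding Lemma \ref{morphismlemma}), so $\rho(\tau\circ_i\sigma)$ is the corresponding composite of the $\rho$'s of the pieces, with $\rho(g\circ_1 g')$ occupying the slot of $g\circ_1 g'$. On the right, expanding $\rho(\tau)$ and $\rho(\sigma)$ through the same decompositions and using the associativity of $End_{CH^\ast(A,A)}$ to slide $\circ_i$ inward, $\rho(\tau)\circ_i\rho(\sigma)$ is the identical composite but with $\rho(g)\circ_1\rho(g')$ in that slot. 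The two therefore agree by Lemma \ref{morphismlemma}. (Equivalently one may organize this as an induction on the total number of generators appearing in fixed decompositions of $\tau$ and $\sigma$, with Lemma \ref{morphismlemma} as the base case.)

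I expect the main obstacle to be the organizational heart of the second step: checking that the sum of marked trees $\tau\circ_i\sigma$ really does coincide, term by term and with matching signs, with the described composite of generators and simple gluings. This rests on the associativity of $\op{TS}_\infty$ --- which is precisely the associativity of grafting branches onto black vertices noted when the operad was defined --- and on the compatibility of the chosen orientations of the CW complexes underlying $\op{TS}_\infty$ with these graftings. Once that is secured, the signs on the cochain side need no separate treatment, since all remaining manipulations invoke only the operad axioms in $End_{CH^\ast(A,A)}$, which hold with their Koszul signs. Some additional care is needed with the special roles of the spine and of the basepoint argument $a_0$, which must occupy the position between the last and the first $v$-branch; but this is the same bookkeeping already carried out in the proof of Lemma \ref{morphismlemma} and in the definitions of the spined brace operations.
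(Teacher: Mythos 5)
Your overall strategy---reduce to compositions of generators via simple gluings and invoke Lemma $\ref{morphismlemma}$---is the right one and is essentially the paper's, but the organizational heart of your second step, which you yourself flag as the main obstacle, contains a genuine gap. You claim that after commuting out the simple gluings in chosen decompositions of $\tau$ and $\sigma$, the only non-simple composition remaining is the single generator-composition $g\circ_1 g'$. This is false: in $\tau\circ_i\sigma$ the $v$-branches of $\tau$ are distributed over \emph{all} of $\sigma$, not merely over the root-adjacent generator $g'$. Concretely, write $\sigma=g'\circ_k\sigma''$ with $\circ_k$ simple. Associativity gives $\tau_2\circ_1\sigma=(\tau_2\circ_1 g')\circ_{k'}\sigma''$, but in those summands of $\tau_2\circ_1 g'$ in which a $v$-branch has been grafted onto the $k$-th white vertex of $g'$, that vertex is no longer a bare spineless leaf, so the composition with $\sigma''$ is not a simple gluing; it genuinely redistributes the grafted branches over $\sigma''$. (Take $\tau=\beta_1$ and $\sigma=\beta_1\circ_2\beta_1$, a path of three white vertices: the terms of $\tau\circ_1\sigma$ in which the leaf of $\tau$ lands on the top white vertex of $\sigma$ are not produced by $g\circ_1 g'$ followed by simple gluings.) Since $\rho$ is only guaranteed by construction to commute with simple gluings, your third step then has nothing to stand on for exactly these terms.

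The fix is the induction you mention only parenthetically, and it must be set up with care: the paper inducts on the number of generators in a decomposition of $\sigma$ alone, with the inductive statement quantified over \emph{all} $\tau$. The base case ($\sigma$ a single generator $g_1$) is handled essentially as you propose---there the factorization does work, because the $v$-branches of $\tau$ remain intact spineless leaves after being distributed over the single generator, so re-gluing the upper parts of $\tau$ at those leaves is still simple, and one is reduced to $h_j\circ_v g_1$ with both factors generators. For the inductive step one writes $\sigma=(\dots(g_1\circ\dots\circ g_n)\dots)$, peels off the outermost composition using $\tau\circ_i(\alpha\circ_l\beta)=(\tau\circ_i\alpha)\circ_{l'}\beta$, and applies the inductive hypothesis to each (arbitrary, non-generator) tree appearing in the sum $\tau\circ_i\alpha$ composed with $\beta$. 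An induction ``on the total number of generators'' could be made to work along the same lines, but the non-inductive reduction you actually wrote does not.
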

\begin{proof}  Since $\rho$ is defined to be linear over trees it is enough to show that $\rho$ respects the operadic composition of two trees.  That is, if $\tau$ and $\tau^\prime$ are trees in $\op{TS}_\infty$ we must show
\begin{equation}\label{proofeq}
\rho(\tau\circ_i \tau^\prime) = \rho(\tau)\circ_i\rho(\tau^\prime)
\end{equation}
We will show this using the following two facts which follow from the Definition of $\rho$ and from Lemma $\ref{morphismlemma}$:
\begin{enumerate}
\item  Equation $\ref{proofeq}$ holds if the gluing is simple.  
\item  Equation $\ref{proofeq}$ holds if $\tau$ and $\tau^\prime$ are generators.
\end{enumerate}
Given a tree $\tau$ we can write $\tau$ as a composition of generators $\tau = (\dots(h_1\circ_{i_1}\dots\circ_{i_{m-1}} h_m)\dots)$ such that each composition occurs at a vertex (of the tree on the left hand side of $\circ$) of maximum height with no spine.  Again the notation $(\dots($ means that there is some parenthesization of these binary operations, which we don't what to specify.  Then property $(1)$ above tells us that $\rho(\tau)=(\dots(\rho(h_1)\circ_{i_1}\dots\circ_{i_{m-1}} \rho(h_m))\dots)$.  Now suppose that $\tau^\prime$ can be written as a composition of $n$ generators $g_1\cdc g_n$, and induct on $n$.  If $n=1$ then write $\tau$ as a composition of generators $h_1\cdc h_m$ such that each composition occurs at a vertex of maximum height.  Let $h_j$ be the generator appearing in $\tau$ which contains the vertex $v$.  Then using the associativity axiom we can write
\begin{equation*}
\tau\circ_i g_1 = (\dots(\dots\circ(h_j\circ_vg_1)\circ \dots)\dots)
\end{equation*}
Where each binary composition in the above expression is either a composition at maximum height or a composition of generators.  Thus by properties (1) and (2) above and the associativity axiom we can apply $\rho$ and see that 
\begin{eqnarray*}
\rho(\dots(\dots\circ(h_j\circ_vg_1)\circ \dots)\dots)= (\dots(\dots\circ(\rho(h_j)\circ_v\rho(g_1))\circ \dots)\dots) \\ 
= (\dots(\rho(h_1)\circ_{i_1}\dots\circ_{i_{m-1}} \rho(h_m))\dots)\circ_v \rho(g_1) = \rho(\tau)\circ_i \rho(g_1)
\end{eqnarray*}
For the induction step notice that we can use the associativity axiom to move the outer parentheses of $\tau\circ_v\tau^\prime=\tau \circ_v (\dots(g_1\circ_{l_1}\dots \circ_{l_{n-1}} g_n)\dots)$ and invoke the induction hypothesis.
\end{proof}

\begin{remark}\label{gradingconventions2}
Note that $\rho$ actually reverses the grading; for example $|B_n|=-n$ and $|\beta_n|=n$, additionally $|\delta|=1$ and $|\Delta|=-1$.  Hence $\rho$ induces a morphism of graded operads on the operad with grading opposite of $\op{TS_\infty}$.  Keeping this in mind we will consider $\rho$ a morphism of graded operads.  The reason why this reversal of grading is necessary is because on the one hand there is a standard grading on the Hochschild cochains which takes the number of inputs as $\textit{adding}$ to the grading, and on the other hand we will build a CW complex whose cells are indexed by trees, where an edge of a tree corresponds to $\textit{taking away}$ an input.  We could circumvent this issue by choosing nonstandard grading conventions for the Hochschild complex (Remark $\ref{gradingconventions}$).
\end{remark}

\subsection{The Differential.}\label{section:differential}  In the previous subsection we constructed a morphism of graded operads
\begin{equation*}
\op{TS}_\infty\stackrel{\rho}\to End_{CH^\ast(A,A)}
\end{equation*}
We would now like to define a differential $\partial_H$ such that 
\begin{enumerate}
\item{}  $\rho$ is a morphism of dg operads, i.e. $\rho \circ \partial_H = d_{Int}\circ \rho$.
\item{}  The operads $(\op{TS}_\infty, \partial_H)$ and $(\op{TS}, \partial_T)$ are homotopy equivalent. 
\end{enumerate}
Here we use the notation $d_{Int}$ for the differential on $End_{CH^\ast(A,A)}$ and call this the `internal differential'.  Explicitly for $\Phi \in End_{CH^\ast(A,A)}(n)$, 
\begin{equation}\label{internaleq}
d_{Int}(\Phi)(f_1,\dots,f_n):= \ds\sum_{i=1}^n (-1)^{||f_1||+\dots+||f_{i-1}||}\Phi(f_1\cdc d(f_i)\cdc f_n)-(-1)^{|\Phi|} d(\Phi(f_1\cdc f_n))
\end{equation} 
where $d$ on the right hand side is the Hochschild differential $d(f)=[f,\mu]$.  The terminology `internal' is meant to remind one that $d_{Int}$ takes $End_{CH^\ast(A,A)}(n)$ to itself, as opposed to the Hochschild differential which changes the number of inputs.

For a marked tree $\tau$ we define the differential $\partial_H$ `locally'.  That is we first define the differential at a vertex $v$, call this $\partial(\tau;v)$, and then define
\begin{equation*}
\partial_H(\tau):=\sum_{v\in \tau}\pm \partial(\tau;v)
\end{equation*}
To define $\partial(\tau;v)$ we have three cases.

$\textbf{Case 1}$: $v$ is a black vertex.  In this case we define $\partial(\tau;v)$ to be the tree resulting from taking the associahedra differential on the label of $v$.  Notice then that trivial black vertices do not contribute to the differential. 

$\textbf{Case 2}$: $v$ is a white vertex whose spine is on a $0$ cell.  In this case $\partial(\tau;v)$ is a sum of all trees which can be formed by contracting one or more adjacent white angles (see subsection $\ref{whiteangles}$).

$\textbf{Case 3}$: $v$ is a white vertex whose spine is on a $1$ cell.  In this case $\partial(\tau;v)$ is a sum of all trees which can be formed by contracting one or more adjacent spineless white angles, along with the two trees that can be formed by pushing off the spine. 
\begin{figure}[htbp]
	\centering
		\includegraphics[scale=.5]{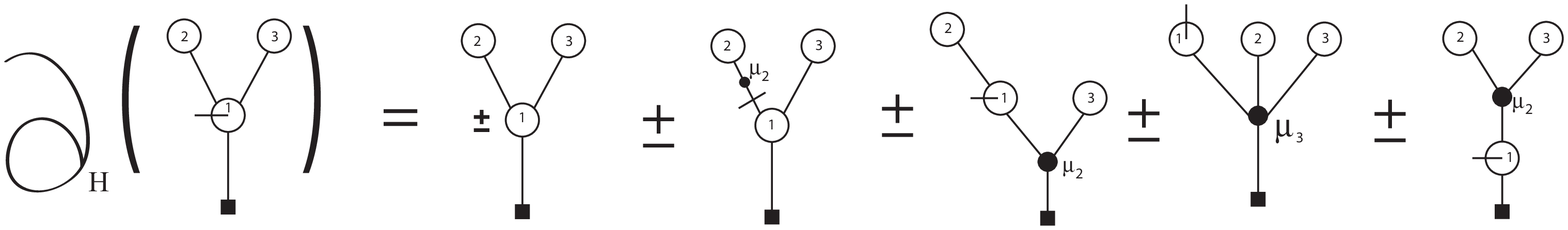}
	\caption{ $\partial_H(\beta_{0,2})$ is the signed sum of 5 trees.  In the picture the first two trees come from pushing off the spine, the third and fifth come from collapsing a single white angle and the fourth comes from collapsing the two consecutive nonspined white angles.  The difference between the differentials $\partial_T(\beta_{0,2})$ and $\partial_H(\beta_{0,2})$ is the existence of the fourth tree (with $\mu_3$ as a label) in the latter.}
	\label{fig:boundary}
\end{figure}

\begin{figure}[htbp]
	\centering
		\includegraphics[scale=.45]{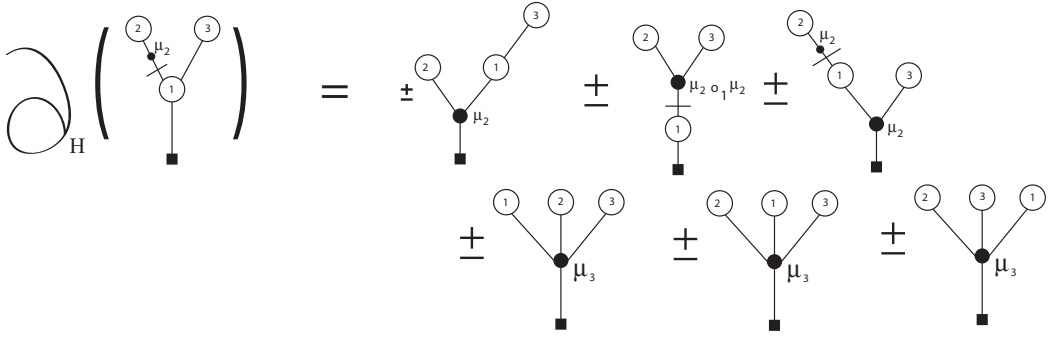}
	\caption{ $\partial_H(\beta_2\wedge_1\mu_2)$ is the signed sum of 6 trees.  The 3 trees on the top line come from collapsing 1 white angle, and the 3 trees on the bottom line come from collapsing 2 consecutive white angles.}
	\label{fig:boundary2}
\end{figure}

For examples see Figures $\ref{fig:boundary}$ and $\ref{fig:boundary2}$.  The signs will be fixed by the choice of orientation of the cells of a certain collection of CW complexes as we will explain below (Subsection $\ref{diffsigns}$).  The fact that $\op{TS}_\infty(n)$ is isomorphic to the cells of a CW complex also tells us that $\partial_H^2=0$.  It follows from this definition that $\partial_H(\mu_2)=\partial_H(\delta)=\partial_H(\beta_1-(12)\beta_1)=0$.  These three cycles will represent the product, the BV operator, and the Gerstenhaber bracket in homology.

The local description of the differential $\partial_H$ given above facilitates checking that the operadic composition maps are dg, i.e. that $\partial_H(\tau\circ_i\sigma)=\partial_H(\tau)\circ_i\sigma+(-1)^{|\tau|}\tau\circ_i\partial_H(\sigma)$.  To see this let $v$ be the vertex of $\tau$ labeled by $i$ and pick a vertex $u\neq v$ of $\tau$ and $w$ of $\sigma$.  Then clearly $\partial(\tau\circ_i\sigma;u)=\partial(\tau;u)\circ_i\sigma$, and the terms appearing in $\partial(\tau;w)\circ_i\sigma$ also appear in $\partial(\tau\circ_i\sigma;w)$, by grafting no branches into the relevant white angle in the latter.  Moreover the terms appearing in $\partial(\tau;v)\circ_i\sigma$ correspond to terms appearing in $\partial_H(\tau\circ_i\sigma)$ which graft multiple branches into a given white angle of $\sigma$ and then collapse the newly created white angles.  Finally note that the terms appearing in $\partial(\tau\circ_i\sigma;u)$ which collapse a newly created white angle between a grafted and nongrafted branch can be created in two ways with opposite sign.

Define $\op{T}_\infty(n) \subset \op{TS}_\infty(n)$ to be the vector space generated by spineless marked trees.  Notice the collection $\op{T}_\infty = \{\op{T}_\infty(n)\}$ forms a suboperad of $\op{TS}_\infty$ generated by the corollas and brace operations.  We defined the differential $\partial_H$ on these generators above and thus can extend $\partial_H$ to all of $\op{T}_\infty$. 

\begin{lemma}\label{minlemma}  Let $M$ be the minimal operad of \cite{KS}.  As dg operads $(\op{T}_\infty,\partial_H) \cong M$.
\end{lemma}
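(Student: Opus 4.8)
The plan is to exhibit an explicit isomorphism of graded operads and then check it intertwines the differentials. First I would recall the definition of Kontsevich--Soibelman's minimal operad $M$: as a graded operad, $M(n)$ is freely generated (in the operadic sense) by the brace operations together with the higher multiplications $\mu_k$, and its underlying collection is spanned by certain planar planted trees with two types of vertices — white vertices carrying brace operations and black vertices carrying the $\mu_k$ — subject to the bipartiteness/stability conditions that match exactly our marked trees with no spines. So the first step is to set up a bijection on generators: white corollas $\leftrightarrow$ brace generators $\beta_n$, black corollas $\mu_n \leftrightarrow \mu_n$, and then observe that a spineless marked tree in $\op{T}_\infty(n)$ is precisely the same combinatorial datum as a generator of $M(n)$ in KS's tree description (using Lemma~\ref{generators} restricted to the spineless case — every spineless marked tree is a simple composition of corollas and unspined braces $\beta_n$).

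Second, I would verify this bijection is an operad isomorphism. Since both operads are generated under operadic composition by the same set of corollas, and since the composition in $\op{T}_\infty$ is the insertion (grafting) composition of Section~\ref{sec:op} restricted to spineless trees — which is visibly KS's insertion composition on the minimal operad — it suffices to check that the grading matches (our grading assigns $|\beta_n| = n$ and $|\mu_n| = n-2$, which I would compare with the grading in \cite{KS} up to the overall sign/reversal already flagged in Remark~\ref{gradingconventions2}) and that the compositions agree on generators, which they do essentially by construction. The identification of associativity in both cases follows from associativity of grafting onto black vertices, already used in Section~\ref{sec:op}.

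Third, and this is where the real content lies, I would check that the isomorphism is a chain map, i.e. that $\partial_H$ restricted to $\op{T}_\infty$ corresponds to KS's differential on $M$. The differential $\partial_H$ on spineless trees has only two sources (Case~1 and Case~2 of Subsection~\ref{section:differential}, since no spine sits on a $1$-cell): the associahedron differential applied to a black vertex label, and contraction of one or more adjacent white angles. The first corresponds to the $A_\infty$-relations $\sum \mu_r \circ \mu_s = 0$ appearing in KS's differential; the second — collapsing white angles, which merges black vertices via the canonical inclusion $K_\ell \times K_{n_1} \times \cdots \times K_{n_\ell} \to K_{\sum n_j}$ of Subsection~\ref{whiteangles} — is exactly the other half of the KS differential, the part encoding that a brace operation composed with multiplications is a boundary. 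Here the cyclohedra/associahedra bookkeeping of \cite{KSch} is the relevant cross-check: \cite{KSch} already identifies the differential of the minimal operad with such face maps, so matching $\partial_H$ to it on generators and invoking the fact that both are derivations with respect to operadic composition (shown for $\partial_H$ at the end of Subsection~\ref{section:differential}) extends the agreement to all of $\op{T}_\infty \cong M$.

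The main obstacle I expect is purely the sign bookkeeping: our signs in $\partial_H$ are defined only implicitly, via the orientations of the CW complexes of Subsection~\ref{diffsigns} (which at this point in the paper have not yet been constructed), whereas KS fix signs by an explicit rule. So the honest proof must either forward-reference the orientation conventions of Section~\ref{section:underlyingtopology} or argue that any consistent (square-zero, Leibniz) sign assignment on this tree complex is unique up to an isomorphism fixing generators — the latter being plausible since the generating cells $\mu_2$, $\beta_1$, $\beta_n$ have no nontrivial automorphisms respecting the grading. I would state the lemma with the understanding that signs are pinned down by Subsection~\ref{diffsigns}, and then the content reduces to the combinatorial matching of face maps described above, which is a direct comparison with \cite{KS} and \cite{KSch}.
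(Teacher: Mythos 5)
Your proposal follows essentially the same route as the paper: identify the KS tree description of $M$ with spineless marked trees (the paper does this by collapsing black edges and labeling the resulting black vertex with the composite associahedron cell, the inverse being insertion of trivial black vertices into white edges), and then check that compositions and differentials agree. The paper's own proof is considerably terser --- it simply asserts that the operad structure and differential are "straightforward to check" to be the same --- so your more explicit matching of the two halves of the differential and your flagging of the sign conventions of Subsection~\ref{diffsigns} is a faithful expansion of the same argument rather than a different one.
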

\begin{proof}  By \cite{KSch} the operad $M$ is equivalent to a dg operad indexed by so called stable trees which are b/w planar planted trees with no black vertices of arity one.  Stable trees and marked trees are seen to be equivalent by the above work.  In particular given a stable tree we can collapse all black edges (labeling as we go) and insert free black vertices in to white edges to get a marked tree.  This operation is also invertible.  Then it is straight forward to check that the operad structure and the differential are the same.
\end{proof}

\begin{theorem}\label{spinelessdiff}\cite{KS}  The differential $\partial_H$ and the above action constitute a $dg$ action of $M\cong\op{T}_\infty$ on the endomorphism operad of the Hochschild cochains of an $A_\infty$ algebra.
\end{theorem}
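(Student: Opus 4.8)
The plan is to verify directly that $\rho$ intertwines the differentials $\partial_H$ and $d_{Int}$, i.e. that $\rho(\partial_H(\tau)) = d_{Int}(\rho(\tau))$ for every spineless marked tree $\tau$, and that this is compatible with the operadic composition. Since $\rho$ is a morphism of graded operads (Theorem \ref{opthm}) and since, by Lemma \ref{minlemma}, $(\op{T}_\infty,\partial_H)\cong M$ is generated as a dg operad by the corollas $\mu_n$ and the unspined braces $\beta_n$, it suffices to check the intertwining relation on these generators, and then propagate it across compositions using that $\partial_H$ is a derivation with respect to $\circ_i$ (established locally in the text preceding the statement) and that $d_{Int}$ is a derivation with respect to $\circ_i$ on $End_{CH^\ast(A,A)}$.

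First I would treat the corollas. We have $\rho(\mu_n)=F(\mu_n)$, which is the operation $(f_1\cdc f_n)\mapsto \bigoplus_{s\geq n}\mu_s\{f_1\cdc f_n\}$. On the one side, $\partial_H(\mu_n)$ is the image of the associahedron differential on $\mu_n$, which by the standard formula is a signed sum of compositions $\mu_r\circ_j\mu_{s}$ with $r+s=n+1$; applying $\rho$ and using the $A_\infty$ relation $\bigoplus_{r+s=t+1}\mu_r\circ\mu_s=0$ together with the brace–composition identities, this reorganizes into the bracket $[F(\mu),\mu]$ evaluated on normalized cochains. On the other side, $d_{Int}(F(\mu_n))$ unwinds, via equation \ref{internaleq} and the definition $d(f)=[f,\mu]$, into exactly the same signed sum of insertions of $\mu$-terms into the $f_i$ and into the outer $\mu_s$. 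The computation here is essentially the one already performed in \cite{KS}; the point is that the signs from the cellular orientation of the associahedra (fixed in subsection \ref{diffsigns}) were chosen precisely to make this match, and one invokes that fact rather than re-deriving the signs.

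Next I would treat the unspined braces $\beta_n$, with $\rho(\beta_n)=B_n$. Here $\partial_H(\beta_n)$ is a signed sum over contractions of one or more adjacent (necessarily unspined) white angles of the interior white vertex, which inserts one $g_i$ into an adjacent $g_{i\pm1}$ or adjoins consecutive $g_i$'s into a single multiplication slot; these are exactly the terms that appear when one computes the Hochschild differential of the brace operation $B_n$, i.e. the higher homotopies witnessing that the brace operations make $CH^\ast(A,A)$ a $B_\infty$-algebra. Again the matching of these two sides, including signs, is the content of the $B_\infty$ / brace relations used in \cite{KS}, and I would cite it. Having checked the intertwining on both families of generators, the general case $\rho(\partial_H(\tau))=d_{Int}(\rho(\tau))$ follows by induction on the number of generators in a simple-gluing decomposition of $\tau$ (Lemma \ref{generators}), using that both $\partial_H$ and $d_{Int}$ are derivations for $\circ_i$ and that $\rho$ respects $\circ_i$ (Theorem \ref{opthm}).

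The main obstacle is bookkeeping of signs: reconciling the orientation-derived signs on $\partial_H$ (coming from the CW structure of the associahedra and of the spineless part of $\op{TS}_\infty$, as fixed in subsection \ref{diffsigns}) with the Koszul signs appearing in $d_{Int}$ via equation \ref{internaleq} and in the brace operations via the pre-Lie sign of equation \ref{preliesigns}. Since, however, this sign reconciliation is exactly what was carried out in \cite{KS} (and the spineless suboperad $\op{T}_\infty$ is isomorphic to their minimal operad $M$ by Lemma \ref{minlemma}, with the action on spineless generators declared to coincide with theirs), the cleanest route is to observe that our $\partial_H$ restricted to $\op{T}_\infty$ and our $\rho$ restricted to $\op{T}_\infty$ agree with the corresponding structures in \cite{KS} under this isomorphism, so that the desired statement is literally their theorem transported along Lemma \ref{minlemma}. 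In other words, rather than re-proving the compatibility from scratch, I would make the identification with \cite{KS} explicit and cite their result.
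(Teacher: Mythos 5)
Your proposal is correct and follows essentially the same route as the paper: the paper likewise reduces the verification to the generators $\mu_n$ and $\beta_n$ (via the derivation property of $\partial_H$ and the fact that $\rho$ respects composition) and then defers the generator check to a "direct but lengthy calculation," crediting \cite{KS} through the identification $\op{T}_\infty\cong M$ of Lemma \ref{minlemma}. Your closing observation that the statement can simply be transported along that isomorphism is exactly the role the citation to \cite{KS} plays in the paper.
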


Our description of the minimal operad $(\op{T}_\infty, \partial_H)$ and its action on the Hochschild complex reduces a proof of this statement to checking that the action is compatible with the differentials for the generators $\mu_n$ and $\beta_n$.  This can be verified by a direct but lengthy calculation.

Theorem $\ref{spinelessdiff}$ tells us that the differential $\partial_H$ is compatible with the action $\rho$ on the suboperad $\op{T}_\infty$.  This is true on the entirety of $\op{TS}_\infty$, as we now record.
\begin{theorem}\label{diffthm}  $\partial_H$ is compatible with $\rho$.
\end{theorem}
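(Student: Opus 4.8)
The plan is to reduce the statement to what has already been established and then handle the genuinely new piece — the spined generators. First I would recall that $\rho\circ\partial_H = d_{Int}\circ\rho$ is an equality of morphisms of graded operads out of $\op{TS}_\infty$, so it suffices to check it on a generating set together with the fact that both $\partial_H$ and $d_{Int}$ are (graded) derivations with respect to operadic composition. The derivation property for $\partial_H$ was recorded above (the local description of $\partial_H$ gives $\partial_H(\tau\circ_i\sigma)=\partial_H(\tau)\circ_i\sigma+(-1)^{|\tau|}\tau\circ_i\partial_H(\sigma)$), and the analogous Leibniz rule for $d_{Int}$ on $End_{CH^\ast(A,A)}$ is standard. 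Since $\rho$ is a morphism of operads (Theorem \ref{opthm}), compatibility on generators propagates to all of $\op{TS}_\infty$ by a decomposition-into-generators argument exactly parallel to the proof of Theorem \ref{opthm}.

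Next I would dispose of the generators already covered. For the spineless generators $\mu_n$ and $\beta_n$, compatibility is precisely the content of Theorem \ref{spinelessdiff} (via the identification $\op{T}_\infty\cong M$ of Lemma \ref{minlemma}), so nothing new is needed there. For the generator $\delta$, one checks $\partial_H(\delta)=0$ directly, and on the other side one must verify $d_{Int}(\Delta)=0$ as an operation on normalized Hochschild cochains — this is exactly the statement (recalled in Remark \ref{normalizationremark}) that $\Delta$ is compatible with the Hochschild differential, i.e. that Connes' $B$ operator is a chain map, combined with the cyclic invariance of the bilinear form (Proposition \ref{cycalgprop}, Remark \ref{invardiffrmk}). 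So the real work is the two families of spined braces: the Type 1 braces $\beta_{l,n}$ with $\rho(\beta_{l,n})=B_{l,n}$, and the Type 2 braces $\beta_n\wedge_l\mu_m$ with $\rho(\beta_n\wedge_l\mu_m)=B_n\wedge_l\mu_m$.

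For these I would proceed as follows. On the tree side, $\partial_H(\beta_{l,n})$ is the explicit signed sum of (a) terms contracting one or more adjacent spineless white angles at the internal white vertex — these merge adjacent leaves $g_j, g_{j+1},\dots$ under the canonical $K_p\times\cdots\to K_{\sum}$ inclusion, hence after applying $\rho$ produce compositions of the $g$'s among themselves inside the brace — and (b) the two terms pushing off the spine, which move the distinguished slot (the $a_0$-slot) past $g_l$ or $g_{l+1}$. On the operation side, $d_{Int}(B_{l,n})(f;g_1,\dots,g_n)$ is computed from \eqref{internaleq}: it distributes $d=[-,\mu]$ over $f$ and over each $g_i$. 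Unwinding the implicit definition of $B_{l,n}$ through the bilinear form — using cyclic invariance (Proposition \ref{cycalgprop}) to move the $1$-input and the $a_0$-input past the $\mu$'s, and the normalization of $\mu_n$ for $n\ge 3$ (Remark \ref{unitremark}) to kill the spurious foliage terms — one sees that the $d(f)$ contribution reproduces exactly the angle-contraction terms of $\partial_H$, while the $d(g_i)$ contributions together with the $\mu_2$-nested terms reproduce the push-off-spine terms; the bracket $[g_i,\mu]$ unpacking and the sign $(-1)^{j(|\alpha|-1)}$ accounting for the cyclic rotation $t^j$ in the definition of $B^1_{l,n}$ are what match the two push-off terms with their correct signs. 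The Type 2 case $\beta_n\wedge_l\mu_m$ is handled the same way but is in fact \emph{simpler}: by \eqref{zerocases} the operation $B_n\wedge_l\mu_m$ is identically zero unless $m=2$, and when $m=2$ the Leibniz rule on $\mu_2$ (which satisfies $d(\mu_2)=0$, i.e. $\mu_2$ is associative up to the $\mu_3$-homotopy already accounted for) reduces the computation to an instance of the Type 1 identity after grafting.

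The main obstacle will be the sign bookkeeping in the spined cases. The definition of $\partial_H$ fixes signs only implicitly, "by the choice of orientation of the cells of a certain collection of CW complexes" (Subsection \ref{diffsigns}), while the signs in $d_{Int}$ come from the Koszul rule on total degrees \eqref{internaleq} and the signs in $B_{l,n}$ come from iterating \eqref{preliesigns} plus the explicit $(-1)^{j(|\alpha|-1)}$ for the cyclic rotation. Reconciling these requires being careful that the CW-orientation conventions were chosen precisely so that contracting a white angle carries the Koszul sign of merging the corresponding leaf-operations and pushing off a spine carries the sign of the cyclic permutation $t$ — in other words, one must verify that the orientation data of Subsection \ref{diffsigns} is compatible with the total-degree grading on $End_{CH^\ast(A,A)}$ (recalling from Remark \ref{gradingconventions2} that $\rho$ reverses grading, so one works with $\op{TS}_\infty^{op}$). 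Once the sign dictionary is pinned down, the matching of terms is a direct, if lengthy, check of the same flavour as the verification underlying Theorem \ref{spinelessdiff}, and I would present it as such rather than writing out every term.
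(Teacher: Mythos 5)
Your overall architecture is the paper's: reduce to generators via the Leibniz property (equation $\ref{diffext}$), dispose of the spineless generators by Theorem $\ref{spinelessdiff}$ and Lemma $\ref{minlemma}$, and then handle $\delta$ and the spined braces by unwinding the implicit definitions through the bilinear form using cyclic invariance and normalization. Two points, however, do not survive contact with the actual computation.

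First, the $\delta$ case is not a citation. In the $A_\infty$ setting the Hochschild differential is $[-,\mu]$ with $\mu=\bigoplus_r\mu_r$, so $d_{Int}(\Delta)=0$ is the identity $\Delta(f\circ\mu_r)-(-1)^{||f||}\mu_r\circ\Delta(f)+\Delta(f)\circ\mu_r=0$ for every $r\geq 1$ simultaneously; the classical fact that Connes' $B$ commutes with the Hochschild boundary only covers the strict associative case. The paper proves this as Proposition $\ref{bvdiff}$ by a page-long computation whose essential inputs are the ones you name (cyclic invariance of the form with respect to each $\mu_r$, normalization to kill $\mu_r\circ f$ for $r\geq 3$, Remark $\ref{invardiffrmk}$ for $r=1$), so your ingredients are right but the step is a computation you still owe, not a known statement.

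Second, and more seriously, your term dictionary for the spined braces is wrong where it matters. The push-off-spine terms $\beta_n\wedge_{l+1}\mu_2-\beta_n\wedge_l\mu_2$ do \emph{not} come from the $d(g_i)$ contributions: they match the $\mu_2\circ f$ half of $d(f)$ (Lemma $\ref{applem1}$; the higher $\mu_r\circ f$ die by normalization). The $d(g_i)=[g_i,\mu]$ contributions, the $f\circ\mu$ half of $d(f)$, and the outer term $[B_{l,n}(f;g_1\cdc g_n),\mu]$ of $d_{Int}$ --- which your dictionary omits entirely --- are what match the white-angle contractions $\mu\circ_v\beta_{l,n}$ and $\beta_{l,n}\circ_v\mu$ (Lemmas $\ref{applem2}$ and $\ref{applem3}$), with the split governed by whether $a_0$ lands in $f$ or in a $\mu_m$. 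Since the entire content of the spined-brace case is this matching (Proposition $\ref{appprop}$), and the assignment you propose cannot be made to work --- a $d(g_i)$ term visibly produces a tree with a $\mu$ grafted at a leaf, not a tree whose spine has moved --- the ``direct if lengthy check'' you defer would fail as described and would have to be reorganized along the lines above.
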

The remainder of the proof of this statement will be relegated to the appendix.  Let us note, however that it is enough to check the compatibility on generators, since we can write $\partial_H(\tau)$ as
\begin{equation}\label{diffext}
\partial_H[(\dots(g_1\circ_{i_1}\dots \circ_{i_n}g_{n+1})\dots)] =\sum_{j=1}^n(-1)^{|g_1|+\dots+|g_{j-1}|}(\dots(g_1\circ_{i_1}\dots \circ_{i_{j-1}}\partial_H(g_j)\circ_{i_j}\dots\circ_{i_n}g_{n+1})\dots)
\end{equation}
for generators $g_j$, with the corresponding equation also being true on the Hochschild side.

\section{Underlying Topology}\label{section:underlyingtopology}  The purpose of this section will be to prove that $\op{TS}_\infty$ is a chain model for $fD_2$.  In order to do this we will consider a collection of CW complexes $\op{X}=\{X_n\}$ which are homotopy equivalent to the framed little disks and whose cellular chains are $\op{TS}_\infty$.  Before considering the underlying topology of the nonassociative case $\op{TS}_\infty$ we will review the associative case $\op{TS}.$

\subsection{The associative case: $CC_\ast(Cacti^1)$}  In the associative case the cell model $\op{TS}$ can be realized as the cellular chains of the topological (quasi)-operad of normalized cacti, as we now review.
\subsubsection{$\op{C}acti$}  Let us briefly recall the topological operad $\op{C}acti$ (see e.g. \cite{Vor},\cite{CV},\cite{KCacti} for more detail).  The space $\op{C}acti(n)$ consists of the collection of all $\{1\cdc n\}$ labeled treelike configurations of $n$ parameterized circles each with a specified perimeter, called lobes, along with a marked point on each lobe, called a spine, a cyclic order at each intersection of lobes, and a marked point associated to the entire configuration, called the global zero.  We consider such configurations as drawn in the plane with lobes oriented counterclockwise.  In this way the arcs of the lobes are labeled with their length.  The operad structure map $\circ_i$ is given by inserting a configuration into the $i^{th}$ lobe of another configuration by identifying the global zero of the former with the spine of the $i^{th}$ lobe of the latter.  In accordance with \cite{KCacti} we call a cacti normalized if the perimeter of each lobe is $1$.  The advantage of normalized cacti is the presence of a natural CW structure given by considering the lengths of the arcs (which sum to 1) of a lobe as corresponding to the points in a simplex of the appropriate degree and taking the product over all lobes.  The disadvantage of normalized cacti is that they do not form a topological operad because their composition does not preserve their defining property.  However a cactus can be normalized or rescaled, and in this way normalized cacti form what is called a topological quasi-operad in \cite{KCacti}.  Moreover this structure induces an honest operad structure on the cellular chains.  In particular we have,

\begin{proposition}\cite{KCacti},\cite{KCyclic}  The cellular chains of normalized cacti form a dg operad and there is an isomorphism of dg operads
\begin{equation*}
CC_\ast(Cacti^1)\cong \op{TS}
\end{equation*}
In particular $\op{TS}$ can be thought of as the cellular chains of a collection of CW complexes.
\end{proposition}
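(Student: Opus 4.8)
The plan is to recollect the argument of \cite{KCacti} and \cite{KCyclic}, matching three pieces of structure between the CW complexes $\op{C}acti^1(n)$ and the dg operad $\op{TS}$: the cells, the differential, and the composition.

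First I would make the CW structure on $\op{C}acti^1(n)$ explicit. A normalized cactus is determined by its combinatorial type --- the treelike incidence pattern of the $n$ lobes together with the positions of the intersection points, the spines, and the global zero relative to the arcs --- and by the arc lengths on each lobe, which for a lobe bordered by $k$ marked points range over an open $(k-1)$-simplex. Thus cells are indexed by combinatorial types and each closed cell is a product $\prod_{\text{lobes}}\Delta^{k_v-1}$ of simplices. I would then write down the bijection with trees with spines: white vertices are the lobes (labeled $1,\dots,n$), black vertices are the intersection points, the cyclic orders and planar-planted structure are read off clockwise from the global zero, the root is the global zero, and the spine of a white vertex is the marked point of that lobe --- sitting at an intersection point it gives a $0$-cell of the corresponding cellular circle $C^{val(v)}_\ast(S^1)$ (a spine on a flag), and interior to an arc it gives a $1$-cell. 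A dimension count then identifies the dimension of the cell with the degree $|\tau|$ from Section~\ref{sec:cdc}: a lobe bordered by $k_v$ points contributes $k_v-1 = |v| + \varepsilon_v$, where $\varepsilon_v$ is $1$ if the spine is interior and $0$ otherwise, using $val(v)=|v|+1$.

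Next I would identify the boundary map and the composition. The codimension-one faces of the cell of $\tau$ arise by sending an arc length to $0$ on some lobe, fusing two marked points --- on the tree side, contraction of a white angle --- or, when the spine is interior to an arc, by sliding the spine to either end of that arc --- on the tree side, the two terms of pushing off the spine. Hence the cellular boundary matches $\partial_T$ term by term; since the signs of $\partial_T$ are by definition fixed by a choice of orientation of exactly these cells, taking the product orientation from a fixed vertex-ordering of each simplex factor makes the two differentials equal on the nose, and $\partial_T^2=0$ is then $\partial^2=0$ in a CW complex. For composition, inserting one normalized cactus into the $i$-th lobe of another (global zero glued to the $i$-th spine) and then rescaling all lobes back to perimeter $1$ as in \cite{KCacti} distributes the inserted configuration around the $i$-th lobe in all ways respecting the cyclic order of the branches at that white vertex --- precisely the grafting description of $\circ_i$ in equations~\ref{tsop}--\ref{tsop2}. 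Because the rescaling is a cellwise affine homeomorphism onto the relevant sub-CW-complex, it induces honest chain maps on cellular chains, and associativity of grafting promotes these to a genuine operad structure even though $\op{C}acti^1$ is only a quasi-operad; matching orientations as above makes them agree with the structure maps of $\op{TS}$. Combining these three matchings yields the isomorphism of dg operads, and the final sentence is then immediate since $CC_\ast$ is the cellular chain functor applied to the $\op{C}acti^1(n)$.

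The main obstacle is the sign and orientation bookkeeping: one must fix an orientation convention on the product-of-simplices cells that is simultaneously compatible with all face maps and with grafting, and check that no sign discrepancy appears when a face of a composite cell is reached in two different ways --- the same coherence issue that underlies the verification that $\partial_H$ is a derivation for $\op{TS}_\infty$. For this I would follow the treatment of \cite{KCyclic} rather than reproduce it.
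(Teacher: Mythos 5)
The paper offers no proof of this proposition --- it is quoted from \cite{KCacti} and \cite{KCyclic} --- but your reconstruction matches exactly the ingredients the paper does describe around it: the product-of-simplices CW structure from arc lengths, the identification of cells with trees with spines (including the dimension count against the degree of Section~\ref{sec:cdc}), the boundary as angle contraction plus spine pushing, the insert-and-rescale quasi-operad composition inducing an honest operad on chains, and the deferral of orientations to \cite{KCyclic} just as in subsection~\ref{diffsigns}. Your argument is correct and takes the same route as the cited sources.
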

Let $|\op{TS}|$ denote the CW complexes whose cellular chains are $\op{TS}$.  We will consider $|\op{TS}|\cong \op{C}acti^1$, instead of considering them to be precisely equal.  To understand the space $|\op{TS}(n)|$ it should be considered as a CW complex whose $m$-cells are indexed by trees with spines of degree $m$ with $n$ white vertices.  The points in this space correspond to `weighted trees':  trees whose interior white vertices are labeled by weights at each $1$ cell which sum to $1$ at each vertex.  The weight at an arc segment of $S^1$ is meant to correspond to the length of the arc.  In particular $|\overline{\delta}|\cong S^1$.  Additionally, since $\beta_n$ has $1$ interior white vertex, no spines, and no nontrivial black vertices, $|\overline{\beta_n}|$ is homeomorphic to an $n$-simplex, and this homeomorphism induces an isomorphism of cellular chains (taking the standard cellular structure of an $n$-simplex).  Here we use the notation $\overline{\tau}$ for the complex generated by a tree $\tau$ and the differential.
\subsubsection{Compatibility of the BV Operator and the Brace Operations} 
Let $\Delta_n$ denote the standard $n$-simplex with vertices $e_0\cdc e_n$.  For $i=1\cdc n$ define a simplicial map $D_i\colon \Delta_n \to \Delta_{n-1}\times I$ by
\begin{equation*}
D_i(e_j)= \begin{cases} (e_j,0) & \text{ if } j\leq i \\ (e_{j-1},1) & \text{ if } j>i \end{cases}
\end{equation*}
Then $D_i(\Delta_n)\cap D_{i+1}(\Delta_n)\cong \Delta_{n-1}$ and the images of the $D_i$ decompose $\Delta_{n-1}\times I$ into $n$ copies of $\Delta_n$.  This decomposition induces a natural cellular structure on the space $\Delta_{n-1}\times I$ that is finer than the standard product structure.  We hereafter consider $\Delta_n\times I$ to be a CW complex with cells given by this finer decomposition.  Quotienting $\Delta_n\times I$ by the equivalence relation $(x,0)\sim(x,1)$ gives the induced cellular structure to $\Delta_n\times S^1$.  This decomposition describes the natural decomposition of the brace operations under the BV operator in the $\textit{associative}$ case.

\begin{proposition}\label{assocdecomp}  Let $\beta_n \in \op{TS}(n+1)$ and $\delta \in \op{TS}(1)$ be as above.  For a tree or collection of trees $\tau$ let $\overline{\tau}$ represent the chain complex generated by $\tau$ and the differential $\partial_T$.  Then
\begin{equation*}
\overline{\beta_n\circ_1\delta}\cong CC_\ast(\Delta_n\times S^1)
\end{equation*}
\end{proposition}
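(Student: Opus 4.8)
The plan is to identify the chain complex $\overline{\beta_n \circ_1 \delta}$ cell-by-cell with the cellular chains of $\Delta_n \times S^1$ equipped with the finer decomposition described just above the statement, and to check that the two differentials match under this identification. First I would describe explicitly which trees with spines arise in $\beta_n \circ_1 \delta$ and in its iterated differential $\partial_T$: by the definition of the operad structure on $\op{TS}$ (subsection on $\op{TS}$), composing $\beta_n$ with $\delta$ at its interior white vertex produces the spined braces $\beta_{i,n}$ for $i = 0, \dots, n$ (the spine now sits on a $1$-cell of the interior white vertex), together with the trees obtained by contracting white angles and pushing off the spine. The claim is that these trees, graded by degree, are in bijection with the cells of $\Delta_n \times S^1$: the top cell $\beta_{i,n}$ (degree $n+1$) corresponds to one of the $n$ top-dimensional copies of $\Delta_{n+1}$... — more precisely, since $\dim(\Delta_n \times S^1) = n+1$ and the finer decomposition cuts $\Delta_n \times S^1$ into $n+1$ top cells (the images $D_i(\Delta_{n+1})$ after the gluing $(x,0)\sim(x,1)$), these match the $n+1$ trees $\beta_{i,n}$.

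Next I would make the bijection precise on lower cells. A point of $|\overline{\beta_n \circ_1 \delta}|$ is a weighted tree: an interior white vertex (a circle) carrying $n+1$ flags, with a spine located somewhere on the circle and arc-weights summing to $1$, together with the pendant structure recording which of the branches has been contracted. Reading the arc-weights between consecutive flags gives a point of $\Delta_n$ (the $n+1$ arc-lengths between the $n+1$ flags, summing to $1$), and reading the position of the spine relative to these flags — i.e. recording in which arc the spine lies and how far along — gives the extra $S^1$ coordinate, glued up cyclically exactly as in the $(x,0)\sim(x,1)$ identification. The combinatorics of the finer decomposition, namely that $D_i(\Delta_{n+1}) \cap D_{i+1}(\Delta_{n+1}) \cong \Delta_n$, precisely mirrors the face relation between $\beta_{i,n}$ and $\beta_{i+1,n}$: their common codimension-one face is the tree in which the spine has been pushed onto the flag separating arc $i$ from arc $i+1$. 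I would organize this as: (1) set up the homeomorphism $|\overline{\beta_n \circ_1 \delta}| \xrightarrow{\sim} \Delta_n \times S^1$ sending the weighted-tree data to the (arc-weights, spine-position) data; (2) check that it is cellular for the finer CW structure; (3) conclude it induces an isomorphism on cellular chains.

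Finally I would verify that this cellular homeomorphism intertwines $\partial_T$ with the cellular boundary of $\Delta_n \times S^1$. The differential $\partial_T$ has two types of terms (from the subsection defining $\op{TS}$): contraction of a white angle, and pushing off the spine. Under the identification, contracting a non-spined white angle corresponds to passing to a face of the simplex factor $\Delta_n$ (setting an arc-weight to zero), while pushing off the spine — the alternating sum of moving the spine to the adjacent flags — corresponds exactly to the two faces $D_i(\Delta_{n+1})$ and $D_{i+1}(\Delta_{n+1})$ meeting along their common $\Delta_n$, which is where the finer decomposition of $\Delta_n \times S^1$ differs from the naive product CW structure. This is the step I expect to be the main obstacle: not the bijection of cells, which is essentially bookkeeping, but getting the signs to agree. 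The signs on $\partial_T$ are, by the paper's own convention (subsection $\ref{diffsigns}$, referenced forward), fixed precisely so as to agree with a chosen orientation of the CW complex $|\op{TS}|$; so the real content is to check that the orientations induced on the cells $\beta_{i,n}$ and their faces by the homeomorphism to $\Delta_n \times S^1$ are compatible with the standard simplicial orientations used to define $D_i$, i.e. that the orientation conventions can be chosen coherently. Once the cellular chain isomorphism respecting boundaries is established, the desired isomorphism $\overline{\beta_n \circ_1 \delta} \cong CC_\ast(\Delta_n \times S^1)$ follows immediately.
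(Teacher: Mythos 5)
Your proposal is correct and follows essentially the same route as the paper: identify $\beta_n\circ_1\delta$ with the signed sum of the $n+1$ spined braces $\beta_{n,l}$ obtained by rotating the spine around the interior white vertex, match these with the $n+1$ top cells $D_i(\mathrm{int}(\Delta_{n+1}))$ of the finer decomposition, and match the spine-on-a-flag codimension-one faces with the intersections $D_i(\Delta_{n+1})\cap D_{i+1}(\Delta_{n+1})$. The paper's proof is just a terser version of this (it does not spell out the point-level homeomorphism or the sign/orientation check, which it defers to the orientation conventions of subsection \ref{diffsigns} exactly as you anticipate).
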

\begin{proof}  As above there is a homeomorphism $\Delta_n\times S^1\cong |\overline{\beta_n}|\times|\overline{\delta}|$.  The composition $\beta_n \circ_1 \delta$ is a sum of trees that can be formed by grafting the branches of $\beta_n$ on to $\delta$.  Let $v$ be the vertex of $\beta_n$ labeled by $1$.  Choosing a gluing scheme in a particular case is equivalent to choosing where to put the spine in relation to the branches.  As such the trees found in the composition can be given by rotating the spine (which starts at the base point) around $v$.  In other words
\begin{equation}\label{compeq}
\beta_n\circ_1\delta = \ds\sum_{l=0}^n(-1)^{l}\beta_{n,l}
\end{equation}
There are $n+1$ possible choices for where to put the spine, which correspond to the open cells $D_i(int(\Delta_{n+1}))$.  There are then $n$ codimension $1$ cells which correspond to the spine being placed on an edge on the one side and to $D_i(\Delta_{n+1})\cap D_{i+1}(\Delta_{n+1})$ (taken mod $n+1$) on the other.  See Figures $\ref{fig:brace}$ and $\ref{fig:triangle}$.
\end{proof}
The important feature to notice is that while the BV operator is induced by $S^1$ on the topological level and the brace operations are induced by simplicies $\Delta_n$, the cellular structure of their composition is not the product structure.
\begin{figure}[htbp]
	\centering
		\includegraphics[scale=.4]{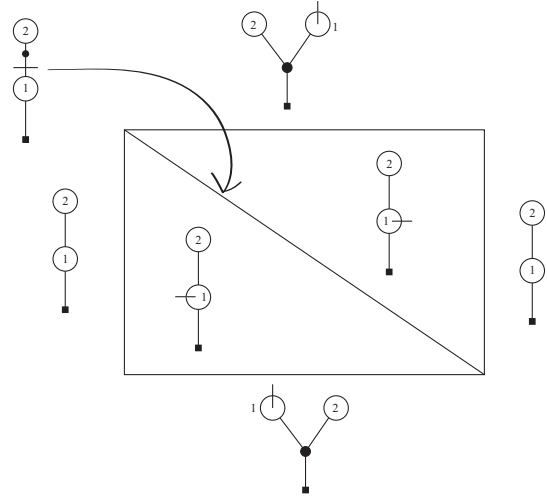}
			\caption{ $\beta_1\circ_1\delta$ gives a cellular decomposition of the cylinder $\Delta_1\times S^1$.}
	\label{fig:brace}
\end{figure}

\begin{figure}[htbp]
	\centering
		\includegraphics[scale=.3]{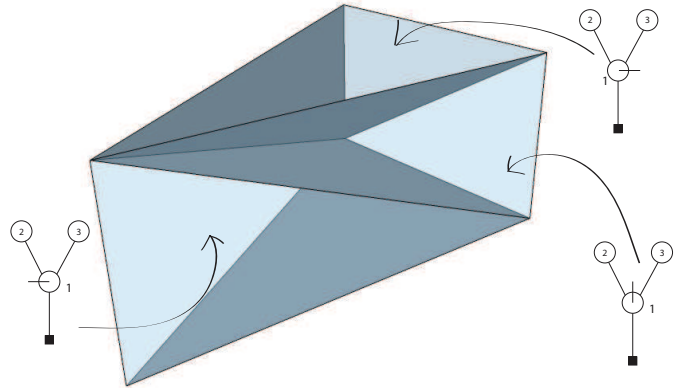}
	\caption{$\beta_2\circ_1\delta$ gives a cellular decomposition of $\Delta_2\times S^1$.  The three trees pictured correspond to the three $3$-cells of the decomposition.}
	\label{fig:triangle}
\end{figure}

\subsection{The $A_\infty$ case}

A fundamental result of \cite{KSch} is that the minimal operad of \cite{KS} is topological.  More precisely there is a collection of CW complexes whose cellular chains are $\op{T}_\infty$.  I will call these spaces $|\op{T}_\infty|$.  Define the space $|\op{TS}_\infty(n)|:=|\op{T}_\infty(n)| \times (S^1)^n$.  Once again the dg-operad $\op{TS}_\infty$ is not given by the product of these two cell structures, but there is a cell structure on $|\op{TS}_\infty(n)|$ which does give $\op{TS}_\infty(n)$ as we now describe.

\subsubsection{Compatibility of the BV Operator and the Brace Operations} 
In the non-associative case the brace operations are governed by cyclohedra.  Recall that the cyclohedron $W_n$ is an abstract polytope whose vertices correspond to full cyclic bracketings of $n$ letters and whose codimension $m$ faces are those partially bracketed expressions with $m$ pairs of brackets.  The dimension of $W_n$ is $n-1$ and its top dimensional cell is indexed by the empty bracketing.  Consider $W_n$ as a CW complex with the canonical CW structure.  Then we have

\begin{proposition} \cite{KSch}  Let $\overline{\beta}$ be the chain complex generated by $\beta$ and $\partial_H$.  As chain complexes $\overline{\beta_n}\cong CC_\ast(W_{n+1})$.
\end{proposition}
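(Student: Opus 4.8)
The plan is to produce the isomorphism cell by cell, matching the marked trees occurring in $\overline{\beta_n}$ with the faces of the cyclohedron and then identifying $\partial_H$ with the cellular boundary of $W_{n+1}$. First, note that $\overline{\beta_n}$ lies in the spineless suboperad $\op{T}_\infty(n+1)$: $\beta_n$ has no spine, no clause in the definition of $\partial_H$ ever creates a spine, and Case 3 of the differential never applies, so only Case 1 (the associahedra differential on a black label) and Case 2 (contraction of one or more adjacent white angles at the unique interior white vertex) occur. Consequently every marked tree $\tau$ appearing in $\overline{\beta_n}$ has exactly one interior white vertex $v$; above each incoming flag of $v$ sits a black vertex carrying a cell of an associahedron, with white leaves above it, and one of these black vertices may in addition carry the root. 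Reading the flags of $v$ clockwise gives a cyclic word $0\,1\cdots n$ (the root flag being $0$) grouped into the consecutive blocks cut out by the black vertices of $v$; together with the bracketing of each block recorded by the associahedral cell on the corresponding black vertex (and a possible outermost bracket on the block absorbing the root), this is exactly the data of a cyclic bracketing of $0\,1\cdots n$, i.e.\ of a face of $W_{n+1}$. I would then check that this assignment is a bijection between the cells of $\overline{\beta_n}$ and the faces of $W_{n+1}$: every cyclic bracketing is realized by a marked tree built by first recording its outermost consecutive blocks (angle contractions at $v$) and then their internal nesting (associahedral cells), and by the operad associativity of the associahedral labels the resulting tree does not depend on the order in which these steps are performed.

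Second, I would verify that the bijection is degree preserving, sending degree-$(n-m)$ cells of $\overline{\beta_n}$ to codimension-$m$ faces of $W_{n+1}$, and that it carries $\partial_H$ to the cellular boundary. The only point requiring care is that the clause ``contract one or more adjacent white angles'' really yields codimension-one faces: merging $N$ consecutive black vertices uses the canonical injection $K_N\times K_{n_1}\times\cdots\times K_{n_N}\hookrightarrow K_{n_1+\cdots+n_N}$ together with the top cell of the $K_N$ factor, which has dimension $N-2$ while $v$ loses $N-1$ units of arity, so the total degree of the tree drops by exactly one; the same count with $K_{j}\times K_1\hookrightarrow K_{j+1}$ handles the angle adjacent to the root, and a Case-1 term drops the degree by one trivially. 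Under the dictionary above these codimension-one transitions are precisely the ways of introducing one further bracket pair --- an outermost bracket around a consecutive sub-arc (angle contractions, including the ``wrap-around'' ones that arise because the flags of $v$ are arranged on a circle, and the root-absorbing one) or an inner bracket inside an already-bracketed block (Case 1) --- and each facet of $W_{n+1}$ arises exactly once. Hence the bijection is an isomorphism of face posets, in particular a homeomorphism $|\overline{\beta_n}|\cong W_{n+1}$ of regular CW complexes.

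Finally, the signs: as everywhere else in the paper the signs of $\partial_H$ are determined by a choice of orientation of the cells, so it suffices to transport the standard orientation of the polytope $W_{n+1}$ along the homeomorphism just constructed, after which the identification $\overline{\beta_n}\cong CC_\ast(W_{n+1})$ of chain complexes is automatic. The step I expect to be genuinely delicate is exactly this sign bookkeeping --- confirming that a single choice of orientation on the top cell $\beta_n$ is simultaneously consistent with the cellular boundary of $W_{n+1}$ across every facet, especially the ``cyclic'' facets in which the distinguished flag $0$ plays a special role. Here I would lean on \cite{KSch}, where the realization $|\op{T}_\infty|$ is constructed and the closure of the brace cell is shown to be combinatorially a cyclohedron; the chain isomorphism, signs included, is then already contained in that result, the tree dictionary above being merely its translation into the language of marked trees.
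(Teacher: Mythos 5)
The paper offers no proof of this proposition: it is stated with the citation \cite{KSch} and followed only by the $\beta_2$ example, so there is no internal argument to compare yours against. Your reconstruction is the intended one --- it is the argument of \cite{KSch} transported into the marked-tree language, the transport being exactly the dictionary of Lemma $\ref{minlemma}$ --- and its substance is correct: the cells of $\overline{\beta_n}$ are the partial cyclic bracketings of the cyclic word $0\,1\cdots n$ read off the flags of the interior white vertex, your degree count $-(N-1)+(N-2)=-1$ correctly certifies that a Case~2 contraction is a facet, and the two clauses of $\partial_H$ (associahedral differential on a black label, contraction of adjacent white angles) account for exactly the two kinds of facets of a product face of $W_{n+1}$. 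Deferring the signs to a choice of orientation is also how the paper itself treats them (Subsection $\ref{diffsigns}$).

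One imprecision is worth correcting, though it does not sink the argument. It is not true that every tree in $\overline{\beta_n}$ has an interior white vertex: when the root-absorbing contraction swallows \emph{all} $n$ branches (contracting $n$ of the $n+1$ angles at $v$, which the definition permits since one angle remains), the vertex $v$ becomes a leaf and the tree is a corolla on $n+1$ white leaves labeled by a cell of $K_{n+1}$. These are precisely the $n+1$ facets of $W_{n+1}$ indexed by a single bracket containing all the letters --- e.g.\ the three sides $(012),(120),(201)$ of the hexagon $W_3$, the ones created by blowing up the vertices of $\Delta_2$ --- so they must appear. Your block-and-bracket dictionary does cover them (the single block containing $0$ is the whole cyclic word, linearized by the surviving angle), so the fix is only to weaken the opening claim to ``at most one interior white vertex'' and to note that the degenerate corolla cells correspond to the all-encompassing brackets; with that adjustment the face-poset bijection and the boundary matching go through as you describe.
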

In particular if we consider $\beta_2$, the space $|\overline{\beta_2}|$ is a $2$-simplex in the associative case, which in the nonassociative case is blown up at each vertex to form a hexagon, whose 6 sides correspond to the 6 cyclic bracketing of a string a $3$ letters.  See figure 5 of \cite{KSch}.  Considering the brace operation under the BV operator, $\beta_2\circ_1\delta$ is again a blow up of the associative case (pictured in Figure $\ref{fig:triangle}$) which decomposes $W_3\times S^1$ in to $3$ pieces.  See Figure $\ref{fig:blowup}$.  Thus we see:
\begin{lemma} As topological spaces $|\overline{\beta_n\circ_1\delta}|\cong W_{n+1}\times S^1$.
\end{lemma}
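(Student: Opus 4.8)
The plan is to upgrade Proposition~\ref{assocdecomp} from the associative to the $A_\infty$ setting, exploiting that, by construction, $|\op{TS}_\infty(n+1)|=|\op{T}_\infty(n+1)|\times(S^1)^{n+1}$ and that the spineless brace $\beta_n$ already carries the full cyclohedron structure. The first step is to check that the formula of equation~\ref{compeq} persists verbatim in $\op{TS}_\infty$, namely $\beta_n\circ_1\delta=\sum_{l=0}^n(-1)^l\beta_{n,l}$. Since $\beta_n$ and $\delta$ involve only trivial black vertices, inserting $\delta$ at the spineless internal white vertex $v$ of $\beta_n$ falls under Case~2 of the operad structure of Section~\ref{sec:op} and produces no nontrivial black labels; the requirement that the cyclic order of the branches of $\beta_n$ be preserved forces all of them to be grafted at the image of $v$, and the only remaining freedom is the location of the spine of $\delta$ among the $n+1$ sectors there, which gives exactly the trees $\beta_{n,l}$, $l=0,\dots,n$.

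The second step is to forget the spine. Every tree occurring in $\overline{\beta_n\circ_1\delta}$---each $\beta_{n,l}$ together with all its iterated $\partial_H$-faces---has a single internal white vertex, and erasing its spine produces a tree of $\overline{\beta_n}$; conversely, a tree of $\overline{\beta_n}$ decorated with a choice of spine position at its internal white vertex recovers a generator of $\overline{\beta_n\circ_1\delta}$. The point is that this bijection is compatible with the differential: by the local description in Subsection~\ref{section:differential}, the terms of $\partial_H$ at the internal white vertex are of two kinds---contractions of \emph{nonspined} white angles, which are precisely the faces appearing in $\partial_H$ of the associated tree of $\overline{\beta_n}$ and, independently of where the spine lies, assemble into the cyclohedron $W_{n+1}$ of \cite{KSch}; and the two terms obtained by pushing off the spine, which involve the spine alone and not the black structure. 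Hence $\overline{\beta_n\circ_1\delta}$ is, as a cell complex, the product of $\overline{\beta_n}$ with the circle recording the position of the spine at the internal white vertex (the circle $|\overline{\delta}|$ swept out by the spine). Equivalently, under the identification $|\op{TS}_\infty(n+1)|=|\op{T}_\infty(n+1)|\times(S^1)^{n+1}$, the subcomplex $|\overline{\beta_n\circ_1\delta}|$ is $|\overline{\beta_n}|\times S^1\times\{\mathrm{pt}\}^{n}$, the $S^1$ being the factor indexed by the internal white vertex. Since the result of \cite{KSch} recalled above identifies $|\overline{\beta_n}|$ with $W_{n+1}$, this gives $|\overline{\beta_n\circ_1\delta}|\cong W_{n+1}\times S^1$.

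It is worth recording, for consistency with Proposition~\ref{assocdecomp} and with the case $n=2$ discussed above, that the resulting CW structure on $W_{n+1}\times S^1$ is not the product one: the circle factor is subdivided by the $n+1$ pushed-off positions of the spine, so that $W_{n+1}\times S^1$ is cut into the $n+1$ top cells $|\overline{\beta_{n,l}}|$, exactly as $\Delta_n\times S^1$ was cut into the $n+1$ simplices of the $D_i$-decomposition in the associative case.

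The main obstacle is the compatibility claim in the second step: one must verify that contracting a white angle adjacent to the spined sector genuinely leaves the spine untouched, so that the ``black-structure'' data and the ``spine-position'' data are honestly independent coordinates and $\partial_H$ is the differential of a product of cell complexes. This is a purely combinatorial statement about contraction of white angles in marked trees (Subsection~\ref{whiteangles}) extending the analysis of \cite{KSch}; once it is in hand, the homeomorphism with $W_{n+1}\times S^1$ is formal.
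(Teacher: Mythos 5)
Your argument reaches the right conclusion and follows essentially the same route as the paper, which in fact gives no formal proof here: after Proposition \ref{assocdecomp} identifies the associative composition with $\Delta_n\times S^1$, the lemma is simply read off from the blow-up $\Delta_n\to W_{n+1}$ of that picture, the $S^1$ factor (the spine position at the internal white vertex) being untouched by the blow-up. Your first step (that $\beta_n\circ_1\delta=\sum_l\pm\beta_{n,l}$ persists verbatim in $\op{TS}_\infty$ because $\delta$ has no black vertices to receive graftings) and your identification of $|\overline{\beta_n\circ_1\delta}|$ with $|\overline{\beta_n}|\times S^1\times\{\mathrm{pt}\}^{n}$ inside $X_{n+1}$ are correct and supply more detail than the paper does. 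One sentence should be repaired, however: the assertion that $\overline{\beta_n\circ_1\delta}$ is ``as a cell complex, the product of $\overline{\beta_n}$ with the circle'' is false --- the set of available spine positions depends on the cell of $\overline{\beta_n}$ (contracting a white angle removes a sector of the internal white vertex, so lower-dimensional faces see a coarser circle; already for $n=1$ the cylinder has $8$ cells, not $3\times 4$), which is precisely the non-product phenomenon the paper emphasizes after Proposition \ref{assocdecomp} and that your own closing paragraph concedes. Since the lemma concerns only the underlying topological space, the conclusion survives; the homeomorphism should just be derived from the point-set identification inside $X_{n+1}=|\op{T}_\infty(n+1)|\times(S^1)^{n+1}$ (every point of $|\overline{\beta_n}|\times S^1$ is realized by some tree of $\overline{\beta_n}$ with some admissible spine position, and conversely) rather than from a purported isomorphism of product cell complexes.
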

\begin{figure}[htbp]
	\centering
		\includegraphics[trim = 5mm 0mm 0mm 0mm,scale=.45]{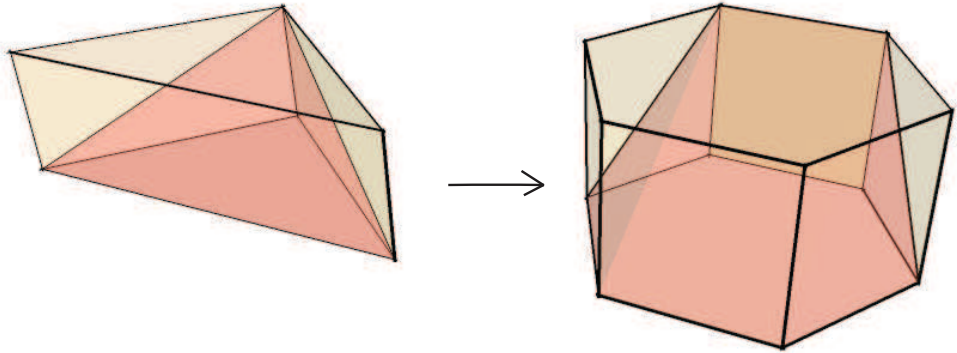}
	\caption{On the left is $\beta_2\circ_1\delta$ in the associative case (with one layer not drawn).  Blowing up gives a cellular decomposition of $W_3\times S^1$.}
	\label{fig:blowup}
\end{figure}

In \cite{KSch} the authors define a sequential blow-up of the standard $n$-simplex $\Delta_n$ to the cyclohedron $W_{n+1}$ achieved by a combinatorially described replacement of certain vertices and faces of $\Delta_n$ by products of cyclohedra and associahedra.  We can give yet another description of this blowup using our tree language as follows.  Label the cells of the $n$ simplex by spineless trees as suggested by the brace operations.  In particular $\beta_n$ labels the top dimensional cell and the vertices are labeled with corollas labeled left to right by $2\cdc i,1,i+1\cdc n+1$.  Then the blow up is achieved by labeling each nontrivial black vertex of arity $n$ by $\mu_n$.  The vertices are blown up to associahedra $K_{n+1}$ and other faces are blown up to compositions (products) of cyclohedra and associahedra which are determined by the combinatorics of the tree labeling said face.  In particular black vertices appearing in a tree correspond to associahedra and internal white vertices correspond to cyclohedra.  See Figures $\ref{fig:cyclo2}$ and $\ref{fig:cyclo1}$.  Recall the cellular structure of $\Delta_n\times S^1$ given in Proposition $\ref{assocdecomp}$.  This blow-up induces a cellular structure on $W_{n+1}\times S^1$, which we take as the cellular chains of $W_{n+1}\times S^1$.

\begin{figure}
	\centering
		\includegraphics[scale=.7]{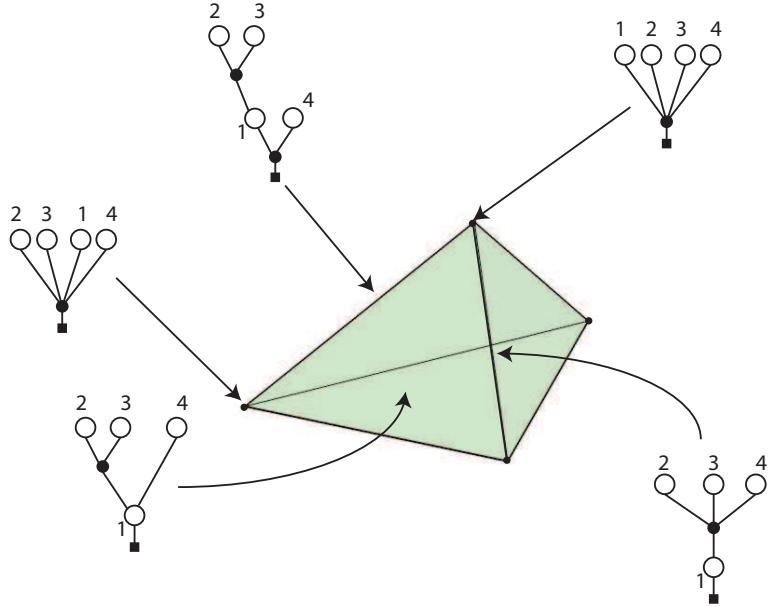}
	\caption{In the associative case the tree $\beta_3$ generates a three simplex whose boundary cells are labeled by trees without spines.  The blow up to the non-associative case is achieved by adding labels to the black vertices, see Figure $\ref{fig:cyclo1}$.}
	\label{fig:cyclo2}
\end{figure}

\begin{figure}
	\centering
		\includegraphics[scale=.7]{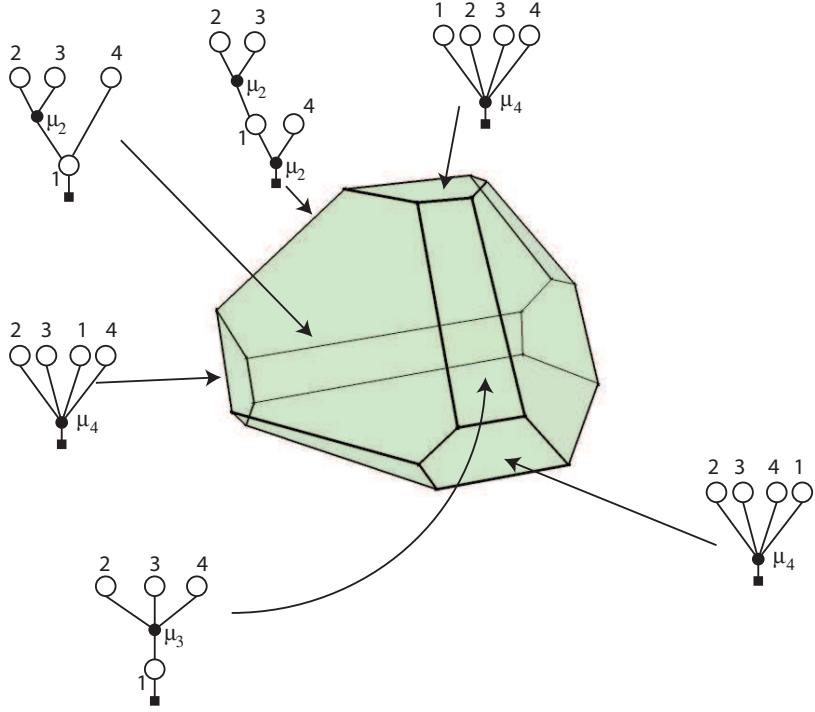}
	\caption{Blowing up $\Delta_3$ in Figure $\ref{fig:cyclo2}$ gives the cyclohedron $W_4$ labeled by trees in $\op{T}_\infty$.  Notice that the boundary cells are products of associahedra and cyclohedra as dictated by the labeling trees.} 
	\label{fig:cyclo1}
\end{figure}

\begin{proposition} As chain complexes $(\overline{\beta_n\circ_1\delta},\partial_H)\cong CC_\ast(W_{n+1}\times S^1)$.
\end{proposition}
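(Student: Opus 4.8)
The plan is to transport the associative-case identification of Proposition~$\ref{assocdecomp}$, $\overline{\beta_n\circ_1\delta}\cong CC_\ast(\Delta_n\times S^1)$, through the blow-up of the simplex $\Delta_n$ into the cyclohedron $W_{n+1}$ described above. Two ingredients are already available: the cell structure we use on $W_{n+1}\times S^1$ is, by definition, the blow-up of the refined $D_i$-cell structure on $\Delta_n\times S^1$ from Proposition~$\ref{assocdecomp}$; and, by the preceding lemma, there is a homeomorphism $|\overline{\beta_n\circ_1\delta}|\cong W_{n+1}\times S^1$ under which the marked trees occurring in iterated $\partial_H$-images of $\beta_n\circ_1\delta$ label the cells. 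So what remains is to check that this homeomorphism is cellular and carries $\partial_H$ to the cellular boundary.

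First I would pin down the cells on both sides. Starting from equation~$\ref{compeq}$, $\beta_n\circ_1\delta=\sum_{l=0}^n(-1)^l\beta_{n,l}$, and iterating the three cases of the differential of subsection~$\ref{section:differential}$, one checks that the trees spanning $\overline{\beta_n\circ_1\delta}$ are exactly the marked trees with $n+1$ white vertices of which precisely one is internal, with arbitrary associahedral labels on the nontrivial black vertices, and with a spine sitting either at a flag or on a white angle of the internal white vertex. Forgetting the spine sends such a tree to a cell of $W_{n+1}$ under the identification $\overline{\beta_n}\cong CC_\ast(W_{n+1})$; the position of the spine records the $S^1$-coordinate exactly as in the proof of Proposition~$\ref{assocdecomp}$; and the associahedral labels are precisely the data supplied by the blow-up. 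This should yield, in each degree (identifying $|\tau|$ with the dimension of the corresponding cell), a bijection between the basis of $\overline{\beta_n\circ_1\delta}$ and the cells of the blown-up complex $W_{n+1}\times S^1$.

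Next I would match the differentials via the local formula $\partial_H(\tau)=\sum_{u\in\tau}\pm\,\partial(\tau;u)$, showing that its three cases correspond to the three ``directions'' of the cellular boundary. When $u$ is a black vertex, $\partial(\tau;u)$ is the associahedron differential of the label of $u$; since the associahedron $K_m$ carries the corresponding cellular chain complex and the blown-up cell structure is locally a product of cyclohedra, associahedra, and the $S^1$-factor (cf.\ Figures~$\ref{fig:cyclo2}$ and~$\ref{fig:cyclo1}$), these are exactly the boundary terms internal to an associahedral factor. When $u$ is a white vertex whose spine lies on a $0$-cell, contraction of white angles yields the codimension-one faces in the cyclohedron direction, which is exactly the content of $\overline{\beta_n}\cong CC_\ast(W_{n+1})$. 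When $u$ is the internal white vertex with spine on a $1$-cell, contracting its spineless white angles again gives cyclohedron-direction faces, while pushing off the spine gives the two ends of the corresponding $S^1$-arc, i.e.\ the boundary in the $S^1$-direction, exactly as in the refined decomposition of Proposition~$\ref{assocdecomp}$ carried along the blow-up. Summing over $u$ then identifies $\partial_H$ with the cellular boundary of $W_{n+1}\times S^1$.

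The signs on both sides are to be fixed by the orientation conventions of subsection~$\ref{diffsigns}$: orienting the cyclohedra and their products with $S^1$ compatibly with the orientations of $\Delta_n\times S^1$ and of the associahedra used in the blow-up makes the bijection above a chain isomorphism essentially by construction. The step I expect to require the most care is verifying that this blow-up is compatible with the $D_i$-refinement of $\Delta_n\times S^1$ — equivalently, that blowing up and forming the refined product with $S^1$ commute. This should hold because the blow-up modifies only the data attached to the black vertices (the ``$\Delta_n$ part''), leaving the spine data untouched, so it may be carried out piece-by-piece over the images of the maps $D_i$ and reassembles into $W_{n+1}\times S^1$ precisely as asserted by the preceding lemma; granting this compatibility, the two bijections above give the claimed isomorphism of chain complexes.
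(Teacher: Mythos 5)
Your proposal is correct and follows essentially the same route as the paper: the paper's proof likewise reduces the claim to matching differentials, invokes Proposition $\ref{assocdecomp}$ for the picture before blow-up, and observes that the blow-up amounts to labeling black vertices by $\mu_r$ with the extra terms of $\partial_H$ (beyond $\partial_T$) corresponding exactly to the labels with $r\geq 3$. Your version simply spells out the cell-by-cell bijection and the three local cases of $\partial_H$ in more detail than the paper does.
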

\begin{proof}  It is enough to show that the two differentials coincide.  We know that the result holds before blow-up of $\Delta_n\times S^1\to W_{n+1}\times S^1$ by Proposition $\ref{assocdecomp}$.  On the one hand, this blow-up is achieved by labeling each black vertex of a tree labeling $\Delta_n\times S^1$ by some $\mu_r$, with $r\geq 2$.  On the other hand, the terms of the differential $\partial_T$ correspond to those trees labeled with $\mu_2$ and the additional terms in the differential $\partial_H$ (not appearing in $\partial_T$) correspond exactly to those trees which can be formed with higher associahedra $(r \geq 3)$.
\end{proof}

\subsection{Definition of $\op{X}$}  We can now define the collection of CW complexes $\op{X}=\{X_n\}$.  As spaces $X_n = |\op{T}_\infty(n)|\times (S^1)^n$.  Notice that the cellular decomposition of $W_{n+1}\times S^1$ described above has $n+1$ top dimensional cells corresponding to the trees $\beta_{n,l}$ and two adjacent top dimensional cells are glued along a codimension one face corresponding to the trees $\beta_n\wedge_l\mu_2$.  Since the brace operations correspond to top dimensional cells of cyclohedra and the corollas correspond to cells of associahedra, we have explicit cellular descriptions of each of the generators of $\op{TS}_\infty$.  For a tree $\tau$ which is a composite of generators, decompose $\tau$ into a product of generators $\tau_i$ such that each operadic composition is simple (see Lemma $\ref{generators}$).  Then if $c(\tau)$ represents the cell associated to $\tau$ we define
\begin{equation*}
c(\tau)=\times_i c(\tau_i)
\end{equation*}
Then define $X_n$ to be the CW complex whose cells correspond to marked trees with $n$ white vertices and with the cellular differential coming from $\partial_H$, i.e.
\begin{equation*}
\partial(c(\tau)):=c(\partial_H(\tau))
\end{equation*}
In the spirit of \cite{KSch} and \cite{KCacti} a point in $X_n$ is given by a marked tree in $\op{TS}_\infty(n)$ along with a weight on each arc of a white vertex such that the weights at each white vertex sum to $1$ and along with a point in each cell of associahedra which labels a black vertex.  Such pictures can be drawn either as trees or as parameterized circles whose intersection points are labeled with cells of associahedra and so the spaces $\op{X}$ can be thought of as a homotopy associative version of $\op{C}acti$.  More precisely:

\begin{theorem}\label{mainthm2}  For each $n\geq 1$ there is an isomorphism of dg vector spaces
\begin{equation*}
CC_\ast(X_n;k)\cong (\op{TS}_\infty(n), \partial_H)
\end{equation*}
inducing a bijection between the set of cells and the set of trees in $\op{TS}_\infty(n)$.  Additionally for each $n$ there is a surjective homotopy equivalence
\begin{equation*}
X_n\to Cacti^1(n)
\end{equation*}
and this collection of maps induces a morphism of dg operads on the cellular chains
\begin{equation*}
\op{TS}_\infty\stackrel{\nu}\longrightarrow CC_\ast(Cacti^1)\cong\op{TS}
\end{equation*}
\end{theorem}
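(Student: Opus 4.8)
The plan is to handle the three assertions in turn, using the block-by-block cellular models of the generators already in hand. For the first assertion, recall that $X_n$ was \emph{defined} so that its cells are indexed by marked trees, with $c(\tau)=\times_i c(\tau_i)$ for any decomposition of $\tau$ into simple compositions of generators and with cellular boundary $\partial c(\tau)=c(\partial_H\tau)$; the content is that this is well posed. First one checks that $c(\tau)$ is independent of the simple decomposition, which follows exactly as for the well-definedness of $\rho$: a simple composition occurs at a spineless vertex of maximal height, hence corresponds to an honest decomposition of the underlying tree, and operadic associativity does the rest. Then one checks that the resulting attaching maps agree, using the identifications $\overline{\mu_n}\leftrightarrow$ a cell of $K_n$, $\overline{\delta}\cong CC_\ast(S^1)$, $\overline{\beta_n}\cong CC_\ast(W_{n+1})$ and $\overline{\beta_n\circ_1\delta}\cong CC_\ast(W_{n+1}\times S^1)$ from the preceding propositions to see that the three local cases of $\partial_H$ (associahedron differential on a black label; contracting adjacent white angles; contracting adjacent spineless white angles together with pushing off a spine) are precisely the cellular boundary in the blow-up cell structure. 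Fixing once and for all the orientations of the cells of the $K_m$, of the $W_{m+1}$ and of $S^1$ pins down the signs in $\partial_H$ and in the structure maps of $\op{TS}_\infty$, and $\partial^2=0$ in a CW complex gives $\partial_H^2=0$; the bijection between cells and trees is then tautological.

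For the second assertion I would take $X_n\to\op{C}acti^1(n)$ to be the ``un-blow-up''. Writing $X_n=|\op{T}_\infty(n)|\times(S^1)^n$ and recalling from \cite{KSch} that $|\op{T}_\infty(n)|$ is a combinatorially prescribed blow-up of the spineless normalized cactus space $|\op{TS}(n)|^{\mathrm{sp}}$ (replacing the intersection points and lobes by products of associahedra and cyclohedra), there is a cellular collapse $|\op{T}_\infty(n)|\to|\op{TS}(n)|^{\mathrm{sp}}$ shrinking every such block back to a point. This collapse is a homotopy equivalence: each block is a contractible polytope, the blow-up is arranged so that the preimage of a cell is exactly such a block, and a homotopy inverse is obtained by selecting the barycentre of each associahedron together with a fibrewise straight-line homotopy compatible across cells. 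Taking the product with $\mathrm{id}_{(S^1)^n}$ (the spine coordinates) and composing with the homeomorphism $|\op{TS}(n)|^{\mathrm{sp}}\times(S^1)^n\cong\op{C}acti^1(n)$ then yields a surjective cellular homotopy equivalence $X_n\to\op{C}acti^1(n)$.

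For the third assertion I would first observe that these collapse maps assemble into a morphism of topological quasi-operads $\op{X}\to\op{C}acti^1$: the composition $\circ_i$ in $\op{X}$ is grafting of $v$-branches exactly as in the definition of the operad structure on $\op{TS}_\infty$, and after collapsing all $A_\infty$ labels the only terms differing from ordinary cactus insertion are those grafting a branch onto a black vertex --- but these land on the same configuration as the corresponding associative grafting, so $\circ_i$ commutes with the collapse up to the rescaling already present in normalized cacti. Passing to cellular chains turns both quasi-operads into honest dg operads (as in \cite{KCacti}), and a cellular map of CW complexes is automatically a chain map, so we obtain the desired morphism of dg operads $\nu\colon\op{TS}_\infty\to CC_\ast(\op{C}acti^1)\cong\op{TS}$; explicitly $\nu(\tau)=\pm\overline{\tau}$ (the underlying tree with spines) when every black vertex of $\tau$ carries a $0$-cell of its associahedron, so that $\sum_i|\gamma_i|=0$, and $\nu(\tau)=0$ otherwise, since then a positive-dimensional associahedral direction is collapsed.

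The step I expect to be the main obstacle is the sign and orientation bookkeeping: one must orient the cells of the cyclohedra, associahedra and circles coherently enough that simultaneously (i) the cellular boundary of $X_n$ reproduces $\partial_H$ verbatim, (ii) the cellular composition reproduces the operad structure of $\op{TS}_\infty$, and (iii) the collapse is a chain map with consistent signs; this is intertwined with the somewhat delicate verification that grafting onto black vertices is compatible with the quasi-operadic composition of cacti under the collapse. The remainder is bookkeeping over the generator-level identifications already established.
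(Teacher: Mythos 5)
Your proposal is correct and in substance follows the same route as the paper: the first assertion holds essentially by the construction of $X_n$ (cells $c(\tau)=\times_i c(\tau_i)$ indexed by marked trees, boundary defined to be $c(\partial_H(\tau))$, well-posedness by the same maximal-height/associativity argument used for $\rho$, signs fixed by the orientations of subsection \ref{diffsigns}), and the map $X_n\to \op{C}acti^1(n)$ is the same forgetful collapse that deletes the black-vertex labels, contracting associahedra to points and cyclohedra to simplices; on cellular chains this sends a marked tree to its underlying tree with spines when every label has degree zero and to zero otherwise. The one place you genuinely diverge is in deducing that $\nu$ is an operad morphism by first asserting that the collapses assemble into a morphism of \emph{topological} quasi-operads $\op{X}\to\op{C}acti^1$: the paper deliberately does not equip $\op{X}$ with a topological quasi-operad structure (see the remark following Corollary \ref{cmcor}), so that step is not available as stated and would require extra work (trees with height in the spirit of \cite{KSch}). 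The detour is also unnecessary: your closing chain-level description of $\nu$, combined with the observation that the terms of the $\op{TS}_\infty$ composition absent from $\op{TS}$ are exactly those grafting branches onto black vertices, whose labels thereby acquire positive degree and are collapsed, already yields the operad-morphism and chain-map properties directly on $CC_\ast$, which is precisely how the paper argues. Your more detailed justification of the homotopy equivalence (contractible blocks, barycentric section) fills in a step the paper asserts in one line, and is a reasonable addition.
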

\begin{proof}  Define a map $X_n\to Cacti^1(n)$ as follows.  A point in $X_n$ corresponds to a weighted marked tree.  Removing the labels of the black vertices we get a weighted tree with spines, which corresponds to a point in $|\op{TS}(n)|\cong Cacti^1(n)$.  This map has the effect of contracting all associahedra to a point, and by extension contracting cyclohedra to associahedra.  Notice that this map is cellular since it decreases the degree when contracting associahedra.  As such this map induces a map on the cellular chains for each $n$ which takes a marked tree $\tau$ to the associated tree with spines if $\tau$ has only black vertices of degree zero, and takes $\tau$ to $0$ otherwise.  Hence this collection of maps induces a morphism of operads, since the operad structure in $\op{TS}_\infty$ has strictly more terms that the operad structure in $\op{TS}$ which correspond to grafting branches to black vertices, and these additional terms are are mapped to zero by this collection of maps.  Moreover this is a morphism of dg operads since the additional terms in the differential of $\op{TS}_\infty$ all arise by labeling black vertices with labels of nonzero degree.
\end{proof}
\begin{corollary}\label{cmcor}  The map $\op{TS}_\infty\stackrel{\nu}\longrightarrow\op{TS}$ is a quasi-isomorphism.  In particular $\op{TS}_\infty$ is a chain model for $fD_2$.
\end{corollary}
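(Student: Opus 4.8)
The plan is to derive both assertions formally from Theorem \ref{mainthm2} together with the earlier identification $CC_\ast(\op{C}acti^1)\cong\op{TS}$. Under the isomorphisms $CC_\ast(X_n;k)\cong(\op{TS}_\infty(n),\partial_H)$ of Theorem \ref{mainthm2} and $CC_\ast(\op{C}acti^1(n);k)\cong\op{TS}(n)$, the arity-$n$ component $\nu_n$ is, by construction, the map on cellular chains induced by the cellular map $X_n\to\op{C}acti^1(n)$ of Theorem \ref{mainthm2}. Since for a CW complex cellular homology is naturally isomorphic to singular homology, and since this map is a homotopy equivalence, the induced map on homology is an isomorphism; hence $\nu_n$ is a quasi-isomorphism. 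As this holds for every $n$ and $\nu$ is already known to be a morphism of dg operads, $\nu\colon\op{TS}_\infty\to\op{TS}$ is a quasi-isomorphism of dg operads.

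For the second assertion I would combine this with the fact that normalized cacti model the framed little disks operad. Concretely, rescaling gives an arity-wise homotopy equivalence of quasi-operads $\op{C}acti^1\to\op{C}acti$, and $\op{C}acti\simeq fD_2$ by \cite{Vor} (see also \cite{KCacti},\cite{KCyclic}); passing to chains yields a zig-zag of quasi-isomorphisms of dg operads $\op{TS}\cong CC_\ast(\op{C}acti^1)\stackrel{\sim}{\longleftarrow}\cdots\stackrel{\sim}{\longrightarrow}C_\ast(fD_2)$, which is exactly the input underlying Kaufmann's solution of the associative cyclic Deligne conjecture (Theorem \ref{cyclicdeligne}). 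Prepending the quasi-isomorphism $\nu$ produces the zig-zag of equation \ref{zzeq}, so $\op{TS}_\infty$ is a chain model for $fD_2$.

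I do not expect a genuine obstacle here: all of the geometric content — the construction of $X_n$ from cyclohedra, the blow-up cell structure, and above all the homotopy equivalence $X_n\to\op{C}acti^1(n)$ — is already established in Theorem \ref{mainthm2}, and the one point that must be checked there, that the comparison map is cellular (so that it induces an honest chain map and not merely a homology-level map), has also been verified. The only care needed is the routine bookkeeping of passing between the cellular and singular models and of splicing together the operadic zig-zag on the cacti side; the latter (i.e.\ that the cellular chains of the quasi-operad $\op{C}acti^1$ carry a genuine dg operad structure quasi-isomorphic to $C_\ast(fD_2)$) is due to Kaufmann and is invoked rather than reproved.
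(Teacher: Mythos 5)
Your argument is correct and matches the paper's own proof: both deduce that $\nu$ is a levelwise quasi-isomorphism from the arity-wise homotopy equivalences $X_n\to \op{C}acti^1(n)$ of Theorem $\ref{mainthm2}$, and then conclude by citing that $\op{TS}\cong CC_\ast(\op{C}acti^1)$ is already known to be a chain model for $fD_2$. You simply spell out the cellular-versus-singular bookkeeping and the cacti-side zig-zag more explicitly than the paper does.
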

\begin{proof}  Since $\nu$ is induced levelwise by homotopy equivalences it induces a levelwise isomorphism in homology.  This levelwise isomorphism is also a morphism of operads, since $\nu$ is, and hence is an isomorphism of operads.  Then since $\op{TS}$ is a chain model for $fD_2$, so is $\op{TS}_\infty$.
\end{proof}
\begin{remark}  We have not given $\op{X}$ a (quasi)-operad structure on the topological level, since it is not necessary for our purposes.  However it should be possible to do this using trees with height in the spirit of \cite{KSch}.  The result would be be a topological `operad' where the operadic composition is only associative up to rescaling.  A possible future direction would be to investigate coherence laws for such quasi-operads and their higher operadic counterparts \cite{bat} in the context of higher versions of Deligne's conjecture (see e.g. \cite{DTT}).
\end{remark}

\subsubsection{Orientation and Signs}\label{diffsigns}  The signs in the operadic composition and the differential $\partial_H$ come most naturally from an orientation of the CW complexes $X_n$ as we now explain.  We first fix an orientation for each cell in $X_n$ which we call the standard orientation.

First for corollas we take the natural orientation of associahedra indicated by the differential.  In other words there is a unique orientation of each associahedron such that the cellular differential coincides with the the differential relations for $A_\infty$ algebras (see e.g. \cite{MSS} p.195).  For a tree $\beta_n$ encoding a brace operation we recall that in the associative case $|\overline{\beta_n}|$ is an $n$ simplex whose vertices correspond to labelings of corollas by $(2,3\cdc i-1,1,i\cdc n+1)$.  We call the vertex with this label $v_i$ and take as the standard orientation of $\beta_n$ that induced by the ordering of vertices $(v_1\cdc v_{n+1})$.  We then define the standard orientation of cyclohedra inductively as the orientation induced by the blowup.  In particular any cell appearing in the boundary of a cyclohedron is an ordered product of associahedra and lower cyclohedra each of which is oriented, so the product is oriented.  Finally for any spines we take the standard orientation as clockwise on each white vertex.  In particular the orientation of $\beta_m\wedge_l \mu_n$ is induced by the projection down to $\beta_m$ and the orientation of $\beta_{m,l}$ agrees with (resp. disagrees with) the orientation induced by $\beta_{m}\times S^1$ if $l$ is even (resp. odd).  Cells which are operadic compositions of generators are oriented as the ordered product of oriented cells given by taking the decomposition in to generators in Lemma $\ref{generators}$ in the order induced by the total order on flags (i.e. traversing the tree clockwise starting from the root).  Finally, we stipulate that the $S_n$ action does not change the orientation of a cell.

The orientation of $X_n$ given above fixes the signs in the differential $\partial_H$.  In particular given a tree $\tau$, the differential $\partial_H(\tau)$ is a signed sum of trees which come with a standard orientation.  These cells come also with an orientation induced from the standard orientation of $\tau$.  If these orientations agree the sign in the sum is $+$ and if they disagree it is $-$.  Notice also that our description of $\partial_H$ as the cellular differential of a CW complex assures us that $\partial_H^2=0$.

For the operadic composition the signs can be determined in a similar manner.  In particular $|\overline{\tau_1\circ_i\tau_2}|$ is a subcomplex of some $X_n$ and a choice of orientation of the entire subcomplex determines the signs of the trees appearing in $\tau_1\circ_i\tau_2$ again by comparison with the standard orientation of each top dimensional cell.  We take as a convention the orientation of the subcomplex $|\overline{\tau_1\circ_i\tau_2}|$ induced by the top dimensional cell (tree) formed by gluing each branch at the largest possible angle (starting from the root and going clockwise).  See Figure $\ref{fig:sings}$.

\begin{figure}[htbp]
	\centering
		\includegraphics[scale=.95]{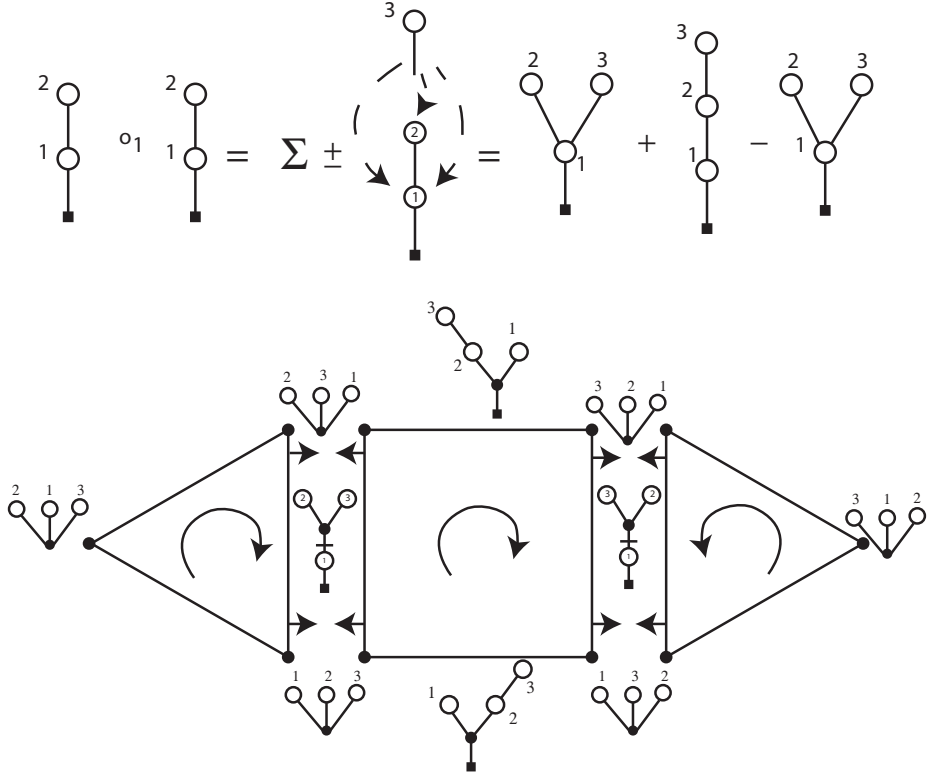}
		\caption{A basic example of the operadic composition with signs.  The first tree on the right hand side $\beta_2$ determines the orientation of the entire subcomplex $|\overline{\beta_1\circ_1\beta_1}|$ since this tree takes each (one) branch glued at the largest possible angle.  Then the orientations agree for the first two trees and disagree for the third, hence the signs.  The orientation for the second tree is given by decomposing it as $\beta_1\circ_2\beta_1$ and taking the orientation induced by this ordered product which is up then right.}
	\label{fig:sings}
\end{figure}

\section{The Main Theorem}\label{sec:mainthm}  Piecing together the above work we can now prove the main theorem.  Let $D_2$ be the little disks operad and $fD_2$ be the framed little disks operad.

\begin{theorem} \label{mainthm} (Cyclic $A_\infty$ Deligne Conjecture)  Let $A$ be a cyclic $A_\infty$ algebra.  There is a chain model for the operad $fD_2$ that acts on the Hochschild cochains of $A$ in a manner compatible with the standard actions on homology/cohomology.
\end{theorem}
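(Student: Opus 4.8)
The plan is to take the chain model to be the dg operad $\op{TS}_\infty$ and the action to be the morphism $\rho$ of Section~\ref{section:action}; the proof is then a synthesis of the results assembled above. First I would record that $\op{TS}_\infty$ is a chain model for $fD_2$. By Theorem~\ref{mainthm2} each $\op{TS}_\infty(n)$ is the cellular chain complex of the CW complex $X_n$, and the collection $\op{X}=\{X_n\}$ admits a surjective levelwise homotopy equivalence to $\op{C}acti^1$ inducing the operad quasi-isomorphism $\nu\colon\op{TS}_\infty\to\op{TS}$ of Corollary~\ref{cmcor}. Since $\op{TS}\cong CC_\ast(\op{C}acti^1)$ and normalized cacti are homotopy equivalent to the framed little disks, stitching $\nu$ together with the standard chain-level comparison between cacti and $fD_2$ yields the zig-zag of quasi-isomorphisms $\op{TS}_\infty\stackrel{\sim}\leftarrow\dots\stackrel{\sim}\rightarrow C_\ast(fD_2)$ of equation~\eqref{zzeq}. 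In particular $H_\ast(\op{TS}_\infty)\cong H_\ast(fD_2)\cong\op{BV}$, and because this is routed through honest topological spaces it requires no appeal to formality of $fD_2$.

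Next I would invoke the action. By Theorem~\ref{opthm} the map $\rho\colon\op{TS}_\infty\to End_{CH^\ast(A,A)}$ is a morphism of graded operads (with respect to the grading reversal of Remark~\ref{gradingconventions2}), and by Theorem~\ref{diffthm} it intertwines $\partial_H$ with the internal differential $d_{Int}$, hence is a morphism of dg operads. Passing to homology produces an action of $H_\ast(\op{TS}_\infty)\cong\op{BV}$ on $HH^\ast(A,A)$, so that $HH^\ast(A,A)$ is an algebra over $H_\ast(fD_2)$.

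It then remains to check that this action is the \emph{standard} one, i.e. compatible with the known operations on (co)homology. For this I would trace the three distinguished cycles of $\op{TS}_\infty$ through $\rho$: the corolla $\mu_2$ maps to the cup product, the cycle $\delta$ maps to the Connes-type BV operator $\Delta$ of Remark~\ref{normalizationremark}, and since $\rho(\beta_1)=B_1$ is the Gerstenhaber circle product, $\beta_1-(12)\beta_1$ maps to the Gerstenhaber bracket. By the explicit formulas of the "distinguished cochains" subsection these are exactly the cochain operators inducing on $HH^\ast(A,A)$ the BV-algebra structure of Theorem~\ref{bvthm} (full details in \cite{Menichi},\cite{tradler}); on the other hand, under $\nu$ and the zig-zag they correspond to the generators of $H_\ast(fD_2)\cong\op{BV}$ recalled after Theorem~\ref{fldbv} — a $0$-cell of $f\op{D}_2$, the $1$-cell rotating the outer marked point one revolution, and the bracket class. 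Since $\op{BV}$ is generated by these classes, agreement on them forces the $H_\ast(fD_2)$-action to be the standard one, which completes the proof.

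The genuinely delicate step is this last matching: one must verify that $\rho$ applied to the generating cycles induces precisely the canonical operations of Theorems~\ref{bvthm} and~\ref{fldbv}, rather than some sign- or normalization-twisted variant, and that the grading reversal of Remark~\ref{gradingconventions2} does not introduce a discrepancy. This is bookkeeping — carried out via the normalized cochain formulas above and the orientation conventions of Subsection~\ref{diffsigns} — but it is where the real content lies; the remaining assembly (that $\op{TS}_\infty$ models $fD_2$, that $\rho$ is a dg morphism) has already been established in Corollary~\ref{cmcor} and Theorems~\ref{opthm} and~\ref{diffthm}.
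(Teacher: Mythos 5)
Your proposal is correct and follows essentially the same route as the paper: the paper's proof is exactly this synthesis, taking $(\op{TS}_\infty,\partial_H)$ as the chain model, the action $\rho$ of Subsection \ref{actionsection}, Theorem \ref{diffthm} for dg-compatibility, and Corollary \ref{cmcor} for the chain-model property and the preservation of the multiplication, bracket, and BV operator. Your explicit tracing of the three generating cycles $\mu_2$, $\delta$, and $\beta_1-(12)\beta_1$ merely spells out what the paper leaves implicit in its citation of Corollary \ref{cmcor}.
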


\begin{proof}  The chain model is the dg operad $(\op{TS}_\infty, \partial_H)$.  The action is defined in subsection $\ref{actionsection}$. That the action is dg is a consequence of Theorem $\ref{diffthm}$.  In addition Corollary $\ref{cmcor}$ tells us that $\op{TS_\infty}$ is a chain model for $fD_2$, and that the action preserves the standard operations, in particular the multiplication, the Gerstenhaber bracket, and the BV operator.  
\end{proof}

\begin{remark}
In \cite{KSch} Kaufmann and Schwell construct a dg operad which is a chain model for the little disks and which is isomorphic to the minimal operad of Kontsevich and Soibelman given in \cite{KS}.  Since the minimal operad acts on the Hochschild cochains of an $A_\infty$ algebra, this gives a proof of a (non-cyclic) $A_\infty$ version of Deligne's conjecture.  By Lemma $\ref{minlemma}$ we can recover their result by restricing Theorem $\ref{mainthm}$ to the suboperad of spineless trees $\op{T}_\infty\subset \op{TS}_\infty$.
\end{remark}

\subsection{An alternate chain model: $\op{BV}_\infty$}  In \cite{GCTV} the authors construct a dg operad $\op{BV}_\infty$ in characteristic $0$ which is an explicit cofibrant replacement of the operad $H_\ast(fD_2)\cong \op{BV}$ in the model category of dg operads \cite{MC1},\cite{MC2}.  Since we have shown that there is a zig-zag of quasi-isomophisms from $\op{TS}_\infty$ to $C_\ast(fD_2)$, the singular chains, we immediately have the following result.

\begin{theorem}\label{tvthm}  Let $k$ be of characteristic 0.  There is a quasi-isomorphism of dg operads
\begin{equation*}
\op{BV}_\infty\stackrel{\sim}\to\op{TS}_\infty
\end{equation*}
and hence $\op{BV}_\infty$ satisfies the cyclic $A_\infty$ Deligne conjecture in characteristic 0.
\end{theorem}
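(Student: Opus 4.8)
The plan is to deduce the statement purely formally, by combining the chain-level comparison already established in this paper with the rational formality of the framed little disks operad and the cofibrancy of $\op{BV}_\infty$, all inside the model category of dg operads over a characteristic $0$ field \cite{MC1},\cite{MC2}. No explicit chain-level formula for the map in equation $\ref{thmceq}$ will be produced this way; the concrete action remains the one furnished by $\op{TS}_\infty$ in Theorem $\ref{mainthm}$.

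First I would recall that Theorem $\ref{mainthm2}$ and Corollary $\ref{cmcor}$, together with the topological operad $\op{X}$ and its relation to normalized Cacti and to $fD_2$, produce the zig-zag of quasi-isomorphisms of dg operads
\begin{equation*}
\op{TS}_\infty\stackrel{\sim}\leftarrow\dots\stackrel{\sim}\rightarrow C_\ast(fD_2)
\end{equation*}
of equation $\ref{zzeq}$, where $C_\ast$ denotes singular chains with coefficients in $k$. Next I would invoke the formality of the framed little disks operad over $\Q$, hence over any field $k$ of characteristic $0$: $C_\ast(fD_2;k)$ is connected by a zig-zag of quasi-isomorphisms of dg operads to its homology $H_\ast(fD_2;k)\cong\op{BV}$. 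Splicing these two zig-zags together exhibits a zig-zag of quasi-isomorphisms of dg operads connecting $\op{TS}_\infty$ and $\op{BV}$.

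Then I would pass to the homotopy category $\mathrm{Ho}$ of dg operads over $k$. The zig-zag above, together with the defining quasi-isomorphism $\op{BV}_\infty\stackrel{\sim}\to\op{BV}$ of \cite{GCTV}, exhibits an isomorphism $\op{BV}_\infty\cong\op{TS}_\infty$ in $\mathrm{Ho}$. Since every dg operad is fibrant and $\op{BV}_\infty$ is cofibrant, which is the whole point of the construction in \cite{GCTV}, every morphism in $\mathrm{Ho}$ from $\op{BV}_\infty$ to $\op{TS}_\infty$ is represented by an honest morphism of dg operads, and any such morphism representing the above isomorphism is necessarily a quasi-isomorphism. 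This yields the arrow of equation $\ref{thmceq}$. For the remaining assertion, given a cyclic $A_\infty$ algebra $A$, precomposing the dg action $\op{TS}_\infty\to End_{CH^\ast(A,A)}$ of Theorem $\ref{mainthm}$ with $\op{BV}_\infty\stackrel{\sim}\to\op{TS}_\infty$ makes $CH^\ast(A,A)$ a $\op{BV}_\infty$-algebra, and since $\op{TS}_\infty$, hence $\op{BV}_\infty$, is a chain model for $fD_2$ inducing the standard operations on (co)homology, this is precisely the cyclic $A_\infty$ Deligne conjecture for $\op{BV}_\infty$ in characteristic $0$.

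The main obstacle lies not in the abstract argument but in correctly assembling and citing the inputs: the formality of $fD_2$ over $\Q$, which is what allows $\op{BV}$ rather than $C_\ast(fD_2)$ to sit at the far end of the zig-zag, and the verification that the model structure on dg operads and the cofibrancy of $\op{BV}_\infty$ behave as expected over an arbitrary characteristic $0$ field and not merely over $\Q$. Everything else is a routine application of standard homotopical algebra, which is exactly why the resulting morphism is non-explicit.
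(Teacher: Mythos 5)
Your proposal is correct and follows essentially the same route as the paper: formality of $C_\ast(fD_2)$ in characteristic $0$, splicing with the zig-zag of equation $\ref{zzeq}$, and then the standard model-category lifting using cofibrancy of $\op{BV}_\infty$ and fibrancy of all dg operads, followed by precomposition with the action of Theorem $\ref{mainthm}$. The only cosmetic difference is that the paper realizes the lift by factoring through an explicit cofibrant replacement $X$ of $\op{TS}_\infty$ via $\op{BV}_\infty\to\op{BV}\leftarrow X$, whereas you phrase the same fact as representability of the isomorphism in the homotopy category by an honest morphism.
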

\begin{proof}  The proof of this fact follows similarly to the proof in the associative case given in \cite{GCTV}.  Since $k$ is of characteristic zero, $C_\ast(fD_2)$ is formal \cite{GS},\cite{Sev}, i.e. there is a zig-zag of quasi-isomorphisms 
\begin{equation*}
C_\ast(fD_2)\stackrel{\sim}\longleftarrow\dots \stackrel{\sim}\longrightarrow \op{BV}
\end{equation*}
and hence there is a zig-zag of quasi-isomorphisms
\begin{equation*}
\op{TS}_\infty\stackrel{\sim}\longleftarrow\dots \stackrel{\sim}\longrightarrow \op{BV}
\end{equation*}
Let $X$ be a cofibrant replacement of $\op{TS}_\infty$.  Then since $X$ is cofibrant and $\op{BV}$ is fibrant (every operad is fibrant in this model category), there is a quasi-isomorphism $X\stackrel{\sim}\longrightarrow \op{BV}$.  Since $\op{BV}_\infty$ is cofibrant and $X$ is fibrant, the diagram
\begin{equation*}
\op{BV}_\infty \stackrel{\sim}\longrightarrow \op{BV} \stackrel{\sim}\longleftarrow X 
\end{equation*}   
induces a morphism $\op{BV}_\infty\stackrel{\sim}\longrightarrow X$ which when composing gives a morphism
\begin{equation}\label{bvinftymor}
\op{BV}_\infty \stackrel{\sim}\longrightarrow \op{TS}_\infty
\end{equation}
For a cyclic $A_\infty$ algebra $A$ we can compose this morphism with the action given above to get a morphism,
\begin{equation}\label{bvinftyaction}
\op{BV}_\infty \to End_{CH^\ast(A,A)}
\end{equation}
which shows that $\op{BV}_\infty$ satisfies the cyclic $A_\infty$ Deligne conjecture.
\end{proof}

\begin{remark}  As discussed in the introduction, the morphism in Theorem $\ref{tvthm}$ is not constructed explicitly but is the result of an abstract model category argument.  An explicit construction of this morphism seems difficult, however I will outline a process by which this morphism could be constructed in a manner which is somewhat explicit.  The operad $\op{BV}_\infty$ can be described it terms of a set of generators $\textbf{M}:=\{m_{p_1\cdc p_t}^d\}$ subject to a family of relations $\textbf{R}:=\{R_{p_1\cdc p_t}^d\}$.  See Theorem 20 of \cite{GCTV}.  Give these sets a total order lexicographically, starting with $d$, then $t$, then $p_t$, then $p_{t-1}$, etc.  Then a given relation $R_{p_1\cdc p_t}^d$ is defined only in terms of the elements of $\textbf{M}$ which are less than or equal to $m_{p_1\cdc p_t}^d$.  Now to construct the morphism $\op{BV}_\infty\stackrel{\phi}\to\op{TS}_\infty$, one must define $\phi$ on generators such that the relations are sent to zero.  Start by defining $\phi(m^0_n)=\mu_n$, $\phi(m^0_{1,1})$ is the Gerstenhaber bracket, $\phi(m_{p_1\cdc p_t}^0)$ is the $\op{G}_\infty$ structure as in \cite{VorG},\cite{TT} and $\phi(m_1^1)$ is the BV operator.  Moreover the topological framework given above can be exploited to determine explicit homotopies which induce the BV structure on homology.  For example one can determine $\phi(m^1_2)$ from figure 12 of \cite{KLP}.  After defining $\phi$ for the first `several' generators such that the corresponding relations are sent to zero, it should be possible to devise an arguement by (trans-finite) induction that $\phi$ extends to all of $\op{BV}_\infty$.  Then since the induced map on homology takes generators to generators it will be a quasi-isomorphism.  
\end{remark}

\section{Extension to Cyclic $A_\infty$ Categories.}\label{section:cat}  The purpose of this section is to extend Deligne's conjecture (Theorem $\ref{mainthm}$) to (unital) cyclic $A_\infty$ categories (also known as Calabi-Yau $A_\infty$ categories in \cite{Co}).  In particular we will prove the following theorem:

\begin{theorem}\label{catthm}  Let $\op{A}$ be a cyclic $A_\infty$ category.  Then $CH^\ast(\op{A},\op{A})$ is an algebra over the operad $\op{TS}_\infty$.
\end{theorem}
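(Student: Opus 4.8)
The plan is to reduce this statement to the algebra case, Theorem \ref{mainthm}, by passing from $\op{A}$ to a single cyclic $A_\infty$ algebra whose Hochschild complex contains $CH^\ast(\op{A},\op{A})$ as a sub-$\op{TS}_\infty$-algebra; this is the concrete form of the adjunction technique sketched in the introduction. First I would recall from Definition \ref{cyccatdef} that a cochain in $CH^\ast(\op{A},\op{A})$ is a collection of maps $\op{A}(X_0,X_1)\tensor\dots\tensor\op{A}(X_{n-1},X_n)\to\op{A}(X_0,X_n)$, one for each tuple of objects, with differential $[-,\mu]$ assembled from the structure maps $\mu_n$ of $\op{A}$, and that the BV operator of Theorem \ref{bvthm} is given on these cochains by the same formula (Remark \ref{normalizationremark}) as in the algebra case, using the cyclic form of $\op{A}$.

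Next I would build the associated algebra. Set $\widehat{\op{A}}:=\bigoplus_{X,Y\in\mathrm{Ob}(\op{A})}\op{A}(X,Y)$, let $\mu_n^{\widehat{\op{A}}}$ agree with the categorical $\mu_n$ on each composable chain $\op{A}(X_0,X_1)\tensor\dots\tensor\op{A}(X_{n-1},X_n)$ and vanish on every non-composable summand, take the formal unit $\sum_X\mathrm{id}_X$, and equip $\widehat{\op{A}}$ with the bilinear form that is the orthogonal sum of the pairings $\op{A}(X,Y)\tensor\op{A}(Y,X)\to k$, zero across summands which are not mutually dual. One then checks that $\widehat{\op{A}}$ is a unital $A_\infty$ algebra: the relations $\bigoplus_{r+s=t+1}\mu_r\circ\mu_s=0$ hold termwise, being the $A_\infty$ relations of $\op{A}$ on composable chains and vanishing term by term on the rest by object bookkeeping; normalization of $\mu_n$ for $n\geq 3$ and unitality of $\sum_X\mathrm{id}_X$ are inherited from $\op{A}$; the form is symmetric, nondegenerate, and satisfies $\langle d(a),b\rangle=(-1)^{|b|-1}\langle a,d(b)\rangle$; and the cyclic invariance \ref{eqcyc} for $\widehat{\op{A}}$ follows from the corresponding identity in $\op{A}$, term by term, since both sides of \ref{eqcyc} are supported on the same composable summands. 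Thus $\widehat{\op{A}}$ is a cyclic $A_\infty$ algebra in the sense of Definition \ref{cycinftydef}, and the passage $\op{A}\mapsto\widehat{\op{A}}$ is cyclicity-preserving.

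The heart of the argument is the comparison of Hochschild complexes. Inside $CH^\ast(\widehat{\op{A}},\widehat{\op{A}})$ let $V$ be the subspace of object-homogeneous cochains: those $f$ carrying $\op{A}(X_0,X_1)\tensor\dots\tensor\op{A}(X_{n-1},X_n)$ into $\op{A}(X_0,X_n)$ and killing every non-composable summand. Since $\mu^{\widehat{\op{A}}}$ is object-homogeneous and the Gerstenhaber bracket of object-homogeneous cochains is object-homogeneous, $V$ is a subcomplex, and unwinding definitions gives a canonical isomorphism of dg vector spaces $(V,[-,\mu])\cong(CH^\ast(\op{A},\op{A}),[-,\mu^{\op{A}}])$. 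I would then show that $V$ is closed under the $\op{TS}_\infty$-action on $CH^\ast(\widehat{\op{A}},\widehat{\op{A}})$ furnished by Theorem \ref{mainthm}. By Lemma \ref{generators} it suffices to check this for the operations attached to the generators: the corollas act through the foliage operator $F(\mu_n)$, the braces $\beta_n$ by $B_n$, the spined braces by $B_{l,n}$ and $B_n\wedge_l\gamma_m$, and $\delta$ by $\Delta$. Each is built from operadic insertions $\circ_i$, the maps $\mu_n^{\widehat{\op{A}}}$, cyclic permutations, and the pairing; an insertion $f\circ_i g$ of object-homogeneous cochains is again object-homogeneous because the $i$-th slot of $f$ pins down the source and target objects of the output of $g$, the foliage terms insert only the object-homogeneous maps $\mu_n^{\widehat{\op{A}}}$, and in $\Delta$ and the spined braces the pairing $\langle a_0,-\rangle$ is nonzero only on the blocks $\op{A}(X,Y)\tensor\op{A}(Y,X)$, which is exactly the constraint matching $a_0$ to the output. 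Hence every generator of $\op{TS}_\infty$ preserves $V$, so $\op{TS}_\infty$ does, and restricting the action to $V$ yields a morphism of dg operads $\op{TS}_\infty\to End_{CH^\ast(\op{A},\op{A})}$. In particular $CH^\ast(\op{A},\op{A})$ is then also a $\op{BV}_\infty$ algebra via Theorem \ref{tvthm}.

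I expect the main obstacle to be bookkeeping of two kinds. First, checking that $\widehat{\op{A}}$ really satisfies Definition \ref{cycinftydef} on the nose --- in particular how the formal unit $\sum_X\mathrm{id}_X$ interacts with the normalization convention of Remark \ref{unitremark} and with the degeneracies, and that the form is nondegenerate; the latter needs the finiteness hypotheses of Section \ref{cyclicinftysection} (finite-dimensional Hom-spaces, and finitely many objects for $\widehat{\op{A}}$ itself to be finite-dimensional). For a general small $\op{A}$ one works instead over full subcategories on finitely many objects, on each of which the above applies, and checks compatibility of the resulting $\op{TS}_\infty$-actions with the restriction maps; equivalently, every structure operation of $\op{TS}_\infty$ evaluated on finitely many cochains involves only finitely many objects, so the action on $CH^\ast(\op{A},\op{A})$ is well defined with no finiteness assumption. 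Second, verifying that $V$ is preserved by the spined braces and by $\Delta$ comes down to checking that the off-diagonal contributions of the now block-diagonal pairing vanish; this is elementary but must be done generator by generator, and is the only place where the categorical structure genuinely enters beyond the algebra computation.
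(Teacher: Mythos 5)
Your proposal is correct and follows essentially the same route as the paper: your $\widehat{\op{A}}$ is exactly the paper's $\op{A}_\oplus$, the subspace $V$ of object-homogeneous cochains is the paper's identification $CH^\ast(\op{A},\op{A})\subset CH^\ast(\op{A}_\oplus,\op{A}_\oplus)$, and the generator-by-generator closure check (corollas via $\circ_i$, braces, spined braces, $\Delta$) matches the paper's sequence of lemmas. The only difference is your extra care about finiteness of $\widehat{\op{A}}$, which the paper handles by simply allowing $\op{A}_\oplus$ to be infinite dimensional under its standing smallness and finite-type hypotheses.
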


The remainder of this section will be devoted to first recalling relevant particulars on $A_\infty$ categories and cyclic $A_\infty$ categories, and to the proof of Theorem $\ref{catthm}$.

\subsection{Background} The purpose of this section is to give background on $A_\infty$ categories and to illustrate how, under certain mild assumptions, the study of $A_\infty$ categories and their modules, as well as $A_\infty$ functors can be reduced to the study of $A_\infty$ algebras and their modules and morphisms.  

\subsubsection{$A_\infty$ Categories}
\begin{definition}  A dg pre-category $\op{C}$ is a collection of objects $Ob(\op{C})$ and a collection of dg vector spaces indexed over all ordered pairs in $Ob(\op{C})\times Ob(\op{C})$.  The dg vector space associated to a pair of objects $(X,Y)$ is denoted $Hom_\op{C}(X,Y)$.  A dg pre-category will be called small if its collection of objects is a set.
\end{definition}

In order to simplify the presentation we use the following notation.  If $\op{A}$ is an dg pre-category and $X_1\cdc X_{n+1}$ is a list of objects, we write
\begin{equation*}
\op{A}(X_1\cdc X_{n+1}):= Hom_\op{A}(X_1,X_2)\tdt Hom_\op{A}(X_n,X_{n+1})
\end{equation*}

\begin{definition}  An $A_\infty$ category $\op{A}$ is a dg pre-category along with maps $\mu_n$
\begin{equation*}
\op{A}(X_1\cdc X_{n+1})\stackrel{\mu_n}\longrightarrow\op{A}(X_1,X_{n+1})
\end{equation*}
for every $n\in\N$ and every ordered collection of $n+1$ objects $(X_1\cdc X_{n+1})$, satisfying the equation
\begin{equation*}
\mu\circ\mu=0
\end{equation*}
where $\mu$ is as in equation $\ref{mueq}$.  In particular $\mu_1$ is the differential.  Note here that the signs are built into the $\circ$ operation, as in equation $\ref{preliesigns}$.  From now on we will consider only `unital' $A_\infty$ categories, in the sense of Remark $\ref{unitremark}$.  This means that we have unit morphisms in each $Hom(X,X)$, and that the higher $\mu_n$ vanish when evaluated at a product with an identity factor.
\end{definition}

\begin{definition}  An $A_\infty$ functor between two $A_\infty$ categories $\op{A}$ and $\op{B}$ is a map $Ob(\op{A})\stackrel{\phi}\longrightarrow Ob(\op{B})$ along with a collection of linear maps $\phi_n$,
\begin{equation*}
\op{A}(X_0\cdc X_n)\stackrel{\phi_n}\longrightarrow\op{B}(\phi(X_0),\phi(X_n))
\end{equation*}
satisfying
\begin{equation*}
\phi_\ast\circ\mu = \sum_{n}\sum_{n=i_1+\dots+i_l}\pm\mu_l(\phi_{i_1}\tdt \phi_{i_l})
\end{equation*} where $\phi_\ast$ is the direct sum.
Where the we take the sign conventions as in \cite{LH}.  An $A_\infty$ functor will be called strict if $\phi_n=0$ for $n\geq 2$.  Notice that a strict $A_\infty$ functor resembles a (proper) functor in that the data is a correspondence of objects and morphisms such that composition is preserved (even though composition need not be associative).
\end{definition}
Given two $A_\infty$ functors $\phi\colon\op{A}\to\op{B}$ and $\psi\colon\op{B}\to\op{C}$ we can compose them by composing on objects and by taking
\begin{equation*} (\psi\circ\phi)_n:=\sum_{i_1+\dots+i_l=n}\pm\psi_l(\phi_{i_1}\tdt\phi_{i_l})
\end{equation*}
This composition is strictly associative \cite{Seidel}.  Note also that there an obvious identity $A_\infty$ functor from an $A_\infty$ category to itself.  Thus the collection of $A_\infty$ categories can be made into a (proper) category.  

\begin{definition}  Let $\textbf{Cat}_\infty$ be the category of small $A_\infty$ categories of finite type.  
\end{definition}

Here finite type means that $Hom(X,Y)$ is finite dimensional for each pair $(X,Y)$.  The assumptions on our $A_\infty$ categories given in this definition are not yet necessary, but will be used below.  The smallness assumption will be necessary for the adjunction we consider.  The finite type assumption will be necessary in the cyclic case when we consider Hochschild cohomology.

The above framework can be seen as a generalization of the theory of $A_\infty$ algebras.  In particular we can consider an $A_\infty$ algebra $A$ as an $A_\infty$ category with one object $X$ by setting $Hom(X,X):=A$.  Then the standard notion of $A_\infty$ algebra morphism and strict morphism coincide with the above definitions.  We write $\textbf{Alg}_\infty$ for the category of unital $A_\infty$ algebras and morphisms, and write $\fr{i}$ for the inclusion functor:
\begin{equation*}
\textbf{Alg}_\infty\stackrel{\fr{i}}\hookrightarrow\textbf{Cat}_\infty
\end{equation*}
Note that the unital assumption is necessary for the associated $A_\infty$ category to have a identity morphism.

\subsubsection{Adjointness} 

There is an obvious candidate for a left adjoint to $\fr{i}$ defined as follows.  Let $\op{A}$ be a small $A_\infty$ category, and define a (possibly infinite dimensional) dg vector space $\op{A}_\oplus$
\begin{equation*}
\op{A}_\oplus:=\bigoplus_{(X,Y) \in Ob(\op{A})^{\times 2}}Hom_\op{A}(X,Y)
\end{equation*}
\begin{lemma} $\op{A}_\oplus$ is naturally a unital $A_\infty$ algebra.
\end{lemma}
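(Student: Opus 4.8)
The plan is to transport the structure maps $\mu_n$ of $\op{A}$ to $\op{A}_\oplus$ by the rule ``apply $\mu_n$ to composable strings of morphisms, and send all other strings to zero''. Since an element of $\op{A}_\oplus$ is, by construction, a finite sum of morphisms, it is enough to define the operations on homogeneous morphisms and extend multilinearly: for $a_i \in Hom_\op{A}(X_i,Y_i)$, $i=1\cdc n$, I would set
\begin{equation*}
\mu_n^\oplus(a_1\tdt a_n):=\begin{cases}\mu_n(a_1\tdt a_n)\in Hom_\op{A}(X_1,Y_n)\subseteq\op{A}_\oplus & \text{if } Y_i=X_{i+1}\text{ for all }i,\\ 0 & \text{otherwise,}\end{cases}
\end{equation*}
with $\mu_1^\oplus$ the direct sum of the differentials of the $Hom_\op{A}(X,Y)$. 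Each $\mu_n^\oplus$ then has the correct total degree (inherited from the categorical $\mu_n$) and genuinely lands in $\op{A}_\oplus$, the output of a composable string being a single morphism.

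Next I would verify the single $A_\infty$ relation $\mu^\oplus\circ\mu^\oplus=0$. The point is that the pre-Lie composition $\circ$ of equation $\ref{preliesigns}$ is computed termwise on pure tensors, and $\mu_n^\oplus$ kills non-composable strings, so the only pure tensors $a_1\tdt a_N$ that contribute to $(\mu^\oplus\circ\mu^\oplus)(a_1\tdt a_N)$ are composable ones, $a_i\in Hom_\op{A}(X_i,X_{i+1})$. On such a string every quantity produced by $\mu^\oplus$ coincides with the corresponding categorical quantity along the path $X_1\to\cdots\to X_{N+1}$, and the signs of equation $\ref{preliesigns}$ depend only on arities and internal degrees, which are unchanged in passing to $\op{A}_\oplus$; hence $(\mu^\oplus\circ\mu^\oplus)(a_1\tdt a_N)$ is exactly the value of the categorical $\mu\circ\mu$ on that string, which is $0$ by the definition of an $A_\infty$ category. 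For unitality I would take $e:=\sum_{X\in Ob(\op{A})}\mathrm{id}_X$, the sum of the unit morphisms of $\op{A}$; then $\mu_2^\oplus(e\tensor a)=\mu_2(\mathrm{id}_X\tensor a)=a=\mu_2(a\tensor\mathrm{id}_Y)=\mu_2^\oplus(a\tensor e)$ for $a\in Hom_\op{A}(X,Y)$, while for $n\geq 3$ the operation $\mu_n^\oplus$ annihilates every pure tensor having a factor equal to some $\mathrm{id}_X$ --- both statements following from unitality of $\op{A}$ in the sense of Remark $\ref{unitremark}$. Naturality is then immediate: an $A_\infty$ functor $\phi\colon\op{A}\to\op{B}$ induces $\phi_\oplus\colon\op{A}_\oplus\to\op{B}_\oplus$ by the same extend-by-zero recipe applied to the $\phi_n$, and the same bookkeeping shows $\phi_\oplus$ is a morphism of unital $A_\infty$ algebras.

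The step to be careful about --- the main obstacle, such as it is --- is the compatibility of ``extend by zero off composable strings'' with the composition $\circ$: one has to confirm that composing two such operations never attaches a nonzero coefficient to a non-composable configuration, so that the categorical $A_\infty$ identities, with their signs, transfer verbatim to $\op{A}_\oplus$. A secondary, cosmetic point is that when $Ob(\op{A})$ is infinite the element $e=\sum_X\mathrm{id}_X$ does not lie in the direct sum; it should be understood as an element of the completion $\prod_{X,Y}Hom_\op{A}(X,Y)$ acting on $\op{A}_\oplus$, which still serves as a strict unit for $\mu_2^\oplus$ and retains the normalization property $\mu_n^\oplus(\cdots)=0$ on factors $\mathrm{id}_X$ that is all the later Hochschild-cochain arguments require.
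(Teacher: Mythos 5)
Your proposal is correct and follows the same route as the paper: define $m_n$ on pure tensors of homogeneous morphisms to be $\mu_n$ on composable strings and zero otherwise, and observe that the categorical relation $\mu\circ\mu=0$ transfers termwise. You are in fact more thorough than the paper's proof, which does not explicitly verify unitality or note that for infinitely many objects the unit $\sum_X \mathrm{id}_X$ lives only in the completion; both of your added observations are sound and consistent with how the unit is later used.
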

\begin{proof}  Write $\mu_n$ for the $A_\infty$ category structure maps.  To give an $A_\infty$ algebra structure to $\op{A}_\oplus$ we want to define maps $m_n\colon [\op{A}_\oplus]^{\tensor n}\to \op{A}_\oplus$, and by linearity it is enough to define them on
\begin{equation*}
Hom(X_1,X_2)\tensor Hom(X_3,X_4) \tdt Hom(X_{2n-1},X_{2n})\stackrel{m_n}\longrightarrow Hom(X_1,X_{2n})
\end{equation*}
which is done as follows.  If $X_{2i}=X_{2i+1}$ for all $i=1\cdc n$, define $m_n:=\mu_n$.  Otherwise, define $m_n=0$.  Then the $A_\infty$ relation $\mu\circ\mu=0$ implies that $m\circ m=0$.
\end{proof}

\begin{lemma} The assignment $\textbf{Cat}_\infty\to\textbf{Alg}_\infty$ given by $\op{A}\mapsto \op{A}_\oplus$ is functorial.
\end{lemma}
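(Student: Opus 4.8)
The plan is to extend the assignment $\op{A}\mapsto\op{A}_\oplus$ to $A_\infty$ functors by the same ``restrict to composable strings'' recipe used on objects in the previous lemma, and then to check the two functor axioms. Given an $A_\infty$ functor $\phi\colon\op{A}\to\op{B}$ with components $\phi_n\colon\op{A}(X_0\cdc X_n)\to\op{B}(\phi(X_0),\phi(X_n))$, I would define $\phi_\oplus\colon\op{A}_\oplus\to\op{B}_\oplus$ by specifying its components $(\phi_\oplus)_n\colon[\op{A}_\oplus]^{\tensor n}\to\op{B}_\oplus$. By linearity it suffices to give these on a summand $Hom(X_1,X_2)\tensor Hom(X_3,X_4)\tdt Hom(X_{2n-1},X_{2n})$: set $(\phi_\oplus)_n:=\phi_n$, landing in $Hom(\phi(X_1),\phi(X_{2n}))\subseteq\op{B}_\oplus$, when $X_{2i}=X_{2i+1}$ for $i=1\cdc n-1$, and $(\phi_\oplus)_n:=0$ otherwise.

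First I would verify that $\phi_\oplus$ is a morphism of unital $A_\infty$ algebras, i.e. that $(\phi_\oplus)_\ast\circ m=\sum_{n=i_1+\dots+i_l}\pm m_l((\phi_\oplus)_{i_1}\tdt(\phi_\oplus)_{i_l})$, where $m$ and $m_l$ denote the structure maps of $\op{A}_\oplus$ and $\op{B}_\oplus$. Since both sides are linear, it is enough to evaluate on a pure tensor of morphisms of $\op{A}$. On a tensor whose source objects do not form a composable string, both sides vanish, since producing a nonzero element of $\op{B}_\oplus$ requires applying structure maps across every tensor factor, which a non-composable junction forbids. On a composable string $a_1\tensor\dots\tensor a_n$ with $a_j\in Hom(X_j,X_{j+1})$, the structure maps of $\op{A}_\oplus$ restrict to the $\mu$'s of $\op{A}$ and the components $(\phi_\oplus)_{i_j}$ restrict to the $\phi_{i_j}$, so the identity reduces precisely to the $A_\infty$-functor relation $\phi_\ast\circ\mu=\sum\pm\mu_l(\phi_{i_1}\tdt\phi_{i_l})$ for $\phi$, which holds by hypothesis. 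Unitality of $\phi_\oplus$ then follows from that of $\phi$: $(\phi_\oplus)_1$ sends the unit of $\op{A}_\oplus$ to that of $\op{B}_\oplus$, and the higher $(\phi_\oplus)_n$ vanish on tensors containing an identity factor because the $\phi_n$ do.

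Next I would check the functor axioms. The identity $A_\infty$ functor $1_\op{A}$ has $(1_\op{A})_1=\mathrm{id}$ and $(1_\op{A})_n=0$ for $n\geq2$, so $(1_\op{A})_\oplus$ has first component the identity of $\op{A}_\oplus$ and all higher components zero, i.e. $(1_\op{A})_\oplus=1_{\op{A}_\oplus}$. For composition, given $\phi\colon\op{A}\to\op{B}$ and $\psi\colon\op{B}\to\op{C}$, both $(\psi\circ\phi)_\oplus$ and $\psi_\oplus\circ\phi_\oplus$ vanish on non-composable strings, and on a composable string their $n$-th components are both equal to $\sum_{i_1+\dots+i_l=n}\pm\psi_l(\phi_{i_1}\tdt\phi_{i_l})$ --- the first by the definition of composition of $A_\infty$ functors restricted to that string, the second by the definition of composition in $\textbf{Alg}_\infty$ together with the fact that each $(\phi_\oplus)_{i_j}$ restricts to $\phi_{i_j}$. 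Hence $(\psi\circ\phi)_\oplus=\psi_\oplus\circ\phi_\oplus$, so $(-)_\oplus$ is a functor.

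The one point that needs genuine care --- and the step I would write out most carefully --- is that the Koszul signs produced by the two composition formulas (for $A_\infty$ functors of categories, with the conventions of \cite{LH}, versus for morphisms of the associated $A_\infty$ algebras) agree term by term. This is not conceptually hard: it is the same sign bookkeeping that already transports $\mu\circ\mu=0$ to $m\circ m=0$ in the previous lemma, and it goes through because each $a_j\in Hom(X_j,X_{j+1})$ keeps its degree, hence its total degree, under the inclusion into $\op{A}_\oplus$, so every sign from equation \ref{preliesigns} is reproduced verbatim. I would isolate this verification in a single remark rather than repeat it at each step.
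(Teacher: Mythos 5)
The paper states this lemma without proof, so there is no argument of the author's to compare against; your proposal is the natural attempt, but it has a genuine gap at precisely the step you dismiss as automatic. The claim that ``on a tensor whose source objects do not form a composable string, both sides vanish'' is false for the right-hand side $\sum\pm m_l((\phi_\oplus)_{i_1}\tdt(\phi_\oplus)_{i_l})$ whenever $\phi$ is not injective on objects. The blocks fed to the individual $(\phi_\oplus)_{i_j}$ can each be composable even though the whole string is not; their images land in various $Hom_{\op{B}}(\phi(X),\phi(Y))$, and $\phi$ may identify the two objects sitting at the bad junction, so the string of outputs becomes composable in $\op{B}$ and $m_l$ no longer kills it. Concretely: let $\op{A}$ have two objects $X\neq Y$ with $Hom(X,X)=Hom(Y,Y)=k$ and all other $Hom$'s zero, let $\op{B}=\fr{i}(k)$ be the one-object category on the unital algebra $k$, and let $\phi$ be the evident strict functor. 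On $id_X\tensor id_Y$ the left-hand side $(\phi_\oplus)_\ast\circ m$ vanishes, exactly as you argue, because the string is not composable in $\op{A}$; but the right-hand side contains the term $\mu_2\bigl((\phi_\oplus)_1(id_X)\tensor(\phi_\oplus)_1(id_Y)\bigr)=\mu_2(1\tensor 1)=1\neq 0$. So $\phi_\oplus$, as you (and, implicitly, the paper) define it, is not a morphism of $A_\infty$ algebras, and the functoriality claim fails for this $\phi$.

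This is the standard failure of functoriality of the category-algebra construction: $\op{A}\mapsto\op{A}_\oplus$ is functorial on $A_\infty$ functors that are injective on objects, but not in general. The rest of your argument --- vanishing of the left-hand side on non-composable strings, reduction to the $A_\infty$-functor relation on composable strings, preservation of identities and of composition, and the remark that the Koszul signs transport verbatim because degrees are preserved under the inclusion into $\op{A}_\oplus$ --- is correct as far as it goes. But the proof cannot be closed without either restricting the morphisms of $\textbf{Cat}_\infty$ to object-injective $A_\infty$ functors or modifying the definition of $\phi_\oplus$; as it stands, the gap is not a matter of bookkeeping but of the statement itself needing a hypothesis.
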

By abuse of notation we will call this functor $\oplus$ and write $\oplus(\op{A})=\op{A}_\oplus$:
\begin{equation*}
\xymatrix{\textbf{Cat}_\infty   \ar@/^/[rr]^{\oplus}&& \ar@/^/[ll]^{\fr{i}} \textbf{Alg}_\infty }
\end{equation*}
Notice that $\oplus\circ\fr{i}= id_{\textbf{Alg}_\infty}$ and that there is an  natural transformation via inclusion $id_{\textbf{Cat}_\infty}\to\fr{i}\circ\oplus$ and so these functors form an adjoint pair.
\begin{remark}  The adjunction given here can also be used to study $A_\infty$ category modules in terms of an associated $A_\infty$ algebra module.  Roughly, an $\op{A}$-module is an $A_\infty$ functor from $\op{A}$ to chain complexes, and given such a functor $\Phi$, we can define
\begin{equation*}  M=\bigoplus_{X\in ob(\op{A})} \Phi(X)
\end{equation*}
Then $M$ is naturally a module over the $A_\infty$ algebra $\op{A}_\oplus$.  Below we will consider the Hochschild cohomology of $\op{A}$ with values in $\op{A}$, but we could also use this approach to consider the Hochschild cohomology with values in a module.
\end{remark}

\subsubsection{The cyclic case.}  

\begin{definition}\label{cyccatdef}  A (unital) cyclic $A_\infty$ category is a small $A_\infty$ category of finite type along with a symmetric nondegenerate pairing
\begin{equation*}
Hom(X,Y)\tensor Hom(Y,X)\stackrel{\langle-,-\rangle}\longrightarrow k
\end{equation*}
for each pair of objects $X,Y \in ob{A}$ which satisfies the following property:  if we extend $\langle-,-\rangle$ to all of $[\op{A}_\oplus]^{\tensor 2}$ by zero, then $(\op{A}_\oplus, \langle-,-\rangle)$ is a (unital) cyclic $A_\infty$ algebra.  This property simply means that equation $\ref{eqcyc}$ holds in the appropriate categorical sense.
\end{definition}

Here is where we use our finite type assumption: for a cyclic $A_\infty$ category $\op{A}$, there is a canonical map $Hom_\op{A}(X,Y)\to Hom_\op{A}(Y,X)^\ast$ given by $f \mapsto \langle f,- \rangle$.  Nondegeneracy of the pairing implies that this map is an injection.  Since the same procedure produces an injection if we switch $X$ and $Y$, we see that $dim(Hom_\op{A}(X,Y))=dim(Hom_\op{A}(Y,X))$, and thus the canonical injection is an isomorphism.  Extending linearly we have the following:
\begin{lemma}  Let $\op{A}$ be a cyclic $A_\infty$ category.  Then $\op{A}_\oplus$ is of finite type and there is a canonical isomorphism $\op{A}_\oplus\to [\op{A}_\oplus]^\ast$.
\end{lemma}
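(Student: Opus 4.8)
The plan is to prove the two assertions in sequence, both reducing to the finite-dimensionality of each $Hom$-space together with the nondegeneracy of the pairing. First I would establish that $\op{A}_\oplus$ is of finite type. By definition $\op{A}_\oplus=\bigoplus_{(X,Y)}Hom_\op{A}(X,Y)$ as a direct sum over the (set!) of ordered pairs of objects; this need not be finite dimensional, but it is of finite type in the sense that each graded piece is finite dimensional, because $\op{A}$ is of finite type and the direct sum is indexed by a set. (This is the place the smallness and finite-type hypotheses in Definition~\ref{cyccatdef} are used.) So the issue is really the existence of the canonical isomorphism $\op{A}_\oplus\to[\op{A}_\oplus]^\ast$.

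Next I would recall the paragraph immediately preceding the statement: for each pair $(X,Y)$ the pairing $\langle-,-\rangle\colon Hom_\op{A}(X,Y)\tensor Hom_\op{A}(Y,X)\to k$ is nondegenerate, which gives an injection $Hom_\op{A}(X,Y)\hookrightarrow Hom_\op{A}(Y,X)^\ast$ via $f\mapsto\langle f,-\rangle$. Running the same argument with $X$ and $Y$ interchanged gives an injection $Hom_\op{A}(Y,X)\hookrightarrow Hom_\op{A}(X,Y)^\ast$, and since both spaces are finite dimensional, comparing dimensions forces $\dim Hom_\op{A}(X,Y)=\dim Hom_\op{A}(Y,X)$, so each of these injections is in fact an isomorphism. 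The key point for the global statement is that one must match up the pieces correctly: the summand $Hom_\op{A}(X,Y)$ of $\op{A}_\oplus$ is identified with the dual of the summand $Hom_\op{A}(Y,X)$, i.e. the self-duality permutes the indexing pairs by the involution $(X,Y)\mapsto(Y,X)$. I would assemble these pairwise isomorphisms into a single map $\op{A}_\oplus\to\bigoplus_{(X,Y)}Hom_\op{A}(Y,X)^\ast$, observe that the right-hand side is $\bigl(\bigoplus_{(X,Y)}Hom_\op{A}(Y,X)\bigr)^\ast$ precisely because the index set is fixed under reindexing and each summand is finite dimensional (so that the dual of a direct sum of finite-dimensional pieces over this set is the corresponding product — but note we only need the dg-vector-space-of-finite-type version, where $[-]^\ast$ is taken componentwise in each degree, so direct sum is preserved). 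Hence we get the desired canonical isomorphism $\op{A}_\oplus\to[\op{A}_\oplus]^\ast$.

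The one thing to be careful about — and what I expect is the only real subtlety — is the compatibility of gradings, signs, and the differential: the map $f\mapsto\langle f,-\rangle$ should be checked to be a morphism of dg vector spaces in the sense of Definition~\ref{cyclicalgebra}, i.e. that the closedness condition $\langle d(a),b\rangle=(-1)^{|b|-1}\langle a,d(b)\rangle$ for $\op{A}_\oplus$ (which holds because $(\op{A}_\oplus,\langle-,-\rangle)$ is a cyclic $A_\infty$ algebra by Definition~\ref{cyccatdef}) translates exactly into the statement that the canonical map intertwines the differential on $\op{A}_\oplus$ with the dual differential on $[\op{A}_\oplus]^\ast$. This is the same computation as in the proof that a cyclic vector space is canonically self-dual (the Lemma following Definition~\ref{cyclicalgebra}), applied componentwise. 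So I would phrase the proof as: finite type is immediate from the hypotheses; the pairwise nondegeneracy plus finite dimensionality gives pairwise isomorphisms $Hom_\op{A}(X,Y)\cong Hom_\op{A}(Y,X)^\ast$; these assemble, using that the direct sum is over a fixed set and each summand is finite dimensional, into the canonical isomorphism $\op{A}_\oplus\cong[\op{A}_\oplus]^\ast$, which is a map of dg vector spaces by the cyclic-$A_\infty$-algebra compatibility of $\langle-,-\rangle$ with the differential. No step requires more than bookkeeping, so there is no genuine obstacle beyond keeping the reindexing involution $(X,Y)\mapsto(Y,X)$ straight.
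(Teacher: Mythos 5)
Your proposal is correct and follows essentially the same route as the paper: the paper's argument (given in the paragraph immediately preceding the lemma) is exactly the pairwise injection $Hom_{\mathcal{A}}(X,Y)\hookrightarrow Hom_{\mathcal{A}}(Y,X)^\ast$ from nondegeneracy, the dimension comparison obtained by swapping $X$ and $Y$, and then ``extending linearly'' over the index set. Your additional remarks on the reindexing involution $(X,Y)\mapsto(Y,X)$ and on compatibility with the differential are finer points the paper leaves implicit, but they do not change the argument.
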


\subsubsection{Hochschild cohomology}  Let $\op{A}$ be an $A_\infty$ category.  We will recall the definition of $CH^\ast(\op{A},\op{A})$.  Define the graded vector space $CH^\ast(\op{A},\op{A})$ by
\begin{equation*}
CH^\ast(\op{A},\op{A})=\bigoplus_{n\geq1}\left[\ds\coprod_{(X_1\cdc X_{n+1})}Hom(\op{A}(X_1\cdc X_{n+1}),\op{A}(X_1,X_{n+1}))\right]
\end{equation*}
with the total grading, i.e. if 
\begin{equation*}
f\in Hom(\op{A}(X_1\cdc X_{n+1}),\op{A}(X_1,X_{n+1}))
\end{equation*}
has degree $deg(f)$, then, if $|f|$ denotes the grading of $f\in CH^\ast(\op{A},\op{A})$, we define $|f|=deg(f)+n$.  Notice that there is a canonical isomorphism
\begin{equation*}
CH^\ast(\op{A}_\oplus,\op{A}_\oplus)\cong\bigoplus_{\substack{n\geq 1 \\ (X_1,\cdc X_{2n}) \\ (Y,Z)}} Hom\left[ \op{A}(X_1,X_2)\tensor \op{A}(X_3,X_4)\tdt \op{A}(X_{2n-1},X_{2n}),\op{A}(Y,Z)\right]
\end{equation*}
and as such we can consider 
\begin{equation*}
CH^\ast(\op{A},\op{A})\subset CH^\ast(\op{A}_\oplus,\op{A}_\oplus)
\end{equation*}
where $CH^\ast(\op{A},\op{A})$ consists of those summands such that $X_{2i-1}=X_{2i}$ for $1 \leq i \leq n$ and $X_1=Y$ and $X_{2n}=Z$.  Note that this subspace is closed under the $A_\infty$ algebra differential $d$ defined above. As such, we can define a differential on $CH^\ast(\op{A},\op{A})$ induced by inclusion.  This cochain complex will be called the Hochschild cochains of $\op{A}$ and its cohomology is the Hochschild cohomology of $\op{A}$, denoted $HH^\ast(\op{A},\op{A})$.

\begin{remark}  Defining the Hochschild cohomlogy of $\op{A}$ in terms of that of $\op{A}_\oplus$ is non-standard, but it agrees with and simplifies the standard presentation.  Although we are concerned here with Hochschild cohomology of $A_\infty$ categories, this approach could also be used to consider their Hochschild homology.
\end{remark}
  
\subsection{Proof of theorem $\ref{catthm}$}  Roughly speaking the operadic action which establishes this theorem is given by applying the action to the relevant direct summand of the associated cyclic $A_\infty$ algebra.

As above, we consider
\begin{equation*}
CH^\ast(\op{A},\op{A})\subset CH^\ast(\op{A}_\oplus,\op{A}_\oplus)
\end{equation*}
which gives us maps
\begin{equation*}
CH^\ast(\op{A},\op{A})^{\tensor n}\tensor \op{TS}_\infty(n)\to CH^\ast(\op{A}_\oplus,\op{A}_\oplus)
\end{equation*}
and it is enough to show that the image actually lands in $CH^\ast(\op{A},\op{A})$.  To see this it is enough to check the action of the generators of $\op{TS}_\infty$ on homogeneous cochains.

\begin{lemma}  $\rho(CH^\ast(\op{A},\op{A})^{\tensor n}\tensor \mu_n)\subset CH^\ast(\op{A},\op{A})$
\end{lemma}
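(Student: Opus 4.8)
The plan is to unwind the definition of $\rho(\mu_n)$ from Subsection~\ref{actionsection} and then to track source and target objects through the composites that appear. Recall that $\rho(\mu_n)=F(\mu_n)$, so for Hochschild cochains $f_1\cdc f_n$ of $\op{A}$ one has
\begin{equation*}
\rho(\mu_n)(f_1\cdc f_n)=\bigoplus_{s\geq n}\mu_s\{f_1\cdc f_n\},
\end{equation*}
a signed sum over $s\geq n$ and over insertion slots $1\leq i_1<\dots<i_n\leq s$ of the terms $(\dots(\mu_s\circ_{i_n}f_n)\dots\circ_{i_1}f_1)$, the $s-n$ remaining slots of $\mu_s$ being free tails. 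Here $\mu_s$ is the image in $End_{CH^\ast(\op{A}_\oplus,\op{A}_\oplus)}$ of the operation $m_s$ of the $A_\infty$ algebra $\op{A}_\oplus$, which by the construction of $\op{A}_\oplus$ annihilates every tensor $Hom(X_1,X_2)\tdt Hom(X_{2s-1},X_{2s})$ that is not composable, and on a composable one outputs an element of $Hom(X_1,X_{2s})$. So it suffices to show each individual term $T=(\dots(\mu_s\circ_{i_n}f_n)\dots\circ_{i_1}f_1)\in End_{CH^\ast(\op{A}_\oplus,\op{A}_\oplus)}(n)$ carries $CH^\ast(\op{A},\op{A})^{\tensor n}$ into $CH^\ast(\op{A},\op{A})$.

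The main step is a bookkeeping argument on a single term. I would fix an arbitrary source summand $Hom(W_1,W_1')\tensor\dots\tensor Hom(W_k,W_k')$ of $\op{A}_\oplus^{\tensor k}$ in the relevant degree and evaluate $T(f_1\cdc f_n)$ on it. The $k$ tensor factors are distributed in the planar order into $s$ consecutive blocks, one per slot of $\mu_s$: a free slot gets a single factor $Hom(W_\ell,W_\ell')$, and the slot carrying $f_j$ gets a consecutive block of $|f_j|$ factors, say $Hom(W_{a_j},W_{a_j}')\tdt Hom(W_{b_j},W_{b_j}')$. Because $f_j\in CH^\ast(\op{A},\op{A})$, it vanishes on that block unless the block is composable ($W_{a_j}'=W_{a_j+1},\dots,W_{b_j-1}'=W_{b_j}$), in which case it returns an element of $Hom(W_{a_j},W_{b_j}')$. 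Hence for $T(f_1\cdc f_n)$ to be nonzero on this summand every $f_j$-block must be internally composable, and after applying the $f_j$'s we feed $m_s$ a tensor of $s$ $Hom$-spaces whose endpoint objects are the endpoint objects of the successive blocks. Since $m_s$ vanishes unless these line up (the target of each block equals the source of the next), combining with internal composability of the $f_j$-blocks forces $W_j'=W_{j+1}$ for every $1\leq j\leq k-1$; that is, $T(f_1\cdc f_n)$ is supported on composable summands, and on such a summand $m_s$ outputs an element of $Hom(W_1,W_k')$. This is exactly the statement that $T(f_1\cdc f_n)\in CH^\ast(\op{A},\op{A})$, and summing over $s$ and over the insertion slots gives the lemma.

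I do not expect a genuine obstacle here: the only work is the index bookkeeping just described, and it goes through because $\op{A}_\oplus$ (and, in the cyclic case, its pairing) were set up precisely so that every structure operation kills the off-diagonal summands. In fact the same template will handle the remaining generators needed for Theorem~\ref{catthm} — $\delta$, $\beta_n$, $\beta_{l,n}$, and $\beta_n\wedge_l\mu_m$ — since $\rho$ of each of these, applied to cochains in $CH^\ast(\op{A},\op{A})$, is again a sum of block-wise composites built from the $f_j$'s, the operations $m_s$ of $\op{A}_\oplus$, and the form $\langle-,-\rangle$, and the form vanishes off the composable summands by Definition~\ref{cyccatdef}; one then repeats the source/target tracking, paying attention only to the extra object bookkeeping forced by the spine (which marks where the output $Hom$-space is read off) when the bilinear form is involved.
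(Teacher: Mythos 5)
Your proof is correct and follows essentially the same route as the paper's: unwind $\rho(\mu_n)$ and track source and target objects through the composite, using that the structure maps $m_s$ of $\op{A}_\oplus$ (and the cochains $f_j$ themselves) vanish on non-composable summands, so the image is supported on the diagonal summands defining $CH^\ast(\op{A},\op{A})$. You are in fact somewhat more careful than the paper, which displays only the leading composition $\mu_n\circ(f_1\tensor\cdots\tensor f_n)$ and leaves the foliage terms $\mu_s$ with $s>n$ implicit.
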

\begin{proof}  It is enough to check on homogeneous generators of $CH^\ast(\op{A},\op{A})$.  As such, for each $1\leq j \leq n$ pick
\begin{equation*}
f_j\in Hom(\op{A}(X^j_1\cdc X^j_{m_j}),\op{A}(X^j_1, X^j_{m_j}))
\end{equation*}
then $\rho(f_1\tdt f_n \tensor \mu_n)$ is defined via the following composition:
\begin{equation*}
\op{A}(X^1_1\cdc X^1_{m_1})\tdt\op{A}(X^n_1\cdc X^n_{m_n})\stackrel{f_1\tdt f_n}\longrightarrow \op{A}(X^1_1, X^1_{m_1})\tdt\op{A}(X^n_1, X^n_{m_n}) \stackrel{\mu_n}\longrightarrow \op{A}(X_1^1,X^n_{m_n}) 
\end{equation*}
Now by definition this composition is zero unless $X^j_{m_j}=X^{j+1}_1$ for all $1\leq j \leq n-1$, which in either case implies that $\rho(f_1\tdt f_n \tensor \mu_n)\in CH^\ast(\op{A},\op{A})$.
\end{proof}

\begin{lemma}  Let $f,g\in CH^\ast(\op{A},\op{A})$ be homogeneous elements with $r$ and $s$ inputs respectively.  Then $f\circ_i g \in CH^\ast(\op{A},\op{A})$ for $1\leq i \leq r$.
\end{lemma}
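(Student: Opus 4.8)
The plan is to exploit the explicit realization of $CH^\ast(\op{A},\op{A})$ as the subspace of $CH^\ast(\op{A}_\oplus,\op{A}_\oplus)$ cut out by the object-matching conditions, and to trace the operadic insertion $\circ_i$ through that decomposition. Recall that, as an element of $End_{\op{A}_\oplus}(r+s-1)$, the composite $f\circ_i g$ is (up to a sign) the operation which inserts the value $g(\alpha_i\tdt\alpha_{i+s-1})$ into the $i^{th}$ argument of $f$, so the whole question is which object-indexed summand of $CH^\ast(\op{A}_\oplus,\op{A}_\oplus)$ this lands in.

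First I would reduce, using bilinearity of $\circ_i$, to the case where $f$ and $g$ are homogeneous for the decomposition of $CH^\ast(\op{A}_\oplus,\op{A}_\oplus)$ by composable chains of objects and already lie in $CH^\ast(\op{A},\op{A})$; that is, $f\in Hom(\op{A}(X_1\cdc X_{r+1}),\op{A}(X_1,X_{r+1}))$ and $g\in Hom(\op{A}(Y_1\cdc Y_{s+1}),\op{A}(Y_1,Y_{s+1}))$. Then I would unwind the definition of $f\circ_i g$: the value $g(\alpha_i\tdt\alpha_{i+s-1})$ lies in $Hom_\op{A}(Y_1,Y_{s+1})$ and is zero unless $\alpha_i\tdt\alpha_{i+s-1}$ is a composable chain $Y_1\to\cdots\to Y_{s+1}$, while the $i^{th}$ argument slot of $f$ accepts only elements of $Hom_\op{A}(X_i,X_{i+1})$. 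Hence the composite vanishes unless $Y_1=X_i$ and $Y_{s+1}=X_{i+1}$, and in that case it is supported on composable chains of the form $X_1\to\cdots\to X_i=Y_1\to Y_2\to\cdots\to Y_{s+1}=X_{i+1}\to\cdots\to X_{r+1}$, i.e.\ on $\op{A}(Z_1\cdc Z_{r+s})$ with $(Z_1\cdc Z_{r+s})=(X_1\cdc X_i,Y_2\cdc Y_s,X_{i+1}\cdc X_{r+1})$, and takes values in $\op{A}(X_1,X_{r+1})=\op{A}(Z_1,Z_{r+s})$. This is exactly the defining shape of a summand of $CH^\ast(\op{A},\op{A})\subset CH^\ast(\op{A}_\oplus,\op{A}_\oplus)$, so $f\circ_i g\in CH^\ast(\op{A},\op{A})$.

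Signs play no role, since they are scalars and do not affect which summand the result occupies; the only point requiring any care is aligning the indices of the two object-chains across the insertion point, so the main (and essentially only) obstacle is this bookkeeping, which is routine. Looking ahead, since the brace operations of $\op{TS}_\infty$ are built by iterating $\circ_i$, the same argument shows they too preserve $CH^\ast(\op{A},\op{A})$, and together with the analogous checks for $\delta$ and the spined braces this will complete the verification that $CH^\ast(\op{A},\op{A})$ is closed under the $\op{TS}_\infty$-action.
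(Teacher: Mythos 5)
Your proof is correct and takes essentially the same approach as the paper: reduce to summands homogeneous with respect to chains of objects, observe that the insertion forces $Y_1=X_i$ and $Y_{s+1}=X_{i+1}$ for a nonzero result, and conclude that the composite is supported on a composable chain and hence lies in $CH^\ast(\op{A},\op{A})$. Your write-up merely makes the index bookkeeping of the resulting object-chain slightly more explicit than the paper does.
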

\begin{proof}  Let
\begin{equation*}
f\in Hom(\op{A}(X_1\cdc X_{r+1}),\op{A}(X_1, X_{r+1}))
\end{equation*}
and
\begin{equation*}
g\in Hom(\op{A}(Y_1\cdc Y_{s+1}),\op{A}(Y_1, Y_{s+1}))
\end{equation*}
Now, the $\circ_i$ operation is preformed on tensor powers of the $A_\infty$ algebra $[\oplus_{X,Y} Hom_{\op{A}}(X,Y)]$, by inserting into the $i^{th}$ factor.  After factoring out the direct sums, in order to get a nonzero term we need a summand in which both $g$ and $f$ are nonzero.  This happens only if $Y_1=X_i$ and $Y_{s+1}=X_{i+1}$.  As a result $f\circ_i g \in CH^\ast(\op{A},\op{A})$.
\end{proof}

\begin{corollary}  $\rho(CH^\ast(\op{A},\op{A})^{\tensor n}\tensor \beta_n)\subset CH^\ast(\op{A},\op{A})$
\end{corollary}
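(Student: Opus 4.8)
The plan is to reduce this corollary to the preceding lemma on the $\circ_i$ operations. First I would recall from Subsection $\ref{actionsection}$ that $\rho(\beta_n)=B_n$, and that by equation $\ref{braceeq}$ the brace operation $B_n(f;g_1\cdc g_n)$ is a signed sum of iterated insertions of the form
\begin{equation*}
(\dots((f\circ_{i_n}g_n)\circ_{i_{n-1}}g_{n-1})\dots\circ_{i_1}g_1),
\end{equation*}
where $1\leq i_1<\dots<i_n\leq|f|$. Since $CH^\ast(\op{A},\op{A})$ sits inside $CH^\ast(\op{A}_\oplus,\op{A}_\oplus)$ as a linear subspace, it is enough to show that each individual summand above, evaluated on homogeneous generators $f,g_1\cdc g_n$ of $CH^\ast(\op{A},\op{A})$, again lies in $CH^\ast(\op{A},\op{A})$; the claim for $B_n$, and hence for $\rho(f_1\tdt f_n\tensor\beta_n)$, then follows by summing.

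The key step is a short induction on the number of insertions, using the preceding lemma. For a single insertion this is exactly that lemma. For the inductive step, suppose the partial composite $h:=(\dots((f\circ_{i_n}g_n)\dots\circ_{i_{k+1}}g_{k+1})$ already lies in $CH^\ast(\op{A},\op{A})$; applying the preceding lemma once more with $h$ in place of $f$ and $g_k$ in place of $g$ shows $h\circ_{i_k}g_k\in CH^\ast(\op{A},\op{A})$, the index bound being respected since each insertion can only increase the arity, so $i_k<i_{k+1}\leq|f|\leq|h|$. Iterating down to $k=1$ finishes each summand.

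I do not anticipate a genuine obstacle here; the only bookkeeping is tracking the source and target object-labels through the chain of insertions, and this is handled termwise by exactly the object-matching argument used to prove the preceding lemma — each insertion contributes a nonzero term only when the boundary objects of the inserted cochain coincide with the objects flanking the slot it enters, which forces the composite to again have a single well-defined pair of boundary objects. Together with the corresponding statement for the corollas $\mu_n$ and with the evident fact that the $S_n$-action on $\op{TS}_\infty(n)$ merely permutes white-vertex labels and so preserves $CH^\ast(\op{A},\op{A})$, this treats all the generators; since by Lemma $\ref{generators}$ every tree in $\op{TS}_\infty$ is a simple composition of generators and simple compositions are ordinary operadic $\circ_i$'s, one recovers the full operadic action on $CH^\ast(\op{A},\op{A})$ asserted in Theorem $\ref{catthm}$.
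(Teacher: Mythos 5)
Your proposal is correct and matches the paper's proof: both reduce the claim to the preceding lemma by observing that $B_n=\rho(\beta_n)$ is a signed sum of iterated $\circ_i$ insertions, each of which preserves $CH^\ast(\op{A},\op{A})$. The paper states this in one line; your added induction on the number of insertions is just an explicit spelling-out of the same argument.
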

\begin{proof}  Recall $\beta_n$ corresponds to a brace operation, which is a sum of $\circ_i$ operations, each one of which is in $CH^\ast(\op{A},\op{A})$ by the previous lemma.
\end{proof}

This shows the minimal operad acts on the Hochschild cochains of an $A_\infty$ category.  We will from now assume that $\op{A}$ is a cyclic $\op{A}_\infty$ category.  There is a unit $1\in\op{A}_\oplus=\oplus Hom(X,Y)$ given by taking the identity morphism in the summands with $X=Y$ and zero else.  

Let $f$ be a homogeneous cochain in $Hom(\op{A}(X_1\cdc X_{n+1}),\op{A}(X_1,X_{n+1}))$.  Define a cochain 
\begin{equation*}
\hat{f}\in Hom(\op{A}(X_{n+1},X_1,X_2\cdc X_{n+1}),\op{A}(X_{n+1},X_{n+1}))
\end{equation*}
implicitly by 
\begin{equation*}
\langle \alpha_0, f(\alpha_1\tdt \alpha_n) \rangle = \langle 1, \hat{f}(\alpha_0\tdt \alpha_n) \rangle
\end{equation*}
where $\alpha_i\in \op{A}(X_i,X_{i+1})$, (mod $n+1$).  Notice that $\hat{f}\in CH^\ast(\op{A},\op{A})$.

\begin{lemma}  $CH^\ast(\op{A},\op{A})$ is closed under the action of spined brace operations and the BV operator.
\end{lemma}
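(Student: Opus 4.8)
The plan is to reduce the statement to two things already in hand: that $CH^\ast(\op{A},\op{A})$ is preserved by the $\circ_i$-operations and by the action of the multiplications $\mu_n$ (hence by the ordinary brace operations and by the foliage operators $F(\mu_m)$), and that the pairing on $\op{A}$ is nondegenerate. Conceptually, the reason the remaining generators behave well is that $\Delta$ and the spined braces are assembled functorially out of the structure maps $\mu_n$ of $\op{A}_\oplus$ and out of the form $\langle-,-\rangle$, and both of these respect the decomposition $\op{A}_\oplus=\bigoplus_{X,Y}Hom_{\op{A}}(X,Y)$ by the very definition of a cyclic $A_\infty$ category (Definition~$\ref{cyccatdef}$): the $\mu_n$ on $\op{A}_\oplus$ are supported on composable tuples of objects, and $\langle-,-\rangle$ vanishes off the summands $Hom_{\op{A}}(X,Y)\tensor Hom_{\op{A}}(Y,X)$. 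Any cochain operation built from these therefore again respects the decomposition, which is exactly the assertion that it maps $CH^\ast(\op{A},\op{A})$ into itself. It remains to turn this into a proof for the three remaining families of generators of $\op{TS}_\infty$ — the class $\delta$ and the two types of spined braces — and the arguments for all of them are parallel.

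Concretely, $\Delta(f)$, $B_{l,n}(f;g_1\cdc g_n)$ and $B_n\wedge_l\mu_m(f;g_1\cdc g_{n+m-1})$ are each defined implicitly by an identity of the shape $\langle a_0,\Psi(\alpha)\rangle=\langle 1,\Psi^1(a_0,\alpha)\rangle$, where $\Psi^1$ is the explicit ``primed'' cochain — $f\circ N$, $B^1_{l,n}(f;g_1\cdc g_n)$ or $B^1_n\wedge_l\mu_m(f;g_1\cdc g_{n+m-1})$ — obtained from $f$ and the $g_i$ (and, in the type-2 case, from $F(\mu_m)$) by iterated $\circ_i$-compositions, by designating one of the input slots to carry the extra argument $a_0$, and by a cyclic rotation $\circ\,t^j$ of the input slots. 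I would then expand $\langle 1,\Psi^1(a_0,\alpha)\rangle$ over the direct-sum decomposition of $\op{A}_\oplus$: the $\circ_i$-compositions and the $F(\mu_m)$-factor force, by the preceding lemmas, that the source and target objects of consecutive factors among the $g_i$, the entries of $\alpha$ and $a_0$ match up, so that these lie along a composable path of morphisms; the rotation $t^j$ only changes which factor is read first; and pairing the output of $\Psi^1$ against the unit $1\in\op{A}_\oplus$ — which lies in $\bigoplus_X Hom_{\op{A}}(X,X)$ — is nonzero only if that output is an endomorphism, which forces the otherwise-open composable path to close into a loop. Hence $\langle 1,\Psi^1(a_0,\alpha)\rangle=0$ unless $\alpha=(\alpha_1\cdc\alpha_p)$ is a composable tuple $W_1\to W_2\to\dots\to W_{p+1}$ with $a_0\in Hom_{\op{A}}(W_{p+1},W_1)$ — which is precisely the categorical form of the cyclic-invariance identity~$\ref{eqcyc}$ required of $\op{A}$. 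Nondegeneracy of $\langle-,-\rangle$ — this is where the finite-type hypothesis enters — then forces $\Psi(\alpha)=0$ unless $\alpha$ is such a composable tuple, in which case $\Psi(\alpha)\in Hom_{\op{A}}(W_1,W_{p+1})$; and that is exactly the condition cutting out the summands of $CH^\ast(\op{A},\op{A})$ inside $CH^\ast(\op{A}_\oplus,\op{A}_\oplus)$. So $\Psi\in CH^\ast(\op{A},\op{A})$. (For type-2 braces with $m\geq 3$ one may instead simply invoke equation~$\ref{zerocases}$, which makes the operation vanish identically.)

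The step I expect to be the real work is the object-bookkeeping underlying the middle paragraph: tracking source/target labels through the nested $\circ_i$-compositions, through the position of $a_0$ inside the primed brace $\{'\,\cdots\,\}$, and through the rotation $t^j$, so as to confirm that, once the output is paired against $1$, everything genuinely closes into a loop rather than merely a composable path. This is the categorical shadow of the cyclic-invariance computations of the previous sections, and should go through verbatim with object labels carried along the tensor factors. Combined with the corollaries already proved for the generators $\mu_n$ and $\beta_n$ and with Lemma~$\ref{generators}$, which expresses every tree as simple $\circ_i$-compositions of generators, this yields the $\op{TS}_\infty$-action on $CH^\ast(\op{A},\op{A})$ asserted in Theorem~$\ref{catthm}$.
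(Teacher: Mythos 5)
Your proposal is correct and follows essentially the same route as the paper: the paper's proof likewise reduces the claim to closure of $CH^\ast(\op{A},\op{A})$ under the $\circ_i$-operations (the preceding lemma) together with closure under the implicit-definition step via the pairing with $1$, which is exactly the observation about $\hat{f}$ recorded just before the lemma. You have merely spelled out the object-bookkeeping and the role of nondegeneracy that the paper leaves implicit.
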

\begin{proof}  Recall that the action of the spined brace operations are implicitly defined as the sum of certain $\circ_i$ operations via the procedure given above.  The BV operator is defined as a sum of implicitly defined operations via this procedure.  Since both the implicit defining and the $\circ_i$ operations are closed in $CH^\ast(\op{A},\op{A})$, the lemma follows.
\end{proof}

Thus the $\op{TS}_\infty$-algebra structure on $CH^\ast(\op{A}_\oplus,\op{A}_\oplus)$ restricts to $CH^\ast(\op{A},\op{A})$, from which the theorem follows.

\appendix
\section{Proof of Theorem $\ref{diffthm}$.}\label{appthm2}  In this section we will prove Theorem $\ref{diffthm}$.  As argued above, it is enough to show that the differential is compatible with the action on generators, and this has already been established for spineless generators by Theorem $\ref{spinelessdiff}$.  Thus it is enough to consider the BV operator and the spined brace operations of types 1 and 2.  These calculations can be performed completely symbolically, although signs present a considerable headache.  They can also be performed more intuitively using the action of trees.  Thus as a compromise we will give the symbolic computation with signs in the case of the BV operator, and the more intuitive approach in the case of a spined brace operation.  Throughout this section $A$ is a cyclic $A_\infty$ algebra.

\subsection{The BV operator}
\begin{proposition}\label{bvdiff}  $\rho(\partial_H(\delta))=d_{Int}(\Delta)=0$.
\end{proposition}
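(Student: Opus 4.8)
The statement splits into two independent assertions, $\rho(\partial_H(\delta))=0$ and $d_{Int}(\Delta)=0$; together they say precisely that $\rho\circ\partial_H$ and $d_{Int}\circ\rho$ agree on the generator $\delta$, since both sides then vanish. The plan is to dispose of the first assertion by a direct combinatorial inspection of $\partial_H$, and the second by the explicit cochain formula for the BV operator.

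First I would compute $\partial_H(\delta)$ from the third case in the definition of $\partial_H$. The tree $\delta$ consists of a single white vertex $v$ of valence $1$ (whose only flag is the root), carrying the spine on the unique $1$-cell of $C^1_\ast(S^1)$. Hence $v$ has exactly one white angle and it is spined, so there are no spineless white angles available for contraction; the only contributions to $\partial(\delta;v)$ are the two push-off-the-spine terms. But pushing the spine onto the flag following, respectively preceding, the spined $1$-cell places it in both cases on the root flag, which by our drawing conventions means the spineless tree $id\in\op{TS}_\infty(1)$; since these two terms enter the push-off with opposite signs, they cancel, so $\partial_H(\delta)=0$ and therefore $\rho(\partial_H(\delta))=0$.

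For the second assertion, since $\Delta=\rho(\delta)$ lies in $End_{CH^\ast(A,A)}(1)$ with $|\Delta|$ odd (Remark \ref{gradingconventions2}), the internal differential formula (equation \ref{internaleq}) specializes to $d_{Int}(\Delta)(f)=\Delta(d(f))+d(\Delta(f))$, where $d=[-,\mu]$ is the Hochschild differential. So it suffices to show that $\Delta$ anticommutes with $d$ on normalized cochains. I would prove this by expanding both composites using the normalized description $\langle a_0,\Delta(f)(a_1\cdc a_{n-1})\rangle=\langle 1,f\circ N_n(a_0\cdc a_{n-1})\rangle$ from equation \ref{normalbv} (with $t_n=(-1)^{n-1}(1\dots n)$ and $N_n=\sum_{i=0}^{n-1}t_n^i$), together with the identity $d(f)=f\circ\mu+(-1)^{||f||}\mu\circ f$ (the sign in $[-,\mu]$ collapses because $||\mu||$ is even), and the cyclic invariance of the pairing from Proposition \ref{cycalgprop}, equation \ref{eqcyc}. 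Concretely, after transporting the generator $1$ and the argument $a_0$ past the remaining inputs via equation \ref{eqcyc} and using the normalized assumption of Remark \ref{unitremark} to discard all terms in which some $\mu_{\geq 3}$ is fed the unit, the computation reduces to the cochain-level, $A_\infty$ version of the mixed-complex identity for Connes' operator, and the surviving terms cancel in pairs. Alternatively one may invoke \cite{tradler} (or \cite{Menichi} in the strict Frobenius case) for the compatibility of $\Delta$ with the Hochschild differential and only verify that the sign is as claimed against our conventions.

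I expect the combinatorial step to be immediate and the real work to lie entirely in the sign bookkeeping of the second step: tracking the pre-Lie signs of equation \ref{preliesigns}, the signs $(-1)^{j(|\alpha|-1)}$ attached to the cyclic rotations $t^j$, and the Koszul signs generated when $1$ and $a_0$ are moved using cyclic invariance. Organizing the expansion by the power $t_n^i$ and the component $\mu_r$ involved, and matching terms one at a time, is the delicate part.
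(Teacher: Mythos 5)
Your overall strategy is the same as the paper's: observe that $\partial_H(\delta)=0$ so the content is entirely in $d_{Int}(\Delta)=0$, reduce the latter to the anticommutation $\Delta d + d\Delta=0$ on normalized cochains via equation $\ref{internaleq}$, and then expand both composites using the normalized formula $\langle a_0,\Delta(f)(\cdots)\rangle=\langle 1, f\circ N(\cdots)\rangle$, cyclic invariance, and the pre-Lie signs. Your treatment of the first half is fine (indeed more explicit than the paper, which disposes of $\partial_H(\delta)=0$ by definition). The issue is with the second half: the entire mathematical content of the proposition is the cancellation, which in the paper occupies a full page of term-by-term computation organized by the power $t_n^{i}$ and the arity $r$ of $\mu_r$, and which you assert ("the surviving terms cancel in pairs") rather than verify. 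Identifying the delicate part is not the same as doing it, and here the delicate part is the whole proof.

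There is also a concrete misstep in the one place where you do commit to a mechanism. You claim the terms in which $a_0$ (equivalently, the unit $1$ after rotation) lands inside a $\mu$ are discarded by the normalization assumption of Remark $\ref{unitremark}$. That kills $\langle a_0,\Delta(\mu_r\circ f)(\alpha)\rangle$ only for $r\geq 3$. The paper must treat $r=2$ by a separate small calculation (normalization says nothing about $\mu_2$) and $r=1$ via the compatibility of the pairing with the differential, Remark $\ref{invardiffrmk}$, since $\mu_1=d_A$. Your fallback of citing \cite{tradler} or \cite{Menichi} for $\Delta d+d\Delta=0$ is legitimate and would close the gap modulo checking sign conventions, but it outsources precisely the statement being proved; if you take that route you should say so as the primary argument rather than as an afterthought.
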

\begin{proof}
Since $\partial_H(\delta)=0$ by definition, it is enough to show that $d_{Int}(\Delta)=0$.  Let $f\in Hom(A^{\tensor n},A)$.  From equation $\ref{internaleq}$ we know
\begin{eqnarray*}
d_{Int}(\Delta(f))=\Delta(d(f))-(-1)^{|\Delta|}d(\Delta(f))=\Delta([f,\mu])+[\Delta(f),\mu]\\
=\Delta(f\circ\mu-(-1)^{||f||-1}\mu\circ f)+\Delta(f)\circ\mu-(-1)^{||\Delta(f)||-1}\mu\circ\Delta(f)
\end{eqnarray*} 
hence,
\begin{equation*}
\langle a_0, d_{Int}(\Delta)(f)(\alpha)\rangle =\bigoplus_{r\geq1} \langle a_0, [\Delta(f)\circ\mu_r-(-1)^{||f||}\mu_r\circ\Delta(f)+\Delta(f\circ\mu_r )+(-1)^{||f||}\Delta(\mu_r\circ f)](\alpha_m) \rangle 
\end{equation*}
where $\alpha \in \mathbb{T}A$ and $\alpha_m$ is the summand in degree $m:=n+r-2$.  Here we assume that $|a_i|$ is even for each $i$.  This assumption can be dropped at the expense of adding additional signs via the Koszul sign rule:
\begin{equation*}
[f\tensor g] (a\tensor b) =(-1)^{|a|deg(g)} f(a)\tensor g(b)
\end{equation*}
Now by our normalization assumption, $\langle a_0, \Delta(\mu_r\circ f)(\alpha_m) \rangle=0$ for $r\geq 3$ and a calculation shows it is also zero for $r=2$.  This term is also zero in the case $r=1$ using Remark $\ref{invardiffrmk}$ and the fact that $\mu_1=d_A$.  Next we will calculate for a fixed $r$ with the help of equations $\ref{preliesigns}$ and $\ref{normalbv}$,

\small{\begin{align*}
&(-1)^{(n+1)r}\langle a_0, \Delta(f\circ\mu_r)(\alpha_m)\rangle=\sum_{i=1}^n\langle a_0, (-1)^{(i-1)(r-1)}\Delta(f\circ_i\mu_r)(\alpha_m)\rangle \\
& = \sum_{i=1}^n \langle 1 ,(-1)^{(i-1)(r-1)}(f\circ_i \mu_r)\circ N_{m+1}(a_0\cdc a_m) \rangle \\
&= \sum_{i=1}^n\sum_{j=1}^{m+1} (-1)^{(j-1)m+(i-1)(r-1)}\langle 1 ,(f\circ_i \mu_r)(a_{m-j+2}\cdc a_m,a_0,\cdc a_{m-j+1}) \rangle \\
&= \sum_{i=1}^n\sum_{j=i}^{i+r-1} (-1)^{(j-1)m+(i-1)(r-1)}\langle 1 ,f (a_{m-j+2}\cdc \mu_r(a_{m-j+i}\cdc a_m,a_0\cdc a_{i+r-j-1})\cdc a_{m-j+1}) \rangle \\
&\indent+\sum_{i=1}^n\sum_{j= i+r}^{i-1} (-1)^{(j-1)m+(i-1)(r-1)}\langle 1 ,(f\circ_i \mu_r)(a_{m-j+2}\cdc a_m,a_0,\cdc a_{m-j+1}) \rangle\\
&= \sum_{i=1}^n \sum_{l=1}^{r}(-1)^{(i+l+r+1)m+(i-1)(r-1)}\langle 1 ,f(a_{n+l-i-1}\cdc\mu_r(a_{l+n-1}\cdc a_m,a_0\cdc a_{l-1}),a_l\cdc a_{n+l-i-2}) \rangle \\
&\indent+\sum_{i=2}^n\sum_{j=1}^{i-1} (-1)^{(j-1)m+(i-1)(r-1)}\langle 1 ,f(a_{m-j+2}\cdc a_0\cdc\mu_r(a_{i-j}\cdc a_{i-j+r-1})\cdc a_{m-j+1}) \rangle\\
&\indent+\sum_{i=1}^{n-1}\sum_{j=i+r}^{m+1} (-1)^{(j-1)m+(i-1)(r-1)}\langle 1 ,f(a_{m-j}\cdc \mu_r(a_{m+i-j+1}\cdc a_{m+i-j+r})\cdc a_m,a_0,\cdc a_{m-j-1}) \rangle\\
&= \sum_{l=1}^{r}\sum_{i=1}^n (-1)^{im+lm+rm+m+ir+i+r+1+(n-1)(i-1)}\langle 1 ,f(t_n^{i-1}(\mu_r(a_{l+n-1}\cdc a_m,a_0\cdc a_{l-1}),a_l\cdc a_{l+n-2})) \rangle \\
&\indent+\sum_{i=2}^n\sum_{j=1}^{i-1} (-1)^{(j-1)m+(i-1)(r-1)+(n-1)(j-1)}\langle 1 ,(f\circ t_n^{j-1}(a_0\cdc\mu_r(a_{i-j}\cdc a_{i-j+r-1})\cdc a_m) \rangle\\
&\indent+\sum_{i=1}^{n-1}\sum_{j=i+r}^{m+1} (-1)^{(j-1)m+(i-1)(r-1)+(n-1)(j-r)}\langle 1 ,f\circ t_n^{j-r}(a_0\cdc \mu_r(a_{m+i-j+1}\cdc a_{m+i-j+r})\cdc a_m) \rangle\\
&= \sum_{l=1}^{r}\sum_{i=1}^n (-1)^{im+lm+rm+m+ir+i+r+1+in+i+n+1}\langle 1 ,f(t_n^{i-1}(\mu_r(a_{l+n-1}\cdc a_m,a_0\cdc a_{l-1}),a_l\cdc a_{l+n-2})) \rangle \\
&\indent+\sum_{s=1}^{n-1}\sum_{j=1}^{s} (-1)^{(j-1)m+(s+j-1)(r-1)+(n-1)(j-1)}\langle 1 ,(f\circ t_n^{j-1}(a_0\cdc\mu_r(a_s\cdc a_{s+r-1})\cdc a_m) \rangle\\
&\indent+\sum_{i=1}^{n-1}\sum_{j^\prime=i+1}^{n} (-1)^{(j^\prime-1)m+(i-1)(r-1)+(n-1)(j^\prime-r)}\langle 1 ,f\circ t_n^{j^\prime-1}(a_0\cdc \mu_r(a_{n+i-j^\prime}\cdc a_{n+r-1+i-j^\prime})\cdc a_m) \rangle\\
&= \sum_{l=1}^{r}\sum_{i=1}^n (-1)^{lm+rm}(-1)^{(n-1)(i-1)}\langle 1 ,f(t_n^{i-1}(\mu_r(a_{l+n-1}\cdc a_m,a_0\cdc a_{l-1}),a_l\cdc a_{l+n-2})) \rangle \\
&\indent+\sum_{s=1}^{n-1}\sum_{j=1}^{s} (-1)^{sr+s}\langle 1 ,f\circ t_n^{j-1}(a_0\cdc\mu_r(a_s\cdc a_{s+r-1})\cdc a_m) \rangle\\
&\indent+\sum_{s^\prime=1}^{n-1}\sum_{j^\prime=s^\prime+1}^{n} (-1)^{(j^\prime-1)m+(n+j^\prime+s^\prime)(r-1)+(n-1)(j^\prime-r)}\langle 1 ,f\circ t_n^{j^\prime-1}(a_0\cdc \mu_r(a_{s^\prime}\cdc a_{s^\prime+r-1})\cdc a_m) \rangle\\
&=\sum_{l=1}^r\langle 1,(-1)^{lm+rm}(f\circ N_n)(\mu_r(a_{l+n-1}\cdc a_m,a_0\cdc a_{l-1}),a_l\cdc a_{l+n-1}) \rangle\\
&\indent+\sum_{s=1}^{n-1}\langle 1 ,(-1)^{sr+s}(f\circ N_n)(a_0\cdc a_{s-1},\mu_r(a_s\cdc a_{r+s-1}),a_{r+s}\cdc a_m) \rangle\\
&= \sum_{l=1}^r (-1)^{lm+rm}\langle \mu_r(a_{l+n-1}\cdc a_m,a_0\cdc a_{l-1}), \Delta(f)(a_l\cdc a_{l+n-1}) \rangle\\
&\indent+ \sum_{s=1}^{n-1}\langle a_0 ,(-1)^{sr+s}\Delta(f)(a_1\cdc a_{s-1},\mu_r(a_s\cdc a_{r+s-1}),a_{r+s}\cdc a_m) \rangle\\
&= \sum_{l=1}^r (-1)^{lm+rm}(-1)^{rdeg(f)}(-1)^{lr}\langle a_0,\mu_r(a_1,\cdc a_{l-1},\Delta(f)(a_l\cdc a_{l+n-1}),a_{l+n}\cdc a_m) \rangle\\
&\indent+\sum_{s=1}^{n-1}\langle a_0 ,(-1)^{sr+s}\Delta(f)\circ_s \mu_r(a_1\cdc a_m) \rangle\\
&= \sum_{l=1}^r (-1)^{rm+n+rdeg(f)}(-1)^{(l-1)n}\langle a_0, (\mu_r\circ_l\Delta(f))(a_1\cdc a_m) \rangle\\
&\indent+\sum_{s=1}^{n-1}\langle a_0 ,(-1)^{r+1}(-1)^{(s+1)(r+1)}\Delta(f)\circ_s \mu_r(a_1\cdc a_m) \rangle\\
&=(-1)^{rm+n+rdeg(f)+(r+1)deg(f)}\langle a_0, \mu_r\circ\Delta(f)(a_1\cdc a_m) \rangle-(-1)^{nr+r}\langle a_0 ,\Delta(f)\circ\mu_r(a_1\cdc a_m) \rangle\\
\end{align*}} So we have shown
\begin{equation*}
(-1)^{(n+1)r}\langle a_0, \Delta(f\circ\mu_r)(\alpha_m)\rangle=(-1)^{rm}(-1)^{||f||}\langle a_0, \mu_r\circ\Delta(f)(\alpha_m) \rangle-(-1)^{(n+1)r}\langle a_0 ,\Delta(f)\circ\mu_r(\alpha_m) \rangle
\end{equation*}
but, $(n+1)r+rm \equiv r^2+r\equiv 0$ thus
\begin{equation*}
\langle a_0, \Delta(f\circ\mu_r)(\alpha_m)\rangle-(-1)^{||f||}\langle a_0, \mu_r\circ\Delta(f)(\alpha_m)\rangle+\langle a_0 ,\Delta(f)\circ\mu_r(\alpha_m) \rangle=0
\end{equation*}
hence the claim.
\end{proof}
\begin{corollary}\label{bvdiffcor}  $\Delta d + d\Delta=0$.  
\end{corollary}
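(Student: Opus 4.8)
The plan is to obtain this immediately from Proposition \ref{bvdiff} by unwinding the definition of the internal differential given in equation \ref{internaleq}. For an operator $\Phi \in End_{CH^\ast(A,A)}(1)$ with a single input, that formula reads
\[
d_{Int}(\Phi)(f) = \Phi(d(f)) - (-1)^{|\Phi|}\, d(\Phi(f)),
\]
where $d$ denotes the Hochschild differential. I would apply this with $\Phi = \Delta = \rho(\delta)$. By Remark \ref{gradingconventions2} the morphism $\rho$ reverses the grading and $|\Delta| = -1$, so $(-1)^{|\Delta|} = -1$ and
\[
d_{Int}(\Delta)(f) = \Delta(d(f)) + d(\Delta(f)) = (\Delta d + d\Delta)(f)
\]
for every Hochschild cochain $f$. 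Proposition \ref{bvdiff} asserts precisely that $d_{Int}(\Delta) = 0$, and hence $\Delta d + d\Delta = 0$. In particular $\Delta$ is compatible with the Hochschild differential and descends to a well-defined operator on $HH^\ast(A,A)$.

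There is no substantive obstacle remaining, since the computational heart of the matter --- verifying that $\Delta$ anticommutes with the $A_\infty$ structure maps $\mu_r$ term by term --- was already carried out in the proof of Proposition \ref{bvdiff}. The only subtlety worth flagging explicitly is the sign: because $\rho$ reverses the total grading (Remark \ref{gradingconventions2}), the BV operator $\Delta$ carries total degree $-1$ rather than $+1$, and this is what forces the two contributions $\Delta(d(f))$ and $d(\Delta(f))$ to the single-input internal differential to enter with the same sign, so that the identity $d_{Int}(\Delta) = 0$ becomes the anticommutation relation $\Delta d + d\Delta = 0$ rather than a commutation relation.
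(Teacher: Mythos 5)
Your proposal is correct and follows exactly the route the paper takes: the corollary is an immediate unwinding of $d_{Int}(\Delta)=0$ from Proposition \ref{bvdiff}, whose proof already records the identity $d_{Int}(\Delta(f))=\Delta(d(f))+d(\Delta(f))$ in its first line, with the plus sign coming from $|\Delta|=-1$. Your explicit flagging of why the sign is $+$ rather than $-$ is a fair elaboration of what the paper leaves implicit, but it is the same argument.
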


\subsection{Spined Braces}  In this section we will show the differential on the spined brace operations is compatible with the action $\rho$.  Since this calculation is lengthy, and since the two types of spined braces work similarly, we will consider a spined brace operations of type one.  First we must describe the trees which appear in $\partial_H(\beta_{l,n})$.

Define a collection of trees $\mu_m\circ_{i,v}\tau$ as follows.  This collection is empty unless both $l\geq i-1$ and $m-i\leq n-l$, in which case we define $\mu_m\circ_{i,v}\tau = \mu_m\circ_i \beta_{l-i+1,n-m+1}$.  Then we define
\begin{equation*}
\mu\circ_v\tau:=\sum_{m=2}^{n+1}\ds\sum_{i=1}^m\pm\sigma_i(\mu_m\circ_{i,v}\tau)
\end{equation*}
where we identify $\delta=\beta_{0,0}$ and where again $\sigma_i$ is the permutation $(1\dots i)$ which assures that the vertex $v$ is labeled by $1$ and the remaining vertices are labeled in the planar order.

In a similar fashion we define $\tau\circ_v\mu$ to be the sum over all $m\geq 2$ of the collections of trees which can be formed by breaking off $m$ consecutive non-root branches at $v$ which are not separated by the spine, gluing the root of $\mu_m$ on to $v$ in the region spanned by the broken branches and then identifying the broken branches with the white vertices of $\mu_m$ in the planar order.  Notice that in making identifications, no branch is moved past the spine.  Also notice that for $m> \text{max}\{l, n-l\}$ this set of trees is empty.

Finally we include those trees formed by pushing off the spine.  They are $\beta_n\wedge_l\mu_2$ and $\beta_n\wedge_{l+1}\mu_2$.  We can then describe $\partial_H(\beta_{l,n})$ as the following finite sum:
\begin{equation}\label{diffeq1}
\partial_H(\beta_{l,n}) = \mu\circ_v\beta_{l,n}\pm\beta_{l,n}\circ_v\mu\pm\beta_n\wedge_{l+1}\mu_2\mp\beta_n\wedge_l\mu_2
\end{equation}
See Figure $\ref{fig:boundary}$ for an example.  We can thus calculate:

\begin{equation*}
\rho(\partial_H(\beta_{l,n}))= d_{Int}(B_{l,n})
\end{equation*}
We will first calculate $\rho(\partial_H(\beta_{l,n}))$ via equation $\ref{diffeq1}$ in Lemmas $\ref{applem1}$, $\ref{applem2}$, and $\ref{applem3}$.  We will then calculate $d_{Int}(B_{l,n})$ and show the two coincide in Proposition $\ref{appprop}$.  This calculation will be done $mod$ $2$, with signs following similarly to Proposition $\ref{bvdiff}$. 

\begin{lemma} \label{applem1} Let $\alpha \in \mathbb{T}A$.  Then,
\begin{equation*}
\langle a_0,\rho(\beta_n\wedge_{l+1}\mu_2 - \beta_n\wedge_l\mu_2)(f,g_1\cdc g_n)(\alpha)\rangle=\langle 1,B^1_{l,n}(\mu_2\circ f;g_1 \cdc g_n)(a_0,\alpha) \rangle
\end{equation*}
\end{lemma}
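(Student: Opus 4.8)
The plan is to unwind both sides into explicit sums of iterated $\circ_i$-compositions and match them term by term. First I would recall from equation $\ref{zerocases}$ that the action of a type-2 spined brace with a $\mu_2$ label collapses (because all foliage dies by normalization) to
\begin{equation*}
\langle a_0,\rho(\beta_n\wedge_l\mu_2)(f;g_1\cdc g_n)(\alpha)\rangle = \langle 1, g_l\cdot f\{'g_{l+1}\cdc g_n,a_0,g_1\cdc g_{l-1}\}(\alpha)\rangle,
\end{equation*}
and similarly for $\beta_n\wedge_{l+1}\mu_2$ with the left-multiplication now being by $g_{l+1}$ and the cyclic rotation of the $g$'s shifted accordingly. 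So the left-hand side of the lemma is a difference of two such expressions, each being $\mu_2$ applied (via $F(\mu_2)$, which is just the binary product since $\mu_2$ is not normalized) with one of the $g_i$'s on the outside and the brace insertion $f\{'\cdots\}$ on the inside.

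Next I would expand the right-hand side $\langle 1, B^1_{l,n}(\mu_2\circ f;g_1\cdc g_n)(a_0,\alpha)\rangle$. Here $\mu_2\circ f = \mu_2\circ_1 f + \mu_2\circ_2 f$ (up to the signs dictated by equation $\ref{preliesigns}$), i.e. $\mu_2$ pre-composed with $f$ in each of its two slots, which amounts to left- and right-multiplying the output of $f$ by an extra input. Substituting this into the definition of $B^1_{l,n}$ and distributing, the $\mu_2\circ_1 f$ term produces exactly the configuration where the extra multiplicative input sits just after $a_0$ in the cyclic order — this is the $\beta_n\wedge_{l+1}\mu_2$ term — and the $\mu_2\circ_2 f$ term produces the configuration where the extra input sits just before $a_0$ — the $\beta_n\wedge_l\mu_2$ term. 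The relative sign between these two comes from the sign in equation $\ref{preliesigns}$ (the $(-1)^{(i-1)(m-1)}$ together with the parity shifts), and should reproduce the minus sign in the statement; checking this bookkeeping is the main chore. The cyclic-rotation factor $t^j$ in $B^1_{l,n}$ must be tracked carefully since the value of $j$ differs by one between the two terms (one fewer/more input to the left of $a_0$), and this is where the invariance of the bilinear form (Proposition $\ref{cycalgprop}$, Remark $\ref{invardiffrmk}$) is used to move the $1$ into the correct slot so that the two pictures genuinely agree.

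I expect the main obstacle to be purely the sign computation: matching the intrinsic signs coming from the Koszul/pre-Lie rule in $\ref{preliesigns}$ on the $B^1$ side against the orientation-induced signs of the trees $\beta_n\wedge_l\mu_2$ on the $\partial_H$ side (fixed in subsection $\ref{diffsigns}$), while simultaneously keeping the $t^j$ exponent consistent across the two terms. Conceptually the identity is transparent — both sides say "stick an extra binary multiplication adjacent to the special argument $a_0$, on one side minus the other side" — so I would organize the proof by (i) writing the LHS using $\ref{zerocases}$, (ii) writing the RHS using the definition of $B^1_{l,n}$ and the expansion of $\mu_2\circ f$, (iii) observing the term-by-term bijection between the summands, and (iv) verifying the signs match, deferring the reader to the analogous (fully signed) computation in Proposition $\ref{bvdiff}$ for the pattern, since this lemma's calculation is stated to be done mod $2$ with signs following similarly.
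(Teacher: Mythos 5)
Your overall strategy coincides with the paper's: rewrite the left-hand side via equation $\ref{zerocases}$ as $\langle 1, g_l\cdot f\{'g_{l+1}\cdc g_n,a_0,g_1\cdc g_{l-1}\}\rangle$ and its $l+1$ analogue, expand $\mu_2\circ f$ as $\pm\mu_2\circ_1 f\pm\mu_2\circ_2 f$ on the right, and match terms, using $\langle 1,ab\rangle=\langle 1,ba\rangle$ and normalization to kill the foliage on $\mu_2$. But there is a genuine gap at your step (iii), where you claim each of $\mu_2\circ_1 f$ and $\mu_2\circ_2 f$ produces \emph{exactly} the configuration of one tree, i.e.\ a clean term-by-term bijection. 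That is not the case. In $B^1_{l,n}(\mu_2\circ_1 f;g_1\cdc g_n)(a_0,\alpha)$ the free leaf of $\mu_2$ is just one more input slot of the composite function: only in the extreme summands (those with $i_l$ in the last position) does it receive the function $g_l$, producing $\langle 1, f\{'\cdots\}\cdot g_l\rangle=\langle 1,g_l\cdot f\{'\cdots\}\rangle$, i.e.\ the $\beta_n\wedge_l\mu_2$ term; in all the other summands it receives a tensor factor of $\alpha$, yielding terms of the shape $\langle 1, f\{'\cdots\}(\dots)\cdot a_j\rangle$ which correspond to \emph{neither} tree on the left-hand side and are not killed by normalization (since $\mu_2$ is not normalized). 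The lemma holds only because these leftover terms reappear with opposite sign among the summands of $B^1_{l,n}(\mu_2\circ_2 f;\cdots)$ in which the first leaf of $\mu_2$ receives a tensor factor, and cancel pairwise via the symmetry of the form under $\langle 1,-\rangle$. Exhibiting this cancellation is the essential content of the proof, not sign bookkeeping: without it the two sides do not even have the same number of terms.

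A secondary issue: your assignment of compositions to trees is reversed. With $\mu_2\circ_1 f=\mu_2(f(-),-)$ the free leaf is the \emph{last} input of the composite, so the only function it can absorb is $g_l$, giving the $\beta_n\wedge_l\mu_2$ term; dually $\mu_2\circ_2 f$ can absorb $g_{l+1}$ into its first slot, giving the $\beta_n\wedge_{l+1}\mu_2$ term. Since both you and the paper defer signs (the appendix works mod $2$), this swap is not fatal on its own, but it indicates the configurations were not pinned down precisely, which is exactly where the missing cancellation hides.
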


\begin{proof} 
\begin{equation*}
\langle 1,B^1_{l,n}(\mu_2\circ_1 f;g_1\cdc g_n)(a_0,\alpha)\rangle=\langle 1,(\mu_2\circ_1 f)\{g_{l+1}\cdc g_n,a_0,g_1\cdc g_l\}(\alpha) \rangle
\end{equation*}
This expression represents a sum of terms which we can categorize by what is placed in the second leaf of $\mu_2$, where the two possibilities are a tensor factor of $\alpha$ or the function $g_l$.  Considering those terms which take $g_l$ on this leaf we have
\begin{align*}
&\langle 1,f\{g_{l+1}\cdc g_n,a_0,g_1\cdc g_{l-1}\}g_l(\alpha)\rangle=\langle 1,g_l[f\{g_{l+1}\cdc g_n,a_0,g_1\cdc g_{l-1}\}](\alpha) \rangle \\
&\indent=\langle a_0,\rho(\beta_n\wedge_l\mu_2)(f,g_1\cdc g_n)(\alpha) \rangle
\end{align*}
since all foliage will be killed by the pairing with $1$.  On the other hand if we consider $\langle 1,B^1_{l,n}(\mu_2\circ_2 f;g_1\cdc g_n)(a_0,\alpha)\rangle$ we can choose between putting a tensor factor of $\alpha$ or $g_{l+1}$ into the first leaf of $\mu_2$.  Choosing $g_{l+1}$ will give $\langle a_0,\rho(\beta_n\wedge_{l+1}\mu_2)(f,g_1\cdc g_n)(\alpha) \rangle$ and since those terms corresponding to placing a tensor factor into the open leaf of $\mu_2$ appear twice with the opposite sign, the extra terms will cancel, hence the lemma. 
\end{proof}
For the following argument we consider $B_{l,n}^1(f\circ\mu_m;g_1\cdc g_n)(a_0,-)$ as a sum of functions which we split in two
\begin{equation*}
B_{l,n}^1(f\circ\mu_m;g_1\cdc g_n)(a_0,-)=B_{l,n}^1(\overline{f}\circ\mu_m;g_1\cdc g_n)(a_0,-)+B_{l,n}^1(f\circ\overline{\mu_m};g_1\cdc g_n)(a_0,-)
\end{equation*}
where the first term is those functions where $a_0$ is evaluated in $f$ and the second term is those functions where $a_0$ is evaluated in $\mu_m$. 

\begin{lemma}\label{applem2}  Let $\alpha_m = \tensor_{i=1}^N a_i$ where $N=|f|-n+m-1+\sum_i |g_i|$.  Then,
\begin{align*}
\langle a_0,\rho(\mu\circ_v\beta_{l,n})(f;g_1\cdc g_n)(\alpha)\rangle
&=\bigoplus_{m\geq 1}\langle 1,B^1_{l,n}(f\circ\overline{\mu_m};g_1\cdc g_n)(a_0,\alpha_m) \rangle \\ 
&-\bigoplus_{m\geq 1}\ds\sum_{j=1}^m\langle 1,B^1_{l,n}(f;g_1\cdc g_n)(\mu_m(\dots, a_0, \dots),a_j,\dots) \rangle 
\end{align*}
\end{lemma}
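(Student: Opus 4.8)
The plan is to expand both sides into signed sums of operations on $\mathbb{T}A$ and match them term by term. On the left, start from the defining formula
$$\mu\circ_v\beta_{l,n}=\sum_{p=2}^{n+1}\sum_{i}\pm\,\sigma_i\bigl(\mu_p\circ_i\beta_{l-i+1,\,n-p+1}\bigr),$$
the inner sum ranging over the $i$ with $l\geq i-1$ and $p-i\leq n-l$, with $\delta$ identified with $\beta_{0,0}$. Each composite $\mu_p\circ_i\beta_{l',n'}$ ($l'=l-i+1$, $n'=n-p+1$) is a simple gluing at the spineless leaf $i$ of the corolla $\mu_p$ (Lemma \ref{simplegluingdef}), so by the construction of $\rho$ on composites — equivalently Theorem \ref{opthm} — it acts as $F(\mu_p)\circ_iB_{l',n'}$. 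Substituting $F(\mu_p)(q_1\cdc q_p)=\bigoplus_{s\geq p}\mu_s\{q_1\cdc q_p\}$ and the implicit description of $B_{l',n'}$ (namely $\langle b_0,B_{l',n'}(f;h_1\cdc h_{n'})(\beta)\rangle=\langle 1,f\{'h_{l'+1}\cdc h_{n'},b_0,h_1\cdc h_{l'}\}(\beta)\rangle$), one sees that $\langle a_0,\rho(\mu\circ_v\beta_{l,n})(f;g_1\cdc g_n)(\alpha)\rangle$ is a signed sum of terms $\langle a_0,\mu_s\{q_1\cdc q_p\}(\alpha_s)\rangle$ in which exactly one $q_i$ is the cochain $B_{l',n'}(f;-)$ and the other $q_j$ are the remaining $g_j$'s.

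I would then peel off the inner cochain using the cyclic invariance of the bilinear form (Proposition \ref{cycalgprop}) together with the self-duality of the finite-dimensional space $A$: evaluating on a pure tensor and choosing the rotation that brings the $B$-factor into the paired slot, $\langle a_0,\mu_s(\dots,B_{l',n'}(f;h_1\cdc h_{n'})(\beta),\dots)\rangle$ equals, up to sign, $\langle 1,f\{'h_{l'+1}\cdc h_{n'},\,\mu_s(\dots,a_0,\dots),\,h_1\cdc h_{l'}\}(\beta)\rangle$, where the inner $\mu_s$ now carries $a_0$ together with its remaining entries — which are precisely the $p-1$ of the $g_j$'s that were absorbed by $\mu_p$ and the $s-p$ foliage entries of $\alpha$. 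Reading off what these terms are: after identifying the foliage index $s$ with the summation variable $m$ of the statement, they are exactly the terms of $B^1_{l,n}(f\circ\mu_m;g_1\cdc g_n)(a_0,\alpha_m)$ in which $a_0$ is evaluated inside the $\mu_m$-insertion \emph{and} at least one $g_j$ is also evaluated inside it (the latter because every generator $\mu_p$ appearing in $\mu\circ_v\beta_{l,n}$ has $p\geq 2$, so it absorbs at least one branch). The complementary terms — those of $B^1_{l,n}(f\circ\overline{\mu_m};g_1\cdc g_n)(a_0,\alpha_m)$ in which the $\mu_m$-insertion contains $a_0$ but no $g_j$ — are, by the definition of $B^1_{l,n}$, exactly $\bigoplus_{m\geq1}\sum_{j=1}^m\langle 1,B^1_{l,n}(f;g_1\cdc g_n)(\mu_m(\dots,a_0,\dots),a_j,\dots)\rangle$, with $j$ recording where $a_0$ sits inside $\mu_m$. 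Subtracting them off yields the stated identity.

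The step I expect to be the main obstacle is purely bookkeeping: showing that the passage from the left-hand indices (the generator pair $(p,i)$ and the foliage choice) to the right-hand data (the index $m$, the position of the $\mu_m$-insertion in $f$, and which $g_j$'s it swallows) is a bijection onto exactly the index set underlying $B^1_{l,n}(f\circ\mu_m;-)$ — this is where the nonemptiness constraints $l\geq i-1$, $p-i\leq n-l$ are used, as they translate into the ordering $1\leq i_{l+1}<\dots<i_n<i_0<\dots<i_l$ built into $B^1_{l,n}$ — and that all the signs match. The signs can be tracked either by iterating the Koszul and pre-Lie rules behind equation \ref{preliesigns}, or, more efficiently, against the orientations of the cells of $X_n$ chosen in Section \ref{section:underlyingtopology}; since $\op{TS}_\infty\cong CC_\ast(X_n)$ (Theorem \ref{mainthm2}) forces the signs to close up, it is cleanest to first verify everything mod $2$, as announced for this section and as done for the BV operator in Proposition \ref{bvdiff}, and to reinstate signs at the end.
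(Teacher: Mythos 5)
Your proposal is correct and is essentially the paper's argument run in the opposite direction: the paper starts from $B^1_{l,n}(f\circ\overline{\mu_m};g_1\cdc g_n)$ and sorts its terms by how many leaves of $\mu_m$ carry a $g_i$ (zero cancelling against the second sum, $\geq 1$ arising from $\mu_{r+1}\circ_v\beta_{l,n}$ via foliage), which is exactly your dichotomy after expanding the left side. The only detail worth flagging explicitly is the $m=1$ case, which your ``no $g_j$ fits inside $\mu_1$'' observation handles but which the paper calls out separately.
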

\begin{proof}  First note that if $m=1$, no terms appear on the left hand side by definition, and the terms on the right hand side cancel.  Now suppose $m\geq2$.  On the top right hand side we can consider the terms of $B^1_{l,n}(f\circ\overline{\mu_m},g_1\cdc g_n)$ based on how many flags of $\mu_m$ are filled with functions $g_i$.  Those terms with no such leaves will cancel with the other term on the right hand side.  Those terms that are filled with functions (meaning $m-1$ leaves have a $g_i$ and the remaining leaf has $a_0$) come from the left hand side.  Finally consider those terms with $r$ of the $g_i$ glued to $\mu_m$ where $1\leq r \leq m-2$, but having $m-r-1$ free tails.  Such terms come from the left hand side in the form of $\mu_{r+1}\circ_v\beta_{l,n}$ after applying the foliage operator.
\end{proof}

\begin{lemma} \label{applem3}  Let $\alpha_m = \tensor_{i=1}^N a_i$ where $N=|f|-n+m-1+\sum_i |g_i|$.  Then,
\begin{align*}
&\langle a_0,\rho(\beta_{l,n}\circ_v\mu)(f;g_1\cdc g_n)(\alpha)\rangle\\
&\indent\indent=\langle 1,B^1_{l,n}(\overline{f}\circ\mu;g_1\cdc g_n)(a_0,\alpha) \rangle +\langle 1,\sum_{i=1}^nB^1_{l,n}(f;\dots,g_i\circ\mu-\mu\circ g_i\cdc g_n)(a_0,\alpha) \rangle\\
&\indent\indent-\bigoplus_{m\geq 1}\ds\sum_{i=1}^{N-m+1}\langle 1,[B^1_{l,n}(f;g_1\cdc g_n)] (a_0,a_1,\dots,a_{i-1},\mu_m(a_i,\dots,a_{i+m-1}),a_{i+m},\dots,a_N) \rangle
\end{align*}
\end{lemma}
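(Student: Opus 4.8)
The plan is to follow the template of the proof of Lemma \ref{applem2}: expand the left‑hand side by evaluating $\rho$ on each marked tree occurring in $\beta_{l,n}\circ_v\mu$ by means of the foliage operator, and then organize the resulting Hochschild‑cochain terms so that they fall into the three groups displayed on the right. Throughout, signs are suppressed and treated mod $2$ exactly as in Proposition \ref{bvdiff}.

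First I would record the shape of the trees in $\beta_{l,n}\circ_v\mu$. By definition these are obtained, for each $m\ge 2$, by detaching a consecutive run $g_i,\dots,g_{i+m-1}$ of non‑root leaves of $\beta_{l,n}$ lying to one side of the spine, inserting a black corolla $\mu_m$ between the internal white vertex $v$ and those leaves, and reattaching them to the incoming flags of $\mu_m$ in planar order. Cutting at the leaves exhibits such a tree as a simple gluing of $\mu_m$ onto a spined‑brace generator ($\beta_{l',n-m+1}$ of type $1$, or, when the run abuts the spine, a generator of type $2$), so by the definition of $\rho$ on generators and by Lemma \ref{morphismlemma} its image is the spined brace $B^1_{l,n}(f;\dots)$ in which the single argument slot corresponding to the $\mu_m$‑edge receives $F(\mu_m)(g_i,\dots,g_{i+m-1})$, while the distinguished slot carrying $a_0$ always lies inside $f$ (the run is disjoint from the root). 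Summing over $m$ and $i$ and expanding each $F(\mu_m)$ and each brace, this collects exactly the summands of $B^1_{l,n}(f\circ\mu;g_1\cdots g_n)$ in which the inserted copy of $\mu$ sits inside $f$, i.e. the ``$\overline f\circ\mu$'' part, subject to the constraint that $\mu$ swallows at least the functions $g_i,\dots,g_{i+m-1}$.

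Next I would stratify $B^1_{l,n}(\overline f\circ\mu;g_1\cdots g_n)$ by the number $r$ of the $g_j$ that the inserted copy of $\mu$ braces in. For $r\ge 2$ the summands are precisely the $\rho$‑images above (the extra inputs of $\mu$ being supplied by foliage, after the re‑indexing $m\mapsto s$ familiar from the corolla case). For $r=0$ the copy of $\mu$ is fed only free tails, so the summand is $\mu_m$ applied to a consecutive block of the arguments of the evaluated brace not containing $a_0$ — a summand of the third group; since no constituent tree of $\beta_{l,n}\circ_v\mu$ produces it, it must be carried to the other side, which is the source of the minus sign in front of that group. The remaining $r=1$ summands, where $\mu$ braces in a single $g_i$, are the operations $\mu\circ g_i$ placed in the $i$‑th argument of $B^1_{l,n}(f;\dots)$; these I would pair with their partners $g_i\circ\mu$, the difference $g_i\circ\mu-\mu\circ g_i$ being the Hochschild differential in that slot, which is precisely the shape needed for the comparison with $d_{Int}(B_{l,n})$ in Proposition \ref{appprop}. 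Reassembling the three strata, and using the normalization and invariance of the pairing (cf. the proof of Lemma \ref{morphismlemma}) to kill all terms whose foliage is not filled by the $g_j$, then yields the asserted identity.

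The main obstacle is exactly this stratification and the attendant bookkeeping: one must match, run by run, the foliage configurations on the tree side against the brace summands on the cochain side, reconcile the constraint $m\ge 2$ in the definition of $\circ_v$ with the $r=0$ and $r=1$ summands that nonetheless appear in $B^1_{l,n}(\overline f\circ\mu;g)$, and verify the behaviour in the boundary cases — a run abutting the spine, where the generator involved is of type $2$ and its image is computed through equation \ref{zerocases}, and the placement of $a_0$ relative to the run. Once Lemma \ref{applem3} is established, together with Lemmas \ref{applem1} and \ref{applem2} it feeds directly into the term‑by‑term comparison of $\rho(\partial_H(\beta_{l,n}))$ with $d_{Int}(B_{l,n})$.
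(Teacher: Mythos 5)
Your proposal follows essentially the same route as the paper's proof: stratify $B^1_{l,n}(\overline{f}\circ\mu_m;g_1,\dots,g_n)$ by the number $r$ of the $g_j$ absorbed by the inserted $\mu_m$, match the $r\geq 2$ strata against the tree-side terms of $\rho(\beta_{l,n}\circ_v\mu)$ via the foliage operator, the $r=1$ stratum against the $\mu_m\circ g_i$ terms, and the $r=0$ stratum against the third group on the right-hand side. The one point to sharpen is that the $r=0$ stratum is not literally a summand of the third group but equals the third group minus $\sum_i B^1_{l,n}(f;\dots,g_i\circ\mu_m,\dots,g_n)$, and it is precisely this discrepancy that supplies the $g_i\circ\mu$ partners you invoke when pairing with the $r=1$ terms.
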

\begin{proof}  This follows similarly to the previous lemma.  Write 
\begin{equation*}
B^1_{l,n}(\overline{f}\circ\mu_m;g_1\cdc g_n)_r(a_0,\alpha) \subset B^1_{l,n}(\overline{f}\circ\mu;g_1\cdc g_n)(a_0,\alpha)
\end{equation*}
where $r=1\cdc m$ for those terms having $r$ of the leaves of $\mu_m$ filled with functions $g_i$.  Then
\begin{align*}
&B^1_{l,n}(\overline{f}\circ\mu_m;g_1\cdc g_n)_0(a_0,\alpha) =  \\
& \indent \ds\sum_{i=1}^{N-m+1}[B^1_{l,n}(f;g_1\cdc g_n)](a_0,a_1,\dots,a_{i-1},\mu_m(a_i,\dots,a_{i+m-1}),a_{i+m}, \dots,a_N)\\ 
&\indent\indent-\sum_{i=1}^nB^1_{l,n}(f;\dots,g_i\circ\mu_m \cdc g_n)
\end{align*}
and
\begin{equation*}
\langle 1,B^1_{l,n}(\overline{f}\circ\mu_m;g_1\cdc g_n)_1(a_0,\alpha)\rangle =\langle 1,\sum_{i=1}^nB^1_{l,n}(f;g_1\cdc\mu_m\circ g_i\cdc g_n)(a_0,\alpha) \rangle \end{equation*}
and $\langle 1,B^1_{l,n}(\overline{f}\circ\mu_m,g_1\cdc g_n)_r(a_0,\alpha)\rangle$ for $r\geq 2$ come from the left hand side.  In particular if $m=r$ then the terms come directly and if $2\leq r <m$ then the term comes from applying the foliage operator to $\beta_{l,n}\circ_v\mu_r$.
\end{proof}

The preceding three lemmas allow us to prove the compatibility of the differentials with the spined braces of type $1$ as follows:
\begin{proposition}\label{appprop}  $\rho(\partial_H(\beta_{l,n}))=d_{Int}(\rho(\beta_{l,n}))$.
\end{proposition}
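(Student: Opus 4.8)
The plan is to evaluate both sides in a common normal form and match term by term; since Lemmas \ref{applem1}, \ref{applem2} and \ref{applem3} already compute $\rho$ on the individual tree families, the work is mainly assembly and an expansion of the internal differential. First I would substitute equation \ref{diffeq1} and apply $\rho$: the families $\beta_n\wedge_{l+1}\mu_2$ and $\beta_n\wedge_{l}\mu_2$ are handled by Lemma \ref{applem1}, the family $\mu\circ_v\beta_{l,n}$ by Lemma \ref{applem2}, and $\beta_{l,n}\circ_v\mu$ by Lemma \ref{applem3}. Pairing against $a_0$ and collecting, two recombinations occur: the term $B^1_{l,n}(\overline{f}\circ\mu;g_1\cdc g_n)$ coming from $\beta_{l,n}\circ_v\mu$ and the term $B^1_{l,n}(f\circ\overline{\mu};g_1\cdc g_n)$ coming from $\mu\circ_v\beta_{l,n}$ glue back into $B^1_{l,n}(f\circ\mu;g_1\cdc g_n)$, and the two families of terms that insert a $\mu_m$ into, respectively, the $a_0$-argument of $B^1_{l,n}(f;g_1\cdc g_n)$ (Lemma \ref{applem2}) and into its remaining arguments (the last line of Lemma \ref{applem3}) glue back into the full Gerstenhaber composition of $B^1_{l,n}(f;g_1\cdc g_n)$, regarded as a Hochschild cochain of $A$, with $\mu$. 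One is left with $\langle a_0,\rho(\partial_H(\beta_{l,n}))(f;g_1\cdc g_n)(\alpha)\rangle=\langle 1,\Psi(a_0,\alpha)\rangle$, where
\begin{equation*}
\Psi=B^1_{l,n}(f\circ\mu;g_1\cdc g_n)+B^1_{l,n}(\mu_2\circ f;g_1\cdc g_n)+\textstyle\sum_i B^1_{l,n}(f;\dots,g_i\circ\mu-\mu\circ g_i,\dots)-B^1_{l,n}(f;g_1\cdc g_n)\circ\mu.
\end{equation*}

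Second, I would expand the right-hand side from equation \ref{internaleq}: $d_{Int}(B_{l,n})(f;g_1\cdc g_n)$ equals $B_{l,n}(d(f);g_1\cdc g_n)+\sum_i\pm B_{l,n}(f;\dots,d(g_i),\dots)-\pm d(B_{l,n}(f;g_1\cdc g_n))$, where each Hochschild differential is $d(-)=[-,\mu]=(-)\circ\mu-\pm\mu\circ(-)$. Translating every summand through the implicit definition of $B_{l,n}$ via $B^1_{l,n}$, the $(-)\circ\mu$ halves directly produce $B^1_{l,n}(f\circ\mu;g_1\cdc g_n)$, the $g_i\circ\mu$ part of the middle term, and the part of $B^1_{l,n}(f;g_1\cdc g_n)\circ\mu$ that does not touch the $a_0$-slot; the $\mu\circ(-)$ halves produce the $-\mu\circ g_i$ part of the middle term together with $B^1_{l,n}(\mu\circ f;g_1\cdc g_n)$ and the $\mu\circ B_{l,n}(f;g_1\cdc g_n)$ contribution. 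The essential reduction is that in the latter two, every summand with $m\geq 3$ vanishes: pairing with $1$ and rotating cyclically via Proposition \ref{cycalgprop} places the identity element into an argument of the outer $\mu_m$, which is $0$ for $m\geq 3$ since $\mu_m$ is normalized (Remark \ref{unitremark}). Hence $B^1_{l,n}(\mu\circ f;g_1\cdc g_n)$ collapses to $B^1_{l,n}(\mu_2\circ f;g_1\cdc g_n)$ — this is exactly Lemma \ref{applem1} — and the $\mu_2\circ B_{l,n}(f;g_1\cdc g_n)$ contribution supplies precisely the $a_0$-slot terms of $B^1_{l,n}(f;g_1\cdc g_n)\circ\mu$ missing above, while the $\mu_1=d_A$ terms cancel in pairs as in Lemma \ref{applem2} and Proposition \ref{bvdiff}. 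Comparing with $\Psi$ gives the identity mod $2$; the signs are produced exactly as in the proof of Proposition \ref{bvdiff}, using the orientation conventions of Subsection \ref{diffsigns} and the sign rule of equation \ref{preliesigns}. Since $\rho(\beta_{l,n})=B_{l,n}$, this is the claim.

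The main obstacle is bookkeeping rather than computation. One must check carefully that the two halves of each reassembled term, $B^1_{l,n}(f\circ\mu;g_1\cdc g_n)$ and $B^1_{l,n}(f;g_1\cdc g_n)\circ\mu$, are each supplied by two different tree families, so that no term is counted twice and none dropped; here the foliage operator is the organizing device, since the terms in which free tails are filled by some of the $g_i$ correspond exactly to those $g_i$ becoming arguments of the inserted $\mu_m$. The other delicate point is the vanishing of all $m\geq 3$ contributions of $\mu\circ(-)$: these have no counterpart among the trees of $\partial_H(\beta_{l,n})$, where the only new generators produced by the spine are the two $\mu_2$ push-offs, so cyclic invariance of the form together with normalization must be invoked precisely at this step. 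The signs themselves, though laborious, raise no new difficulty beyond the template already established for the BV operator.
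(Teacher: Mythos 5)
Your overall strategy --- expand $d_{Int}(B_{l,n})$ via equation $\ref{internaleq}$, translate each summand through the implicit definition of $B_{l,n}$ in terms of $B^1_{l,n}$, and match against Lemmas $\ref{applem1}$, $\ref{applem2}$ and $\ref{applem3}$ --- is the same as the paper's. But the key reduction you rely on is wrong in one of its two halves, and the error is not cosmetic. It is true that $\langle 1, B^1_{l,n}(\mu_m\circ f;g_1\cdc g_n)(a_0,\alpha)\rangle$ vanishes for $m\geq 3$: there the outermost function is $\mu_m$, so cyclic rotation of the form really does place the unit into an argument of $\mu_m$ and normalization kills it; this is exactly the reduction to $\mu_2\circ f$ and Lemma $\ref{applem1}$. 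It is false, however, that the summands of $\mu_m\circ B_{l,n}(f;g_1\cdc g_n)$ vanish for $m\geq 3$. For those terms the cyclic rotation
\begin{equation*}
\langle a_0,\mu_m(a_1\cdc a_{j-1},B_{l,n}(f;g_1\cdc g_n)(a_j,\dots),\dots,a_N)\rangle=\pm\langle B_{l,n}(f;g_1\cdc g_n)(a_j,\dots),\mu_m(\dots,a_0\cdc a_{j-1})\rangle
\end{equation*}
moves $a_0$, not the unit, into the arguments of $\mu_m$; the unit only appears afterwards, paired against the $B^1_{l,n}$ function when one applies the implicit definition with $\mu_m(\dots,a_0,\dots)$ occupying the $a_0$-slot. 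Nothing forces these terms to vanish, and they must not vanish: they are precisely the second family of terms appearing in Lemma $\ref{applem2}$.

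Correspondingly, your claim that these contributions ``have no counterpart among the trees of $\partial_H(\beta_{l,n})$'' misreads equation $\ref{diffeq1}$. Besides the two spine push-offs, $\partial_H(\beta_{l,n})$ contains the family $\mu\circ_v\beta_{l,n}=\sum_{m=2}^{n+1}\sum_i\pm\sigma_i(\mu_m\circ_i\beta_{l-i+1,n-m+1})$, obtained by contracting white angles adjacent to the outgoing flag of the internal white vertex; these trees have a black vertex labelled $\mu_m$ below the white vertex for every $m\geq 2$, and under $\rho$ (after applying the foliage operator) they produce exactly the $a_0$-slot insertions of $\mu_m$ for all $m$. If you discard the $m\geq 3$ part of $\mu\circ B_{l,n}$ as you propose, the two sides no longer match: the left-hand side still contains $\rho(\mu_m\circ_{i,v}\beta_{l,n})$ for $m\geq 3$ with nothing to cancel against. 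The fix is to keep all $m$ in the $\mu\circ B_{l,n}$ half, perform the cyclic rotation displayed above, and observe that the result is the second sum in Lemma $\ref{applem2}$; normalization is invoked only for the $B_{l,n}(\mu_m\circ f;\dots)$ terms and for killing unfilled foliage. The rest of your assembly --- the splitting of $B^1_{l,n}(f\circ\mu;\dots)$ according to whether $a_0$ lands in $f$ or in the inserted $\mu$, the cancellation of the $\mu_1=d_A$ terms, and the treatment of signs --- is consistent with the paper's argument.
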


\begin{proof}  \begin{align*}
&\langle a_0,d_{Int}(\rho(\beta_{l,n}))(f;g_1\cdc g_n) (\alpha) \rangle \\
&=\langle a_0,[(\rho(\beta_{l,n})\circ\mu\pm\mu\circ \rho(\beta_{l,n}))(f,g_1\cdc g_n)\pm\sum_{i=0}^n \rho(\beta_{l,n})(f,\dots,g_i\circ\mu-\mu\circ g_i\cdc g_n)] (\alpha) \rangle \\ 
& = \langle a_0,(\rho(\beta_{l,n})\circ\mu\pm\mu\circ \rho(\beta_{l,n}))(f,g_1\cdc g_n)(\alpha)\rangle\pm\sum_{i=0}^n\langle 1,B^1_{l,n}(f;\dots,g_i\circ\mu-\mu\circ g_i\cdc g_n) (\alpha) \rangle\\
\end{align*}
using the notation $g_0=f$.  If we examine the sum in the last line we see that when $i=0$ the $\dots \mu\circ f \dots$ term can be reduced to considering $\dots \mu_2\circ f \dots$ which gives us the terms from $\ref{applem1}$ corresponding to pushing off the spine in the differential.  Also when $i=0$, the $\dots f\circ\mu\dots$ term splits in to two depending on the location of $a_0$.  These terms appear in lemmas $\ref{applem2}$ and $\ref{applem3}$.  For $i\geq1$ the terms appear in $\ref{applem3}$.  So it remains to show that the remaining terms on both sides are the same.  We consider then those terms which are as yet unaccounted for: 
\begin{align*}
\langle a_0&,\rho(\beta_{l,n})(f,g_1\cdc g_n)\circ\mu_m (a_1,\dots,a_N) \rangle-\langle a_0,\mu_m\circ \rho(\beta_{l,n})(f,g_1\cdc g_n) (a_1,\dots,a_N)\rangle \\ 
=& \ds\sum_{i=1}^{N-m+1}\langle a_0,[\rho(\beta_{l,n})(f,g_1\cdc g_n)\circ_i\mu_m] (a_1,\dots,a_N) \rangle \\
& \indent -\ds\sum_{j=1}^m\langle a_0,[\mu_m\circ_j \rho(\beta_{l,n})(f,g_1\cdc g_n)] (a_1,\dots,a_N) \rangle \\
=& \ds\sum_{i=1}^{N-m+1}\langle a_0,[\rho(\beta_{l,n})(f,g_1\cdc g_n)] (a_1,\dots,a_{i-1},\mu_m(a_i,\dots,a_{i+m-1}),a_{i+m},\dots,a_N) \rangle\\
& \indent -\ds\sum_{j=1}^m\langle a_0,\mu_m(a_1,\dots, a_{j-1}, \rho(\beta_{l,n})(f_1,\dots, f_n)(a_j,\dots,a_{N-j+m}),a_{N-j+m+1},\dots,a_N) \rangle \\
=& \ds\sum_{i=1}^{N-m+1}\langle 1,[B^1_{l,n}(f;g_1\cdc g_n)] (a_0,a_1,\dots,a_{i-1},\mu_m(a_i,\dots,a_{i+m-1}),a_{i+m},\dots,a_N) \rangle\\
& \indent \pm\ds\sum_{j=1}^m\langle \rho(\beta_{l,n})(f,g_1\cdc g_n)(a_j,\dots,a_{N-j+m}),\mu_m(a_{N-j+m+1},\dots, a_N, a_0, \dots, a_{j-1}) \rangle \\
=& \ds\sum_{i=1}^{N-m+1}\langle 1,[B^1_{l,n}(f;g_1\cdc g_n)] (a_0,a_1,\dots,a_{i-1},\mu_m(a_i,\dots,a_{i+m-1}),a_{i+m},\dots,a_N) \rangle\\
& \indent \pm\ds\sum_{j=1}^m\langle 1,B^1_{l,n}(f;g_1\cdc g_n)(\mu_m(a_{N-j+m+1},\dots, a_N, a_0, \dots, a_{j-1}),a_j,\dots,a_{N-j+m}) \rangle 
\end{align*}
Notice that these sums appear in Lemmas $\ref{applem3}$ and $\ref{applem2}$ respectively.  Now, the terms appearing in Lemmas $\ref{applem1}$, $\ref{applem2}$, and $\ref{applem3}$ amount to $\rho(\partial_H(\beta_{l,n}))$ on the one hand, and by the above calculation they amount to $d_{Int}(\rho(\beta_{l,n}))$, hence the proposition.
\end{proof}

\bibliography{cycbib}
\bibliographystyle{amsalpha}

\end{document}